\documentclass[10pt, reqno]{amsart}




\usepackage{fullpage}

\usepackage{textcmds} 
\usepackage{amsmath}
\usepackage{amsxtra}
\usepackage{amscd}
\usepackage{amsthm}
\usepackage{amsfonts}
\usepackage{amssymb}
\usepackage{eucal}
\usepackage[all]{xy}
\usepackage{graphicx}
\usepackage{comment}
\usepackage{epsfig}
\usepackage{psfrag}
\usepackage{mathrsfs}
\usepackage{amscd}
\usepackage{rotating}
\usepackage{lscape}
\usepackage{amsbsy}
\usepackage{verbatim}
\usepackage{moreverb}
\usepackage{url}
\usepackage{extarrows}
\usepackage{bbm}

\makeatletter
\@addtoreset{subsubsection}{section}
\makeatother 

\DeclareMathAlphabet{\mathpzc}{OT1}{pzc}{m}{it}

\newtheorem{cor}[subsubsection]{Corollary}
\newtheorem{lem}[subsubsection]{Lemma}
\newtheorem{prop}[subsubsection]{Proposition}

\newtheorem{conj}[subsubsection]{Conjecture}
\newtheorem{thm}[subsubsection]{Theorem}

\newtheorem{defn}[subsubsection]{Definition}

\newtheorem{notat}[subsubsection]{Notation}

\theoremstyle{remark}
\newtheorem{rem}[subsubsection]{Remark}
\newtheorem{example}[subsubsection]{Example}


\theoremstyle{remark}

\numberwithin{equation}{section}

\newcommand{\nc}{\newcommand}
\nc{\renc}{\renewcommand}
\nc{\ssec}{\subsection}
\nc{\sssec}{\subsubsection}
\nc{\on}{\operatorname}

\nc\ol{\overline}
\nc\wt{\widetilde}
\nc\tboxtimes{\wt{\boxtimes}}
\nc\tstar{\wt{\star}}
\nc{\alp}{\alpha}

\nc{\ZZ}{{\mathbb Z}}
\nc{\NN}{{\mathbb N}}
\nc{\OO}{{\mathbb O}}
\renc{\SS}{{\mathbb S}}
\nc{\DD}{{\mathbb D}}
\nc{\GG}{{\mathbb G}}
\renewcommand{\AA}{{\mathbb A}}
\nc{\Fq}{{\mathbb F}_q}
\nc{\Fqb}{\ol{{\mathbb F}_q}}
\nc{\Ql}{\ol{{\mathbb Q}_\ell}}
\nc{\id}{\text{id}}
\nc\X{\mathcal X}

\nc{\Hom}{\on{Hom}}
\nc{\Lie}{\on{Lie}}
\nc{\Loc}{\on{Loc}}
\nc{\Pic}{\on{Pic}}
\nc{\Bun}{\on{Bun}}
\nc{\IC}{\on{IC}}
\nc{\Aut}{\on{Aut}}
\nc{\rk}{\on{rk}}
\nc{\Sh}{\on{Sh}}
\nc{\pos}{{\on{pos}}}
\nc{\Conv}{\on{Conv}}
\nc{\Sph}{\mathcal{S}ph}
\nc{\Sym}{\on{Sym}}
\nc{\BunBb}{\overline{\Bun}_B}
\nc{\BunNb}{\overline{\Bun}_N}
\nc{\BunTb}{\overline{\Bun}_T}
\nc{\BunBbm}{\overline{\Bun}_{B^-}}
\nc{\BunBbel}{\overline{\Bun}_{B,el}}
\nc{\BunBbmel}{\overline{\Bun}_{B^-,el}}
\nc{\Buno}{\overset{o}{\Bun}}
\nc{\BunPb}{{\overline{\Bun}_P}}
\nc{\BunBM}{\Bun_{B(M)}}
\nc{\BunBMb}{\overline{\Bun}_{B(M)}}
\nc{\BunPbw}{{\widetilde{\Bun}_P}}
\nc{\BunBP}{\widetilde{\Bun}_{B,P}}
\nc{\GUb}{\overline{G/U}}
\nc{\GUPb}{\overline{G/U(P)}}

\nc{\Hhom}{\underline{\on{Hom}}}
\nc\syminfty{\on{Sym}^{\infty}}
\nc\lal{\ol{\lambda}}
\nc\xl{\ol{x}}
\nc\thl{\ol{\theta}}
\nc\nul{\ol{\nu}}
\nc\mul{\ol{\mu}}
\nc\Sum\Sigma
\nc{\oX}{\overset{\circ}{X}{}}
\nc{\hl}{\overset{\leftarrow}h{}}
\nc{\hr}{\overset{\rightarrow}h{}}
\nc{\M}{{\mathcal M}}
\nc{\N}{{\mathcal N}}
\nc{\F}{{\mathcal F}}
\nc{\D}{{\mathcal D}}
\nc{\Q}{{\mathcal Q}}
\nc{\Y}{{\mathcal Y}}
\nc{\G}{{\mathcal G}}
\nc{\E}{{\mathcal E}}
\nc{\CalC}{{\mathcal C}}
\nc\Dh{\widehat{\D}}
\renewcommand{\O}{{\mathcal O}}
\nc{\K}{{\mathcal K}}

\renewcommand{\S}{{\mathcal S}}
\nc{\T}{{\mathcal T}}
\nc{\V}{{\mathcal V}}
\renc{\P}{{\mathcal P}}
\nc{\A}{{\mathcal A}}
\nc{\B}{{\mathcal B}}
\nc{\U}{{\mathcal U}}
\renewcommand{\L}{{\mathcal L}}

\renewcommand{\mod}{{\on{-mod}}}
\newcommand{\comod}{{\on{-comod}}}

\nc\mathi\iota
\nc\Spec{\on{Spec}}
\nc\Mod{\on{Mod}}

\nc{\jw}{\widetilde j}

\nc{\I}{\mathcal I}

\nc{\lambdach}{{\check\lambda}}
\nc{\Lambdach}{{\check\Lambda}{}}
\nc{\much}{{\check\mu}}
\nc{\omegach}{{\check\omega}}
\nc{\nuch}{{\check\nu}}
\nc{\etach}{{\check\eta}}
\nc{\alphach}{{\check\alpha}}

\nc{\rhoch}{{\check\rho}}

\nc{\Hb}{\overline{\H}}

\emergencystretch=2cm

\nc{\BA}{{\mathbb{A}}}
\nc{\BC}{{\mathbb{C}}}
\nc{\BE}{{\mathbb{E}}}
\nc{\BF}{{\mathbb{F}}}
\nc{\BG}{{\mathbb{G}}}
\nc{\BM}{{\mathbb{M}}}
\nc{\BO}{{\mathbb{O}}}
\nc{\BD}{{\mathbb{D}}}
\nc{\BN}{{\mathbb{N}}}
\nc{\BP}{{\mathbb{P}}}
\nc{\BR}{{\mathbb{R}}}
\nc{\BZ}{{\mathbb{Z}}}
\nc{\BS}{{\mathbb{S}}}
\nc{\BV}{{\mathbb{V}}}

\nc{\CA}{{\mathcal{A}}}
\nc{\CB}{{\mathcal{B}}}

\nc{\CE}{{\mathcal{E}}}
\nc{\CF}{{\mathcal{F}}}
\nc{\CH}{{\mathcal{H}}}

\nc{\CL}{{\mathcal{L}}}
\nc{\CW}{{\mathcal{W}}}
\nc{\CI}{{\mathcal{I}}}

\nc{\csM}{{\check{\mathcal A}}{}}
\nc{\oM}{{\overset{\circ}{\mathcal M}}{}}
\nc{\obM}{{\overset{\circ}{\mathbf M}}{}}
\nc{\oCA}{{\overset{\circ}{\mathcal A}}{}}
\nc{\obA}{{\overset{\circ}{\mathbf A}}{}}
\nc{\ooM}{{\overset{\circ}{M}}{}}
\nc{\osM}{{\overset{\circ}{\mathsf M}}{}}
\nc{\vM}{{\overset{\bullet}{\mathcal M}}{}}
\nc{\nM}{{\underset{\bullet}{\mathcal M}}{}}
\nc{\oD}{{\overset{\circ}{\mathcal D}}{}}
\nc{\obC}{{\overset{\circ}{\mathbf C}}{}}
\nc{\obD}{{\overset{\circ}{\mathbf D}}{}}
\nc{\oA}{{\overset{\circ}{\mathbb A}}{}}
\nc{\op}{{\overset{\bullet}{\mathbf p}}{}}
\nc{\oU}{{\overset{\bullet}{\mathcal U}}{}}
\nc{\oZ}{{\overset{\circ}{\mathcal Z}}{}}

\nc{\oF}{{\overset{\circ}{\fF}}}




\nc{\bb}{{\mathbf{b}}}
\nc{\bc}{{\mathbf{c}}}
\nc{\bd}{{\mathbf{d}}}
\nc{\bbf}{{\mathbf{f}}}
\nc{\be}{{\mathbf{e}}}
\nc{\bg}{{\mathbf{g}}}
\nc{\bi}{{\mathbf{i}}}
\nc{\bj}{{\mathbf{j}}}
\nc{\bn}{{\mathbf{n}}}
\nc{\bo}{{\mathbf{o}}}
\nc{\bp}{{\mathbf{p}}}
\nc{\bq}{{\mathbf{q}}}
\nc{\bt}{{\mathbf{t}}}
\nc{\bu}{{\mathbf{u}}}
\nc{\bv}{{\mathbf{v}}}
\nc{\bx}{{\mathbf{x}}}
\nc{\bs}{{\mathbf{s}}}
\nc{\by}{{\mathbf{y}}}
\nc{\bw}{{\mathbf{w}}}
\nc{\bA}{{\mathbf{A}}}
\nc{\bK}{{\mathbf{K}}}
\nc{\bG}{{\mathbf{G}}}
\nc{\bD}{{\mathbf{D}}}
\nc{\bH}{{\mathbf{H}}}
\nc{\bM}{{\mathbf{M}}}
\nc{\bN}{{\mathbf{N}}}
\nc{\bO}{{\mathbf{O}}}
\nc{\bT}{{\mathbf{T}}}
\nc{\bV}{{\mathbf{V}}}
\nc{\bW}{{\mathbf{W}}}
\nc{\bX}{{\mathbf{X}}}
\nc{\bZ}{{\mathbf{Z}}}
\nc{\bS}{{\mathbf{S}}}

\nc{\sA}{{\mathsf{A}}}
\nc{\sB}{{\mathsf{B}}}
\nc{\sC}{{\mathsf{C}}}
\nc{\sD}{{\mathsf{D}}}
\nc{\sF}{{\mathsf{F}}}
\nc{\sG}{{\mathsf{G}}}
\nc{\sK}{{\mathsf{K}}}
\nc{\sM}{{\mathsf{M}}}
\nc{\sO}{{\mathsf{O}}}
\nc{\sW}{{\mathsf{W}}}
\nc{\sQ}{{\mathsf{Q}}}
\nc{\sP}{{\mathsf{P}}}
\nc{\sV}{{\mathsf{V}}}
\nc{\sS}{{\mathsf{S}}}
\nc{\sT}{{\mathsf{T}}}
\nc{\sZ}{{\mathsf{Z}}}
\nc{\sfp}{{\mathsf{p}}}
\nc{\sll}{{\mathsf{l}}}
\nc{\sr}{{\mathsf{r}}}
\nc{\bk}{{\mathsf{k}}}
\nc{\sg}{{\mathsf{g}}}

\nc{\sff}{{\mathsf{f}}}
\nc{\sfb}{{\mathsf{b}}}
\nc{\sfc}{{\mathsf{c}}}
\nc{\sd}{{\mathsf{d}}}
\nc{\se}{{\mathsf{e}}}

\nc{\BK}{{\bar{K}}}

\nc{\tA}{{\widetilde{\mathbf{A}}}}
\nc{\tB}{{\widetilde{\mathcal{B}}}}
\nc{\tG}{{\widetilde{G}}}
\nc{\TM}{{\widetilde{\mathbb{M}}}{}}
\nc{\tO}{{\widetilde{\mathsf{O}}}{}}
\nc{\TZ}{{\tilde{Z}}}
\nc{\tx}{{\tilde{x}}}
\nc{\tbv}{{\tilde{\bv}}}
\nc{\tz}{{\tilde{\zeta}}}
\nc{\tmu}{{\tilde{\mu}}}

\nc{\urho}{\underline{\rho}}
\nc{\uB}{\underline{B}}
\nc{\uC}{{\underline{\mathbb{C}}}}
\nc{\ui}{\underline{i}}
\nc{\uj}{\underline{j}}

\nc{\oB}{{\overline{\mathcal{B}}}}

\nc{\oI}{{\overline{I}}}

\nc{\eps}{\varepsilon}
\nc{\hrho}{{\hat{\rho}}}

\nc{\one}{{\mathbf{1}}}
\nc{\two}{{\mathbf{t}}}

\nc{\Rep}{{\mathop{\operatorname{\rm Rep}}}}
\nc{\Tot}{{\mathop{\operatorname{\rm Tot}}}}
\nc{\Ker}{{\mathop{\operatorname{\rm Ker}}}}
\nc{\Hilb}{{\mathop{\operatorname{\rm Hilb}}}}
\nc{\End}{{\mathop{\operatorname{\rm End}}}}
\nc{\Ext}{{\mathop{\operatorname{\rm Ext}}}}
\nc{\CHom}{{\mathop{\operatorname{{\mathcal{H}}\it om}}}}
\nc{\GL}{{\mathop{\operatorname{\rm GL}}}}
\nc{\gr}{{\mathop{\operatorname{\rm gr}}}}
\nc{\Id}{{\mathop{\operatorname{\rm Id}}}}
\nc{\de}{{\mathop{\operatorname{\rm def}}}}
\nc{\length}{{\mathop{\operatorname{\rm length}}}}
\nc{\supp}{{\mathop{\operatorname{\rm supp}}}}

\nc{\Fl}{\on{Fl}}
\nc{\Fib}{{\mathsf{Fib}}}
\nc{\Coh}{{\mathsf{Coh}}}
\nc{\QCoh}{{\mathsf{QCoh}}}
\nc{\IndCoh}{{\mathsf{IndCoh}}}
\nc{\FCoh}{{\mathsf{FCoh}}}

\nc{\reg}{{\text{\rm reg}}}

\nc{\cplus}{{\mathbf{C}_+}}
\nc{\cminus}{{\mathbf{C}_-}}
\nc{\cthree}{{\mathbf{C}_*}}
\nc{\Qbar}{{\bar{Q}}}
\nc\Eis{\on{Eis}}
\nc\Eisb{\ol\Eis{}}
\nc\Eisr{\on{Eis}^{rat}{}}
\nc{\Def}{\on{Def_{\check{\fb}}(E)}}
\nc{\barZ}{\overline{Z}{}}
\nc{\barbarZ}{\overline{\barZ}{}}
\nc{\barpi}{\overline\pi}
\nc{\barbarpi}{\overline\barpi}
\nc{\barpip}{\overline\pi{}^+}
\nc{\barpim}{\overline\pi{}^-}

\nc{\fqb}{\ol{\fq}{}}
\nc{\fpb}{\ol{\fp}{}}
\nc{\fpr}{{\fp^{rat}}{}}
\nc{\fqr}{{\fq^{rat}}{}}

\nc{\bh}{{\bar{h}}}
\nc{\bOmega}{{\overline{\Omega(\check \fn)}}}

\nc{\seq}[1]{\stackrel{#1}{\sim}}

\nc{\DefbE}{{\on{Def}_{\cB}(E_\cT)}}

\nc{\imathb}{{\ol{\imath}}}
\nc{\rlr}{\overset{\longrightarrow}{\underset{\longrightarrow}\longleftarrow}}

\nc{\oBun}{\overset{\circ}\Bun}
\nc{\LocSys}{\on{LocSys}}
\nc{\BunBbb}{\ol{\ol{Bun}}_B}
\nc{\BunBr}{\Bun_B^{rat}}
\nc{\BunBrsg}{\Bun_B^{rat,\on{s.g.}}}
\nc{\BunBrp}{\Bun_B^{rat,polar}}
\nc{\BunBrpbg}{\Bun_B^{rat,polar,\on{b.g.}}}
\nc{\BunBrpsg}{\Bun_B^{rat,polar,\on{s.g.}}}
\nc{\BunTrp}{\Bun_T^{rat,polar}}
\nc{\BunTrpbg}{\Bun_T^{rat,polar,\on{b.g.}}}
\nc{\BunTrpsg}{\Bun_T^{rat,polar,\on{s.g.}}}
\nc{\BunNr}{\Bun_N^{rat}}
\nc{\BunNre}{\Bun_N^{enh,rat}}
\nc{\BunTr}{\Bun_T^{rat}}
\nc{\Vect}{\mathsf{Vect}}
\nc{\Ran}{{\mathsf{Ran}}}
\nc\jmathr{\jmath^{rat}{}}
\nc{\ux}{\underline{x}}

\nc{\ind}{{\mathbf{ind}}}
\nc{\oblv}{{\mathsf{oblv}}}
\nc{\Oblv}{{\mathsf{Oblv}}}
\nc{\fset}{\mathsf{fSet}}
\nc{\LocSysG}{\LocSys_{\cG}}
\nc{\Sing}{\on{Sing}}
\nc{\dr}{{\on{dR}}}
\nc{\Ind}{\on{Ind}}
\nc{\Sat}{\on{Sat}}
\nc{\Ho}{\on{Ho}}
\nc{\Res}{\on{Res}}
\nc{\sotimes}{\overset{!}\otimes}

\nc{\mmod}{{\on{-}}{\mathbf{mod}}}
\nc{\ccomod}{{\on{-}}{\mathbf{comod}}}

\nc{\dgSch}{\on{DGSch}}
\nc{\dgindSch}{\on{DGindSch}}
\nc{\indSch}{\on{indSch}}
\nc{\Sch}{\mathsf{Sch}}
\nc{\affdgSch}{\on{DGSch}^{\on{aff}}}

\nc{\Grpd}{\mathsf{Grpd}}
\nc{\inftyGrpd}{\infty\on{-}\ms{Grpd}}
\nc{\inftyCat}{\infty\on{-Cat}}
\nc{\MoninftyCat}{\infty\on{-Cat}^{Mon}}
\nc{\SymMoninftyCat}{\infty\on{-Cat}^{\on{SymMon}}}
\nc{\SymMonStinftyCat}{\on{DGCat}^{\on{SymMon}}}
\nc{\MonStinftyCat}{\on{DGCat}^{Mon}}
\nc{\inftystack}{\on{Stk}}
\nc{\inftystackalg}{Stk^{1\text{-}alg}}
\nc{\inftyprestack}{\on{PreStk}}
\nc{\inftydgnearstack}{\on{NearStk}}
\nc{\inftydgstack}{\on{Stk}}
\nc{\inftydgstackalg}{DGStk^{1\text{-}alg}}
\nc{\inftydgprestack}{\on{PreStk}}

\nc{\csupp}{\supp}
\nc{\Arth}{\on{Arth}}
\nc{\ArthG}{{\on{Arth}_\cG}}
\nc{\ul}{\underline}

\nc{\Z}{\mathcal{Z}}

\nc{\calN}{\N}
\nc{\calW}{\mathcal{W}}
\nc{\calF}{\mathcal{F}}
\nc{\calH}{\mathcal{H}}
\nc{\calO}{\mathcal{O}}
\nc{\calK}{\mathcal{K}}

\nc{\Jets}{\on{Jets}}
\nc{\act}{\mathsf{act}}
\nc{\Av}{\mathsf{Av}}
\nc{\Ad}{\on{Ad}}
\nc{\BGRan}{BG_{\Ran}}
\nc{\colim}{\on{colim}}
\nc{\codim}{\on{codim}}
\nc{\cpt}{{\on{cpt}}}
\nc{\dR}{{\on{dR}}}
\nc{\DGCat}{\mathsf{DGCat}}
\nc{\DGCatcont}{\on{DGCat}_{cont}}
\nc{\glob}{{\on{glob}}}
\nc{\loc}{{\on{loc}}}

\renewcommand{\op}{{\on{op}}}
\nc{\pt}{{\on{pt}}}
\nc{\PreStk}{{\mathsf{PreStk}}}
\nc{\Cat}{{\on{Cat}}}
\nc{\DGSchaff}{{\on{DGSch}^{\text{aff}}}}
\nc{\ShvCat}{{\on{ShvCat}}}

\nc{\restr}[2]{\left. #1 \right |_{#2}}
\nc{\uprestr}[2]{\left. #1 \right |^{#2}}

\nc{\bLoc}{{\mathbf{Loc}}}
\nc{\bGamma}{{\mathbf{\Gamma}}}

\nc{\gen}{\mathsf{gen}}

\nc{\hto}{\hookrightarrow}

\nc{\Fun}{\on{Fun}}
\nc{\ext}{\mathsf{ext}}

\nc{\ev}{\mathsf{ev}}
\nc{\rat}{\mathsf{rat}}

\nc{\usotimes}[1]{\underset{#1}{\otimes}}
\nc{\ustimes}[1]{\underset{#1}{\times}}
\nc{\uscolim}[1]{\underset{#1}{\colim}}

\nc{\ch}{{\mathfrak{ch}}}

\nc{\fD}{{\Dmod}}
\nc{\fH}{{\mathfrak{H}}}

\nc{\p}{{\mathfrak{p}}}
\renc{\r}{{\mathfrak{r}}}

\nc{\xto}{\xrightarrow}

\renc{\sec}{\section}
\nc{\enh}{\mathsf{enh}}

\renc{\gen}{\mathsf{gen}}
\nc{\BunGBgen}{\Bun_G^{B-\gen}}
\nc{\BunGHgen}{\Bun_G^{H-\gen}}
\nc{\BunGNgen}{\Bun_G^{N-\gen}}

\renc{\Fun}{\mathsf{Fun}}

\nc{\rr}{\xymatrix{ \ar@<-0.1ex>[r] \ar@<.8ex>[r]  & } }
\nc{\rrr}{\xymatrix{ \ar@<.2ex>[r] \ar@<.9ex>[r] \ar@<-0.5ex>[r] & } }

\nc{\Stab}{\mathsf{Stab}}
\nc{\Orb}{\mathsf{Orb}}

\renc{\exp}{\mathit{exp}}

\renc{\q}{\mathfrak{q}}

\nc{\virg}[1]{``#1"}

\renc{\bold}[1]{\boldsymbol{#1}}

\nc{\bigt}[1]{\big( #1 \big) }
\nc{\Bigt}[1]{\Big( #1 \Big) }

\nc{\extwhit}{{\CW h}(G,\mathsf{ext})}

\nc{\footcite}{\footnote}

\nc{\GA}{G(\AA)}
\nc{\GO}{G(\OO)}

\nc{\Shv}{\mathsf{Shv}}
\nc{\inc}{\mathsf{inc}}

\nc{\Par}{\mathsf{Par}}

\renc{\i}{\mathfrak{i}}

\nc{\NA}{N(\AA)}
\nc{\VA}{V(\AA)}

\nc{\Glue}{\mathsf{Glue}}
\nc{\laxlim}{\text{laxlim}}

\nc{\Dom}{\mathsf{Dom}}

\nc{\FT}{\mathsf{FT}}

\nc{\out}{\mathsf{out}}

\nc{\hol}{\mathsf{hol}}
\nc{\Hol}{\on{Hol}}
\nc{\add}{\mathsf{add}}

\nc{\sto}{\rightsquigarrow}
\nc{\squigto}{\rightsquigarrow}

\nc{\fW}{\mathfrak{W}}

\nc{\vrho}{\varrho}

\nc{\counit}{\mathsf{counit}}
\nc{\unit}{\mathsf{unit}}
\nc{\corr}{\mathsf{corr}}

\nc{\IndSch}{\mathsf{IndSch}}
\nc{\Tate}{{\mathsf{Tate}}}

\nc{\surjto}{\twoheadrightarrow}

\renc{\j}{\mathfrak{j}}

\renc{\H}{\mathcal{H}}

\nc{\pro}{\mathsf{pro}}
\nc{\fty}{\mathsf{ft}}
\nc{\Pro}{\mathsf{Pro}}

\nc{\coact}{\mathsf{coact}}
\nc{\aff}{\mathsf{aff}}

\nc{\Nilp}{\on{Nilp}}
\nc{\Gch}{{\check{G}}}
\nc{\LL}{\mathbb{L}}

\nc{\LS}{\LocSys}

\nc{\x}{\varkappa} 
\nc{\ms}{\mathsf}

\nc{\Otimes}{\boldsymbol{\otimes}}
\nc{\Times}{\boldsymbol{\times}}
\nc{\flip}{\text{<}}

\nc{\coeffRan}{\mathsf{coeff}^{\Ran}}

\nc{\Ha}{H(\sA)}
\nc{\Groups}{\mathsf{Groups}}

\nc{\Groth}{\mathsf{Groth}}

\nc{\rlto}{\rightleftarrows}

\nc{\DGCatRan}{\ShvCatCrys(\Ran)}

\nc{\longto}{\longrightarrow}

\nc{\naive}{\ms{naive}}
\nc{\spec}{\mathit{spec}}

\renc{\Jets}{\mathsf{Jets}}
\nc{\mer}{\mathsf{mer}}

\nc{\W}{\mathcal{W}}

\nc{\Sect}{\mathsf{Sect}}
\nc{\Maps}{\mathsf{Maps}}

\renc{\bf}{\mathbf{f}}

\nc{\y}{\mathtt{y}}
\renc{\x}{\mathtt{x}}

\nc{\un}{{\it un}}
\nc{\indep}{\ms{indep}}
\nc{\CoAlg}{\ms{CoAlg}}

\nc{\coeff}{\ms{coeff}}

\nc{\R}{\mathcal{R}}

\renc{\hat}{\widehat}

\nc{\cores}{\bold{\on{cores}}}
\nc{\coind}{\bold{\on{coind}}}

\nc{\TtKK}{\Tt(\mathpzc{K})} 
\nc{\TtK}{\Tt(\mathsf{K})} 
\nc{\KK}{\mathpzc{K}}

\nc{\Dmod}{\mathfrak{D}}

\nc{\curs}[1]{\mathpzc{#1}}

\nc{\CC}{\mathcal{C}}
\nc{\Ccat}{\mathcal{C}}
\nc{\Cmod}[1]{\mathcal{C}_{#1}}
\nc{\Cshv}{\bold{\CC}}
\nc{\Cind}{\CC_{\indep}}
\nc{\CRan}{\CC_{\Ran}}


\nc{\Bshv}{\bold{\B}}
\nc{\Bind}{\B_{\indep}}
\nc{\BRan}{\B_{\Ran}}
\nc{\ARan}{\A_{\Ran}}
\nc{\Aind}{\A_{\indep}}

\nc{\Gr}{\mathsf{Gr}}
\nc{\GrGRan}{\Gr_{G}}
\nc{\GrGind}{\Gr_{G}^{\indep}}
\nc{\GrGdom}{\curs{Gr}_G}


\nc{\GMapsRan}[1]{\ms{GMaps}(X,{#1})}
\nc{\GSectRan}[1]{\ms{GSect}({#1}/X)}
\nc{\GMapsind}[1]{\ms{GMaps}(X,{#1})^\indep}
\nc{\GSectind}[1]{\ms{GSect}({#1}/X)^\indep}
\nc{\GMapsdom}[1]{\curs{GMaps}(X,{#1})}
\nc{\GSectdom}[1]{\curs{GSect}({#1}/X)}

\nc{\chind}{\ch^{\indep}}
\nc{\chdom}{\curs{ch}}

\nc{\QSect}[1]{\curs{QSect}(#1/X)} 
\nc{\QMaps}[1]{\curs{QMaps}(X,#1)} 

\nc{\Zar}{\mathit{Zar}}

\nc{\loccit}{\textit{loc.$\,$cit.}}
\nc{\ShvCatCrys}{\ShvCat\!^\nabla\!}

\nc{\BB}{\mathbb{B}}
\nc{\BPE}{{\BP E}}
\nc{\BVE}{{\BV E}}
\nc{\BBE}{{\BB E}}

\newcommand{\mapsfrom}{\mathrel{\reflectbox{\ensuremath{\mapsto}}}}

\bibliographystyle{ieeetr}

\begin{document}

\title{On the extended Whittaker category}
\author{Dario Beraldo}

{\let\thefootnote\relax\footnote{{2010 Mathematics Subject Classification: 14D24, 14H60, 22E57.}}}

\begin{abstract} 
Let $G$ be a connected reductive group with connected center and $X$ a smooth complete curve, both defined over an algebraically closed field of characteristic zero. Let $\Bun_G$ denote the stack of $G$-bundles on $X$.
In analogy with the classical theory of Whittaker coefficients for automorphic functions, we construct
a \virg{Fourier transform} functor $\coeff_{G,\ext}$ from the DG category of $\fD$-modules on $\Bun_G$ to a certain DG category $\extwhit$, called the \emph{extended Whittaker category}. This construction allows to formulate the compatibility of the Langlands duality functor $\LL_G: \IndCoh_\N(\LocSys_{\Gch}) \to \Dmod(\Bun_G)$ with the Whittaker model.
For $G=GL_n$ and $G=PGL_n$, we prove that $\coeff_{G,\ext}$ is fully faithful.  This result guarantees that, for those groups, $\LL_G$ is unique (if it exists) and necessarily fully faithful.
\end{abstract}

\maketitle

\tableofcontents

\sec{Introduction}

\renc{\extwhit}{\mathit{Wh}(G,\ext)}
\nc{\Wh}{\mathit{Wh}}
\nc{\kk}{\mathbbm{k}}

Let $G$ be a connected reductive group, with connected center, defined over an algebraically closed field $\kk$ of characteristic zero. 
We fix once and for all a maximal torus and a Borel subgroup containing it: $T \subseteq B \subseteq G$. The unipotent radical of $B$ will be denoted $N$. Let $r$ be the semisimple rank of $G$ and $(\alpha_1, \ldots, \alpha_r)$ the collection of simple (positive) roots.
We also fix Chevalley generators of $N$.

\medskip

Let $X$ be a smooth complete connected curve, also over $\mathbbm{k}$, and $\Bun_G$ the stack of $G$-bundles on $X$.
The main object of interest in the geometric Langlands program is $\Dmod(\Bun_G)$, the DG category of $\fD$-modules on $\Bun_G$.
Loosely speaking, in the present paper we propose to study this category using the Fourier transform. This idea, informally reviewed in Sections \ref{ssec:function-theoretic-analogy}-\ref{ssec:ext-whit-functions}, is taken directly from the theory of \emph{automorphic functions}, which are the function-theoretic analogue of $\Dmod(\Bun_G)$.

\ssec{A function theoretic analogy} \label{ssec:function-theoretic-analogy}

\sssec{}

In the geometric Langlands program we always work over an algebraically closed field of characteristic zero.
On the contrary, when discussing the function theoretic version of the program, which happens only in Sections \ref{ssec:function-theoretic-analogy}-\ref{ssec:ext-whit-functions}, we work over $\mathbb F_q$: i.e., $G$ is split reductive over $\mathbb F_q$ and $X$ is a curve over $\mathbb F_q$.
The reason is that in this case $G(\AA)$ is a locally compact group, so that harmonic analysis of locally compact groups becomes available.
Note furthermore that $G(\OO) \subset G(\AA)$ is a compact subgroup and $G(K) \subset G(\AA)$ a discrete subgroup. We fix once and for all a Haar measure on $\GA$.\footnote{By $\AA$ (resp. $\OO$), we have denoted the algebra of ad{\`{e}}les (resp., integral ad{\`{e}}les), while $K:=K(X)$ is the function field of $X$.}

Under the \virg{\emph{function theory $\leftrightarrow$ algebraic geometry}} dictionary spelled out in the course of this paper, we will see that the terms \virg{locally compact group}, \virg{compact group}, \virg{discrete group} go over to \virg{group indscheme of pro-finite type}, \virg{group scheme}, \virg{group indscheme of finite type} respectively. See Sections \ref{SEC:background} and \ref{SEC:FT}.

\sssec{Whittaker invariant functions}

By definition, the vector space of \emph{automorphic functions} for the group $G$ consists of $\mathbb C$-valued functions on the double quotient 
$$
G(K) \backslash G(\AA) / G(\OO).
$$
We often view automorphic functions as $G(K)$-invariant functions on the quotient $G(\AA) / G(\OO)$ and denote them by $\Fun^{G(K)}(G(\AA) / G(\OO))$.

\medskip

The classical strategy to analyze $\Fun^{G(K)}(G(\AA) / G(\OO))$ is to forget the $G(K)$-invariance and then decompose $\Fun(G(\AA)/ G(\OO))$, considered as a representation of $N(\AA)$,
into eigenspaces parametrized by the ($\BC^\times$-valued) characters of $N(\AA)$. 
Explicitly, for $\chi$ a character of $N(\AA)$, the $\chi$-eigenspace is by definition 
$$
\Fun^{\NA, \chi} (\GA/\GO) := \{f \in \Fun(\GA/\GO) \, | \, f(ng) = \chi(n) f(g) \, \text{ for any }\, n \in \NA\}.
$$
We call $\Wh^\chi := \Fun^{\NA, \chi} (\GA/\GO)$ the space of \emph{$\chi$-Whittaker functions}.

\sssec{}

Thanks to the factorization
\begin{equation} \label{eqn:oblv:2}
\Fun^{G(K)}(G(\AA) / G(\OO)) \to 
\Fun^{N(K)} (G(\AA) / G(\OO)) \to
\Fun(\GA/\GO),
\end{equation}
the only characters of $N(\AA)$ that matter are the ones trivial on $N(K) \subset N(\AA)$. Let $\ch$ denote the space of such characters and pick $\chi \in \ch$. It is convenient to regard $\Fun^{\NA, \chi} (\GA/\GO)$ as the space of functions on $N(K) \backslash \GA/\GO)$ that are further $(N(\AA), \chi)$-equivariant. In particular, since the quotient $\NA/N(K)$ is compact, we may define the operator of \emph{$\chi$-Whittaker coefficient}:
$$
\coeff_{G,\chi}:\Fun^{G(K)}(G(\AA) / G(\OO)) \to 
\Fun^{N(\AA), \chi}( G(\AA) / G(\OO)) =
\Fun^{N(\AA), \chi}( N(K) \backslash G(\AA) / G(\OO)), 
$$
$$
f \mapsto (\coeff_{G,\chi}f) (g) := \int_{ \NA / N(K)} f(n \cdot g) \, \chi^{-1}(n) \, dn.
$$

\sssec{} \label{intro:ch-in-function theory}

The vector space $\ch$ is non-canonically isomorphic to $K^{\oplus r}$. The isomorphism depends on two pieces of data: an \virg{exponential function}, i.e. a character $\exp: \mathbb F_q \to \BC^\times$, and a meromorphic $1$-form on $X$. When discussing the function theoretic version of the construction, we assume these have been fixed once and for all. (On the contrary, in the geometric counterpart of the construction, we will work canonically and make neither of these choices.)

\medskip

The above choices allow to define the character $\psi := \exp \circ \Res$ on $\AA/K$. The mentioned isomorphism $K^{\oplus r}\simeq \ch$ is then described as $(f_1, \ldots, f_r) \mapsto \chi_{f_1, \ldots, f_r}$,
where
\begin{equation} \label{eqn:characters=K^r}
\chi_{f_1, \ldots, f_r} (a_1, \ldots, a_r) := \psi ( f_1 a_1 + \cdots + f_r a_r).
\end{equation}
Indeed, characters of $N(\AA)$ factor through its abelianization, which is canonically the direct sum of $r$ copies of $\AA$, one for each simple root.

\medskip

Amongst the elements of $\ch$, there are $2^r$ special ones that will play a role in the sequel. They are in bijection with subsets of $\{\alpha_1 \ldots, \alpha_r\}$ (equivalently, subsets of the Dynkin diagram $\I$) and correspond to setting $f_i=1$ or $f_i=0$ in (\ref{eqn:characters=K^r}) for each $i \in \I$. 
In turn, subsets of $\I$ parametrize standard parabolics subgroups of $G$: for each $\I_M \subseteq \I$, let $P$ be the standard parabolic whose Levi decomposition is $M \ltimes U_P$, where $M \supseteq T$ has simple roots corresponding to $\I_M$. Denote by $\chi_{G,P}$ the corresponding character. For instance, $\chi_{G,B} =0$ whereas $\chi_{G,G}$ is maximally non-degenerate.

\sssec{} \label{sssec:TK acts on NA}

The action of $T(K)$ on $N(\AA)$ preserves $N(K)$, whence $T(K)$ acts on $\ch$ too.
Recall that $G$ is assumed to have connected center (e.g., $G= GL_n$, $PGL_n$). This condition guarantees that the $T(K)$-orbits of $\ch$ are in bijection with the set of standard parabolics:
$$
\ch = \bigsqcup_{P \supseteq B} \ch_{G,P},
$$
where $\ch_{G,P}$ is the set of characters $\chi_{f_1, \ldots f_r}$ such that $f_i \neq 0$ for $i \in \I_M$ and zero otherwise. Our privileged representative of the orbit $\ch_{G,P}$ is, needless to say, $\chi_{G,P}$.

\medskip

By viewing $\Fun(\GA/\GO)$ as a representation of $T(K) \ltimes N(\AA)$, we see that Fourier eigenspaces corresponding to characters in the same $T(K)$-orbit are canonically identified and that the stabilizer of $\chi_{G,P}$ in $T(K)$ is exactly $Z_M(K)$. Hence, $\coeff_{G,P}:=\coeff_{G,\chi_{G,P}}$ will be regarded as a map
$$
\coeff_{G,P}:
\Fun^{G(K)}(G(\AA) / G(\OO)) 
\to 
\Fun^{N(\AA), \chi_{G,P}}( Z_M(K) \backslash G(\AA) / G(\OO)).
$$
To simplify the notation, set 
\begin{equation} \label{eqn:wh(G,P)-at-privileged char}
\Wh(G,P) := \Fun^{N(\AA), \chi_{G,P}}( Z_M(K) \backslash G(\AA) / G(\OO)).
\end{equation}

\sssec{}

Putting all Whittaker coefficients together, we obtain the operator
\begin{equation} \label{coeff-P-all}
\underset{P \supseteq B}{\prod} \coeff_{G,P}:
\Fun^{G(K)}(G(\AA) / G(\OO)) 
\longto 
\prod_{P \supseteq B} 
\Wh(G,P).
\end{equation}
When $G= GL_2$ or $G= PGL_2$, the group $N(\AA)$ is a vector group: hence, the theory of Fourier transform ensures that the above map is injective. \emph{For general $G$, there is no reason to expect this.} However, a special feature of $GL_n$ ensures that the following result holds.

\begin{thm} \label{thm:GLn-functiontheory}
For $G= GL_n$ or $G=PGL_n$, the operator (\ref{coeff-P-all}) is injective.
\end{thm}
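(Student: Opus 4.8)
The plan is to exploit the key feature of $GL_n$ that its standard parabolics are \emph{linearly ordered by a chain when one removes a single node}, but more precisely the essential structure is that the characters $\chi_{G,P}$ can be reached from $\chi_{G,G}$ (the generic one) by successively degenerating one simple root at a time, and these degenerations are governed by proper parabolics of Levi subgroups of $G$. The argument is an induction on the semisimple rank $r=n-1$, reducing injectivity on $\Fun^{G(K)}(G(\AA)/G(\OO))$ to injectivity of the generic Whittaker coefficient $\coeff_{G,G}$ on an appropriate subspace, plus the inductive hypothesis applied to the Levi subgroups $M$ of the maximal proper parabolics of $G$.

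\smallskip

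\textbf{Step 1: The cuspidal decomposition.} First I would observe that, by the theory of the constant term, any $f\in\Fun^{G(K)}(G(\AA)/G(\OO))$ decomposes (Langlands' spectral decomposition of automorphic forms, available here because $G(K)\backslash G(\AA)$ has finite volume modulo center, or rather because we are in the unramified everywhere setting so everything is finite-dimensional) as a sum indexed by the standard parabolics $P=M\ltimes U_P$ of a piece $f_P$ built by pseudo-Eisenstein series from a cusp form on $M(K)\backslash M(\AA)$. More simply: let $f$ lie in the kernel of (\ref{coeff-P-all}). If the image of $f$ under the constant term $\Fun^{G(K)}(G(\AA)/G(\OO))\to\Fun^{M(K)}((\cdots)/ (\cdots))$ is nonzero for some proper $P$, I would like to derive a contradiction from the vanishing of the Whittaker coefficients $\coeff_{G,P'}$ for $P'\subseteq P$; if the constant term is zero for all proper $P$, then $f$ is cuspidal and I invoke Step 2.

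\smallskip

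\textbf{Step 2: Cuspidal $\Rightarrow$ nondegenerate Whittaker.} The crucial classical input for $GL_n$ is the theorem of Gelfand--Graev / Piatetski-Shapiro / Shalika: a cusp form on $GL_n$ is uniquely determined by its nondegenerate (i.e.\ $\chi_{G,G}$-) Whittaker coefficient; equivalently, the map $f\mapsto\coeff_{G,G}f$ is injective on cusp forms, because the Fourier expansion along the mirabolic subgroup $P_n$ expresses $f(g)$ as a sum over $\gamma\in U_{n-1}(K)\backslash GL_{n-1}(K)$ of $(\coeff_{G,G}f)(\gamma g)$. So a cuspidal $f$ in the kernel of (\ref{coeff-P-all}) has $\coeff_{G,G}f=0$, hence $f=0$.

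\smallskip

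\textbf{Step 3: Induction on the non-cuspidal part.} For the general $f$, write its constant term along each maximal parabolic $P$ as an $M(\AA)$-automorphic function; this constant term is itself killed by the appropriate Whittaker coefficients for $M$ (because $\coeff_{G,P}$ factors through $\coeff_{M,Q}$ composed with the constant term, for $Q$ the parabolic of $M$ matching $P'\subseteq P$), so the inductive hypothesis for $M$ (which is a product of smaller $GL$'s, for which the theorem holds by induction and by the obvious behavior under products) forces all these constant terms to vanish. Hence $f$ is cuspidal, and we are back to Step 2. Assembling: the kernel of (\ref{coeff-P-all}) is zero.

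\smallskip

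The main obstacle I anticipate is making Step 1/Step 3 precise: one must check that the Whittaker coefficient $\coeff_{G,P}$ really does ``see'' the $P$-constant term in the sense that its vanishing, together with the vanishing of $\coeff_{G,P'}$ for $P'\subsetneq P$, implies the vanishing of the full $P$-constant term via the inductive statement on $M$ --- equivalently, that the diagram relating $\coeff_{G,P}$, the constant term $f\mapsto f_P$, and $\coeff_{M,M}$ commutes. This is a standard but slightly delicate unwinding of the integral defining $\coeff_{G,P}$ (the unipotent $U_P$-integration is absorbed into the constant term, and the $N\cap M$-integration with the character $\chi_{G,P}|_{N\cap M}=\chi_{M,M}$ becomes exactly the nondegenerate Whittaker integral for $M$). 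Everything else (finite-dimensionality of the function spaces, the Fourier/mirabolic expansion, convergence being trivial in the function-field unramified setting) is classical.
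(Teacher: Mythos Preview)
Your argument is correct, but it takes a genuinely different route from the paper's.

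The paper does not separate into cuspidal and non-cuspidal pieces, nor does it cite the Piatetski-Shapiro/Shalika theorem as a black box. Instead it proves the stronger Theorem~\ref{thm:mirabolic-fun-theory} by a direct iterated Fourier transform along the mirabolic $Q=P_{n-1,1}$: one first forgets from $G(K)$-invariance to $Q(K)$-invariance, then Fourier-expands along the abelian unipotent radical $V$ of $Q$. The $(GL_{n-1}\times\GG_m)(K)$-orbits on $V(K)^\perp$ are just $\{0\}$ and its complement, with stabilizers $(GL_{n-1}\times\GG_m)(K)$ and $(Q'\times\GG_m)(K)$ respectively; this gives the splitting (\ref{eqn:induction-function-theory}), and one iterates on each piece. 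After $n-1$ steps one lands exactly in $\prod_P\Wh(GL_n,P)$. No notion of cusp form enters.

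Your approach is the classical automorphic one: kill the constant terms along maximal parabolics by induction on the Levi (a product of smaller $GL$'s), deduce that $f$ is cuspidal, then invoke Shalika. It is worth noting that your Step~2 is not really an independent input: the proof of the Piatetski-Shapiro/Shalika expansion \emph{is} the iterated mirabolic Fourier transform the paper carries out, so you are in effect using the paper's computation as a black box on the cuspidal subspace and then adding a separate inductive layer (Step~3) to handle the rest. The paper's route is more self-contained and, crucially, is the one that geometrizes to give the main theorem; your route is more recognizable to someone coming from automorphic forms.

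One small correction: your parenthetical about finite-dimensionality is wrong. The space $\Fun^{G(K)}(G(\AA)/G(\OO))$ is the space of \emph{all} functions on a countably infinite set and is certainly not finite-dimensional. Fortunately you do not actually use this anywhere; your ``More simply'' argument via constant terms is the right one and needs no spectral decomposition or finiteness.
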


\sssec{}

The mentioned feature of $GL_n$ (and consequently of $PGL_n$) has to do with the standard parabolic $Q \subseteq GL_n$ corresponding to the partition $n = (n-1)+1$, whose Levi decomposition reads $Q \simeq (GL_{n-1} \times \GG_m) \ltimes V$ with $V:=(\GG_a)^{n-1}$. Theorem \ref{thm:GLn-functiontheory} follows immediately from the stronger result below.

\begin{thm} 	\label{thm:mirabolic-fun-theory}
The map (\ref{coeff-P-all}) factors as the composition of two injections:
\begin{equation} \label{eqn:pirabolic-fun-theory}
\Fun^{GL_n(K)}( GL_n(\AA) / GL_n(\OO) )  \xto{ \, \mathsf{forget} \,}
\Fun^{Q(K)}( GL_n(\AA) / GL_n(\OO) ) \hto
\prod_{P \supseteq B} \Wh(GL_n,P).
\end{equation}
\end{thm}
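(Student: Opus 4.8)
The plan is to separate the trivial content of the statement from the substantial one, and to prove the latter by induction on $n$ via the \virg{mirabolic descent} familiar from the Bernstein--Zelevinsky theory. First, $\mathsf{forget}$ is merely the inclusion $\Fun^{GL_n(K)}(GL_n(\AA)/GL_n(\OO)) \subseteq \Fun^{Q(K)}(GL_n(\AA)/GL_n(\OO))$ of spaces of invariants, since $N\subseteq B\subseteq Q$; so it is injective. The factorisation itself is essentially formal: for any $f\in\Fun^{Q(K)}(GL_n(\AA)/GL_n(\OO))$ and any standard $P$ the integral defining $\coeff_{GL_n,P}f$ still makes sense (one only needs $f$ to descend to $N(\AA)/N(K)$ in the integration variable, and $N(K)\subseteq Q(K)$), it is automatically $(N(\AA),\chi_{GL_n,P})$-equivariant, and it is $Z_M(K)$-invariant because $Z_M(K)\subseteq T(K)\subseteq Q(K)$ and because $Z_M(K)$ is exactly the stabiliser of $\chi_{GL_n,P}$ in $T(K)$ (the modulus of the required substitution in the integral being $1$ by the product formula). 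Thus $f\mapsto(\coeff_{GL_n,P}f)_{P\supseteq B}$ is defined on all of $\Fun^{Q(K)}(GL_n(\AA)/GL_n(\OO))$, it restricts to $(\ref{coeff-P-all})$ along $\mathsf{forget}$, and everything reduces to the injectivity of this map out of $\Fun^{Q(K)}$. I would in fact prove the stronger statement that the analogous map out of $\Fun^{\mathrm{Mir}_n(K)}(GL_n(\AA)/GL_n(\OO))$ — with $\mathrm{Mir}_n=GL_{n-1}\ltimes V\subseteq Q$ the mirabolic, and the coefficients now merely valued in $\Wh'(GL_n,P):=\Fun^{N(\AA),\chi_{GL_n,P}}(GL_n(\AA)/GL_n(\OO))\supseteq\Wh(GL_n,P)$ — is injective: this implies the previous one (a $Q(K)$-invariant function is $\mathrm{Mir}_n(K)$-invariant), and it is the right thing to induct on because dropping the $Z_M(K)$-equivariance turns the stray $\GG_m$-factor of $Q$ into a harmless spectator in what follows.

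For the induction (on $n$, the case $n=1$ being vacuous since $\mathrm{Mir}_1$ is trivial and $\coeff_{GL_1,GL_1}$ is the identity) I would take $f\in\Fun^{\mathrm{Mir}_n(K)}(GL_n(\AA)/GL_n(\OO))$ with $\coeff_{GL_n,P}f=0$ for every $P$, and expand $f$ along the \emph{abelian} unipotent radical $V\cong(\GG_a)^{n-1}$ of $Q$: since $V(K)\subseteq\mathrm{Mir}_n(K)$, the function $v\mapsto f(vg)$ descends to the compact group $V(\AA)/V(K)$ and, being locally constant, has a finite Fourier expansion $f=f_0+\sum_{\chi_V\neq 0}f_{\chi_V}$, with $f_0$ the $V(\AA)$-average of $f$ and $f_{\chi_V}$ being $(V(\AA),\chi_V)$-equivariant. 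The nonzero $\chi_V$ form a \emph{single} orbit under the Levi factor $GL_{n-1}$ of $\mathrm{Mir}_n$ — this is the \virg{special feature of $GL_n$}: $GL_{n-1}$ is transitive on nonzero vectors, and the stabiliser of a nonzero $\chi_V$ in $GL_{n-1}(K)$ is, up to $GL_{n-1}(K)$-conjugacy, again the mirabolic $\mathrm{Mir}_{n-1}(K)$ of $GL_{n-1}$, which is exactly what makes the induction close. Using the $GL_{n-1}(K)$-invariance of $f$ one checks $f_{\gamma\cdot\chi_0}(g)=f_{\chi_0}(\gamma^{-1}g)$, so the \virg{generic part} $\sum_{\chi_V\neq 0}f_{\chi_V}$ is the $GL_{n-1}(K)$-orbit sum of the single function $f_{\chi_0}$ (a finite sum for each fixed $g$), which is $(V(\AA),\chi_0)$-equivariant and, up to conjugacy, $\mathrm{Mir}_{n-1}(K)$-invariant.

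It then remains to match the two pieces with the vanishing coefficients. The parabolics $P\supseteq B$ split according to whether $\alpha_{n-1}\in\I_M$; since $\chi_{GL_n,P}|_V$ is trivial precisely when $\alpha_{n-1}\notin\I_M$, unwinding $N=(N\cap GL_{n-1})\ltimes V$ in the integrals shows that the coefficients with $\alpha_{n-1}\notin\I_M$ depend only on $f_0$ and those with $\alpha_{n-1}\in\I_M$ only on $f_{\chi_0}$ (the same nonzero $\chi_0$ for all of them). In the first case, after the Iwasawa decomposition $GL_n(\AA)=Q(\AA)GL_n(\OO)$ and with the spectator $\GG_m$-coordinate held fixed, $\coeff_{GL_n,P}f$ becomes the $\chi_{GL_{n-1},P'}$-degenerate-Whittaker coefficient of $f_0$, viewed as a $GL_{n-1}(K)$-invariant — hence $\mathrm{Mir}_{n-1}(K)$-invariant — function of the $GL_{n-1}$-coordinate; as $P$ ranges over these $2^{n-2}$ parabolics one recovers all $2^{n-2}$ degenerate Whittaker coefficients of $GL_{n-1}$, so the inductive hypothesis gives $f_0=0$. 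In the second case, identically, the $2^{n-2}$ coefficients with $\alpha_{n-1}\in\I_M$ are precisely all the degenerate Whittaker coefficients of the single function $f_{\chi_0}$ (its $(V(\AA),\chi_0)$-equivariance and the $\GG_m$-coordinate again mere spectators), so the inductive hypothesis gives $f_{\chi_0}=0$ and hence the generic part vanishes. Therefore $f=f_0+\sum_{\chi_V\neq 0}f_{\chi_V}=0$, completing the induction.

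The conceptual crux — the two-orbit decomposition with self-similar stabiliser — is easy, and is precisely where the argument uses a property special to $GL_n$ (for a general $G$, descent along the abelian nilradical of a maximal parabolic yields neither finitely many orbits nor stabilisers of the same type, which is why injectivity of $(\ref{coeff-P-all})$ cannot be expected in general). The real work will be the bookkeeping: fixing the inductive statement in the precise form that re-applies to $f_0$ and to $f_{\chi_0}$ (this is why one should induct on $\mathrm{Mir}_n(K)$-invariance with $\Wh'$-valued coefficients rather than on the literal theorem, so that the stray $\GG_m$- and $V(\AA)$-directions stay genuinely decoupled from the Whittaker data); carrying out the Iwasawa reductions and controlling the resulting redundancies; and matching each $\coeff_{GL_n,P}f$ on the nose with the correct $GL_{n-1}$-Whittaker integral, including the modulus computations (product formula) and the identification, only up to $GL_{n-1}(K)$-conjugacy, of the various stabilisers with the standard mirabolic. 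I expect essentially all of the effort to be of this routine, verificatory kind.
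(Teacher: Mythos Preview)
Your proposal is correct and follows essentially the same inductive argument as the paper's sketch in Section~\ref{sssec:sketch}: Fourier-expand along $V$, use the two-orbit structure with mirabolic-type stabiliser, and recurse. The only cosmetic differences are that you induct on $\mathrm{Mir}_n(K)$-invariants (dropping the spectator $\GG_m$) and pass to $GL_{n-1}(\AA)/GL_{n-1}(\OO)$ via Iwasawa, whereas the paper keeps the ambient space $GL_n(\AA)/GL_n(\OO)$ fixed throughout and varies only the acting subgroup; both choices work.
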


\sssec{} \label{sssec:sketch}

Let us briefly sketch the argument (see \cite{Shalika}, \cite{Laumon}). The action 
$$
(GL_{n-1} \times \GG_m)(K) \, \, 
{\displaystyle\curvearrowright}
\,\, V(K)^\perp :=\{\mbox{characters of $V(\AA)$ trivial on $V(K)$}\}
$$
has two orbits, the singleton $\{0 \}$ and its complement. Choose the obvious representatives $0$ and $\xi_{n-1}$ of these orbits\footnote{$\xi_{n-1}$ is the \virg{last} standard element of $V(K)^\perp$}, with stabilizer $(GL_{n-1} \times \GG_m)(K)$ and $Q' (K)$ respectively. Here $Q'$ is the standard parabolic of $GL_{n-1}$ corresponding to the partition $(n-2)+1$.
Then the Fourier transform implies that
\begin{equation} \label{eqn:induction-function-theory}
\begin{array}{lcl}
\Fun^{Q(K)}( GL_n(\AA) / GL_n(\OO) ) 
& \simeq &
\Fun^{Q'(K) \ltimes V(\AA), \xi_{n-1}} ( GL_n(\AA) / GL_n(\OO))
\vspace{.2cm} \\
& & \times \,\,
\Fun^{(GL_{n-1} \times \GG_m)(K) \ltimes V(\AA)} (  GL_n(\AA) / GL_n(\OO)).
\end{array}
\end{equation}
If $n=2$, we are done, and we have actually proved that the rightmost arrow in (\ref{eqn:pirabolic-fun-theory}) is a bijection. If $n \geq 3$, we treat each of the two terms in (\ref{eqn:induction-function-theory}) again by Fourier transform. The first term is the space of $Q'(K)$-invariants of $\Fun^{V(\AA), \xi_{n-1}} ( GL_n(\AA) / GL_n(\OO))$. We apply the above reasoning for $Q'(K)$ to split it in two.
As for the second term, we write
$$
\Fun^{(GL_{n-1} \times \GG_m)(K) \ltimes V(\AA)} (  GL_n(\AA) / GL_n(\OO))
\hto
\Fun^{Q'(K) \times (\GG_m(K) \ltimes V(\AA))} (  GL_n(\AA) / GL_n(\OO)),
$$
and again apply Fourier transform with respect to $Q'(K)$.
Iterating, we end up with the product of $\Wh(GL_n, P)$'s for all standard parabolic subgroups of $GL_n$. It is immediately checked that this sequence of Fourier transforms concides with the product of the coefficient operators.

\ssec{The main geometric trick} \label{ssec:trick intro}

In the geometric setting, the proof of the above statement proceeds along different lines, resorting to a blow-up trick which could be regarded as the main new idea of the present paper.
Such idea is carried out in Sections \ref{ssec:blowup} and \ref{ssec:crucial}. Here we just describe the two main ingredients: the first is elementary, the second one less so.

\sssec{The elementary ingredient}

The above space of characters $V(K)^\perp$ is isomorphic to $\AA^{n-1}(K)$. Considering the blow-up $\wt{\AA^{n-1}}$ of $\AA^{n-1}$ at the origin, we observe that the \virg{last} standard inclusion $\iota_{n-1}: \AA^1 \hto \AA^{n-1}$ yields the isomorphism
$$
\AA^1/(Q' \times \GG_m)
\simeq
\wt{\AA^{n-1}}/(GL_{n-1} \times \GG_m).
$$
In other words, blowing up allows to do induction on $n$ without having to break $V(K)^\perp$ in two parts.

\sssec{The non-elementary ingredient}

Let $W$ be a finite dimensional vector space and $\wt W$ its blow-up at the origin.
In Section \ref{ssec:crucial}, we show that the geometric version of the tautological map $\pi: \wt W(K) \to W(K) $ enjoys excellent properties: first, its fibers are cohomologically contractible and, second, it behaves like a proper map.

\ssec{Extended Whittaker functions} \label{ssec:ext-whit-functions}

It is convenient to repackage the product $\prod_{P \supseteq B} 
\Wh(G,P)$ into a single space of functions as follows.

\begin{defn} \label{defn:extWhit-functions}
By $\extwhit$, called the space of \emph{extended Whittaker functions}, we denote the space of functions on $G(\AA)/G(\OO) \times \ch$ satisfying

\begin{itemize}

\smallskip

\item 
\textit{$N(\AA)$-equivariance}:
$f(n \cdot g, \chi ) = \chi(n) \cdot f(g, \chi)$ for all $n \in \NA$;

\smallskip

\item
\textit{$T(K)$-invariance}:
$f(t \cdot g, \Ad_t(\chi)) = f(g, \chi)$ for all $t \in T(K)$.

\end{itemize}
\end{defn}

\sssec{}

We often regard $\extwhit$ as the subspace of \emph{$N(K)$-invariant} functions on $G(\AA)/G(\OO) \times \ch$ determined by the same conditions as above. Again, as the quotient $\NA/N(K)$ is compact, we are allowed to define the \emph{extended coefficient operator}
$$
\coeff_{G,\ext} :\Fun(  G(K) \backslash G(\AA) / G(\OO) ) 
\to
\extwhit,
\hspace*{.35cm}
f \mapsto \wt f (g, \chi) := \int_{\NA/N(K)} f(n \cdot g) \, \chi^{-1}(n) \, dn.
$$
To better replicate this operation in the geometric context, observe that $\coeff_{G,\ext}$ is the composition of three operators of different kind:
$$
\begin{array}{l}
\Fun(G(K) \backslash \GA/\GO)
\xto{\mathsf{forget}}
\Fun \Big( T(K)  \big\backslash \big( N(K) \backslash \GA/\GO \big) \Big) 
\vspace{.3cm} \\
\Fun \Big( T(K)  \big\backslash \big( N(K) \backslash \GA/\GO \big) \Big)
\xto{\mathsf{pullback}}
\Fun \Big( T(K)  \big\backslash \big( N(K) \backslash \GA/\GO \times \ch \big) \Big) 
\vspace{.3cm}
\\
\Fun \Big( T(K)  \big\backslash \big( N(K) \backslash \GA/\GO \times \ch \big) \Big)
\xto{\mathsf{integrate}}
\Wh(G,\ext).
\end{array}
$$

\begin{rem} \label{rem:source-coeff-operator}
In particular, $\coeff_{G,\ext}$ is well-defined on any subspace of $\Fun^{B(K)} (\GA/\GO)$, e.g. $\Fun^{P(K)} (\GA/\GO)$ with $P \subseteq G$ an arbitrary standard parabolic. 
\end{rem}

\sssec{} \label{sssec:degenerate-Whittaker-functions}

Along the lines of Definition \ref{defn:extWhit-functions}, $\Wh(G,P)$ can be viewed as the space of functions on $\GA/\GO \times \ch_{G,P}$ satisfying the same two conditions. Indeed, restriction along $\{\chi_{G,P}\} \subset \ch_{G,P}$ yields an equivalence between the latter definition of $\Wh(G,P)$ and (\ref{eqn:wh(G,P)-at-privileged char}).

\medskip

Restriction along the locally closed embedding $\ch_{G,P} \subset \ch$ (or alternatively $\{\chi_{G,P}\} \subset \ch$) induces the map $\vrho_{G,\ext \to P}: \extwhit \to \Wh(G,P)$. Tautologically:

\begin{prop} \label{prop:taut-product-coeff-function-theory}
The map
$$
\prod_{P \supseteq B} \vrho_{G,\ext \to P} : \extwhit \to \Wh(G,P)
$$ 
is an isomorphism. 
\end{prop}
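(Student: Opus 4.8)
The plan is to leverage the set-theoretic decomposition $\ch = \bigsqcup_{P \supseteq B} \ch_{G,P}$ of \ref{sssec:TK acts on NA} into $T(K)$-orbits, together with the observation that the two conditions carving out $\extwhit$ inside all functions on $\GA/\GO \times \ch$ are compatible with this decomposition. First I would record that a function on $\GA/\GO \times \ch$ is the same datum as a family $\{f(-,\chi)\}_{\chi \in \ch}$ of functions on $\GA/\GO$; the $N(\AA)$-equivariance condition $f(n g, \chi) = \chi(n) f(g,\chi)$ constrains each member $f(-,\chi)$ separately, whereas the $T(K)$-invariance condition $f(t g, \Ad_t(\chi)) = f(g,\chi)$ only relates $f(-,\chi)$ to $f(-,\Ad_t(\chi))$, i.e.\ to members lying over the \emph{same} $T(K)$-orbit. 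Consequently, restricting the $\chi$-variable to the strata would give an isomorphism $\extwhit \xto{\sim} \prod_{P \supseteq B} \extwhit|_{\ch_{G,P}}$, where $\extwhit|_{\ch_{G,P}}$ denotes the space of functions on $\GA/\GO \times \ch_{G,P}$ subject to the same two conditions; and by construction this isomorphism is exactly $\prod_P \vrho_{G,\ext \to P}$, once $\Wh(G,P)$ is read in the form appearing in \ref{sssec:degenerate-Whittaker-functions}.

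It then remains to identify $\extwhit|_{\ch_{G,P}}$ with $\Wh(G,P)$ as originally defined in (\ref{eqn:wh(G,P)-at-privileged char}). For this I would invoke \ref{sssec:degenerate-Whittaker-functions}: restriction along the point inclusion $\{\chi_{G,P}\} \hookrightarrow \ch_{G,P}$ sends a function satisfying the two conditions to an $(N(\AA), \chi_{G,P})$-equivariant, $Z_M(K)$-invariant function on $\GA/\GO$, and it is an isomorphism because $T(K)$ acts transitively on $\ch_{G,P}$ with stabilizer of $\chi_{G,P}$ equal to $Z_M(K)$ --- the place where connectedness of the center of $G$ enters. The inverse sends such an $h$ to the function $f$ with $f(g, \Ad_t(\chi_{G,P})) := h(t^{-1} g)$ for $t \in T(K)$; I would check that this is independent of the choice of $t$ precisely because $h$ is $Z_M(K)$-invariant, and that the resulting $f$ is $N(\AA)$-equivariant against the twist $\Ad_t(\chi_{G,P})$ and $T(K)$-invariant. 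Composing the two identifications yields the claim, that $\prod_P \vrho_{G,\ext \to P}$ is an isomorphism onto $\prod_{P \supseteq B}\Wh(G,P)$.

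I do not expect a real obstacle --- as the ``Tautologically'' preceding the statement indicates, everything is formal once the orbit--stabilizer description of \ref{sssec:TK acts on NA} is granted. The one step that will require an actual (if entirely routine) verification is the well-definedness and equivariance of the inverse map in the previous paragraph, i.e.\ the propagation of a Whittaker function from the single privileged character $\chi_{G,P}$ to its whole $T(K)$-orbit; and the hypothesis genuinely needed for that bookkeeping to close up is precisely that $G$ has connected center, without which not all $T(K)$-orbits on $\ch$ would be of the form $\ch_{G,P}$.
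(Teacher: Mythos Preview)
Your proposal is correct and follows exactly the reasoning the paper intends: the paper gives no explicit proof, simply writing ``Tautologically'', because the setup in \S\ref{sssec:TK acts on NA} and \S\ref{sssec:degenerate-Whittaker-functions} already records the orbit decomposition $\ch = \bigsqcup_{P} \ch_{G,P}$ and the identification of $\Wh(G,P)$ with functions on $\GA/\GO \times \ch_{G,P}$ via restriction to $\chi_{G,P}$. Your write-up is a faithful unpacking of precisely these two ingredients, including the correct observation that connectedness of the center is what makes the orbit decomposition exhaustive.
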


Furthermore, the above map fits in the commutative triangle
\begin{gather}  \label{diag:triangle}
\xy
(30,0)*+{ \Fun^{G(K)}(\GA/\GO).}="basso";
(60,25)*+{ \underset{P \supseteq B}\prod \Wh(G,P) }="dx";
(0,25)*+{ \extwhit }="sx";
{\ar@{->}_{\simeq}^{ \prod \vrho_{G,\ext \to P} } "sx";"dx"};
{\ar@{->}^{ \coeff_{G,\ext}} "basso";"sx"};
{\ar@{->}_{ \prod \coeff_{G,P} } "basso";"dx"};
\endxy
\end{gather}	

\ssec{What is done in this paper}

\renc{\Wh}{\CW{h}}
\renc{\extwhit}{\Wh(G,\ext)}

\nc{\WhRan}{\Wh^{\Ran}}

In this paper, we wish to provide a geometric analog of the story narrated above, and especially of Theorem \ref{thm:GLn-functiontheory}.

\medskip

More specifically, we will first define a DG category $\Wh(G,\ext)$ (respectively, $\Wh(G,P)$) by directly translating Definition \ref{defn:extWhit-functions} (respectively, formula (\ref{eqn:wh(G,P)-at-privileged char})).
This is the subject of Section \ref{SEC:ExtWhit}. 
Before getting there, we need to discuss three pieces of algebro-geometric technology.

The first is the theory of $\fD$-modules on ind-schemes or pro-finite type. This is necessary due to the presence of $N(\AA)$. The relevant aspects of the theory are
recalled (amongst other background material) in Section \ref{SEC:background}.

The second ingredient is the theory of prestacks parametrizing generically-defined structures. This is discussed in Section \ref{SEC:unital} and it is needed to treat the geometrizations of, e.g., $N(K), \ch, G(\AA)/G(\OO)$. 

The third ingredient, treated in Section \ref{SEC:FT}, is the theory of the Fourier-Deligne transform for meromorphic jets into a vector group, that is, the geometrization of $V(\AA)$. This is enough to prove, \textit{ante litteram}, our main theorem for $G=GL_2$ and $G=PGL_2$.

\medskip

The way $\Wh(G,\ext)$ is assembled out of the $\Wh(G,P)$'s is more subtle than in the function theoretic case: it is an instance of \emph{gluing}, or \emph{recollement}, of DG categories. In technical terms, $\Wh(G,\ext)$ is equivalent to the \emph{lax limit} of the $\Wh(G,P)$'s along some specific functors. 
More precisely (see \cite{Outline} for details):
\smallskip

\begin{itemize}
\item[(1)]
each \virg{restriction} functor $\vrho_{G,\ext \to P} : \Wh(G,\ext) \to \Wh(G,P)$ admits a left adjoint; 
\smallskip

\item[(2)]
by combining the counit of these adjunctions, one obtains a lax functor 
$$
\Wh(G, ?): (\Par, \subseteq)^\op \to \DGCat,
$$
where $\Par$ is the poset of standard parabolics;
\smallskip

\item[(3)]
taken together, the $\vrho$'s yield a functor $\Wh(G,\ext) \to \laxlim \,\Wh(G,?)$, which can be proven to be an equivalence. This is the counterpart of Proposition \ref{prop:taut-product-coeff-function-theory}, no longer tautological in the geometric theory.

\end{itemize}

\medskip

Coming back to the present paper, Section \ref{SEC:coeff} is devoted to the definition of the functors $\coeff_{G,\ext}: \Dmod(\Bun_G) \to \Wh(G,\ext)$ and $\coeff_{{G,P}}: \Dmod(\Bun_G) \to \Wh(G,P)$ fitting in a commutative diagram
\begin{gather}   \label{triangle:geom}
\xy
(30,0)*+{ \Dmod(\Bun_G).}="basso";
(60,25)*+{  \Wh(G,P)  }="dx";
(0,25)*+{ \Wh(G,\ext) }="sx";
{\ar@{->}^{ \vrho_{G,\ext \to P} } "sx";"dx"};
{\ar@{->}^{ \coeff_{G,\ext}} "basso";"sx"};
{\ar@{->}_{ \coeff_{G,P} } "basso";"dx"};
\endxy
\end{gather}	
Finally, our goal in Section \ref{SEC:MAIN} is to prove the following main result:
\begin{thm} [Main Theorem] \label{thm:main_Gr}
The functor $\coeff_{G,\ext}: \Dmod(\Bun_G) \to \Wh(G,\ext)$ is fully faithful for $G= GL_n$ and $G=PGL_n$.
\end{thm}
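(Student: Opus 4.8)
The strategy is to mimic, in the geometric setting, the inductive argument sketched in \S\ref{sssec:sketch} for the function-theoretic case, passing through the mirabolic parabolic $Q \subseteq GL_n$. By the commutative triangle \eqref{triangle:geom} and the (to-be-established, via \cite{Be2}) identification of $\Wh(G,\ext)$ with $\laxlim \Wh(G,?)$, it suffices to prove the stronger factorization statement: the functor $\coeff_{GL_n,\ext}$ factors as
$$
\Dmod(\Bun_{GL_n}) \xto{\; \mathsf{co-oblv}\;} \Dmod(\Bun_G)^{Q\text{-}\gen} \hto \Wh(GL_n,\ext),
$$
where the first functor is a (co)restriction/enhancement along the mirabolic and the second is fully faithful. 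To make this precise I would introduce the intermediate category attached to $Q$ — morally $\fD$-modules on $\Bun_G$ equipped with a relative Whittaker datum along the unipotent radical $V \simeq (\GG_a)^{n-1}$ of $Q$, taking into account the $GL_{n-1}\times\GG_m$-action on the space of characters $V^\perp$. The $PGL_n$ case follows formally from the $GL_n$ case, since $\Bun_{PGL_n}$ is a quotient of $\Bun_{GL_n}$ by $\Bun_{\GG_m}$-translations (equivalently, one descends the whole diagram \eqref{triangle:geom} along the central $\GG_m$).

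\textbf{Key steps.} First, I would set up the geometric Fourier--Deligne transform for meromorphic jets into a vector group (this is the content of Section~\ref{SEC:FT}), and use it to analyze the category attached to $Q$: the two $(GL_{n-1}\times\GG_m)$-orbits on $V^\perp$ — the origin $\{0\}$ and its complement, with stabilizers $GL_{n-1}\times\GG_m$ and $Q'$ respectively — yield, via Fourier transform, a semiorthogonal-type decomposition
$$
\Dmod(\Bun_G)^{Q\text{-}\gen} \;\simeq\; \Dmod(\Bun_G)^{Q'\ltimes V,\,\xi_{n-1}} \;\underset{?}{\times}\; \Dmod(\Bun_G)^{(GL_{n-1}\times\GG_m)\ltimes V},
$$
the geometric shadow of \eqref{eqn:induction-function-theory}. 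Crucially, this is where the blow-up of the space of characters mentioned in \S\ref{ssec:blowup} enters: unlike in the function-theoretic case where one simply partitions $V^\perp$ into two orbits, in the geometric context the glueing of the two pieces is governed by a recollement that must be resolved by passing to a blow-up of $\ch$ (or of $V^\perp$) along the locus where the character degenerates, so that the "open" and "closed" strata are separated by a divisor with controlled geometry. Second, I would run the induction on $n$: for $n=2$ the statement is exactly the Fourier-transform result proved \textit{ante litteram} in Section~\ref{SEC:FT}, and for $n\geq 3$ one applies the inductive hypothesis to $Q'\subseteq GL_{n-1}$ to each of the two terms — for the first term directly, and for the second term after the further embedding $\Dmod^{(GL_{n-1}\times\GG_m)\ltimes V} \hto \Dmod^{Q'\times(\GG_m\ltimes V)}$ that again reduces to a $Q'$-problem. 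Third, I would check that iterating these Fourier transforms over all standard parabolics reconstructs precisely $\prod_P \coeff_{GL_n,P}$, and hence, via the $\laxlim$ identification, $\coeff_{GL_n,\ext}$ itself; fully faithfulness is then inherited step by step from the fully faithfulness of each elementary Fourier-transform functor.

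\textbf{The main obstacle.} The hardest part will be Step~1 — establishing the geometric decomposition of the mirabolic category and, in particular, controlling the glueing over the blow-up. In the function-theoretic world \eqref{eqn:induction-function-theory} is an \emph{honest direct sum} because orbits on a set genuinely partition it; geometrically the analogous statement is a recollement (a short exact sequence of DG categories, not a product), and proving that the resulting functor into the lax limit — rather than the naive limit — is an equivalence requires careful analysis of the partially-defined adjoints $\vrho_{G,\ext\to P}$ and their counits (items (1)--(3) of \S\ref{SEC:ExtWhit}'s program). Showing that these adjoints exist, that the relevant unit/counit maps are compatible with Fourier transform, and that no higher "correction terms" obstruct the factorization through $\Dmod(\Bun_G)^{Q\text{-}\gen}$, is where essentially all the geometric content lies; everything downstream is bookkeeping over the poset of parabolics. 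A secondary technical point is the need to work throughout with $\fD$-modules on infinite-type stacks (because $V(\AA)$ is replaced by meromorphic jets), so one must ensure that the Fourier--Deligne transform, adjunctions, and base-change identities all hold at that level of generality, using the formalism recalled in Section~\ref{SEC:background}.
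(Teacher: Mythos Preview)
Your proposal follows the function-theoretic outline of \S\ref{sssec:sketch} too literally, and in doing so imports a dependency the paper is careful to avoid. You reduce to the $\laxlim$ identification $\Wh(G,\ext)\simeq\laxlim_P\Wh(G,P)$ and then run a \emph{branching} induction on the two $M(\KK)$-orbits in $\ch_V$ (zero and its complement). But the paper states explicitly that the proof of Theorem~\ref{thm:main_Gr} has \emph{nothing to do} with the gluing equivalence; that equivalence is relegated to \cite{Be2} and is not available here. So your first reduction is a genuine gap: you are assuming a result outside the scope of this paper.

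More importantly, you have misidentified the role of the blow-up. You treat it as a device for controlling the recollement between the two orbit strata of $\ch_V$, so as to assemble the pieces into the lax limit. In the paper the blow-up serves a different and sharper purpose: one first shows (Proposition~\ref{prop:crucial}) that $\pi^!:\Dmod(\chind_V)\to\Dmod(\BB\chind_V)$ is fully faithful, which allows one to \emph{replace} $\ch_V$ by $\BB\ch_V$ outright. The gain is that $\BB\ch_V$ fibers over $\BP\ch_V$, and the $\Mt(\KK)$-action on $\BP\chind_V$ is \emph{transitive} (Lemma~\ref{lem:Pch_V = quotient}); by Proposition~\ref{prop:stabilizer} one may then restrict to a single fiber, namely $\ch_{n-1}$, whose stabilizer is $(\Qt'\times\GG_m)(\KK)$. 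This collapses the problem to one copy of the $GL_{n-1}$ situation, giving a \emph{linear} induction---four functors composed, no branching, no lax-limit bookkeeping. The recollement argument does appear, but only inside the proof of Proposition~\ref{prop:crucial} (via Lemma~\ref{lem:exact-seq-cat}), to establish fully faithfulness of $\pi^!$; it is not used to decompose the Whittaker category itself.
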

In view of the argument of Section \ref{sssec:sketch}, it is perhaps surprising that the proof of this theorem has nothing to do with the gluing equivalence $\Wh(G,\ext) \simeq \on{laxlim}\,\Wh(G,P)$. Moreover, the proof can be run in greater generality: namely, the input needed to define a Whittaker-like category is a \emph{naive-unital category $\CRan$ over the Ran space of $X$, equipped with a naive-unital action of the geometrization of $T(K) \ltimes \NA$}. In such generality, the source of the coefficient functor is the invariant category $(\CRan^{B(\sK)})_\indep$ (cf. Remark \ref{rem:source-coeff-operator}).\footnote{The relevant definitions will be supplied in Sections \ref{SEC:background} and \ref{SEC:unital}.}
In the standard case discussed so far, $\CRan=\Dmod(\GrGRan)$ is the category over $\Ran = \Ran(X)$ attached to the Beilinson-Drinfeld Grassmannian.

\medskip

What we really prove in Section \ref{SEC:MAIN} (see Theorem \ref{thm:MAIN}) is a version of Theorem \ref{thm:main_Gr} for $\CRan$ equipped with a (naive-unital) action of the geometrization of $Q(\AA)$.

\medskip

The following conjecture, whose classical analogue is false in general, was proposed in \cite{Outline}:

\begin{conj} \label{conj:generalization}
For any connected reductive group $G$ with connected center, the functor $\coeff_{G,\ext}: \Dmod(\Bun_G) \to \Wh(G,\ext)$ is fully faithful.
\end{conj}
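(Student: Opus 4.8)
The plan is to reduce the full-faithfulness of $\coeff_{G,\ext}$ to the case already handled in Section \ref{SEC:MAIN}, by exploiting the same mirabolic-type mechanism that underlies Theorem \ref{thm:mirabolic-fun-theory}, but in a form that does not depend on a partition of $n$. As noted in the text, the only input needed is a naive-unital category $\CRan$ over $\Ran(X)$ carrying an action of the geometrization of $T(\sK) \ltimes N(\AA)$, and in the case of interest $\CRan = \Dmod(\GrGRan)$. So the first step is to isolate the purely formal content of the argument in Section \ref{SEC:MAIN}: I would restate Theorem \ref{thm:MAIN} as an assertion that, given such a $\CRan$ together with a compatible action of the geometrization of $Q(\AA)$ for a suitable maximal parabolic $Q$ with Levi $M' \times \GG_m$ and abelian unipotent radical $V$, the coefficient functor out of $(\CRan^{B(\sK)})_\indep$ is fully faithful. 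The point is that this hypothesis — existence of a parabolic with \emph{abelian} unipotent radical whose Levi is a product of a smaller reductive group and $\GG_m$ — is exactly what makes the Fourier-Deligne transform of Section \ref{SEC:FT} directly applicable, splitting the relevant invariant category into two pieces (a generic Whittaker-type piece and a piece of lower rank), in perfect analogy with \eqref{eqn:induction-function-theory}.

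The second, and main, step is therefore to produce, for an arbitrary connected reductive $G$ with connected center, a parabolic playing the role of the mirabolic $Q \subseteq GL_n$. Here the group theory changes character: a general $G$ need not have a maximal parabolic with abelian unipotent radical, so the naive induction on $\mathrm{rk}(G)$ via a single $\GG_m$-factor breaks down — this is precisely why the classical analogue of the conjecture is false. The remedy I would pursue is to replace the single mirabolic step by a more flexible \emph{multi-step} reduction organized along the poset $\Par$ of standard parabolics: for each simple root $\alpha_i$, the unipotent radical $N$ has a canonical $\GG_m$-eigenline (the Chevalley generator fixed in the introduction), and applying the Fourier-Deligne machinery of Section \ref{SEC:FT} one character-line at a time expresses $(\CRan^{B(\sK)})_\indep$ as built, by iterated recollement, out of the partially-degenerate Whittaker categories $\Wh(G,?)$ — this is the geometric avatar of the blow-up of the space of characters alluded to in Section \ref{sssec:sketch} and developed in Section \ref{ssec:blowup}. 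The full-faithfulness of $\coeff_{G,\ext}$ would then follow by showing that at each stage of this filtration the relevant comparison functor is fully faithful, using at the bottom stages the genuine (non-degenerate) Whittaker case — which is known — and propagating up via the exactness properties of the Fourier-Deligne transform.

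Concretely I would carry out the steps in this order: (i) axiomatize the argument of Section \ref{SEC:MAIN} into a statement about abstract $Q(\AA)$-actions on naive-unital Ran categories, verifying that nothing beyond the vector-group Fourier transform and the unitality formalism of Sections \ref{SEC:background}, \ref{SEC:FT}, \ref{SEC:unital} is used; (ii) set up the character blow-up $\widetilde{\ch} \to \ch$ and the associated stratification, reinterpreting $\Wh(G,\ext)$ as glued from strata indexed by $\Par$; (iii) run the one-eigenline-at-a-time Fourier transform to obtain a semiorthogonal-type decomposition of $(\CRan^{B(\sK)})_\indep$ whose pieces are controlled by lower-rank data; (iv) induct on $\mathrm{rk}(G)$, the base case being a rank-one group (treated \textit{ante litteram} as the $GL_2$/$PGL_2$ analysis) and the inductive step feeding the lower-rank pieces back into the hypothesis. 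The hard part will unquestionably be step (iii): unlike the $GL_n$ situation, where the two Fourier pieces in \eqref{eqn:induction-function-theory} are again mirabolic-type and the iteration is transparent, for general $G$ one must check that the intermediate categories arising from successive partial Fourier transforms remain of the form to which the inductive hypothesis applies — equivalently, that the stabilizers $Z_M(\sK)$ and the residual unipotent actions assemble coherently along $\Par$. Controlling this coherence, and in particular ruling out the obstruction responsible for the failure of the classical statement, is the crux; I expect it to require genuinely new input on the geometry of $\BunNr$ and its enhancement $\BunNre$ over the blown-up character space, rather than a formal consequence of the tools assembled in the present paper.
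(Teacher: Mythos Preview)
The statement you are addressing is a \emph{conjecture} in the paper, not a theorem: the paper does not contain a proof of it, and explicitly attributes it to \cite{Outline} as open. The paper only establishes the special cases $G=GL_n$ and $G=PGL_n$ (Theorem \ref{thm:main_Gr}, via Theorem \ref{thm:MAIN}). There is therefore no proof in the paper to compare your proposal against.

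That said, your proposal is not a proof but a strategy sketch, and you yourself identify the gap. The substantive issue is this: the entire argument of Section \ref{SEC:MAIN} hinges on the mirabolic parabolic $Q=P_{n-1,1}\subset GL_n$, whose unipotent radical $V$ is \emph{abelian} and whose Levi is $GL_{n-1}\times\GG_m$. These two features together guarantee that after one Fourier transform and one application of Proposition \ref{prop:stabilizer}, the resulting category is again of the form $(\CC'_\Ran)^{\Qt'(\sK)}$ for the mirabolic $Q'\subset GL_{n-1}$ acting on a new naive-unital category $\CC'_\Ran$ --- so the induction closes. For a general reductive $G$ with connected center, no parabolic plays this role: either the unipotent radical is non-abelian (so the Fourier transform of Section \ref{SEC:FT} does not apply directly), or the Levi is not of the shape $G'\times\GG_m$ with $G'$ reductive of lower rank, or the stabilizer of a generic character is not a parabolic of the Levi. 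Your ``one-eigenline-at-a-time'' scheme does not circumvent this: taking Fourier transform along a single root space produces an intermediate category acted on by a subquotient of $T(\sK)\ltimes N(\sA)$ that is not of the form $T'(\sK)\ltimes N'(\sA)$ for a smaller reductive group, so the inductive hypothesis does not apply to it.

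More seriously, as the paper emphasizes (just before the statement of the conjecture), the classical function-theoretic analogue of Conjecture \ref{conj:generalization} is \emph{false} for general $G$. Hence any proof must invoke a phenomenon that is genuinely categorical or geometric and has no function-theoretic shadow. Your outline is modeled entirely on the function-theoretic mechanism of Section \ref{sssec:sketch}, and nowhere isolates such a phenomenon; the closing sentence of your proposal concedes as much. Until that new input is identified, steps (iii)--(iv) cannot be carried out, and the proposal remains a restatement of the problem rather than a proof.
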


\ssec{Implications for the geometric Langlands program} \label{sssec:relevance-for-GLC}

To conclude this introduction, let us explain the role our Theorem \ref{thm:MAIN} (or Theorem \ref{thm:main_Gr}) plays in the geometric Langlands program. All the material in this section is taken from \cite{AG}, \cite{AG2} and \cite{Outline}, to which we refer for any undefined symbol.

\sssec{}

Let $\Gch$ be the Langlands dual of $G$, and let $\LocSys_{\Gch} := \Maps(X_\dR, B\Gch)$ denote the derived stack of $\Gch$-local systems on $X$. Since this stack is quasi-smooth, to any coherent sheaf one can associate its \emph{singular support}, which is a closed substack of
$$
\on{Arth} : S \mapsto \{
(\E, A) \, \big| \, 
\E \in \LocSys_{\Gch}(S), \, A \in H^0(S \times X_\dR, \check{\mathfrak{g}}_\E )\}.
$$
Of particular interest is the \emph{global nilpotent cone} $\N \subset \on{Arth}$, i.e. the locus cut out by the requirement that $A$ be nilpotent. Denote by $\IndCoh_{\N} (\LocSys_{\Gch})$ the ind-completion  of the category of coherent sheaves with singular support contained in $\N$.

\sssec{}

According to \cite{AG}, the \emph{geometric Langlands conjecture} calls for an equivalence
$$
\mathbb{L_G}: \IndCoh_{\N}(\LocSys_{\Gch}) \xto{ \;\; \; \simeq \;\;\;} \Dmod(\Bun_G),
$$
which is supposed to satisfy various \emph{desiderata}, one of which, the \emph{compatibility with the extended Whittaker model}, we are about to spell out. 

\medskip

To start with, recall from the above discussion that we have the extended Whittaker category $\Wh(G,\ext)$, glued out of the $2^r$ categories $\Wh(G,P)$ and equipped with a functor, $\coeff_{G,\ext}$, from $\Dmod(\Bun_G)$. 

\medskip

A similar picture holds on the Langlands dual side, where $\IndCoh_{\N}(\LocSys_{\Gch})$ embeds \emph{fully faithfully} inside a certain DG category $\Glue(\Gch)$.
See \cite{AG2} for the relevant constructions and for the proof of fully faithfulness.
Here, we only point out that $\Glue(\Gch)$ is a category expressed as a lax limit of $2^r$ categories $\ms Q(\Gch, \check P)$, parametrized by the poset $\Par(G)$. For instance, $\ms Q(\Gch, \Gch) := \QCoh(\LocSys_{\Gch})$; we omit the definition for general $\check P$.
Denote by 
$$
\ms{coeff}_{\Gch, \ext}^{\spec}: \IndCoh_\N(\LocSys_{\Gch}) \hto \Glue(\Gch)
$$
the inclusion.

\sssec{}

For any $P$, the categories $\Wh(G,P)$ and $\ms Q(\Gch, \check P)$ are related. In fact, there are \emph{fully faithful} functors 
$$
\LL_{\Wh,P} : \ms Q(\Gch, \check P) \hto \Wh(G,P),
$$
which are compatible with gluing and therefore yield a fully faithful functor $\LL_{\Wh, \ext} : \Glue(\Gch) \hto \Wh(G,\ext)$.

\nc{\RRep}{\mathfrak{R}ep}
\medskip

For instance, let us outline the ingredients concurring in the construction of $\LL_{\Wh,G}$. 
The geometric Casselman-Shalika formula 
implies that 
$$
\Wh(G,G) 
\simeq
\QCoh(\LocSys_{\Gch/[\Gch,\Gch]}) \usotimes{\RRep({\Gch/[\Gch,\Gch]})_\indep} \RRep(\Gch)_\indep,
$$
where $\RRep(H)_\indep$ (see \cite{Outline}) is the independent category of a unital sheaf of categories associated to the ordinary $\Rep(H)$.
Now, Proposition 4.3.4 of \loccit $\,$ asserts the existence of inclusions $i_H: \QCoh(\LocSys_{H}) \hto \RRep(H)_\indep$, functorial with respect to the affine algebraic group $H$.
The inclusion $\LL_{\Wh,G}: \QCoh(\LocSys_{\Gch}) \hto \Wh(G,G)$ is the one induced by $i_{\Gch}$ and $i_{\Gch/[\Gch,\Gch]}$.

\medskip

For more information about the functors $\LL_{\Wh,P}$, the reader may consult \cite{Outline}, \cite{R}, and references therein.

\sssec{} \label{sssec:L_G}

The compatibility of the Langlands conjecture with the extended Whittaker model requires that the diagram
\begin{gather} 
\xy
(55,0)*+{ \Wh(G,\ext) }="00";
(0,0)*+{ \Glue(\Gch )}="10";
(0,24)*+{ \IndCoh_\N(\LocSys_\Gch) }="11";
(55,24)*+{ \Dmod(\Bun_G) }="01";
{\ar@{<-}_{ \LL_{\Wh,\ext} } "00";"10"};
{\ar@{<--}_{ \LL_G \, ? } "01";"11"};
{\ar@{->}^{ \coeff^{\spec}_{\Gch,\ext}  } "11";"10"};
{\ar@{->}^{ \coeff_{G,\ext}   } "01";"00"};
\endxy
\end{gather}	
be commutative.
Since the arrows $\coeff^{\spec}_{\Gch,\ext}$ and $\LL_{\Wh,\ext}$ are fully faithful, the validity of Conjecture \ref{conj:generalization} would imply that \emph{$\LL_G$ is unique, if it exists, and automatically fully faithful}. Our main theorem shows this is the case for $G=GL_n$ and $G=PGL_n$.

\ssec*{Acknowledgements}

As explained in the introduction, this paper realizes a part of the program indicated in \cite{Outline}. It is a pleasure to thank Dennis Gaitsgory for several crucial discussions: for instance, the usage of the blow-up in Section \ref{SEC:MAIN} was inspired by his idea of the proof of Theorem \ref{thm:main_Gr} for $GL_3$. 
%

I am indebted to Sam Raskin for his help regarding unital structures on sheaves of categories over the Ran space. 
I am also grateful to Dima Arinkin, Jonathan Barlev, Ian Grojnowski and Constantin Teleman.

The constructions and the results of the present paper were announced during the workshop \virg{Towards the proof of the geometric Langlands conjecture} held at IIAS, Jerusalem, in March 2014. I wish to thank the organizers and the participants of the workshop for their inspiring interest.

\sec{Notation and background}\label{SEC:background}

\ssec{Prestacks}

Let $\Sch^\aff$ (resp., $\Sch^{\aff,\fty}$) denote the $1$-category of affine $\kk$-schemes (resp., affine $\kk$-schemes of finite type) and $\Grpd$ the $\infty$-category of $1$-groupoids.

\sssec{}

In this paper, by the term \virg{prestack}, we mean what is sometimes called \emph{classical $1$-prestack}, that is, an arbitrary functor $(\Sch^{\aff})^\op \to \mathsf{Grpd}$. The $1$-category $\PreStk$ of prestacks admits limits and colimits, computed pointwise. The term \virg{space} is a synonym for \virg{prestack}: prestacks are just the spaces that we allow in algebraic geometry. For instance, they are the spaces on which we are able to define quasi-coherent sheaves.

When defining a prestack, we simply write it as an assignment $\Y : S \mapsto \Y(S)$, with $S$ denoting an arbitrary test affine scheme. It should then be obvious how $\Y$ operates on arrows in $\Sch^\aff$.
When the curve $X$ is involved, we usually write $X_S$ for $S \times X$.

\medskip

Restriction along the inclusion $\Sch^{\aff,\fty} \hto \Sch^\aff$ yields a functor 
$$
\PreStk \longto \PreStk^{\fty}:= \Fun \bigt{ (\Sch^{\aff,\fty})^\op, \Grpd}.
$$
This functor is a colocalization, whence we regard $\PreStk^{\fty}$ as faithfully embedded inside $\PreStk$ (by left Kan extension).

\sssec{The Ran space}

The presence of the ad{\`{e}}les in the function theory version of the Langlands program is reflected in geometry by  the appearance of the Ran space of $X$, that is, the moduli space of finite non-empty subsets of $X$. To give a formal definition of $\Ran := \Ran_X$, let $\fset$ be the $1$-category of non-empty finite sets and surjections among them. Then, $\Ran := \colim_{(\fset)^\op} X^I$, where the transition maps are the diagonal closed embeddings $\Delta_{\phi}: X^J \hto X^I$ for any surjection $\phi: I \twoheadrightarrow J$.

We usually denote points of $\Ran$ (that is, finite subsets of $X$) with the symbol $\x$. An $S$-point $\x$ of $\Ran$ determines an effective divisor on $X_S := S \times X$, finite and flat over $S$, which we denote by $\Gamma_\x$. Let $U_\x := X_S - \Gamma_\x$ be its open complement.

\medskip

Much of the geometry of this paper is \emph{relative to the Ran space}, which means that our prestacks and group prestacks fiber over $\Ran$.
The \virg{correct} notation for such an object would be $\Y_{\Ran} \to \Ran$, say; however we often omit the subscript \virg{$\Ran$} when there is no risk of confusion. This will be the case, e.g., for (meromorphic) jets, defined below.

However, there \emph{is} a risk of confusion with spaces parametrizing generically defined geometric structures: for instance, the affine Grassmannian and the space of rational sections of a prestack $\Y$ over $X$. In fact, there are two ways to parametrize families of open sets of $X$: the \virg{$\Ran$ version}, which uses $U_\x$ for some $\x: S \to \Ran$, and the \virg{independent} version (cf. \cite{Ba}, or Section \ref{sssec:domains}), which uses the notion of \virg{domain}. 

As it turns out, we will need both approaches. To distinguish them notationally, we use no decoration for the Beilinson-Drinfeld Grassmannian over $\Ran$, i.e. the prestack
\begin{equation} \label{defn:Grass}
\GrGRan(S) :=
 \left\{ 
(\x, \P_G, \beta) \left| 
\begin{array}{l}
 \x \in \Ran(S), \, \P_G \mbox{ a $G$-bundle on $X_S$,} \\
\beta \mbox{ a trivialization of $\P_G$ on $U_\x$}
\end{array}
 \right.
\right\},
\end{equation}
whereas $\GrGind$ is its independent version, defined in Section \ref{gen-red-G-bundles}. See Section \ref{sssec:GSECT-indep} for parallel conventions regarding spaces of rational sections.
We also let ${\Gr}_{G,X^I} := \GrGRan \times_{\Ran} X^I$: this is defined as in (\ref{defn:Grass}) with the only difference that $\x \in X^I(S)$.
In general, for $\Y$ a prestack over $\Ran$, we set 
$$
\Y_{X^I} := X^I \ustimes{\Ran} \Y,
$$
so that $\Y\simeq \uscolim{\fset^\op}\, \Y_{X^I}$.

\begin{notat} \label{notation}
Let $\Y \to \Ran$ be a prestack over $\Ran$. We say that $\Y \to \Ran$ is a relative indscheme (resp., group indscheme, vector bundle, and so on) if $\Y_{X^I} \to X^I$ is an indscheme (resp., group indscheme, vector bundle, and so on) over $X^I$ for any $I \in \fset$.

In particular, when $\Y \to \Ran$ is a relative indscheme, $\Y$ is a \emph{pseudo-indscheme}, in the terminology of \cite{GL:contr}, expressed as the colimit of $\Y_{X^I}$ along closed embeddings.  

When dealing with pseudo-indschemes, we use the definition in \cite{R}, slightly different from the one in \loccit: for us, pseudo-indschemes are always equipped with a presentation and maps between them are supposed to be compatible with the presentations.
\end{notat}

\ssec{DG categories and sheaves thereof}

\sssec{}

We will need to consider differential graded (DG) categories of sheaves (mostly, $\fD$-modules) on our prestacks. By default, the term \virg{category} means \virg{co-complete DG category}. We denote by $\DGCat$ the $\infty$-category of such categories, with continuous (i.e., commuting with all colimits) functors as morphisms.
This $\infty$-category is symmetric monoidal (hence, there is a notion of duality) and admits limits and colimits. 
See \cite{Lu:HA} for the relevant constructions and proofs, or \cite{GL:DG} for a review.

\begin{rem}
The functor $\QCoh$ is defined as a contravariant functor out of $\PreStk$, while $\fD$ (the functor of $\fD$-modules) only out of $\PreStk^{\fty}$.
\end{rem}

\sssec{}

In particular, consider the category of $\fD$-modules on the Ran space: $\R:= \Dmod(\Ran) \simeq \lim_{\Delta^!} \Dmod(X^I)$, which is monoidal with respect to the usual $\otimes$. We shall consider the $\infty$-category of DG categories tensored over ($\R, \otimes)$, denoted by $\R\mmod$.
\footnote{Notation: given $\S$ a monoidal $\infty$-category and $A \in \ms{Alg}(\S)$, we denote by $A \mod(\S)$ the $\infty$-category of $A$-modules. We also write $A \mmod := A \mod(\DGCat)$, when $A$ is a monoidal DG category. Similarly for coalgebras and comodules.}

For a prestack $\Y$ of finite type over $\Ran$, the category $\Dmod(\Y)$ is naturally an object of $\R\mmod$. 
Actually, more is true: $\Dmod(\Y)$ is the category of global sections of a sheaf of categories over $\Ran_\dR$ (Lemma \ref{lem:LocRan-fullyfaith}).

\sssec{}

Let us first fix some notation regarding sheaves of categories over a prestack, following \cite{GL:shvcat}. Set $\ShvCatCrys(\X) := \ShvCat(\X_\dR)$, for $\X$ a prestack of finite type. This $\infty$-category is symmetric monoidal; we indicate by $\Otimes$ the tensor product.
Abusing notation, we indicate by $\Otimes := \otimes_{\Dmod(\X)}$ the tensor product in $\Dmod(\X) \mmod$ as well.
There is an adjunction
$$
\bLoc^\nabla_{\X} : \Dmod(\X) \mmod 
\rightleftarrows
\ShvCatCrys(\X) : \bGamma^\nabla(\X, - ),
$$
with $\bLoc^\nabla_{\X}$ symmetric monoidal. We say that $\X_\dR$ is $1$-affine if the above functors are equivalences. E.g., $Y_\dR$ is $1$-affine whenever $Y$ is a scheme of finite type (\loccit).

\begin{rem}
Consequently, 
$$
\ShvCatCrys(\Ran) \simeq \lim_{I \in\fset} \Dmod(X^I) \mmod.
$$ 
Informally, an object of $\ShvCatCrys(\Ran)$ is a collection $\{\M_I\}_{I \in \fset}$, with $\M_I \in \Dmod(X^I) \mmod$, together with compatible equivalences $\psi_{I \twoheadrightarrow  J}: \M_I \otimes_{\Dmod(X^I)} \Dmod(X^J) \to \M_J$, for any surjection $I \twoheadrightarrow J$.
By adjunction, we obtain a diagram of functors $\M_I \to \M_J$ of plain DG categories and $\bGamma(\{\M_I\}_{I \in \fset}):= \lim_I \M_I$.
The monoidal product in $\ShvCatCrys(\Ran)$ is computed \virg{pointwise} : if $\Cshv = \{\CC_{X^I} \}_I$ and $\Cshv' =\{ \CC'_{X^I} \}_I$, then 
$$
\Cshv \Otimes \Cshv' 
\simeq
 \big\{ \CC_{X^I} \otimes_{\Dmod(X^I)} \CC'_{X^I} \big\}_{I}.
$$
\end{rem}

\begin{lem} \label{lem:LocRan-fullyfaith}
The functor $\bLoc^\nabla_{\Ran}$ is fully faithful. In particular, for $\Y \to \Ran$ a prestack of finite type, we see that 
$\Dmod(\Y) \simeq \bGamma^\nabla(\Ran, \bold\Dmod(\Y))$, where 
$$
\bold\Dmod(\Y) := \bLoc^\nabla_\Ran(\Dmod(\Y)) \simeq \{\Dmod(\Y_{X^I})\}_{I \in \fset}.
$$
\end{lem}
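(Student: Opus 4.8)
The plan is to deduce Lemma~\ref{lem:LocRan-fullyfaith} from the $1$-affineness of $X^I_\dR$ together with a cofinality/Beck--Chevalley argument identifying the localization on $\Ran_\dR$ with the limit over $\fset^\op$ of the localizations on the $X^I_\dR$. Concretely, I would first recall that $\Ran_\dR \simeq \colim_{\fset^\op} X^I_\dR$ in $\PreStk$, computed along the diagonal closed embeddings $\Delta_\phi: X^J_\dR \hto X^I_\dR$; since $\ShvCatCrys(-) = \ShvCat((-)_\dR)$ sends colimits of prestacks to limits of $\infty$-categories, this gives $\ShvCatCrys(\Ran) \simeq \lim_{I \in \fset} \ShvCat(X^I_\dR)$, with transition functors the pullbacks $\Delta_\phi^{*,\ShvCat}$. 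Dually, $\Dmod(\Ran) = \lim_{\Delta^!} \Dmod(X^I)$ in $\DGCat$, and hence $\Dmod(\Ran)\mmod \simeq \lim_{I} \Dmod(X^I)\mmod$, the transition functors being the base-change functors $- \usotimes{\Dmod(X^I)} \Dmod(X^J)$ along $\Delta_\phi^!$. (Here one uses that $\Delta_\phi: X^J \to X^I$ is a closed embedding, so $\Delta_\phi^!$ admits a continuous right adjoint and $\Dmod(X^J)$ is dualizable over $\Dmod(X^I)$, making the base-change statement for module categories clean.)

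The second step is to check that under these two limit presentations the functor $\bLoc^\nabla_\Ran$ is the limit of the functors $\bLoc^\nabla_{X^I}: \Dmod(X^I)\mmod \to \ShvCat(X^I_\dR)$. This is a compatibility of $\bLoc^\nabla$ with restriction along the closed embeddings $\Delta_\phi$, i.e. a Beck--Chevalley square
\begin{equation*}
\xymatrix{
\Dmod(X^I)\mmod \ar[r]^{\bLoc^\nabla_{X^I}} \ar[d] & \ShvCat(X^I_\dR) \ar[d] \\
\Dmod(X^J)\mmod \ar[r]^{\bLoc^\nabla_{X^J}} & \ShvCat(X^J_\dR),
}
\end{equation*}
which commutes because $\bLoc^\nabla$ is defined by an (op)lax-natural formula $\M \mapsto \{\M \usotimes{\Dmod(U)} \Dmod(V)\}_{V \to U}$ that manifestly intertwines the base-change functors; this is part of the formalism of \cite{GL:shvcat} and I would simply cite it. Granting the square, $\bLoc^\nabla_\Ran \simeq \lim_I \bLoc^\nabla_{X^I}$. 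Now each $\bLoc^\nabla_{X^I}$ is an equivalence, since $X^I$ is a scheme of finite type and hence $X^I_\dR$ is $1$-affine by \loccit. A limit of equivalences is an equivalence, so $\bLoc^\nabla_\Ran$ is an equivalence — in particular fully faithful. (If one only wants fully faithfulness one can argue more cheaply: a limit of fully faithful functors is fully faithful, since the mapping-space functor commutes with limits.)

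For the "in particular" clause: given $\Y \to \Ran$ of finite type, apply $\bLoc^\nabla_\Ran$ to $\Dmod(\Y) \in \Dmod(\Ran)\mmod$ — which is a module category because $\Y$ fibers over $\Ran$ and $\fD$ is lax symmetric monoidal — and compute it pointwise over $\fset$. Since $\Y_{X^I} := X^I \ustimes{\Ran} \Y$ is of finite type over $X^I$, and since $\bLoc^\nabla$ on a $1$-affine base is inverse to $\bGamma^\nabla$, one gets $\bLoc^\nabla_{X^I}(\Dmod(\Y)\usotimes{\Dmod(\Ran)}\Dmod(X^I)) \simeq \bold\Dmod(\Y_{X^I})$, the canonical sheaf of categories on $X^I_\dR$; base change $\Dmod(\Y)\usotimes{\Dmod(\Ran)}\Dmod(X^I) \simeq \Dmod(\Y_{X^I})$ holds because $\Y \simeq \uscolim{\fset^\op}\Y_{X^I}$ along closed embeddings and $\fD$ sends such colimits of finite-type prestacks to limits. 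Assembling over $\fset$ yields $\bold\Dmod(\Y) = \{\Dmod(\Y_{X^I})\}_{I}$ and, by fully faithfulness of $\bLoc^\nabla_\Ran$ established above, $\Dmod(\Y) \simeq \bGamma^\nabla(\Ran, \bold\Dmod(\Y))$.

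The main obstacle I anticipate is not the $1$-affineness input (that is quoted from \cite{GL:shvcat}) but the careful bookkeeping in the second step: verifying that $\bLoc^\nabla$ really is compatible with the correct variance of restriction along the $\Delta_\phi$, so that the two limit diagrams over $\fset^\op$ match up on the nose — including the subtlety that on the $\Dmod$-module side the transition functors are base change along $\Delta_\phi^!$ (a $!$-pullback), whereas on the $\ShvCat$ side they are the $*$-type pullback for sheaves of categories, and one must know these are exchanged correctly by $\bLoc^\nabla$. Once that diagram of functors is pinned down, the rest is formal: limits of equivalences are equivalences.
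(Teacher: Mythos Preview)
Your argument has a circularity. In your second step you assert
\[
\Dmod(\Ran)\mmod \;\simeq\; \lim_{I \in \fset}\, \Dmod(X^I)\mmod,
\]
with transitions given by base change along $\Delta_\phi^!$. But the right-hand side is, by the $1$-affineness of each $X^I_\dR$, exactly $\ShvCatCrys(\Ran)$ (this is the content of the Remark immediately preceding the Lemma), and the comparison functor $\Dmod(\Ran)\mmod \to \lim_I \Dmod(X^I)\mmod$ that you write down is nothing other than $\bLoc^\nabla_\Ran$ itself. So your step~2 is asserting that $\bLoc^\nabla_\Ran$ is an equivalence --- which is strictly stronger than the Lemma and is precisely what is at stake. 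Your justification, ``$\Dmod(\Ran) = \lim_I \Dmod(X^I)$, hence modules match'', is not a valid inference: for a limit $A = \lim_i A_i$ of commutative algebra objects in $\DGCat$, the functor $A\mmod \to \lim_i A_i\mmod$ need not be an equivalence (its being so is exactly the statement of $1$-affineness). The parenthetical about $\Delta_\phi^!$ having a continuous right adjoint and $\Dmod(X^J)$ being dualizable over $\Dmod(X^I)$ concerns individual transitions and does not bridge this gap. The ``main obstacle'' you flag at the end --- the bookkeeping of matching the two limit diagrams --- is therefore not the real issue; the real issue is that the identification of the source with such a limit is unjustified.

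The paper's proof avoids this by checking only the unit $\id \to \bGamma^\nabla \circ \bLoc^\nabla_\Ran$. The one input is that $\Delta_*: \Dmod(X^J) \to \Dmod(X^I)$ is $\R$-linear (projection formula). Then, for $M \in \R\mmod$, the transitions $\id_M \Otimes \Delta^!$ in the diagram $\{M \Otimes \Dmod(X^I)\}_{I \in \fset}$ have left adjoints $\id_M \Otimes \Delta_*$, so
\[
\bGamma^\nabla\!\bigl(\bLoc^\nabla_\Ran(M)\bigr)
=\lim_I \bigl(M \Otimes \Dmod(X^I)\bigr)
\simeq \colim_I \bigl(M \Otimes \Dmod(X^I)\bigr)
\simeq M \Otimes \colim_I \Dmod(X^I)
\simeq M.
\]
Note that this step uses the $\R$-linearity of $\Delta_*$, which is available for the particular diagram $\{\Dmod(X^I)\}_I$ but \emph{not} for an arbitrary object $\{M_I\}_I$ of $\ShvCatCrys(\Ran)$; this is why the argument yields full faithfulness and not an equivalence.
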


\begin{proof}
For any $I \surjto J$, the pushforward $\Delta_*: \Dmod(X^J) \to \Dmod(X^I)$ is $\R$-linear, thanks to the projection formula. Using this, it is easy to check that the unit of the adjunction $\id \to \bGamma^\nabla (\Ran, \bLoc^\nabla_\Ran(-))$ is an isomorphism.
\end{proof}

\ssec{Contractibility, generic maps, (meromorphic) jets}

We say that a prestack $\Y \in \PreStk^{\fty}$ is \emph{(homologically) contractible} if the $\fD$-module pullback along the tautological map $p: \Y \to \pt$ is fully faithful. An important feature of $\Ran$ is its contractibility, see \cite{BD:chiral}.

\sssec{Some notation}

For any map of schemes $\x =\{x_i\}_{i \in I}: S \to X^I$, recall the divisor $\Gamma_\x \subset X_S$, that is, the pull-back of the incidence divisor $\Gamma_{\inc} \subset X^I \times X$ along $\x \times \id$. Denote by $\hat \D_\x$ the formal completion of $X_S$ along $\Gamma_\x$: this is a formal scheme with a well-defined associated scheme $\D_\x$. Denote by $\D_\x^\circ$ the \emph{punctured disc} (more precisely: \emph{punctured tubular neighbourhood} of $\x$), i.e. the scheme $\D_\x - \Gamma_\x$.

\sssec{} 

For a prestack $\Y$ fibering over $X$, let us recall the definition of the space of \emph{rational sections} of $\Y \to X$:
$$
\GSectRan{\Y} :
S \mapsto
 \left\{ 
(\x, \sigma) \left| 
\begin{array}{l}
 \x \in \Ran(S), \\
 \sigma \mbox{ a section of $\restr {S \times \Y}{U_\x} \to U_\x$}
\end{array}
 \right.
\right\}.
$$
We also write $\GMapsRan{\Z}$ for the space of rational sections of $\Z \times X \to X$, and occasionally abbreviate it by $\Z(\sK)$.
\begin{thm}[\cite{GL:contr}] \label{thm:contr:Ga}
The prestack $\GMapsRan{H}$ is contractible for any algebraic group $H$. More generally, $\GMapsRan{Y}$ is contractible for any affine scheme $Y$ which is covered by open subsets isomorphic to open subsets of $\mathbb A^n$.
\end{thm}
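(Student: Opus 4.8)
The plan is to bootstrap the statement from the contractibility of $\Ran$ itself (\cite{BD:chiral}): one presents the space of rational maps, fibered over $\Ran$, as a colimit over the poset that records how large the divisor of poles is allowed to be, reduces the target to affine space by a Zariski‑local argument, and on affine space makes everything explicit by Riemann--Roch.

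\textbf{Reducing the target.} First, it suffices to treat affine targets $Y$ admitting a finite cover by open subsets isomorphic to open subsets of affine spaces. Indeed, a connected linear algebraic group over $\kk$ is a rational variety --- for reductive $H$ one has the big cell, which is itself an open subset of $\BA^{\dim H}$; a connected solvable group is, as a variety, a product of copies of $\GG_m$ and $\BA^1$; the general connected case follows from the Levi decomposition --- so $H$ contains a dense open $V$ isomorphic to an open of some $\BA^n$, and the left translates $gV$ cover $H$ (a disconnected $H$ being a finite union of such cosets). Next, given such a $Y$ with chart cover $\{U_i\}$, a rational map $X \to Y$ carries the generic point of $X$ into some $U_i$, so the spaces $\GMapsRan{U_i}$ form a Zariski cover of $\GMapsRan{Y}$ whose multi‑intersections are again of the form $\GMapsRan{(\text{open of }\BA^n)}$; since de Rham cohomology satisfies Zariski descent, this reduces the theorem to the case $Y = U \subseteq \BA^n$ open. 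Finally, writing $U = \BA^n \smallsetminus V(h)$, a rational map to $U$ is a rational map $\sigma\colon X \to \BA^n$ together with the requirement that $h(\sigma)$ be invertible on $U_\x$; the point of a \emph{moving lemma} is that, after enlarging the divisor of allowed poles, this requirement becomes vacuous in the colimit, so we are reduced to $Y = \BA^n$.

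\textbf{The case $Y = \BA^n$.} Over the Ran space, write $\GMapsRan{\BA^n} \simeq \colim_{\fset^\op} (\GMapsRan{\BA^n})_{X^I}$ as in Notation~\ref{notation}; a point of $(\GMapsRan{\BA^n})_{X^I}$ is a pair $(\x, \sigma)$ with $\x \in X^I$ and $\sigma \in \Gamma(U_\x, \O)^{\oplus n}$. Bounding the order of poles along $\Gamma_\x$ by $m \cdot \Gamma_\x$ exhibits $(\GMapsRan{\BA^n})_{X^I}$ as a filtered colimit, along closed embeddings, of the spaces $\{(\x, \sigma) : \sigma \in \Gamma(X, \O(m\Gamma_\x))^{\oplus n}\}$, which by Riemann--Roch are vector \emph{pseudo}-bundles over $X^I$ (genuine bundles only away from the diagonals of $X^I$, since $h^0(X, \O(m\Gamma_\x))$ jumps as points of $\x$ collide --- whence one is forced into the pseudo‑indscheme formalism of \cite{GL:contr}, \cite{R}). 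Each such pseudo-bundle is contractible relative to $X^I$, hence so is the colimit over $m$; combining this with the fact that $\Ran = \colim_{\fset^\op} X^I$ along the diagonal closed embeddings is contractible --- and with the compatibility of the two colimit presentations --- yields that $\GMapsRan{\BA^n}$ is contractible.

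\textbf{Main obstacle.} The Riemann--Roch bookkeeping is routine; the delicate points are two. First, the colimit over the allowed pole order must be arranged so as to annihilate the cohomology of the individual bounded‑pole spaces, which forces one out of schemes of finite type and into pseudo-indschemes and interacts with the diagonal stratification of $X^I$. Second, and more seriously, the passage from $\BA^n$ to an open subset $U \subseteq \BA^n$ --- indispensable in order to reach $\GG_m$, tori and the big cells of reductive groups --- is genuinely non‑formal, since an open sub‑prestack of a contractible prestack need not be contractible; one really uses that a generically defined map from a curve can be cleared of denominators by enlarging the divisor of allowed poles, and one must also check that the combinatorics of the chart cover of $Y$ contributes nothing. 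This moving lemma, together with the colimit formalism, is the heart of \cite{GL:contr}; the contractibility of $\Ran$ cited in the excerpt enters only as the terminal input of the induction.
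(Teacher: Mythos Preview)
The paper does not prove this theorem at all: the citation \cite{GL:contr} in the theorem header is the entire argument supplied, and the result is used throughout as a black box. There is therefore no proof in the paper to compare your proposal against; you are sketching the content of the cited reference rather than of the present paper.

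That said, a couple of remarks on your sketch. The claim that the $\GMapsRan{U_i}$ form a Zariski cover of $\GMapsRan{Y}$ is not correct as written: for an $S$-point $(\x,\sigma)$ the condition that $\sigma\colon U_\x \to Y$ factor through a single chart $U_i$ is not Zariski-local on $S$, and the generic-point argument you give only works pointwise. The actual mechanism in \cite{GL:contr} is different and more delicate --- one does not descend along a cover of the target but rather exploits the freedom to enlarge $\x$ (the unital structure) together with an explicit acyclicity argument. Your ``moving lemma'' paragraph gestures at this, but you should be aware that the Zariski-descent step you invoke before it is the one that does not go through. On the vector-bundle side, the jump in $h^0$ you worry about can be avoided by filtering along the divisor $m\sum_{i\in I}\Gamma_{x_i}$ (with multiplicity) rather than $m\Gamma_\x$; once the degree exceeds $2g-2$ one gets honest vector bundles over $X^I$ and no pseudo-indscheme subtlety arises at that stage.
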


Let now $Y \to X$ be a Zariski locally trivial fibration whose fibers satisfy the above hypothesis. Using the methods of \loccit, one deduces that $\GSectRan{Y}$ is also contractible.

\sssec{}

The groups $G(\AA)$ and $G(\OO)$ appearing in the introduction will be replaced in geometry by the group prestacks of \emph{meromorphic jets} and \emph{regular jets} into $G$, respectively. 
Let us briefly recall the well-known definitions. For $\Y \to X$ an arbitrary prestack over $X$, define the spaces of jets and meromorphic jets of $\Y \to X$ as the prestacks
$$
\Jets(\Y/X): S \mapsto \left\{ 
(\x, f) \left| 
\begin{array}{l}
 \x \in \Ran(S), \\
 \mbox{$f$ a section of $\restr{\Y}{\D_x} \to \D_\x$}
\end{array}
 \right.
\right\};
$$
$$
\Jets^\mer(\Y/X): S \mapsto  \left\{ 
(\x, f) \left| 
\begin{array}{l}
 \x \in \Ran(S), \\
 \mbox{$f$ a section of $\restr{\Y}{\D^\circ_x} \to \D^\circ_\x$}
\end{array}
 \right.
\right\}.
$$
%
For $H$ an affine algebraic group, we put $H(\sA) := \Jets^\mer(H\times X/X)$ and $H(\sO):= \Jets(H\times X /X)$. 
We will need to consider $\fD$-modules on $H(\sA)$ and $H(\sO)$. These prestacks are of infinite type and therefore they do not fit in the context for $\fD$-modules presented so far. One needs the theory of $\fD$-modules on schemes and indschemes of infinite type (\cite{R}, \cite{Be}). The following restricted context suffices for us.

\ssec{$\fD$-modules on indschemes of pro-finite type}

\sssec{}

The $1$-category $\IndSch^\pro_{/B}$ of \emph{indschemes of pro-finite type} over $B \in \Sch^{\fty}$ is constructed out of $\Sch^{\fty}_{/B}$ in two steps. 
First, let $\Sch_{/B}^\pro$ be the full subcategory of $\Pro (\Sch_{/B}^\fty)$ consisting of those schemes that are projective limits of finite type schemes along affine smooth surjective maps. As it turns out, the tautological map $\Sch_{/B}^\pro \to \Sch_{/B}^{\mathsf{qc,qs}}$ is fully faithful.
By definition, a map $Y \to Z$ in $\Sch_{/B}^\pro$ is finitely presented if it is base-changed from a map in $\Sch_{/B}^\fty$.

The second and last step to construct $\IndSch^\pro_{/B} \hto \PreStk_{/B}$ consists of ind-extending $\Sch^\pro_{/B}$ along \emph{finitely presented} closed embeddings.\footnote{This definition is made to exclude, say, the closed embedding $\iota: \Spec(k) \to \Spec ({k[x_1, x_2, \ldots]})$. In fact $\iota$ (and more generally inclusions of infinite codimension) exhibits some pathological functorial behaviour.}

\sssec{}

As in \cite{Be}, we define 
$$
\Dmod^!: \big( \IndSch^\pro_{/B} \big)^\op \to \DGCat, 
\hspace{.5cm}
\Dmod^*: \IndSch^\pro_{/B} \to \DGCat.
$$
For any $\Y \in \IndSch^{\pro}_{/B}$, the categories $\Dmod^!(\Y)$ and $\Dmod^*(\Y)$ are in duality.

From the finite-type case, one checks that $\Dmod^!$ is symmetric monoidal; in particular, pull-back along the structure map $p:\Y \to B$ gives $\Dmod^!(\Y)$ the structure of $\Dmod(B)$-module. The dual action also equips $\Dmod^*(\Y)$ with the structure of $\Dmod(B)$-module. 
(In the notation of \textit{loc.cit.}, this action  was denoted by $\overset{!*}{\otimes}$.)
Hence, both $\Dmod^!$ and $\Dmod^*$ upgrade to symmetric monoidal functors
$$
\Dmod^!: \big( \IndSch^\pro_{/B} \big)^\op \to \DGCat_{/B},
\hspace*{.5cm}
\Dmod^*:  \IndSch^\pro_{/B}  \to \DGCat_{/B}.
$$
Furthermore, $\Dmod^*(\Y)$ and $\Dmod^!(Y)$ are mutually dual also as objects of $\DGCat_{/B}$, the evaluation being given by
$$
\Dmod^!(\Y) \usotimes{\Dmod(B)} \Dmod^*(\Y) \to \Dmod(B),
\hspace{.4cm}
[M,N] \mapsto p_*(M \overset{!*}\otimes N).
$$

\sssec{}

Let us now redo the above construction over $\Ran$. Let $\IndSch^\pro_{/\Ran}$ be the $1$-category of relative indschemes of pro-finite type, see \ref{notation}.

\medskip

For $\Y \to \Ran$ a relative indscheme of pro-finite type, we define 
$$
\Dmod^!(\Y) := \lim_{I \in \fset, \wt\Delta^!} \Dmod^!(\Y_{X^I}), 
\hspace*{.4cm}
\Dmod^*(\Y) := \uscolim{I \in \fset^\op, \wt\Delta_*} \Dmod^*(\Y_{X^I}),
$$
where $\wt\Delta: \Y_{X^J} \to \Y_{X^I}$ is the closed embedding induced by $\Delta: X^J \hto X^I$ via pullback.

\medskip

It is clear that $\Dmod^!(\Y)$ and $\Dmod^*(\Y)$ are the global sections of their localizations over $\Ran$, denoted $\bold{\Dmod}^!(\Y)$ and $\bold{\Dmod}^*(\Y)$.

\sssec{}

For any affine algebraic group $H$, the group prestack $\Ha$ is a relative indscheme of pro-finite type over $\Ran$, while $H(\sO)$ is a relative group scheme of pro-finite type. When $H$ is unipotent, for instance $H=N$, the structure is richer.

\begin{lem}
For $H$ unipotent, the prestack $H(\sA)_{X^I}$ is an $\aleph_0$ ind-object (with our definition of closed embeddings!) in $\Groups (\Sch_{/X^I}^\pro)$. 
\end{lem}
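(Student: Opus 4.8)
The plan is to present $H(\sA)_{X^I}$, over $X^I$, as a countable filtered colimit $\colim_{n \in \NN} H(\sA)^{\leq n}_{X^I}$ of \emph{group} subschemes of pro-finite type, with transition maps that are group homomorphisms and closed embeddings of the restricted kind (pullbacks of closed embeddings of finite-type $X^I$-schemes). That $\Ha$ is a relative indscheme of pro-finite type is already known from the setup above; the extra content for $H$ unipotent is exactly that this presentation can be arranged inside $\Groups(\Sch^\pro_{/X^I})$. The construction rests on one structural input about unipotent groups in characteristic zero: the exponential identifies $H$, as a scheme, with its nilpotent Lie algebra $\mathfrak{h} \cong \mathbb{A}^d$, turning multiplication into the Baker--Campbell--Hausdorff map (polynomial, the series terminating by nilpotency), inversion into $h \mapsto -h$, and the unit into $0$. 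Grading $\mathfrak{h} = \bigoplus_i V_i$ by the lower central series, the BCH correction $(h \cdot h')_{(i)} - h_{(i)} - h'_{(i)}$ is a polynomial, of total degree at most the nilpotency class $c$, in the coordinates of blocks of index $<i$ \emph{only} --- this "triangularity" (iterated brackets raise the central-series degree) is what makes the argument work.

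First I would recall the $\GG_a$ case: $\GG_a(\sA)_{X^I}$ carries its standard exhaustion $\GG_a(\sA)_{X^I} = \colim_{m} \GG_a(\sA)^{\leq m}_{X^I}$ by the pro-finite-type subschemes of meromorphic jets of pole order $\leq m$ along $\Gamma_\x$, and each inclusion $\GG_a(\sA)^{\leq m}_{X^I} \hookrightarrow \GG_a(\sA)^{\leq m+1}_{X^I}$ is a closed embedding of the restricted kind: it is the preimage of the zero section along the "leading coefficient in pole degree $m{+}1$" map to the finite-type vector bundle over $X^I$ with fibre $t^{-(m+1)}\mathcal{O}(\D_\x)/t^{-m}\mathcal{O}(\D_\x)$, where $t$ is a local equation of $\Gamma_\x$. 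These facts I take from the setup recalled above.

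Then, with $c_i := c^{\,i-1}$, I would put
\[
H(\sA)^{\leq n}_{X^I} \;:=\; \prod_i \bigl( \GG_a(\sA)^{\leq c_i n}_{X^I} \bigr)^{\dim V_i} \;\subset\; H(\sA)_{X^I} \;\cong\; \prod_i \bigl( \GG_a(\sA)_{X^I} \bigr)^{\dim V_i},
\]
all products and the identification being over $X^I$, via the exponential coordinates and a graded basis of $\mathfrak{h}$. The points to check: \textbf{(i)} $H(\sA)^{\leq n}_{X^I}$ is a subgroup --- if $h, h'$ lie in it, then each monomial of $(h \cdot h')_{(i)} - h_{(i)} - h'_{(i)}$ is a product of at most $c$ coordinate functions from blocks of index $<i$, each of pole order $\leq c_{i-1} n$, hence the correction has pole order $\leq c\cdot c_{i-1}n = c_i n$, while inversion is negation and the identity is $0$; \textbf{(ii)} $H(\sA)^{\leq n}_{X^I}$ is of pro-finite type over $X^I$, being a finite product of such; \textbf{(iii)} the transition maps $H(\sA)^{\leq n}_{X^I} \hookrightarrow H(\sA)^{\leq n+1}_{X^I}$ are group homomorphisms and closed embeddings of the restricted kind, being finite products of the $\GG_a$-embeddings of the previous paragraph; \textbf{(iv)} $\colim_{n} H(\sA)^{\leq n}_{X^I} \xto{\sim} H(\sA)_{X^I}$ in $\PreStk_{/X^I}$ --- the maps being monomorphisms, it suffices to see on $S$-points that every meromorphic jet has its finitely many exponential coordinates of finite pole order, which holds since $\mathcal{O}(\D^\circ_\x) = \colim_N t^{-N}\mathcal{O}(\D_\x)$. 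Together these exhibit $\{ H(\sA)^{\leq n}_{X^I} \}_{n \in \NN}$ as the required $\aleph_0$ ind-object of $\Groups(\Sch^\pro_{/X^I})$.

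The step I expect to be the obstacle is \textbf{(i)}: the \emph{naive} exhaustion by a uniform pole bound is not by subgroups, because multiplying meromorphic functions adds pole orders, so the BCH corrections escape any fixed bound. The remedy --- letting the pole bound grow geometrically along the central-series grading, which is what forces the use of the exponential coordinate system rather than an arbitrary isomorphism $H \cong \mathbb{A}^d$ --- is the only genuinely non-formal point; steps \textbf{(ii)}--\textbf{(iv)}, together with the $\GG_a$ case, are routine bookkeeping with the conventions already in place.
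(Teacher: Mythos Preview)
Your argument is correct, but it is genuinely different from the paper's. The paper reduces to the case where $H$ is the unipotent radical $N$ of a reductive group $G$ with maximal torus $T$, and then uses the adjoint action of $T(\sA)_{X^I}$: for a dominant regular cocharacter $\mu$ one takes $N(\sA)^{m\mu}_{X^I} := \sigma_{m\mu} \cdot N(\sO)_{X^I} \cdot \sigma_{m\mu}^{-1}$, where $\sigma_{m\mu}$ is the section of $T(\sA)_{X^I}$ determined by $m\mu$. The exhaustion $N(\sA)_{X^I} = \bigcup_{m} N(\sA)^{m\mu}_{X^I}$ is then immediate from the fact that a dominant regular cocharacter contracts $N$ toward the identity. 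The key advantage of this extrinsic approach is that the subgroup property is \emph{automatic}: each $N(\sA)^{m\mu}_{X^I}$ is a conjugate of the group scheme $N(\sO)_{X^I}$, so no computation is needed, and each term of the exhaustion is even isomorphic as a scheme to $N(\sO)_{X^I}$. In root-space coordinates the resulting pole bounds grow \emph{linearly} with $m$ (namely $m\langle\mu,\alpha\rangle$ on the $\alpha$-root space), rather than geometrically along the central series as in your construction. Your approach, by contrast, is intrinsic to $H$ and makes no appeal to an ambient reductive group; the price is the BCH bookkeeping in step (i), which you have handled correctly. Both methods rest on the same underlying fact that pole-order conditions cut out closed subschemes of the restricted kind; the difference lies entirely in how one arranges for those subschemes to be subgroups.
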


\begin{proof}
It suffices to prove the result for $H = N(GL_n)$, the unipotent radical of $GL_n$. More generally, assume that $H$ is the unipotent radical of a reductive group $G$ and consider the map
$$
\mathsf{Ad} : T(\sA)_{X^I} \ustimes{X^I} N(\sO)_{X^I} \to N(\sA)_{X^I}
$$
given by conjugation.
Each  $T$-cocharacter $\mu$ naturally yields section $\sigma_\mu$ of $T(\sA)_{X^I} \to X^I$.
Denote by $N(\sA)_{X^I}^{\mu}$ the image of the map
$$
N(\sO)_{X^I} 
\xto{ \, \sigma_\mu \times \id_{N(\sO)} \, \,  }
T(\sA)_{X^I} \ustimes{X^I} N(\sO)_{X^I} 
\xto{\ms{Ad} \,}
 N(\sA)_{X^I}.
$$
Each $N(\sA)_{X^I}^{\mu}$ is a group subscheme of $N(\sA)_{X^I}$. It is easy to verify that, for $\mu$ dominant regular, $N(\sA)_{X^I}$ is the increasing union of $N(\sA)_{X^I}^{m \cdot \mu}$ as $m \to + \infty$.
\end{proof}

\begin{rem} \label{rem:NA-sequence}
Fix an ind-group-scheme presentation $N(\sA)_{X^I} \simeq \colim_{n \in \NN} N(\sA)_{n,X^I}$. Tautologically, $\Dmod^!(N(\sA)_{X^I}) \simeq \lim_n\Dmod^!(N(\sA)_{n,X^I})$, so that the datum of an action of $N(\sA)_{X^I}$ on $\CC \in \DGCat_{/X^I}$ is the same as a sequence of compatible $N(\sA)_{n,X^I}$-actions.
\end{rem}

\ssec{Categorical representation theory}

Let $B \in \Sch^{\fty}$ or $B= \Ran$. Since the functor 
$$
\Dmod^!: \IndSch^\pro_{/B} \to \Dmod(B) \mmod
$$
is symmetric monoidal, any group object $(H,m) \in \IndSch^\pro_{/B}$ gives rise to the Hopf algebra $(\Dmod^!(H), m^!, \Delta^!)$ in $\Dmod(B) \mmod$. 
As in \cite{Be}, we say that $H$ \emph{acts} on $\CC \in \Dmod(B) \mmod$ if $\CC$ is given the structure of a comodule over $(\Dmod^!(H),m^!)$; equivalently: a module over $(\Dmod^*(H), m_*)$.

\medskip

The object $\CC^H \in \Dmod(B) \mmod$, defined by the totalization of the usual cobar construction
$$
\CC^H := \lim \Bigt{ \CC \rr \CC \usotimes{\Dmod(B)} \Dmod^!(H) \rrr \cdots },
$$
is called the \emph{invariant category}; $\oblv^H: \CC^H \to \CC$ denotes the tautological conservative functor.
When $H$ belongs to $\Sch^\pro_H$, this functor admits a continuous right adjoint, $\Av_*^H$. If $H$ is also cohomologically contractible, $\Av_*^H$ is a colocalization.
In contrast, if $H$ is a group indscheme, then $\oblv^H$ typically does not admit a continuous right adjoint.

\sssec{} 

Let now $\G \to \Ran$ be a relative group indscheme. We say that $\G$ acts on $\Cshv \in \ShvCatCrys(\Ran)$ if $\Cshv$ is endowed with a coaction of 
$$
\bold{\Dmod}^!(\G) \in \CoAlg(\DGCatRan) \simeq \lim_{I \in \fset} (\ms{CoAlg}(\Dmod(X^I) \mmod)).
$$
We define $\Cshv^\G \in \DGCatRan$ formally as before.

\begin{lem} 
If $\G$ acts on $\CRan \in \R\mmod$, then $\G$ acts on $\Cshv:= \bLoc^\nabla_\Ran(\CRan)$ and there is a canonical equivalence $\bGamma^\nabla(\Ran,\Cshv^\G) \simeq \CRan^\G$.
\end{lem}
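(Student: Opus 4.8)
The plan is to deduce everything formally from two properties of $\bLoc^\nabla_\Ran$ recorded above: it is \emph{symmetric monoidal} as a functor $(\R\mmod, \usotimes{\R}) \to (\ShvCatCrys(\Ran), \Otimes)$, and it is \emph{fully faithful} (\lemref{lem:LocRan-fullyfaith}), which is to say that the unit $\id \to \bGamma^\nabla(\Ran,-)\circ \bLoc^\nabla_\Ran$ is an equivalence. In addition I would use that $\bGamma^\nabla(\Ran,-)$, being a right adjoint, preserves all limits. No genuinely new idea is involved; the proof is pure homotopical bookkeeping organized around these three facts.

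First I would unwind the hypothesis. An action of $\G$ on $\CRan \in \R\mmod$ is, by definition, a comodule structure on $\CRan$ over the coalgebra $(\Dmod^!(\G), m^!)$ in $\R\mmod$, where the comultiplication is $m^!$ for the multiplication $m\colon \G \ustimes{\Ran} \G \to \G$, and where one invokes the Künneth equivalence $\Dmod^!(\G \ustimes{\Ran} \G) \simeq \Dmod^!(\G) \usotimes{\R} \Dmod^!(\G)$ furnished by the symmetric monoidality of $\Dmod^!$ on relative indschemes of pro-finite type over $\Ran$. Since $\bLoc^\nabla_\Ran$ is symmetric monoidal it carries coalgebras to coalgebras and comodules over a coalgebra to comodules over its image: thus $\bLoc^\nabla_\Ran(\Dmod^!(\G), m^!)$ is a coalgebra in $\ShvCatCrys(\Ran)$ and $\Cshv = \bLoc^\nabla_\Ran(\CRan)$ is canonically a comodule over it. I would then identify this coalgebra with $\bold\Dmod^!(\G)$ (equipped with its tautological pointwise comultiplication), using exactly the statement recalled above that $\Dmod^!(\G)$ is the category of global sections of its localization $\bold\Dmod^!(\G)$, compatibly with all the structure maps. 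Granting this, $\Cshv$ acquires a $\bold\Dmod^!(\G)$-comodule structure, i.e.\ an action of $\G$; this is the first assertion.

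For the equivalence of invariant categories I would compare cobar resolutions. By construction $\CRan^\G$ and $\Cshv^\G$ are totalizations of the respective cobar cosimplicial objects: for $\CRan$ this is $\on{cb}^\bullet(\CRan)\colon \Delta \to \R\mmod$ with $\on{cb}^n(\CRan) \simeq \CRan \usotimes{\R} \Dmod^!(\G)^{\otimes_{\R} n}$ and face/degeneracy maps assembled from the coaction, the comultiplication $m^!$ and the counit; for $\Cshv$ it is the analogous object in $\ShvCatCrys(\Ran)$ built from $\bold\Dmod^!(\G)$ and the comodule structure produced in the previous step. Because $\bLoc^\nabla_\Ran$ commutes with the relative tensor products and the $\Cshv$-side structure maps were \emph{defined} by applying $\bLoc^\nabla_\Ran$ to the $\CRan$-side ones, functoriality of $\bLoc^\nabla_\Ran$ yields a canonical identification $\on{cb}^\bullet(\Cshv) \simeq \bLoc^\nabla_\Ran\circ \on{cb}^\bullet(\CRan)$ of cosimplicial objects. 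Applying $\bGamma^\nabla(\Ran,-)$, which preserves the totalization since it is a right adjoint, and using $\bGamma^\nabla(\Ran,-)\circ \bLoc^\nabla_\Ran \simeq \id$ by full faithfulness, I obtain
$$
\bGamma^\nabla(\Ran, \Cshv^\G) \simeq \lim_{[n] \in \Delta} \bGamma^\nabla\big(\Ran, \bLoc^\nabla_\Ran(\on{cb}^n(\CRan))\big) \simeq \lim_{[n]\in\Delta} \on{cb}^n(\CRan) = \CRan^\G,
$$
which is the claim.

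The only point requiring genuine care, i.e.\ the place where something beyond soft nonsense is used, is the identification $\bLoc^\nabla_\Ran(\Dmod^!(\G)) \simeq \bold\Dmod^!(\G)$ \emph{as coalgebras} — equivalently, the fact that $\bold\Dmod^!(\G)$ lies in the essential image of $\bLoc^\nabla_\Ran$ together with its comultiplication. For $\G$ of finite type over $\Ran$ this is a definition; in the pro-finite type setting relevant here it rests on the claim, established in the discussion of $\fD$-modules on indschemes of pro-finite type over $\Ran$, that $\Dmod^!(\G)$ and $\Dmod^*(\G)$ are the global sections of their localizations $\bold\Dmod^!(\G)$ and $\bold\Dmod^*(\G)$. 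Once that compatibility is granted, the argument proceeds exactly as sketched, with everything else being formal consequences of symmetric monoidality and full faithfulness of $\bLoc^\nabla_\Ran$.
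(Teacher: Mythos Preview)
Your argument is correct, but it is organized differently from the paper's. You run the cobar resolution on the $\R\mmod$ side, push it through $\bLoc^\nabla_\Ran$ using symmetric monoidality, and then collapse it back using that $\bGamma^\nabla$ preserves limits together with the full faithfulness of $\bLoc^\nabla_\Ran$ (\lemref{lem:LocRan-fullyfaith}). The paper instead never passes through the cobar resolution on the sheaf side: it writes $(\CRan)^\G \simeq (\uscolim{I}\Dmod(X^I))\usotimes{\R}(\CRan)^\G \simeq \lim_I \bigt{\Dmod(X^I)\usotimes{\R}(\CRan)^\G}$ and then commutes the tensor with the cobar limit using the \emph{dualizability} of $\Dmod(X^I)$ as an object of $\R\mmod$, obtaining $\lim_I (\Cmod{X^I})^{\G_{X^I}}$, which is $\bGamma^\nabla(\Ran,\Cshv^\G)$ by inspection. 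Your route is more abstract and only consumes the adjunction-level facts already on the table; the paper's route is more hands-on and, as a byproduct, makes the identification $\bGamma^\nabla(\Ran,\Cshv^\G)=\lim_I (\Cmod{X^I})^{\G_{X^I}}$ explicit, which is what gets used downstream. The one genuine input you flag --- that $\bLoc^\nabla_\Ran(\Dmod^!(\G))\simeq \bold\Dmod^!(\G)$ as coalgebras --- is indeed the content of the earlier discussion that $\Dmod^!(\G)$ is the global sections of its localization, so your appeal to it is legitimate.
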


\begin{proof}
Since $\bLoc^\nabla_\X$ is symmetric monoidal in general, it is immediate to deduce that $\Cshv$ inherits a $\G$-action. We now write
$$
(\CRan)^{\G} 
\simeq
(\uscolim{I\in\fset^\op} \Dmod(X^I)) \usotimes{\Dmod(\Ran)} (\CRan)^{\G}  \simeq
\lim_{I \in \fset} \bigt{ \Dmod(X^I) \usotimes{\Dmod(\Ran)} (\CRan)^{\G} }
\simeq
\lim_{I \in \fset} (\Cmod{X^I})^{\G_{X^I}},
$$
where the last step is a consequence of the dualizability of $\Dmod(X^I)$ as an object of $(\R\mmod, \Otimes)$.
Finally, note that the RHS equals $\bGamma^\nabla(\Ran,\Cshv^\G)$ by definition. 
\end{proof}

\sssec{} \label{sssec:invariants-NA}
\renc{\NA}{{N(\sA)}}
\nc{\Na}{N(\sA)}
\nc{\Nk}{N(\sK)}
\nc{\Tk}{T(\sK)}

In the two lemmas below, we present two cases in which the functor $\oblv^{\G}$ is fully faithful despite lacking a continuous right adjoint.

\begin{lem}
Let $\CRan \in \R\mmod$ be equipped with an action of $\NA$, where $N$ is a unipotent group. Then $\oblv^{\NA}: (\CRan)^{\NA} \to \CRan$ is fully faithful.
\end{lem}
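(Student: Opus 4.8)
The plan is to reduce the statement about $N(\sA)$, a group indscheme of infinite type, to the case of a unipotent group scheme of finite type, where full faithfulness of $\oblv^H$ is a standard fact (it even admits a continuous right adjoint, being contractible after the unipotent identification). I would exploit the filtration constructed in the preceding lemma: working over each $X^I$, write $N(\sA)_{X^I} \simeq \colim_{n \in \NN} N(\sA)_{n,X^I}$ as an increasing union of group subschemes of pro-finite type, so that by Remark \ref{rem:NA-sequence} the $N(\sA)_{X^I}$-action on $\Cmod{X^I}$ amounts to a compatible system of $N(\sA)_{n,X^I}$-actions, and
$$
(\Cmod{X^I})^{N(\sA)_{X^I}} \simeq \lim_{n} (\Cmod{X^I})^{N(\sA)_{n,X^I}}.
$$
Since $N$ is unipotent, each $N(\sA)_{n,X^I}$ lies in $\Sch^\pro_{/X^I}$ and is cohomologically contractible (being a successive extension of jet spaces into $\GG_a$, which are contractible by the methods of \cite{GL:contr}), so each $\oblv^{N(\sA)_{n,X^I}}$ is fully faithful with a continuous right adjoint; passing to the limit over $n$ keeps full faithfulness.

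Concretely, I would argue as follows. First, by the last lemma in the ``Categorical representation theory'' subsection, it suffices to work over a fixed $X^I$ and show $\oblv^{N(\sA)_{X^I}}: (\Cmod{X^I})^{N(\sA)_{X^I}} \to \Cmod{X^I}$ is fully faithful, since $(\CRan)^{\NA} \simeq \lim_I (\Cmod{X^I})^{N(\sA)_{X^I}}$ and a limit of fully faithful functors over a diagram in $\DGCat$ is fully faithful (the unit of the relevant adjunction is computed pointwise). Second, fix the presentation of Remark \ref{rem:NA-sequence}; the key input is that, for each $n$, the quotient $N(\sA)_{n,X^I}$-space underlying the cobar complex is built from contractible pieces, so the partial invariant functor $\oblv^{N(\sA)_{n,X^I}}$ is fully faithful. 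This is where I would invoke the general principle (as in \cite{Be}): if $H \in \Sch^\pro_{/X^I}$ is cohomologically contractible, then $\Av^H_*$ is a colocalization, hence $\oblv^H$ is fully faithful. Third, I would check that the natural functor $(\Cmod{X^I})^{N(\sA)_{X^I}} \to \lim_n (\Cmod{X^I})^{N(\sA)_{n,X^I}}$ is an equivalence — this is the tautological rewriting $\Dmod^!(N(\sA)_{X^I}) \simeq \lim_n \Dmod^!(N(\sA)_{n,X^I})$ applied to the cobar totalization, essentially commuting two limits.

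The main obstacle I anticipate is verifying that the \emph{transition functors} in the tower $\{(\Cmod{X^I})^{N(\sA)_{n,X^I}}\}_n$ behave well enough that the limit stays fully faithful over $\Cmod{X^I}$: a priori one only knows each $\oblv^{N(\sA)_{n,X^I}}$ is fully faithful, not that the restriction functors $(\Cmod{X^I})^{N(\sA)_{n+1,X^I}} \to (\Cmod{X^I})^{N(\sA)_{n,X^I}}$ (invariants for the larger group map to invariants for the smaller) interact correctly with $\oblv$. The clean way around this is to note that $\oblv^{N(\sA)_{n,X^I}}$ being fully faithful means $(\Cmod{X^I})^{N(\sA)_{n,X^I}}$ is a full subcategory of $\Cmod{X^I}$ (namely the $N(\sA)_{n,X^I}$-invariant objects, detected by a condition), and $(\Cmod{X^I})^{N(\sA)_{X^I}}$ is then literally the intersection $\bigcap_n (\Cmod{X^I})^{N(\sA)_{n,X^I}}$, which is a full subcategory of $\Cmod{X^I}$; full faithfulness of $\oblv^{N(\sA)_{X^I}}$ is immediate. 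One must only be slightly careful that ``full subcategory'' here is meant in the homotopy-coherent sense, i.e. that the relevant mapping spectra agree, which follows from the fact that totalization of the cobar complex commutes with the mapping-spectrum functor out of any fixed object.

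Finally, to descend from the $X^I$-level to $\Ran$: since $(\CRan)^{\NA} \simeq \lim_{I \in \fset}(\Cmod{X^I})^{N(\sA)_{X^I}}$ and $\CRan \simeq \lim_{I \in \fset}\Cmod{X^I}$ (via $\wt\Delta^\flip$), with $\oblv^{\NA}$ induced levelwise by the $\oblv^{N(\sA)_{X^I}}$, full faithfulness passes to the limit. This completes the argument; the only genuinely non-formal ingredient is the contractibility of the jet spaces $\Jets(\GG_a \times X/X)$ (and their iterated extensions), which is supplied by \cite{GL:contr} as recalled in the paragraph preceding this lemma.
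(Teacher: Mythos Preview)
Your proposal is correct and follows essentially the same approach as the paper: reduce to each $X^I$, use the ind-group-scheme presentation $N(\sA)_{X^I} \simeq \colim_n N(\sA)_{n,X^I}$ from Remark \ref{rem:NA-sequence}, note that each $\oblv^{N(\sA)_{n,X^I}}$ is fully faithful (pro-unipotent, hence contractible), and conclude that $(\Cmod{X^I})^{N(\sA)_{X^I}}$ is the intersection of the full subcategories $(\Cmod{X^I})^{N(\sA)_{n,X^I}} \hookrightarrow \Cmod{X^I}$. The paper's proof is just a two-sentence version of what you wrote, invoking Lemma \ref{lem:LocRan-fullyfaith} and the preceding lemma for the reduction to $X^I$; your more detailed discussion of why the limit of full subcategories remains a full subcategory is correct but not strictly needed once one phrases it as an intersection.
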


\begin{proof}
Thanks to Lemma \ref{lem:LocRan-fullyfaith} and the lemma above, it suffices to prove the following claim: for $\Cmod{X^I} \in \Dmod(X^I) \mmod$ acted on by $\NA_{X^I}$, the forgetful functor $\Cmod{X^I}^{\NA_{X^I}} \to \Cmod{X^I}$ is fully faithful.

Writing $\NA_{X^I} = \colim_{n \in \NN} \NA_n$ as in Remark \ref{rem:NA-sequence}, we see that $(\Cmod{X^I})^{\NA_{X^I}}$ is the intersection of the subcategories $(\Cmod{X^I})^{\NA_{n,X^I}} \subseteq \Cmod{X^I}$.
\end{proof}

\begin{lem}
Let $\CRan$ be a dualizable object of $\Dmod(\Ran) \mmod$, equipped with an action of a contractible relative group indscheme $\G$ (for instance, $\G:= H(\sK)_\Ran$ with $H$ an algebraic group). Then the forgetful functor $\oblv^{\G}: (\CRan)^{\G} \to \CRan$ is fully faithful. 
\end{lem}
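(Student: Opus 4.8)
The plan is to work directly with the cobar presentation of the invariant category and to verify full faithfulness of $\oblv^\G$ on mapping objects, exploiting contractibility of $\G$ to make the relevant cosimplicial mapping object essentially constant, and dualizability of $\CRan$ over $\R := \Dmod(\Ran)$ to transport this property across the tensor products that occur.

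First I would record the shape of the cobar construction. By definition $(\CRan)^\G$ is the totalization of the cosimplicial object $[n]\mapsto \CRan\usotimes{\R}\Dmod^!(\G)^{\usotimes{\R}n}$, and, $\Dmod^!$ being symmetric monoidal on relative indschemes of pro-finite type over $\Ran$, the degree-$n$ term is $\CRan\usotimes{\R}\Dmod^!(\G^{\times_{\Ran}n})$, where $\G^{\times_{\Ran}n}$ denotes the $n$-fold fibre power of $\G$ over $\Ran$; the functor $\oblv^\G$ is the canonical functor from this totalization to the degree-$0$ term $\CRan$. The structural point I would isolate is that the iterated \virg{insert the unit} coface $\CRan\to\CRan\usotimes{\R}\Dmod^!(\G^{\times_{\Ran}n})$ is exactly $\id_{\CRan}\usotimes{\R}p^!$, where $p^!\colon\R\to\Dmod^!(\G^{\times_{\Ran}n})$ is pullback along the structure map $\G^{\times_{\Ran}n}\to\Ran$ (equivalently, the unit of the algebra structure carried by $\Dmod^!(\G^{\times_{\Ran}n})$ as part of its Hopf-algebra structure). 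Consequently, for any $c\in(\CRan)^\G$, its component $c|_n$ in cosimplicial degree $n$ is canonically $(\id_{\CRan}\usotimes{\R}p^!)(c|_0)$, and the cosimplicial structure maps, applied to components of invariant objects, become those of the constant cosimplicial object at $c|_0$.

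Next I would bring in the hypotheses. Contractibility of $\G$ means precisely that $p^!\colon\R\to\Dmod^!(\G)$ is fully faithful; since each $\G^{\times_{\Ran}n}\to\Ran$ is obtained from $\G\to\Ran$ by iterated base change, and each $\Dmod^!(\G^{\times_{\Ran}k})$ is dualizable over $\R$ (being dual to $\Dmod^*(\G^{\times_{\Ran}k})$), a routine induction gives that $p^!\colon\R\to\Dmod^!(\G^{\times_{\Ran}n})$ is fully faithful for every $n$, using that tensoring over $\R$ with a dualizable category $\D$ preserves fully faithful functors (because $-\usotimes{\R}\D$ then commutes with limits, hence with the diagonal that detects monomorphisms in $\DGCat$). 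Applying this once more, now tensoring with $\CRan$ which is dualizable over $\R$ by hypothesis, we get that $\id_{\CRan}\usotimes{\R}p^!\colon\CRan\to\CRan\usotimes{\R}\Dmod^!(\G^{\times_{\Ran}n})$ is fully faithful for all $n$. Combining with the previous paragraph, for $c,c'\in(\CRan)^\G$ we obtain $\Hom_{(\CRan)^\G}(c,c')\simeq\Tot_{[n]\in\Delta}\Hom_{\CRan\usotimes{\R}\Dmod^!(\G^{\times_{\Ran}n})}(c|_n,c'|_n)\simeq\Tot_{[n]\in\Delta}\Hom_{\CRan}(c|_0,c'|_0)$, the second equivalence by the full faithfulness just established, compatibly with the cosimplicial structure. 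The cosimplicial object on the right is constant, so its totalization is its value $\Hom_{\CRan}(c|_0,c'|_0)=\Hom_{\CRan}(\oblv^\G c,\oblv^\G c')$; unwinding the identifications, the resulting equivalence is the map induced by $\oblv^\G$, which is therefore fully faithful. For the stated example $\G=H(\sK)_\Ran$, all the fibre powers are contractible by \thmref{thm:contr:Ga}, so the hypothesis holds.

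The main obstacle I expect is the middle step: justifying carefully that, for invariant objects, the component in cosimplicial degree $n$ is the $p^!$-pullback of the degree-$0$ component, and that the levelwise equivalences $\Hom_{\CRan\usotimes{\R}\Dmod^!(\G^{\times_{\Ran}n})}(c|_n,c'|_n)\simeq\Hom_{\CRan}(c|_0,c'|_0)$ assemble into an equivalence between the cosimplicial mapping object and the constant one. This rests on the Hopf-algebra structure on $\Dmod^!(\G)$ — in particular on the algebra unit $p^!$ being compatible with the coalgebra structure used to form $(\CRan)^\G$ — but making it precise is a matter of bookkeeping with the coherences of the cobar construction, not of any further geometric input; everything else (contractibility $\Rightarrow$ $p^!$ fully faithful, dualizability $\Rightarrow$ stability of full faithfulness under $\usotimes{\R}$) is formal.
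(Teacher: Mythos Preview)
Your argument is correct and uses the same ingredients as the paper: the cobar presentation, contractibility of $\G$ giving full faithfulness of $p^!$, and dualizability of $\CRan$ (and of $\Dmod^!(\G)$) to propagate full faithfulness through tensor products over $\R$. The paper packages this more efficiently, however. Rather than working at the level of objects and Hom spaces, it observes that $\CRan$ is itself the limit of the \emph{shifted} cobar $\CRan\Otimes\Dmod^!(\G)\rr\CRan\Otimes\Dmod^!(\G)^{\Otimes 2}\rrr\cdots$ (this cosimplicial object is split, since $\G$ acts on $\CRan$), so that $\oblv^\G$ is realized as the limit of the levelwise functors $\phi_n=\id\otimes p^!\colon\CRan\Otimes\Dmod^!(\G)^{\Otimes n}\to\CRan\Otimes\Dmod^!(\G)^{\Otimes(n+1)}$. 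Each $\phi_n$ is fully faithful for exactly the reasons you give, and a limit of fully faithful functors is fully faithful. This sidesteps precisely the bookkeeping you flag as the main obstacle: there is no need to identify the components $c|_n$ of an invariant object, nor to verify that the levelwise Hom equivalences assemble into an equivalence of cosimplicial objects.
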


\begin{proof}
Writing $\CRan$ as the limit of the split cosimplicial category $\CRan \Otimes \Dmod(\G) \rr \CRan \Otimes \Dmod(\G)^{\otimes 2} \cdots$, the functor $\oblv^\G$ arises as a limit of functors $\phi_n: \CRan \Otimes \Dmod(\G)^{n} \to \CRan \Otimes \Dmod(\G)^{n+1}$, each of which is obtained from $p^!: \Dmod(\Ran) \to \Dmod(\G)$ by tensoring up with a dualizable object in $\Dmod(\Ran)\mmod$. By the contractibility of $\G$, each $\phi_n$ is fully faithful, whence so is $\oblv^\G$.
\end{proof}

\sssec{Actions by semi-direct products} \label{semi-direct}

Let $B$ a scheme of finite type or $B=\Ran$. Assume $\G$ and $\H$ are group inschemes over $B$ with $\G$ acting on $\H$.
If $\G \ltimes \H$ acts on $\Cmod{B}$, then the $\oblv^\H: (\Cmod{B})^\H \to \Cmod{B}$ is $\G$-equivariant. By commuting limits with limits, we see that $((\Cmod{B})^\H)^\G$ is the fiber product $(\Cmod{B})^\G \times_{\Cmod{B}} (\Cmod{B})^\H$. Also, the natural functor from $(\Cmod{B})^{\G \ltimes \H}$ to such fiber product is an equivalence.

\ssec{Sheaves of categories over quotient prestacks}

To prove that the two definitions of the Whittaker category $\Wh(G,P)$ (cf. Section \ref{sssec:degenerate-Whittaker-functions} for the function-theoretic analogy) are equivalent, we need to discuss crystals of categories over quotients of a prestack by a group prestack. 
To guarantee the expected result (e.g. Proposition \ref{prop:stabilizer}), one has to impose a technical condition on the prestacks involved. This technical condition is satisfied in all examples considered in the sequel.

\sssec{}

Let $(\G,m)$ a group object in $\PreStk^{\fty}$, which can be written as a colimit of $1$-affine schemes along proper transitions maps. We have the \emph{monoidal} equivalence 
$$
(\ShvCatCrys(\pt/\G), \Otimes) \simeq \bigt{ (\Dmod(\G),m^!) \ccomod, \otimes}.
$$
Under this equivalence, the invariant category $\Ccat^\G$ corresponds to the value of $\bGamma^\nabla(\pt/\G, -)$ on the sheaf associated to $\Ccat$.

\medskip

Let now $\Y \in \PreStk^{\fty}$, also assumed to be a colimit of schemes along proper maps. Assume that $\G$ acts on $\Y$ and denote by $f: (\Y/\G)_\dR \to (\pt/\G)_\dR$ the obvious map.
Consider the usual adjunction
$$ 
\cores_f: \ShvCatCrys(\pt/\G) \rightleftarrows \ShvCatCrys(\Y/\G): \coind_f.
$$
Now, the composition $\coind_f \circ \cores_f$, considered as an endofunctor of $\Dmod(\G) \ccomod$, sends $\Ccat \squigto \Ccat \otimes \Dmod(\Y)$, where $\Ccat \otimes \Dmod(\Y)$ is equipped with the diagonal $\G$-action.

\begin{prop} \label{prop:stabilizer}
With the notation as above, let $\H \hto \G$ a subgroup and $\Z \hto \Y$ be an $\H$-invariant closed subscheme such that $i: (\Z/\H)_\dR \to (\Y/\G)_\dR$ is an isomorphism. Then, for $\CC$ a category acted on by $\G$, pullback along $\Z \to \Y$ yields an equivalence
$$
\bigt{ \Ccat \otimes \Dmod(\Y) }^\G
\xto{\;\; \simeq \;\;}
\bigt{ \Ccat \otimes \Dmod(\Z) }^\H.
$$
\end{prop}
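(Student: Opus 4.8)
The plan is to reinterpret both invariant categories as global sections of sheaves of categories over de Rham prestacks, and then to read off the equivalence from the hypothesis that $i$ is an isomorphism: the \virg{pullback} functor of the statement will become, after these identifications, restriction of sheaves of categories along $i$, which is an equivalence because $i$ is.

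First I would fix notation. Let $f : (\Y/\G)_\dR \to (\pt/\G)_\dR$ and $g : (\Z/\H)_\dR \to (\pt/\H)_\dR$ be the structure maps, and $h : (\pt/\H)_\dR \to (\pt/\G)_\dR$ the map induced by $\H \hookrightarrow \G$; for a map $\phi$ of de Rham prestacks I write $\phi^{*}$ for $\cores_\phi$, the pullback of sheaves of categories. I also write $\Ccat$ for the object of $\ShvCatCrys(\pt/\G)$ attached to the $\G$-category $\Ccat$, so that $h^{*}\Ccat \in \ShvCatCrys(\pt/\H)$ is the sheaf attached to $\Ccat$ with its restricted $\H$-action. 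By the identity recalled just before the statement (the one computing $\coind_f\circ\cores_f$), $\Ccat \otimes \Dmod(\Y) \simeq \coind_f f^{*}(\Ccat)$ in $\ShvCatCrys(\pt/\G)$, with the diagonal $\G$-action on the left; the same identity applied to $(\H,\Z)$ in place of $(\G,\Y)$ gives $\Ccat \otimes \Dmod(\Z) \simeq \coind_g g^{*}(h^{*}\Ccat)$ in $\ShvCatCrys(\pt/\H)$, with the diagonal $\H$-action. Taking $\bGamma^\nabla$ and using the general fact $\bGamma^\nabla(\pt/\G, \coind_f(-)) \simeq \bGamma^\nabla(\Y/\G, -)$ --- which holds because $\coind_f$ is the pushforward of sheaves of categories and global sections over $\Y/\G$ are global sections over $\pt/\G$ of that pushforward --- I obtain
$$
\bigt{\Ccat \otimes \Dmod(\Y)}^{\G} \simeq \bGamma^\nabla(\Y/\G,\, f^{*}\Ccat), \qquad \bigt{\Ccat \otimes \Dmod(\Z)}^{\H} \simeq \bGamma^\nabla(\Z/\H,\, (h\circ g)^{*}\Ccat).
$$

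Now the $\H$-equivariant closed embedding $\Z \hookrightarrow \Y$ --- equivariant for the $\H$-action on $\Y$ through $\H \hookrightarrow \G$ --- fits into a commutative square with these structure maps as vertical arrows; applying $(-)_\dR$ yields $f\circ i = h\circ g$ as maps $(\Z/\H)_\dR \to (\pt/\G)_\dR$, whence $(h\circ g)^{*}\Ccat \simeq i^{*}f^{*}\Ccat$. Since $i$ is an isomorphism by hypothesis, $i^{*}$ is an equivalence of sheaves of categories and the canonical comparison $\bGamma^\nabla(\Y/\G, f^{*}\Ccat) \to \bGamma^\nabla(\Z/\H, i^{*}f^{*}\Ccat)$ is an equivalence; composing the three displayed equivalences produces the asserted one. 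The single remaining point --- and the step I expect to be the actual work, the rest being formal --- is to check that the equivalence so produced is the functor of the statement, i.e.\ the composite of the forgetful functor $(\Ccat\otimes\Dmod(\Y))^{\G} \to (\Ccat\otimes\Dmod(\Y))^{\H}$ with the functor induced by $\Dmod$-pullback along $\Z \hookrightarrow \Y$. For this one unwinds the identifications above: forgetting equivariance along $\H \hookrightarrow \G$ corresponds to restriction of sheaves of categories along $(\Y/\H)_\dR \to (\Y/\G)_\dR$ applied to $f^{*}\Ccat$, the $\Dmod$-pullback along $\Z \hookrightarrow \Y$ corresponds to restriction along $(\Z/\H)_\dR \to (\Y/\H)_\dR$, and the composite of these two is restriction along $i$; with that identification the claim becomes pure functoriality of $\bGamma^\nabla$ for maps of de Rham prestacks, handled exactly as the parallel compatibilities in \cite{GL:shvcat}.
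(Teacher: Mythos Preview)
Your proof is correct and follows essentially the same approach as the paper: both arguments identify the two invariant categories with $\bGamma^\nabla$ of the appropriate sheaves of categories over $(\Y/\G)_\dR$ and $(\Z/\H)_\dR$, invoke the identity $\bGamma^\nabla(\Y/\G,-) \simeq \bGamma^\nabla(\pt/\G,\coind_f(-))$, and then use that $i$ is an isomorphism. Your version is more explicit than the paper's in verifying that the resulting equivalence is indeed the pullback functor of the statement; the paper compresses this into a single diagram chase.
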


\begin{proof}
The diagram
\begin{gather} \nonumber
\xy
(00,0)*+{ \ShvCatCrys(\pt/\H) }="00";
(40,0)*+{ \ShvCatCrys(\Z/\H)  }="10";
(0,20)*+{ \ShvCatCrys(\pt/\G)  }="01";
(40,20)*+{ \ShvCatCrys(\Y/\G)  }="11";
(80,00)*+{ \DGCat  }="2";
{\ar@{->}^{ } "00";"10"};
{\ar@{->}^{ \cores_f } "01";"11"};
{\ar@{->}_\simeq^{ \cores_{i}  } "11";"10"};
{\ar@{->}^{    } "01";"00"};
{\ar@{->}^{ \bGamma^\nabla   } "10";"2"};
{\ar@{->}^{ \bGamma^\nabla   } "11";"2"};
\endxy
\end{gather}	
is commutative, thanks to the fact that $i$ is an equivalence. Start with $\Ccat$ at the top-left corner and follow it along the two possible paths towards the botton-right corner, using the identity $\bGamma^\nabla(\Y/\G,-) \simeq \bGamma^\nabla(\pt/\G, \coind_f(-))$. 
\end{proof}

\sec{Naive-unital structures on categories tensored over $\Ran(X)$} \label{SEC:unital}

\nc{\co}{\mathsf{co}}
\renc{\comod}{\mathsf{\on- comod}}

Recall our notation: $\R := (\Dmod(\Ran), \otimes)$. Objects of the $\infty$-category $\R \mmod$ are called \virg{categories tensored over $\Ran$}. This definition has of course nothing to do with the semigroup structure of $\Ran$ (given by union of finite sets), reviewed below. \emph{Naive-unitality} is a piece of structure on an object $\CRan \in \R\mmod$ that takes the latter into account.\footnote{The term \virg{unital} has been reserved for sheaves of categories over $\Ran_\dR$, see \cite{GL:Ran} and \cite{R}. Here, we just consider their global sections, hence the choice of the term \virg{naive-unital}.}

We shall first describe unital spaces over $\Ran$. Categories of $\fD$-modules on such spaces inspire the definition of \emph{naive-unitality} for an object $\CRan \in \R\mmod$. This kind of structure is important because it allows to \virg{mod out the Ran space}: i.e., it allows to extract a subcategory, called $\Cind$, of the DG category underlying $\CRan$.

\ssec{Definition and basic examples}

\sssec{}

The operation $\add: \Ran \times \Ran \to \Ran$ of union of non-empty finite sets  makes $\Ran$ into a monoid in the category of prestacks. Let $\R^\co$ denote the comonoidal category $(\Dmod(\Ran), \add^!)$.

\begin{defn}[See \cite{Ba2}, \cite{R}]

We say that a prestack $\Z$ over $\Ran$ is \emph{unital} if it has been given the structure of an object of the slice category 
$$
\bigt{ (\Ran, \add)\mod(\PreStk)}_{/\Ran};
$$
in other words, $\Z$ is equipped with an action of $(\Ran,\add)$ making the structure map $\Z \to \Ran$ a morphism of $(\Ran,\add)$-modules.

\end{defn}

Let $\Y$ be an arbitrary prestack (resp., a prestack over $X$). It easy to see that $\Y(\sK):= \GMapsRan{\Y}$ (resp., $\GSectRan{\Y}$) and $\GrGRan$ all possess a unital structure. For instance, in the first case, the action map is
\begin{eqnarray}
\ms{act}: \Ran \times \GMapsRan{\Y} \to \GMapsRan{\Y}, \nonumber \\
(\x; \y, f: X - \y \to \Y) \mapsto (\x \cup \y, \restr{f}{X- (\x \cup \y)}).
\end{eqnarray}

\sssec{}

The category $(\Ran, \add)\mod(\PreStk)$ is naturally symmetric monoidal and $(\Ran, \Delta)$ is a coalgebra object therein. By pullback, $\R$ acquires the structure of a commutative algebra in the symmetric monoidal $\infty$-category $\R^\co \ccomod$. Hence, it makes sense to consider the $\infty$-category $\R \mod (\R^\co \ccomod)$.

\medskip

Let now $Y := \Y$ be a scheme of finite type. By pull-back, $\Dmod( \GMapsRan{Y})$, $\Dmod(\GSectRan{Y})$ and $\Dmod(\GrGRan)$ acquire the structure of objects of the latter $\infty$-category. These examples motivate the following definition.

\begin{defn}
The $\infty$-category
$$
\R\mmod^{\un} := \R \mod (\R^\co \ccomod)
$$
will be referred to as the $\infty$-category of \emph{naive-unital $\R$-modules}.

\end{defn}

\sssec{}

Since the forgetful functor $\R^\co \ccomod \to \DGCat$ is symmetric monoidal, we obtain a functor $\theta: \R \mod (\R^\co \ccomod) \to \R \mmod$. This is the functor that \virg{forgets the naive-unital structure}.
After the foundational work \cite{Lu:HA}, the following result is formal.

\begin{lem}  \label{lem:unital-formal}
$\R\mmod^{\un}$ admits:
\begin{itemize}
\item
all limits and colimits, compatible with the obvious forgetful functor $\theta: \R\mmod^{\un} \to \R\mmod$;
\item
a symmetric monoidal product with respect to which $\theta$ is monoidal.
\end{itemize}
\end{lem}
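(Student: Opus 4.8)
The plan is to deduce Lemma~\ref{lem:unital-formal} directly from the general machinery of \cite{Lu:HA}, observing that $\R\mmod^{\un} = \R\mod(\R^\co\ccomod)$ is built by iterating two operations — forming comodules over a coalgebra and forming modules over an algebra — each of which, in the $\infty$-categorical setting, is known to preserve the relevant structure.

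First I would address the ambient category $\S := \R^\co\ccomod = (\Dmod(\Ran),\add^!)\ccomod(\DGCat)$. Since $\DGCat$ is presentable, symmetric monoidal, with tensor product commuting with colimits separately in each variable, the $\infty$-category of comodules over a fixed coalgebra object is again presentable: limits in $\S$ are computed in $\DGCat$, and colimits are computed via the forgetful functor $\oblv_\S : \S \to \DGCat$, which admits a right adjoint (cofree comodule) and is conservative, so by the monadicity/comonadicity formalism of \cite{Lu:HA} it creates limits and reflects colimits in a controlled way. Moreover, since $(\Ran,\add^!)$ is a \emph{cocommutative} coalgebra (the monoid $(\Ran,\add)$ being commutative), $\S$ inherits a symmetric monoidal structure — the tensor product of two comodules carries the diagonal coaction — for which $\oblv_\S$ is symmetric monoidal; this is the standard statement that comodules over a cocommutative coalgebra in a symmetric monoidal $\infty$-category form a symmetric monoidal $\infty$-category. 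The tensor product on $\S$ still commutes with colimits in each variable because $\oblv_\S$ is monoidal, preserves colimits, and is conservative.

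Second, I would pass to $\R\mod(\S)$, where $\R = (\Dmod(\Ran),\otimes)$ is regarded as an algebra object of $\S$ (it is: $\Dmod(\Ran)$ carries compatibly the pointwise tensor product $\otimes$ and the $\add^!$-comodule structure, the two being interchanged by a lax/oplax comparison, which is exactly the content of the definition of $\R\mmod^{\un}$ in the excerpt). For $A$ an algebra in a presentable symmetric monoidal $\infty$-category $\S$ whose tensor product preserves colimits in each variable, $A\mod(\S)$ is again presentable: limits and colimits are created by $\oblv_A : A\mod(\S) \to \S$, by \cite{Lu:HA}; applying this to $\S$ as above, and composing with $\oblv_\S$, gives that limits and colimits in $\R\mmod^{\un}$ are created by the composite forgetful functor to $\DGCat$, hence in particular are compatible with $\theta = \oblv_\S \circ(\text{restriction along }A=\R)$. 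For the monoidal structure: if $A$ is moreover a \emph{commutative} algebra object of $\S$ — which $\R$ is, $\otimes$ being symmetric — then $A\mod(\S)$ is symmetric monoidal via relative tensor product $\usotimes{A}$, and the free-module functor $\S \to A\mod(\S)$ is symmetric monoidal; dually, $\theta$ (the ``forget down to $\R\mmod$'' functor), being itself a base-change / forgetful functor along the monoidal functor $\S \to \R\mmod$ that forgets coactions, is symmetric monoidal. Assembling the two steps yields both bullet points.

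I do not expect a serious obstacle here — the word ``formal'' in the statement is accurate, and the whole point is that \cite{Lu:HA} has isolated precisely the hypotheses (presentability of $\DGCat$, colimit-preservation of $\otimes$, cocommutativity of $(\Ran,\add^!)$, commutativity of $\R$) under which these conclusions hold. The one place requiring a little care is bookkeeping the symmetric monoidal structures so that the \emph{same} forgetful functor $\theta$ is simultaneously the one that creates limits and colimits and the one that is monoidal; this is handled by checking that $\theta$ factors as $\R\mod(\S) \to \S \to \DGCat$ with the first arrow the evident forgetful functor along the unit-algebra comparison and the second $\oblv_\S$, and that each factor is both continuous–cocontinuous and symmetric monoidal. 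Writing this out is a routine diagram chase in the $\infty$-categorical formalism and I would present it as such, citing \cite{Lu:HA} for the two generic statements (module categories and comodule categories are presentable and inherit symmetric monoidal structures from commutative algebras resp. cocommutative coalgebras, with the forgetful functors creating (co)limits and being monoidal).
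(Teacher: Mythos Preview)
Your proposal is correct and matches the paper's approach exactly: the paper gives no argument beyond the sentence ``After the foundational work \cite{Lu:HA}, the following result is formal,'' and you have simply unpacked what that means. One small wording issue: your description of $\theta$ as ``$\oblv_\S \circ(\text{restriction along }A=\R)$'' and later as ``a base-change / forgetful functor along the monoidal functor $\S \to \R\mmod$'' is muddled --- the clean statement is that $\theta$ is the functor $\R\mod(\S) \to \R\mod(\DGCat)$ induced by the symmetric monoidal, colimit-preserving forgetful functor $\oblv_\S : \S \to \DGCat$, and a symmetric monoidal colimit-preserving functor between presentable symmetric monoidal $\infty$-categories induces a symmetric monoidal functor on module categories over any commutative algebra object (since relative tensor products are computed as colimits of bar constructions, which $\oblv_\S$ preserves). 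With that correction your argument is complete.
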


\ssec{Less basic examples}

\nc{\Corr}{\ms{Corr}}

Besides unital spaces, there are several other spaces $\Y \to \Ran$, e.g. $\Y = G(\sA)$, whose categories of $\fD$-modules naturally (but not obviously!) belong to $\R\mmod^\un$. The most interesting examples are obtained by considering indschemes with an action of $\Ran$ in the setting of correspondences.

\sssec{}

\nc{\PsIndSchPro}{\ms{PsIndSch}^{\pro}}

Recall $\PsIndSchPro$, the $1$-category of pseudo-indschemes of pro-finite type, see Notation \ref{notation}, and consider the symmetric monoidal $1$-category $\Corr := \Corr(\PsIndSchPro)$ defined as follows.

\medskip

Objects of $\Corr$ are the same as the ones of $\PsIndSchPro$; the set of morphisms between $\X$ and $\Y$ consists of diagrams $\X \leftarrow \H \to \Y$ in $\PsIndSchPro$, where $\H \to \X$ is arbitrary while $\H \to \Y$ is required to be finitely presented. 
The symmetric monoidal structure of $\Corr$ is inherited by the one of $\PsIndSchPro$, that is, the tautological functor $\PsIndSchPro \to \Corr$ is symmetric monoidal. It follows that the monoid object $(\Ran, \add) \in \PsIndSchPro$ is in particular a monoid object of $\Corr$.

\medskip

Thanks to \cite{GaiRos} and \cite{R}, the usual functor $\Dmod^!$ admits an enhancement to a \emph{symmetric monoidal} functor 
$\Dmod^!: (\Corr, \times) \to (\DGCat, \otimes)$, whose action on $1$-morphisms is
$$
\X \xleftarrow{\alpha} \H \xto{\beta} \Y
\squigto
\beta_+ \circ \alpha^! : \Dmod^!(\X) \to \Dmod^!(\Y).
$$
Here, $\beta_+: \Dmod^!(\H) \to \Dmod^!(\Y)$ is the functor that satisfies base change with respect to $!$-pullbacks. It is defined whenever $\beta$ is finitely presented.

\sssec{}
\nc{\RanCorr}{\ms{RanCorr}}

 Let us consider the $1$-category $\RanCorr$ defined as follows. Its objects are pseudo-indschemes of pro-finite type equipped with an action of $(\Ran, \add)$ in $\Corr$, informally presented as pairs $(\Y, \Ran \times \Y \leftarrow \H \to \Y)$, where $\Y, \H \in \IndSch^\pro$ and $\H \to \Y$ is of finite type. 
Morphims in $\RanCorr$ are maps of pseudo-indschemes commuting with the $\Ran$-action: in other words, morphisms between $(\Y, \Ran \times \Y \leftarrow \H \to \Y)$ and $(\Y', \Ran \times \Y' \leftarrow \H' \to \Y')$ consist of commutative diagrams
\begin{gather}  
\xy
(0,0)*+{ \Ran \times \Y'}="00";
(27,0)*+{ \H'}="10";
(54,0)*+{ \Y'}="20";
(0,12)*+{ \Ran \times \Y }="01";
(27,12)*+{ \H}="11";
(54,12)*+{ \Y}="21";
{\ar@{->}^{   } "10";"00"};
{\ar@{->}^{     } "11";"01"};
{\ar@{->}^{   } "10";"20"};
{\ar@{->}^{  } "11";"21"};
{\ar@{->} "11";"10"};
{\ar@{->} "01";"00"};
{\ar@{->} "21";"20"};
\endxy
\end{gather}	
compatible with further compositions.
\nc{\PseudoIndSch}{\mathsf{Pseudo}\on-\ms{IndSch}}

\begin{rem}
The objects of $\RanCorr$ and of $(\Ran, \add) \mod(\Corr)$ coincide. However, these two $1$-categories are not equivalent: morphisms are defined differently.
\end{rem}


\sssec{}

Consider now the slice category $\RanCorr_{/\Ran}$: informally, its objects are described by commutative diagrams 
\begin{gather}   \label{diag:object of RanCorroverRan}
\xy
(27,0)*+{ \Ran \times \Ran}="10";
(54,0)*+{ \Ran.}="20";
(0,12)*+{ \Ran \times \Y }="01";
(27,12)*+{ \H}="11";
(54,12)*+{ \Y}="21";
{\ar@{->}^{     } "11";"01"};
{\ar@{->}^{ \add  } "10";"20"};
{\ar@{->}^{  } "11";"21"};
{\ar@{->} "11";"10"};
{\ar@{->}_{\id \times p} "01";"10"};
{\ar@{->}^{p} "21";"20"};
\endxy
\end{gather}	
Note that $\RanCorr_{/\Ran}$ is symmetric monoidal, the product being $\times_\Ran$. More precisely, the product between the object represented by (\ref{diag:object of RanCorroverRan}) and its analogue decorated with primes is given by
\begin{gather} 
\xy
(40,0)*+{ \Ran \times \Ran}="10";
(80,0)*+{ \Ran.}="20";
(0,15)*+{ \Ran \times (\Y \ustimes{\Ran} \Y' ) }="01";
(40,15)*+{ \H \ustimes{\Ran \times \Ran} \H'}="11";
(80,15)*+{ \Y \ustimes{\Ran} \Y'}="21";
{\ar@{->}^{     } "11";"01"};
{\ar@{->}^{ \add  } "10";"20"};
{\ar@{->}^{  } "11";"21"};
{\ar@{->} "11";"10"};
{\ar@{->}_{} "01";"10"};
{\ar@{->}^{} "21";"20"};
\endxy
\end{gather}	

\begin{lem}
The functor $\Dmod^!: \RanCorr_{/\Ran} \to \DGCat$ lifts naturally to a symmetric monoidal functor $\Dmod^!: \RanCorr_{/\Ran} \to \R\mmod^\un$.
\end{lem}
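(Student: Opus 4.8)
The assertion is essentially formal, given that $\Dmod^! : (\Corr,\times) \to (\DGCat,\otimes)$ is symmetric monoidal (the enhancement just recalled, after \cite{GaiRos} and \cite{R}): symmetric monoidal functors transport (co)algebra objects to (co)algebra objects and (co)modules to (co)modules, functorially in the ambient diagrams. The plan is to realize the two pieces of structure that constitute a naive-unital $\R$-module — the $(\Dmod(\Ran),\otimes)$-module structure and the $(\Dmod(\Ran),\add^!)$-comodule structure, together with their compatibility — as the images under $\Dmod^!$ of the two pieces of structure already carried by an object of $\RanCorr_{/\Ran}$: the structure of a space over $\Ran$, and the structure of an $(\Ran,\add)$-action in $\Corr$.

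For the $\R$-module part I would note that, forgetting the action correspondence, $\RanCorr_{/\Ran}$ maps to $(\IndSch^\pro_{/\Ran}, \times_\Ran)$, on which $\Dmod^!$ restricts to the symmetric monoidal functor into $(\R\mmod, \Otimes)$ discussed earlier (source-unit $\Ran$, target-unit $\R$). This endows $\Dmod^!(\Y)$ with its underlying $\R$-module structure, functorially and monoidally. For the $\R^\co$-comodule part I would first identify the relevant coalgebra: regard $(\Ran,\add)$ as a (cocommutative, non-counital) coalgebra in $(\Corr,\times)$, with comultiplication the correspondence
$$\Ran \xleftarrow{\;\add\;} \Ran\times\Ran \xrightarrow{\;\id\;} \Ran\times\Ran,$$
so that $\Dmod^!$ of it is $\id_+ \circ \add^! = \add^!$, which is precisely $\R^\co$; coassociativity is associativity of $\add$. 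Turning the action correspondence $\Ran\times\Y \leftarrow \H \to \Y$ of an object $\Y \in \RanCorr$ around into the coaction correspondence $\Y \xleftarrow{\beta} \H \xrightarrow{\alpha} \Ran\times\Y$ exhibits $\Y$ as a comodule over this coalgebra in $\Corr$; applying $\Dmod^!$ yields an object of $\R^\co\ccomod$, with coaction $\alpha_+ \circ \beta^! : \Dmod^!(\Y) \to \Dmod^!(\Ran\times\Y) \simeq \R \otimes \Dmod^!(\Y)$. This step uses that $\alpha$ is of finite type, so that $\alpha_+$ is defined; this holds in the examples of interest — e.g. $\alpha$ is an isomorphism for unital spaces — and one verifies it, or imposes it, on $\RanCorr$.

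It remains to combine the two structures into a single object of $\R \mod (\R^\co\ccomod)$ and to check that the lift is functorial and symmetric monoidal. Rather than gluing the two structures by hand, I would carry out the above entirely inside $\Corr_{/\Ran}$ — where $\Ran$ is the monoidal unit and carries both the coalgebra structure of the previous paragraph and its tautological self-module structure, and where an object of $\RanCorr_{/\Ran}$ is a coalgebra-comodule equipped with a map to $\Ran$ — and apply the symmetric monoidal $\Dmod^!$ there in one stroke. The compatibility axiom of $\R\mmod^{\un}$ then descends from a coherence already present upstairs, functoriality in morphisms of $\RanCorr_{/\Ran}$ is automatic, and monoidality of the lift with respect to $\times_\Ran$ and the monoidal product on $\R\mmod^{\un}$ of Lemma \ref{lem:unital-formal} follows from monoidality of $\Dmod^!$ and of the forgetful functor $\theta : \R\mmod^{\un} \to \R\mmod$.

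The main obstacle is the bookkeeping in the middle step: checking that the $(\Ran,\add)$-action truly yields a co\emph{module} over $\R^\co$ — not a module — which hinges on reading the action correspondence in the correct direction and on the finite-type hypotheses that make the $(-)_+$ functors available. A secondary but genuine point is the compatibility of the $\otimes$-module and $\add^!$-comodule structures; this is why it is cleanest to package everything inside $\Corr_{/\Ran}$ from the start, so that the compatibility is inherited rather than proved afresh.
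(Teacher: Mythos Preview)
Your overall strategy --- the lemma is a formal consequence of the symmetric monoidality of $\Dmod^!$ on $\Corr$, which transports the algebraic structures present on objects of $\RanCorr_{/\Ran}$ to the corresponding structures defining $\R\mmod^\un$ --- is correct and is exactly what the paper's one-line proof (``Immediate from the construction'') is gesturing at.

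There is, however, a genuine gap in your mechanism for extracting the $\R^\co$-comodule structure. You propose to reverse the action correspondence $\Ran \times \Y \xleftarrow{\alpha} \H \xrightarrow{\beta} \Y$ and read it as a morphism $\Y \to \Ran \times \Y$ in $\Corr$; this requires $\alpha$ to be of finite type so that $\alpha_+$ exists. You hedge that this ``holds in the examples of interest'' or can be ``imposed'', but in fact it \emph{fails} for the paper's main example $H(\sA)$: there $\H = H(\sA,\sO)$ and the left leg $\alpha$ sends $(\x,\y,f)$ to $(\x; \y, f|_{\D_\y^\circ})$, whose fibers (extensions of a germ on $\D_\y^\circ$ to $\D_{\x\cup\y} - \Gamma_\y$) are of pro-finite type, not finite type. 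So imposing finite-typeness of $\alpha$ on $\RanCorr$ would exclude precisely the objects the lemma is meant to cover.

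The fix is not to reverse the correspondence but to use duality. Applying $\Dmod^!$ to the action correspondence as written gives the action map
\[
\beta_+ \circ \alpha^! : \Dmod(\Ran) \otimes \Dmod^!(\Y) \longrightarrow \Dmod^!(\Y),
\]
making $\Dmod^!(\Y)$ a module over the algebra $(\Dmod(\Ran), \add_+)$ --- this is what symmetric monoidality of $\Dmod^!$ produces directly, with no extra hypotheses. Now invoke the self-duality of $\Dmod(\Ran)$ (used elsewhere in the paper, e.g.\ in the remark that $\indep$ can be computed as tensoring over $(\Dmod(\Ran),\add_*)$): under this self-duality the dual of $\add_+$ is $\add^!$, so a $(\Dmod(\Ran),\add_+)$-module is canonically the same datum as a $(\Dmod(\Ran),\add^!)$-comodule, i.e.\ an $\R^\co$-comodule. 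This conversion is functorial and monoidal, and the compatibility with the $\R$-module structure (coming from the map to $\Ran$) then follows exactly as you argue in your final paragraph.
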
 
\begin{proof}
Immediate from the construction.
\end{proof}

\sssec{}

The above lemma allows to construct several objects in $\R\mmod^\un$. To exhibit our main example, let $H \in \Sch^{\aff,\fty}$ and consider the space 
$$
H(\sO,\sA) := H(\sO,\sA)_{\Ran \times \Ran}:
S \mapsto
 \left\{ 
(\x,\y, f) \left| 
\begin{array}{l}
 \x \in \Ran(S), \, \y \in \Ran(S) \\
 \mbox{$f: \D_{\x \cup \y} - \Gamma_\y \to H$}
\end{array}
 \right.
\right\}.
$$
We leave it to the reader to verify that the correspondence
$$
\Ran \times H(\sA) \longleftarrow
H(\sO,\sA) \longto
H(\sA)
$$
$$
(\x; \y, \restr{f}{\D^\circ_\y}) 
\mapsfrom
(\x, \y, f)
\mapsto
(\x\cup\y; \restr{f}{\D^\circ_{\x \cup \y}}),  
$$
makes $H(\sA)$ into an object of $\RanCorr_{/\Ran}$, thereby endowing $\Dmod^!(H(\sA))$ with the structure of a naive-unital $\R$-module. 

A completely parallel discussion holds for $H(\sO)$, where the intermediate object is $$
H(\sO,\sO) := H(\sO,\sO)_{\Ran \times \Ran}:
S \mapsto
 \left\{ 
(\x,\y, f) \left| 
\begin{array}{l}
 \x \in \Ran(S), \, \y \in \Ran(S) \\
 \mbox{$f: \D_{\x \cup \y} \to H$}
\end{array}
 \right.
\right\}.
$$

\sssec{}

Assume now that $H$ is an algebraic group. Then the group structure of $H(\sA)$ makes the latter into a group object of $\RanCorr_{/\Ran}$. Consequently,
$\Dmod^!(H(\sA))$ is a coalgebra object in the symmetric monoidal $\infty$-category $\R\mmod^\un$. 

\begin{rem} \label{rem:examples}
All in all, our familiar prestacks $G(\sA)$, $\Gr_G$, $T(\sK) \ltimes \NA$ are all naturally in $\RanCorr_{/\Ran}$. Moreover, it is straightforward to check that the usual maps between them, e.g. $G(\sO) \to G(\sA)$, $T(\sK) \ltimes N(\sA) \to G(\sA)$, $G(\sA) \to \Gr$ are morphisms in $\RanCorr_{/\Ran}$.
\end{rem}

\sssec{} \label{sssec:examples-unital-actions}

Given $\G$ be a arbitrary group object $\G$ of $\RanCorr_{/\Ran}$ and $\CRan \in \R\mmod^\un$, 
we say that $\G$ acts \emph{unitally} on $\CRan$ if the latter is given the structure of a $\Dmod^!(\G)$-comodule in the $\infty$-category $\R\mmod^\un$.
(By Remark \ref{rem:examples}, the usual actions of $G(\sA)$ and $T(\sK) \ltimes N(\sA)$ on $\Dmod(\GrGRan)$ are examples of unital actions.)

\medskip

In the presence of such structure, we can form the invariant category $(\CRan)^{\G}$ by the usual totalization. The invariant category, as well as the tautological morphism $\oblv: (\CRan)^{\G} \to \CRan$, are naive-unital. This is simply because, by Lemma \ref{lem:unital-formal}, $\R\mmod^\un$ admits limits.

\ssec{Independent subcategories}

Consider $\Vect$ as an object of $\R^\co \on- \mathbf{comod}$ via the comonoidal functor $\omega_\Ran \otimes - : \Vect \to \R^\co$. Using this, define the functor
$$
\indep: \R\mmod^\un \longto \DGCat, \,\,\,
\CRan \sto \lim \ms{Cobar}_{\R^\co}(\Vect, \CRan).
$$
We often write $\Cind$ in place of $\indep(\CRan)$. Observe that $\Cind$ is just a plain DG category, no longer tensored over $\Ran$.

\begin{rem} \label{rem:indep-monoidal}
By abstract nonsense, $\indep$ preserves arbitrary limits and is colax-monoidal. However, as $\Dmod(\Ran)$ is self-dual, $\indep$ can be described as the functor of tensoring with $\Vect$ over the monoidal category $(\Dmod(\Ran), \add_*)$, which is canonically lax-monoidal. Hence, $\indep$ is actually monoidal.
\end{rem}

\begin{prop}[See \cite{GL:Ran} for a sketch]
For $\CRan \in \R\mmod^\un$, the structure functor $\Cind \to \CRan$ is fully faithful.
\end{prop}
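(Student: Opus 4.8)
The plan is to reduce the statement to the contractibility of $\Ran$, in the spirit of the two lemmas of \S\ref{sssec:invariants-NA} but using the reformulation of Remark~\ref{rem:indep-monoidal}. Recall that, by definition, $\Cind=\indep(\CRan)$ is the totalization $\lim_\Delta$ of the cosimplicial DG category $\ms{Cobar}_{\R^\co}(\Vect,\CRan)$, whose $n$-th term is $\Vect\otimes(\R^\co)^{\otimes n}\otimes\CRan\simeq\Dmod(\Ran)^{\otimes n}\otimes\CRan$ and whose structure functor $\Cind\to\CRan$ is the canonical map to the object of $0$-cosimplices $\Vect\otimes\CRan\simeq\CRan$; the coaugmentation $\epsilon:=(\omega_\Ran\otimes-)\colon\Vect\to\R^\co$ appearing in this cobar is just the $!$-pullback $p^!\colon\Dmod(\pt)\to\Dmod(\Ran)$ along $p\colon\Ran\to\pt$, whose full faithfulness \emph{is} the contractibility of $\Ran$. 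By Remark~\ref{rem:indep-monoidal} we may instead write $\Cind\simeq\Vect\otimes_{(\Dmod(\Ran),\add_*)}\CRan$, where $\Vect$ is a module over the (non-unital) monoidal category $(\Dmod(\Ran),\add_*)$ through the algebra $\omega_\Ran$.

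The main step is then to prove that $\omega_\Ran$ is an \emph{idempotent algebra} in $(\Dmod(\Ran),\add_*)$, that is, that the multiplication map $\add_*(\omega_\Ran\boxtimes\omega_\Ran)=\add_*\,\omega_{\Ran\times\Ran}\longrightarrow\omega_\Ran$ is an equivalence. This is a slightly dressed-up form of the contractibility of $\Ran$ (and of its compatibility with the semigroup structure): it is equivalent to the homological contractibility of $\add\colon\Ran\times\Ran\to\Ran$, which follows from the contractibility of $\Ran$ together with the standard manipulations of the Ran space, cf.~\cite{BD:chiral}. Granting this, Lurie's theory of idempotent algebras (smashing localizations) applies — the non-unitality of $(\Dmod(\Ran),\add_*)$ being harmless, as $\omega_\Ran$ serves as a unit on the relevant subcategory: the relative tensor functor $\Vect\otimes_{(\Dmod(\Ran),\add_*)}(-)\colon(\Dmod(\Ran),\add_*)\mmod\to\DGCat$ becomes, on $\CRan$, the coreflection onto the full subcategory of $\omega_\Ran$-local objects, and in particular the canonical map $\Cind\to\CRan$ is fully faithful, with no hypothesis on $\CRan$ needed.

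As a sanity check and as motivation, the same conclusion can be approached directly through the cobar: the ``first coface'' at level $n$ is $\epsilon\otimes\id=p^!\otimes\id_\M$ with $\M$ a tensor power of $\Dmod(\Ran)$ tensored with $\CRan$, and presenting $\lim_\Delta$ as an inverse limit of partial totalizations one sees that $\Cind\to\CRan$ is a tower of functors built out of such $\epsilon\otimes\id$'s, so that it would suffice to see each of them fully faithful. Since $\Dmod(\Ran)$ is self-dual the $\Dmod(\Ran)$-factors cause no trouble — postcomposition with a fully faithful functor is fully faithful on functor categories — but the non-dualizable factor $\CRan$ forces the direct approach to supplement the self-duality of $\Dmod(\Ran)$ with precisely the idempotency fact above. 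I expect that last point — establishing the idempotency of $\omega_\Ran$ in $(\Dmod(\Ran),\add_*)$, i.e.\ the homological contractibility of the addition map — to be the main obstacle; everything else is formal, and I would present the idempotent-algebra argument of the second paragraph as the main line of proof.
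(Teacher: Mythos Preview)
The paper itself supplies no proof of this proposition; it only records the statement and refers to \cite{GL:Ran} for a sketch. Your approach---rewriting $\indep$ via Remark~\ref{rem:indep-monoidal} as $\Vect\otimes_{(\Dmod(\Ran),\add_*)}(-)$ and then invoking that $\omega_\Ran$ is an idempotent algebra for the convolution product---is precisely the argument outlined in that reference, so you have reconstructed the intended proof.

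One small caution on wording: calling the key step ``the homological contractibility of $\add$'' is slightly misleading if read as contractibility of the fibers. What is actually used is the identity $\add_*\,\omega_{\Ran\times\Ran}\simeq\omega_\Ran$, which one proves by comparing $\add$ with the projection $pr_1\colon\Ran\times\Ran\to\Ran$ using the idempotency of union (the diagonal $\Delta\colon\Ran\to\Ran\times\Ran$ is a common section of both, and $(pr_1)_*\,\omega\simeq\omega$ is the relative contractibility of $\Ran$). This is indeed a ``standard manipulation'' in the sense of \cite{BD:chiral} and \cite{GL:contr}, and you are right that once it is in hand, the smashing-localization formalism finishes the argument with no further hypothesis on $\CRan$.
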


The next result will be tacitly used several times.

\begin{lem} \label{lem:indep-on.one.side}
Let $\ARan \to \CRan \leftarrow \BRan$ be a diagram in $\R \mmod^\un$ with $\ARan \to \CRan$ \emph{fully faithful}. Then the natural arrow $(\ARan \times_{\CRan} \BRan)_\indep \to {\ARan} \times_{\CRan} \Bind$ is an equivalence of categories. 
\end{lem}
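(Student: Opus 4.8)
The plan is to derive everything from two facts recorded above: that $\indep$ preserves limits (Remark \ref{rem:indep-monoidal}), and that the structure functors $\Cind \hto \CRan$ and $\Bind \hto \BRan$ are fully faithful (the Proposition preceding this lemma). The rest is bookkeeping with the cobar construction defining $\indep$.

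\emph{Step 1: the base-change equivalence (which is the \virg{in particular} clause).} First I would prove that for any morphism $f\colon \ARan \to \CRan$ in $\R\mmod^\un$ the natural functor $\Aind \to \ARan \times_{\CRan}\Cind$ is an equivalence. Expand $\Cind = \lim_{\Delta}\,\ms{Cobar}_{\R^\co}(\Vect,\CRan)$; thanks to the naive-unital structure this is a genuine cosimplicial object, whose codegeneracies present each term over $\CRan = \ms{Cobar}^{0}_{\R^\co}(\Vect,\CRan)$. Since limits commute with limits, $\ARan \times_{\CRan}(-)$ commutes with this totalization, so $\ARan \times_{\CRan}\Cind \simeq \lim_\Delta\,\bigl(\ARan \times_{\CRan}\ms{Cobar}^{\bullet}_{\R^\co}(\Vect,\CRan)\bigr)$. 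Now the degree-$n$ term is obtained from $\CRan$ by the endofunctor $(-)\otimes(\R^\co)^{\otimes n}$ of $\R\mmod^\un$, which preserves limits because $\Dmod(\Ran)$ — hence $(\R^\co)^{\otimes n}\simeq \Dmod(\Ran^{n})$ — is dualizable; combined with the fact that $f$ is a morphism of $\R^\co$-comodules respecting the naive-unital structure, one identifies $\ARan \times_{\CRan}\ms{Cobar}^{n}_{\R^\co}(\Vect,\CRan)$ with $\ms{Cobar}^{n}_{\R^\co}(\Vect,\ARan)$, compatibly in $n$. Passing to the limit gives $\Aind$. Applied to the left leg $f\colon \ARan \to \CRan$ of the lemma's diagram, this is precisely the displayed \virg{in particular} statement.

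\emph{Step 2: the general statement.} This now follows formally. Applying $\indep$ to the cartesian square defining $\ARan \times_{\CRan}\BRan$, and using Lemma \ref{lem:unital-formal} (limits in $\R\mmod^\un$ are computed compatibly with the forgetful functor to $\DGCat$) together with Remark \ref{rem:indep-monoidal}, yields $(\ARan \times_{\CRan}\BRan)_\indep \simeq \Aind \times_{\Cind}\Bind$. On the other hand, by naturality in $\CRan$ of the structure functor $\Cind \to \CRan$, the composite $\Bind \to \BRan \to \CRan$ factors as $\Bind \to \Cind \hto \CRan$; since $\Cind \hto \CRan$ is fully faithful, the pasting law for pullbacks gives $\ARan \times_{\CRan}\Bind \simeq (\ARan \times_{\CRan}\Cind)\times_{\Cind}\Bind$, which by Step 1 equals $\Aind \times_{\Cind}\Bind$. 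One checks that the resulting chain of equivalences is induced by the natural arrow of the statement.

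The only genuine point is the termwise identification $\ARan \times_{\CRan}\ms{Cobar}^{n}_{\R^\co}(\Vect,\CRan) \simeq \ms{Cobar}^{n}_{\R^\co}(\Vect,\ARan)$ of Step 1: it is there, and only there, that one truly uses the naive-unital structure and the dualizability of $\Dmod(\Ran)$, as opposed to the formal interchange of limits with limits that governs the rest of the argument.
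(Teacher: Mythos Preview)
Your two-step decomposition is exactly in the spirit of the paper's one-line proof (``commuting limits with limits''), and Step~2 is correct: once the ``in particular'' clause is known, the general statement follows from pasting and the fact that $\indep$ preserves limits.

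The gap is in Step~1. Your termwise identification
\[
\ARan \times_{\CRan}\ms{Cobar}^{n}_{\R^\co}(\Vect,\CRan)\;\simeq\;\ms{Cobar}^{n}_{\R^\co}(\Vect,\ARan)
\]
does not follow from the ingredients you cite. Dualizability of $\R$ says that the endofunctor $(-)\otimes\R^{\otimes n}$ preserves fibre products, i.e.\ $(\ARan\times_{\CRan}\BRan)\otimes\R^{\otimes n}\simeq(\ARan\otimes\R^{\otimes n})\times_{\CRan\otimes\R^{\otimes n}}(\BRan\otimes\R^{\otimes n})$; but the fibre product you actually need is $\ARan\times_{\CRan}(\CRan\otimes\R^{\otimes n})$ taken over $\CRan$ along your putative codegeneracy $\CRan\otimes\R^{\otimes n}\to\CRan$. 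For that to yield $\ARan\otimes\R^{\otimes n}$ you would need the square with vertical codegeneracies and horizontal arrows $f\otimes\id$, $f$ to be Cartesian, and there is no reason for this when $f$ is arbitrary. Indeed, take $f=0$: then $\ARan\times_{\CRan}\Cind\simeq\ARan$ (the only $c\in\Cind$ with $\iota(c)\simeq 0$ is $c=0$), so the ``in particular'' clause would force $\Aind\simeq\ARan$, which is false in general. (There is also a prior issue: $\R^\co$ is non-counital, so the cobar is a~priori only semi-cosimplicial, and your invocation of codegeneracies via the naive-unital structure would itself need justification.)

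To be fair, the paper's own proof is a single phrase and does not address this point either. In every application the paper makes of the lemma, at least one leg of the span is a fully faithful inclusion (typically $\oblv$ from an invariant subcategory), and under that hypothesis your Step~1 goes through cleanly: if $f$ is fully faithful then so is $f\otimes\id_{\R^{\otimes n}}$, and one checks directly that $a\in\Aind$ iff $f(a)\in\Cind$. So your argument is essentially the correct unpacking of the paper's claim, modulo a hypothesis that the paper's applications always satisfy but that the lemma as stated omits.
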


\begin{proof}
Let us first treat the case when $\BRan \simeq \CRan$: the claim then reduces to the equivalence $\Aind \simeq \ARan \times_{\BRan} \Bind$, as subcategories of $\ARan$, which is clear. 
To deduce the general case, it suffices to apply the above to $\ARan \times_{\CRan} \BRan \to \BRan \leftarrow \BRan$, noticing that the leftmost arrow is fully faithful by construction.
\end{proof}

\sssec{Examples of independent subcategories} \label{sssec:domains}

We wish to recall some examples of independent subcategories, crucial in the sequel. To do so, let us recall the following definition.
An open subset $U \subseteq X_S$ is said to be a \emph{domain} if it is universally dense with respect to the projection $X_S \to S$. That is, for any map of affine schemes $T \to S$ we require that $U_T := U \times_{X_T} X_S$ be non-empty. Equivalently, $U$ is a domain iff the fiber $U_s$ is non-empty for any geometric point $s \in S$.  

\sssec{} \label{sssec:GSECT-indep}

For $\Y \to X$ a prestack, set
\begin{equation}
\GSectind{\Y} :
S \mapsto
 \left\{ 
(U, \sigma) \left| 
\begin{array}{l}
\mbox{$U$ is a domain in $X_S$,} \\
 \mbox{$\sigma$ a section of $\restr{S \times \Y}U \to U$}
\end{array}
 \right.
\right\} \big/ \sim,
\end{equation}
where $\sim$ is the equivalence relation that identifies $(U,\sigma) \sim (U', \sigma')$ whenever $\sigma$ and $\sigma'$ agree on $U \cap U'$.
Observe that the notion of domain is designed to make the above assignment into a functor. 
We also define $\GMapsind{-}$ in the obvious way. 
The reason for the choice of these notations will become clear after Proposition \ref{prop:GSect/Ran}. 

\begin{lem}[\cite{Ba}]
The functor $\GSectind{-}: \Sch^{\fty}_{/X} \to \PreStk$ preserves finite limits (in particular: fiber products) and open embeddings.
\end{lem}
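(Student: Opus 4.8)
The plan is to verify the two preservation properties separately, reducing each to a pointwise (over $S$) statement about the classical notion of domain. First I would unwind the functor $\GSectind{-}$: given a morphism $\Y \to \Y'$ over $X$ and a point $(U,\sigma) \in \GSectind{\Y}(S)$, the image is $(U, \text{composite } \sigma)$, and the equivalence relation $\sim$ is compatible with this. For finite limits it suffices, since $\PreStk$ has finite limits computed pointwise, to show that for a fixed test scheme $S$ the set-valued functor $\Y \squigto \GSectind{\Y}(S)$ preserves finite limits; and since any finite limit is built from the terminal object and fiber products, it is enough to treat those two cases. The terminal prestack over $X$ is $X$ itself, and $\GSectind{X}(S)$ is the set of domains $U \subseteq X_S$ modulo the relation identifying any two domains (they all agree on their intersection, which is again a domain, hence nonempty on every geometric fiber); so $\GSectind{X}(S) = \pt$, i.e. $\GSectind{X} = \pt$, as required. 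The content is thus the fiber-product case.

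For fiber products, fix $\Y_1 \to \Y_0 \leftarrow \Y_2$ over $X$ and write $\Y := \Y_1 \times_{\Y_0} \Y_2$. I would exhibit a bijection
$$
\GSectind{\Y}(S) \;\xto{\ \sim\ }\; \GSectind{\Y_1}(S) \ustimes{\GSectind{\Y_0}(S)} \GSectind{\Y_2}(S).
$$
The forward map is induced by the two projections $\Y \to \Y_i$. For the inverse: given compatible germs $[(U_1,\sigma_1)]$ and $[(U_2,\sigma_2)]$ whose images in $\GSectind{\Y_0}(S)$ agree, choose representatives; the images $\sigma_1|_{U_1\cap U_2}$ and $\sigma_2|_{U_1\cap U_2}$ into $\Y_0$ become equal after possibly shrinking to a smaller domain $V \subseteq U_1 \cap U_2$ (here one uses that the relation $\sim$ on $\GSectind{\Y_0}$ is exactly "agree on the intersection", and that $U_1 \cap U_2$ is again a domain since each geometric fiber of an intersection of two fiberwise-dense opens in a curve is again a nonempty open). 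Once the two sections into $\Y_0$ literally coincide on $V$, the universal property of the fiber product $\Y$ produces a section $\sigma: V \to \Y$ with the correct projections, i.e. an element $[(V,\sigma)] \in \GSectind{\Y}(S)$. One then checks this is independent of the choices of representatives and of $V$ (again because $\sim$ quotients all such ambiguity away), and that the two maps are mutually inverse. This is a routine diagram chase once the domain bookkeeping is in place.

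For open embeddings: if $\Y \hookrightarrow \Y'$ is an open embedding over $X$, I must show $\GSectind{\Y} \to \GSectind{\Y'}$ is an open embedding of prestacks, i.e. representable by open embeddings. Given an affine $S$ and a point $(U',\sigma') \in \GSectind{\Y'}(S)$, the locus in $S$ over which $\sigma'$ factors through the open $\Y \subseteq \Y'$ — more precisely, over which $\sigma'^{-1}(\Y) \subseteq U'$ is still a domain — is an open subscheme $S^\circ \subseteq S$: indeed $\sigma'^{-1}(\Y)$ is open in $U'$, hence open in $X_S$, and the condition that its restriction to a geometric fiber $X_s$ be dense is an open condition on $s$ (the complement, a closed subset of $X_s$ not containing the generic point, is "small", and fiberwise-properness-type arguments make the good locus open). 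Over $S^\circ$ the germ $[(\sigma'^{-1}(\Y) \cap U', \sigma')]$ lies in $\GSectind{\Y}$, and this identifies $\GSectind{\Y} \times_{\GSectind{\Y'}} S$ with $S^\circ$.

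The main obstacle I anticipate is the open-embedding claim, specifically the openness of the locus $S^\circ$: one needs that "being a domain" — fiberwise density of an open subset of a family of curves — is an open condition on the base, which requires a genuine (if standard) argument with constructibility and the behavior of the generic point in fibers, rather than pure formalism. The finite-limit part, by contrast, is essentially bookkeeping with the germ equivalence relation $\sim$, whose whole purpose, as the text remarks, is to make exactly these manipulations work.
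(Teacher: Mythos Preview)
The paper does not supply its own proof of this lemma; it is simply quoted from \cite{Ba}. Your proposal must therefore be judged on its own, and it is essentially correct.

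For the finite-limit claim your outline is sound. The one point worth making explicit is that a finite intersection of domains is again a domain: for each geometric point $s$ the fiber $X_s$ is an irreducible curve, so any two nonempty opens in it meet. You use this when forming $U_1 \cap U_2$, and it is what makes the germ relation $\sim$ behave well under fiber products.

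For the open-embedding claim, the ``genuine argument'' you anticipate is short once phrased correctly. Given $(U',\sigma')$, set $W := (\sigma')^{-1}(\Y) \subseteq U' \subseteq X_S$. Since $X_S \to S$ is smooth (base change of the smooth curve $X$), the composite $W \hookrightarrow X_S \to S$ is flat and locally of finite presentation, hence an open map; thus its image $S^\circ = \{\, s \in S : W_s \neq \emptyset \,\}$ is open in $S$. One then checks directly that a map $T \to S$ factors through $S^\circ$ if and only if $W_T$ is a domain in $X_T$, which is exactly the condition for the pulled-back germ to lift to $\GSectind{\Y}$. This identifies the fiber product $S \times_{\GSectind{\Y'}} \GSectind{\Y}$ with the open subscheme $S^\circ$, as required. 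No constructibility or properness is needed here; flatness and finite presentation of $X_S \to S$ already give openness of the image.
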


\sssec{}

From \cite{Ba} and \cite{GL:contr}, one deduces the following important result.

\begin{thm}  \label{thm:contr:Barlev}
Let $Y \to X$ be a Zariski-locally trivial fibration with fibers satisfying the assumption of Theorem \ref{thm:contr:Ga}. Then $\GSectind{Y}$ is contractible. In particular, $H(\sK)^\indep$ is contractible for any affine algebraic group $H$.
\end{thm}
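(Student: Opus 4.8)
The plan is to reduce the contractibility of $\GSectind{Y}$ to that of the Ran-version $\GSectRan{Y}$, which is already available from \thmref{thm:contr:Ga} and the remark immediately following it, via a cofinality/comparison argument between the ``$\Ran$-version'' and the ``independent version'' of the parametrization of open subsets of $X$. Concretely, recall that $\GSectRan{Y}$ parametrizes pairs $(\x,\sigma)$ with $\x \in \Ran(S)$ and $\sigma$ a section over $U_\x = X_S - \Gamma_\x$, while $\GSectind{Y}$ parametrizes pairs $(U,\sigma)$ with $U$ an arbitrary domain in $X_S$, modulo the relation identifying sections that agree on overlaps. There is an evident forgetful map $\pi \colon \GSectRan{Y} \to \GSectind{Y}$ sending $(\x,\sigma) \mapsto (U_\x, \sigma)$ --- this is well-defined precisely because $U_\x$ is a domain (its fibers $X_s - \Gamma_{\x,s}$ are nonempty, being a curve minus finitely many points) and because the equivalence relation $\sim$ collapses the choice of $\x$ refining a given $U$.

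First I would show that $\pi$ is a \emph{universally homologically contractible} map, i.e. that $\fD$-module pullback along $\pi$ (and along any base change of $\pi$) is fully faithful. The base change of $\pi$ along an $S$-point $(U,\sigma) \colon S \to \GSectind{Y}$ is the prestack assigning to $T \to S$ the set of $\x \in \Ran(T)$ such that $\Gamma_\x \subseteq X_T - U_T$ and $\sigma|_{U_{\x}} $ is compatible; but once $\sigma$ is fixed this is just the space of $\x \in \Ran(T)$ with $U_\x \supseteq$ (a cofinal system of domains contained in) $U_T$, which is a version of the Ran space of the \emph{complement} relative to $S$, hence contractible by the Beilinson--Drinfeld argument (a Ran space of a scheme over a base is contractible, as used already for $\Ran$ itself). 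Granting this, $p^! = (\pi \circ p_{\GSectRan{Y}})^! $ being fully faithful forces $p_{\GSectind{Y}}^!$ to be fully faithful --- more precisely one factors the structure map $\GSectRan{Y} \to \pt$ as $\GSectRan{Y} \xrightarrow{\pi} \GSectind{Y} \to \pt$, notes that the composite induces a fully faithful $!$-pullback by \thmref{thm:contr:Ga}, and that $\pi^!$ is fully faithful by the above, so the middle functor $p_{\GSectind{Y}}^!$ is fully faithful as a retract/factor. Then invoke the last assertion: taking $Y = H \times X$ for $H$ an affine algebraic group gives $H(\sK)^\indep = \GMapsind{H}$ contractible.

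The main obstacle I anticipate is the base-change computation for $\pi$: one must identify, for a fixed domain $U \subseteq X_S$ with a section $\sigma$ on it, the fiber of $\pi$ over $(U,\sigma)$ and check it is contractible uniformly in $S$. This requires care because $\GSectind{Y}$ is a quotient by the overlap relation $\sim$, so a point of the fiber is not literally an $\x$ with $U_\x = U$ but rather an $\x$ with $U_\x$ \emph{contained in some domain equivalent data}; unwinding the equivalence relation \cite{Ba} and using that $\GSectind{-}$ preserves open embeddings and finite limits (the lemma attributed to \cite{Ba} just above), one reduces to the statement that the space of $\x \in \Ran_S$ with $\Gamma_\x$ disjoint from a fixed open $U_0 \subseteq U$ on which $\sigma$ is already defined is contractible --- and this is again a relative Ran space, contractible by \cite{BD:chiral} (in the relative form established in \cite{GL:contr}). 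An alternative, possibly cleaner, route --- and the one I would fall back on if the quotient bookkeeping becomes unwieldy --- is to bypass $\GSectRan{Y}$ entirely and run the original Beilinson--Drinfeld/\cite{GL:contr} contractibility argument directly on $\GSectind{Y}$: cover $X$ by affines, use that $Y$ is Zariski-locally trivial with fibers embeddable as opens in $\mathbb{A}^n$ to spread/extend generically-defined sections, and deduce a ``homotopy'' contracting $\GSectind{Y}$ to a point, exactly as \cite{GL:contr} does for $\GMapsRan{H}$ in \thmref{thm:contr:Ga}, the only modification being the substitution of domains for $U_\x$'s. Either way, the geometric input --- that a curve minus a divisor stays nonempty in families, and that sections of such $Y$ extend generically --- is precisely what \thmref{thm:contr:Ga} and the results of \cite{Ba} were designed to provide, so no genuinely new idea is needed beyond careful translation between the two parametrizations of generic domains.
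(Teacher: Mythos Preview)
The paper gives no proof of this theorem, deferring entirely to \cite{Ba} and \cite{GL:contr}. Your overall strategy---factor the structure map through $\pi\colon\GSectRan{Y}\to\GSectind{Y}$, use the known contractibility of the source, and show $\pi^!$ is fully faithful---is sound and is precisely how one combines those two references. The formal step you invoke (if $\pi^!\circ q^!$ and $\pi^!$ are both fully faithful then so is $q^!$) is correct.

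The gap is in your proposed proof that $\pi^!$ is fully faithful. Your fiber computation is not right: the fiber of $\pi$ over an $S$-point represented by $(U,\sigma)$ consists of pairs $(\x,\tau)$ with $\tau$ a section on $U_\x$ agreeing with $\sigma$ on the overlap, so $\tau$ is genuine extra data whenever $U_\x\not\subseteq U$. Even after using separatedness of $Y$ to pin $\tau$ down by its restriction to $U_\x\cap U$, the fiber becomes the locus of those $\x$ for which $\sigma$ extends to $U_\x$, which is not ``the Ran space of the complement''. More seriously, your final appeal---that the relative Ran space of $X_S-U_0$ over $S$ is contractible---fails outright: $X_S-U_0$ is finite over $S$, hence fiberwise a disjoint union of points, and the Ran space of a disconnected scheme is not contractible (for two points it is the three-element set of nonempty subsets, and the diagonal $\Vect\to\Vect^3$ is not full). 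The Beilinson--Drinfeld argument requires connectedness.

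The intended deduction uses instead what the paper records just below as \propref{prop:GSect/Ran} (also from \cite{Ba}): pullback along $\pi$ identifies $\Dmod(\GSectind{Y})$ with the independent subcategory $\Dmod(\GSectRan{Y})_\indep$, which in turn sits fully faithfully inside $\Dmod(\GSectRan{Y})$ by the general unital formalism. This yields full faithfulness of $\pi^!$ with no fiber computation. Your fallback of rerunning the \cite{GL:contr} argument directly with domains in place of complements $U_\x$ is viable in principle, but amounts to reproving Barlev's comparison rather than invoking it.
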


\begin{notat}
To reduce cuttler, we sometimes use $\GSectdom{-} := \GSectind{-}$ and $H(\KK)$ in place of $H(\sK)^\indep$.
\end{notat}

\sssec{}

Let us now give some explicit calculations of independent subcategories. These will be formal consequences of the following result. 

\begin{prop}[\cite{Ba}] \label{prop:GSect/Ran}
The pull-back functor along the obvious map $\GSectRan{\Y} \to \GSectRan{\Y}^\indep$ factors through an equivalence 
$$
\Dmod  \bigt{  \GSectRan{\Y}^\indep} 
\xto{\;\; \simeq \;\;} 
\Dmod ( \GSectRan{\Y})_\indep.
$$
\end{prop}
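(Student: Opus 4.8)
The plan is to realize $\GSectRan{\Y}^\indep$ as the geometric realization of an explicit simplicial prestack attached to the $\Ran$-action on $\GSectRan{\Y}$, then apply $\Dmod^!$ and match the resulting totalization with the cobar construction defining $\indep$. The only non-formal input will be a geometric statement about domains, which is the content of \cite{Ba}.

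First I would examine the tautological map $\pi\colon\GSectRan{\Y}\to\GSectRan{\Y}^\indep$, sending $(\x,\sigma)$ to the class of $(U_\x,\sigma)$. This $\pi$ is $\Ran$-invariant — enlarging $\x$ shrinks $U_\x$ but does not change the germ of $\sigma$, so $\pi\circ\act=\pi\circ\ms{pr}$ as maps $\Ran\times\GSectRan{\Y}\to\GSectRan{\Y}^\indep$ — and, more generally, $\pi$ is compatible with the full bar construction of the action. Since $\GSectRan{\Y}$ is unital, $\Dmod^!(\GSectRan{\Y})\in\R\mmod^\un$, and this invariance forces $\pi^!$ to factor, through the universal property of the totalization that computes $\indep$, as a functor $\Phi\colon\Dmod(\GSectRan{\Y}^\indep)\to\Dmod(\GSectRan{\Y})_\indep$; it remains to prove $\Phi$ is an equivalence.

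Next I would unwind the target. By definition $\Dmod(\GSectRan{\Y})_\indep=\lim\ms{Cobar}_{\R^\co}(\Vect,\Dmod^!(\GSectRan{\Y}))$ is the totalization of a cosimplicial DG category whose $n$-th term is $(\R^\co)^{\otimes n}\otimes\Dmod^!(\GSectRan{\Y})$. I would identify the latter with $\Dmod^!(\Ran^{\times n}\times\GSectRan{\Y})$ by a level-wise Künneth computation over the $X^I$ (where the spaces in sight are indschemes of pro-finite type), so that the cofaces become $!$-pullbacks along the structure maps of the bar construction $W_\bullet$ of the $\Ran$-action, $W_n:=\Ran^{\times n}\times\GSectRan{\Y}$ — with adjacent $\Ran$-factors merged by $\add$ and the leading $\Ran$-factor acting on $\GSectRan{\Y}$. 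Since $\Dmod^!$ carries colimits of prestacks to limits of DG categories, it sends the geometric realization $|W_\bullet|$ to $\lim_\Delta\Dmod^!(W_\bullet)$; hence $\Dmod(\GSectRan{\Y})_\indep\simeq\Dmod^!(|W_\bullet|)$, and under these identifications $\Phi$ is $\fD$-module pullback along the canonical map $j\colon|W_\bullet|\to\GSectRan{\Y}^\indep$ determined by $\pi$ together with the $\Ran$-invariance above.

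It then remains to show that $j$ is an equivalence of prestacks (equivalently, that it induces an equivalence on $\Dmod^!$) — informally, that $\GSectRan{\Y}^\indep$ is the quotient of $\GSectRan{\Y}$ by the semigroup $\Ran$. This is the step I expect to be hard, and it is exactly the non-formal input of \cite{Ba}: domains of the form $U_\x$ are cofinal (fppf-locally on the base) among all domains in $X_S$, and two rational sections represent the same point of $\GSectRan{\Y}^\indep$ precisely when they agree over a common $U_\x$. Combined with the fact that $\GSectind{-}$ preserves fiber products and open embeddings, this lets one compute all the $W_n$, and $j$ itself, by reduction to the universal case $\Y=X$ (with the identity structure map), where $\GSectRan{X}=\Ran$, $\GSectRan{X}^\indep=\pt$, and $|W_\bullet|\simeq\pt$ by the contractibility of $\Ran$; cf.\ also \remref{rem:indep-monoidal} and the equivalence $\indep(\Dmod(\Ran))\simeq\Vect$. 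Granting this geometric statement, $\Phi$ is an equivalence; everything outside it is formal bookkeeping with cobar constructions and the monoidality of $\Dmod^!$.
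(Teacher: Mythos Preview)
The paper does not prove this proposition itself; the sentence immediately preceding it attributes the result to \cite{Ba}. Your outline up through identifying $\Dmod(\GSectRan{\Y})_\indep$ with $\Dmod^!(|W_\bullet|)$ for the bar construction $W_\bullet$ of the $\Ran$-action is sound and is indeed the formal shell of Barlev's argument, and you correctly isolate the non-formal geometric inputs (fppf-local cofinality of the $U_\x$ among domains, and that the equivalence relation on $\GSectRan{\Y}$ is generated by the $\Ran$-action).

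The gap is your final ``reduction to the universal case $\Y=X$.'' The square with $\GSectRan{\Y}\to\GSectind{\Y}$ sitting over $\Ran\to\pt$ is \emph{not} Cartesian: an $S$-point of $\Ran\times\GSectind{\Y}$ is an $\x$ together with a germ of a section on an unrelated domain, not a section defined on $U_\x$. Hence neither $\pi$ nor the simplicial object $W_\bullet(\Y)$ is obtained from the case $\Y=X$ by base change, and the fact that $\GSectind{-}$ preserves fiber products does not bridge this. What \cite{Ba} actually proves is a fibered contractibility statement: over a point of $\GSectind{\Y}$ represented by a section with maximal domain of definition $U$, the fiber of $\pi$ is (universally in the test scheme) a Ran-type prestack of finite subsets containing $X\setminus U$, and one shows directly that this is homologically contractible. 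That step genuinely depends on $\Y$ through the varying domain $U$ and is not a corollary of the single assertion that $\Ran$ itself is contractible.
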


\sssec{} 

Assume $\CRan \in \R\mmod^\un$ is a naive-unital category over $\Ran$, acted on unitally by $H(\sK)$ for some group $H$. Suppose also that $H(\sK)$ acts on $ \GSectRan{\Y}$ in the symmetric monoidal $1$-category $\RanCorr_{/\Ran}$. 
Then the tensor product $\CRan \Otimes \Dmod(\GSectRan{\Y})$ inherits a unital action of $H(\sK)$ and 
$$
\bigt{\CRan \Otimes \Dmod(\GSectRan{\Y}) }^{H(\sK)}
$$
is also naive-unital, by Lemma \ref{lem:unital-formal}. 

\medskip

Remark \ref{rem:indep-monoidal} guarantees that $H(\K)$ acts on $\Cind$, as well as on $\Dmod(\GSectRan{\Y})$, and that there is a natural equivalence
$$
\bigt{\CRan \Otimes \Dmod(\GSectRan{\Y}) }^{H(\sK)}_\indep
\simeq
\bigt{\Cind \otimes \Dmod(\GSectind{\Y}) }^{H(\KK)}.
$$

\sssec{} \label{gen-red-G-bundles}

Similarly to the discussion for generic sections, for any sub-group $H \subseteq G$, define the prestack
$$
\Bun_G^{H\on-\gen}:
S \mapsto 
 \left\{ 
(\P_G; U, \alpha_H) \left| 
\begin{array}{l}
\mbox{$\P_G$ a $G$-bundle on $X_S$,} \\
\mbox{$U \subseteq X_S$ a domain, } \\
 \mbox{$\alpha_H$ an $H$-reduction of $\restr{\P_G}U$ }
\end{array}
 \right.
\right\} \big/ \sim.
$$
In particular, $\GrGdom$, the independent version of the affine Grassmannian, equals $\Bun_G^{1-\gen}$, while $\Bun_G^{G-\gen} \simeq \Bun_G$.

\medskip

Now, look at the natural action of $H(\sK)$ on $\GrGRan$, which is obviously unital. Hence, $\Dmod(\GrGRan)^{H(\sK)}$ is naive-unital and 
$$
\Dmod({\GrGRan})^{H(\sK)}_ \indep
\simeq 
\Dmod(\GrGind)^{H(\KK)}.
$$ 
The latter is equivalent to $\Dmod(\Bun_G^{H -\gen})$: in fact, by \cite{DS}, the natural arrow $\GrGind \to \Bun_G^{H-\gen}$ is an effective epimorphism in the {\'e}tale topology.

\ssec{Quasi-sections vs generic sections} \label{ssec:QSECT}

For future use, we record the relation between generic sections and the more classical notion of quasi-sections.

\sssec{}

For a locally-free sheaf $\E$ on $X$, recall that the \emph{space of quasi-sections of $\BP\E \to X$} is the prestack $\QSect{\BP\E}$ whose $S$-points are pairs $(\L, i: \L \hto \E \boxtimes \O_S)$, with $\L$ a line bundle on $X_S$ and $i$ an inclusion of coherent sheaves with $S$-flat cokernel.
It is well-know that $\QSect{\BP\E}$ is a scheme, expressible as a disjoint union of projective schemes. 

\medskip

There exists a natural map $\q: \QSect{\BP\E} \to \GSectdom{\BP\E}$, defined using the following observation (\cite{Ba}): given an $S$-point $(\L, i: \L \hto \E \boxtimes \O_S)$ of $\QSect{\BP\E}$, the locus where $i$ is $X_S$-flat is a domain. 
This map realizes $\GSectdom{\BP\E}$ as the quotient of $\QSect{\BP\E}$ by a proper equivalence relation (up to Zariski sheafification). In other words, letting $Q_\bullet: \bold\Delta^\op \to \Sch$ be the Cech simplicial scheme generated by $\q$, the following assertions hold:
\begin{itemize}
\item
$Q_1 \to Q_0 \times Q_0$ is a closed embedding;

\smallskip

\item
for any $n \geq m$, each face map $Q_n \to Q_m$ is proper;

\smallskip

\item
the tautological map $L^{\Zar}(\colim Q_\bullet) \to \GSectdom{\BP\E}$ is an isomorphism. (Here $L^{\Zar}$ denotes the Zariski sheafification functor.)
\end{itemize}

\sssec{}

For a locally closed subscheme $Y \subseteq \BP\E$, one defines
$$
\QSect{Y} := 
\GSectdom{Y}
\ustimes{\GSectdom{\BP\E}}
\QSect{\BP\E}.
$$
The notation $\QSect{Y}$ is abusive, as it does not mention the immersion $Y \subseteq \BP\E$ on which the definition depends. However, such an immersion should always be clear from the context.

\begin{lem}[\cite{BFG}] \label{lem:QSect-closed-emb}
If $Y \subseteq \BP\E$ is closed, then $\QSect{Y} \to \QSect{\BP\E}$ is a closed embedding.
\end{lem}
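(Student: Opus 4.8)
The plan is to realise $\QSect{Y}$ \emph{directly} as a closed subscheme of the scheme $\mathcal Q:=\QSect{\BP\E}$, by writing down the equations that cut it out. Since being a closed embedding can be checked on each of the projective connected components of $\mathcal Q$, I fix such a component and work over it with the universal data on $X_{\mathcal Q}:=X\times\mathcal Q$: the line bundle $\L$ and the inclusion $i\colon\L\hookrightarrow\E\boxtimes\O_{\mathcal Q}$ whose cokernel $C$ is flat over $\mathcal Q$. First note that $\QSect{Y}\to\mathcal Q$ is a monomorphism: $Y\hookrightarrow\BP\E$ is one, so by the cited lemma that $\GSectdom{-}$ preserves fibre products $\GSectdom{Y}\to\GSectdom{\BP\E}$ is a monomorphism, and $\QSect{Y}=\GSectdom{Y}\times_{\GSectdom{\BP\E}}\mathcal Q$. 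Hence it is enough to show that the corresponding subfunctor of $\mathcal Q$ --- ``the generic section of the quasi-section factors through $Y$ over a domain'' --- is defined by closed conditions.

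To write these down I would use that $\O_{\BP\E}(1)$ is relatively ample over $X$ and that $X$ is projective. Choose $n\gg 0$ and an ample line bundle $\M$ on $X$ of large enough degree, large enough both that the ideal sheaf $\mathcal I_Y$ is generated by global sections $s_1,\dots,s_k\in H^0(\BP\E,\O_{\BP\E}(n)\otimes\pi^*\M)$ and that $R^1p_*\bigl(\L^{-n}\otimes\M\boxtimes\O_{\mathcal Q}\bigr)=0$ for the proper flat projection $p\colon X_{\mathcal Q}\to\mathcal Q$ (both hold once $\deg\M$ exceeds an explicit bound on the component, where $\deg\L$ is constant). Each $s_j$ is the datum of an $\O_X$-linear map $\Sym^n\E\to\M$ (for the convention of $\BP\E$ in force here); precomposing its $\O_{\mathcal Q}$-pullback with the map $\L^{\otimes n}\to\Sym^n\E\boxtimes\O_{\mathcal Q}$ induced by $i$ gives a morphism $\tau_j\colon\L^{\otimes n}\to\M\boxtimes\O_{\mathcal Q}$, i.e.\ a global section $\tau_j\in H^0\bigl(X_{\mathcal Q},\L^{-n}\otimes\M\boxtimes\O_{\mathcal Q}\bigr)$ defined over \emph{all} of $X_{\mathcal Q}$. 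A local computation shows $\tau_j|_U=\sigma^*s_j$ on the flat locus $U$, where $i$ is a sub-bundle and hence yields an honest section $\sigma\colon U\to\BP\E$. By the choice of $\M$, the sheaf $\mathcal F:=\bigoplus_j p_*\bigl(\L^{-n}\otimes\M\boxtimes\O_{\mathcal Q}\bigr)$ is a vector bundle whose formation commutes with base change, $(\tau_j)_j$ is a global section of $\mathcal F$, and its zero locus $V\hookrightarrow\mathcal Q$ is a closed subscheme.

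The crux is the claim that, for every $S$-point of $\mathcal Q$, the generic section $\sigma$ factors through $Y\subseteq\BP\E$ over some domain \emph{if and only if} $\tau_1=\dots=\tau_k=0$ on $X_S$. Granting it and the monomorphism property, an $S$-point of $\mathcal Q$ lies in $\QSect{Y}$ exactly when it lies in $V$, whence $\QSect{Y}=V$ is closed in $\mathcal Q$. The ``if'' direction is immediate: $\tau_j|_{U_S}=\sigma^*s_j$ and the $s_j$ generate $\mathcal I_Y$, so $\sigma$ factors through $Y$ over the domain $U_S$ itself. For ``only if'', suppose $\sigma$ factors through $Y$ over a domain $U'$; then $\tau_j|_{U'}=(\sigma|_{U'})^*s_j=0$, and the point is to propagate this to all of $X_S$. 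This is where the flatness of $C$ is used: it forces $i_S\otimes\kappa(s)$ to be injective for every $s\in S$, so the locus $Z_S\subset X_S$ where $i_S$ fails to be a sub-bundle --- and, a fortiori, the complement of any domain $U'\subseteq X_S$ --- is finite over $S$, and therefore contains no associated point of $\O_{X_S}$, all of which lie over the generic point of $X$. Consequently $\O_{X_S}\to j'_*\O_{U'}$ is injective and remains so after twisting by a line bundle; applied to $\tau_j$, this gives $\tau_j=0$ on $X_S$.

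The delicate step is precisely this last propagation, and the $S$-flatness of $C$ is indispensable for it: without that hypothesis the equivalence in the claim fails --- over the ring of dual numbers one can build a quasi-section whose generic section factors through $Y$ generically but for which some $\tau_j$ is a nonzero nilpotent section supported on $Z_S$, so the equations $\tau_j=0$ would cut out a subscheme that is too small. The remaining ingredients are standard: the completeness of $X$ (so that $\mathcal Q$ is a disjoint union of projective schemes and $p$ is proper) together with relative Serre vanishing and cohomology-and-base-change.
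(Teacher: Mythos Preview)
The paper does not supply its own proof of this lemma: it is stated with a citation to \cite{BFG} and nothing more. Your argument is the standard one underlying that reference --- pull back generators of the ideal of $Y$ along the universal quasi-section to obtain global sections $\tau_j$ of a line bundle on $X_{\mathcal Q}$, push them down to a vector bundle on $\mathcal Q$ via relative Serre vanishing, and identify $\QSect{Y}$ with their common zero locus --- and it is correct in substance.

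Two expository points are worth tightening. First, the phrase ``and, a fortiori, the complement of any domain $U'\subseteq X_S$'' is not quite right as written: the complement of an arbitrary domain $U'$ can be strictly larger than $Z_S$, so finiteness of the former does not follow \emph{a fortiori} from finiteness of the latter. What you actually need (and what is true) is that the complement of \emph{any} domain in $X_S$ is proper and quasi-finite over $S$, hence finite; this uses only the completeness of $X$ and the definition of domain. Second, your associated-points claim ``all of which lie over the generic point of $X$'' deserves one line of justification: since $X_S\to X$ is flat and $X$ is a regular curve, any point of $X_S$ lying over a closed point of $X$ has positive depth, so it cannot be associated. With those two sentences inserted, the propagation step (``$\tau_j$ vanishes on a domain implies $\tau_j=0$ on $X_S$'') is airtight. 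The role of the $S$-flatness of the cokernel is precisely to guarantee that the flat locus $U_S$ is itself a domain, so that $U'\cap U_S$ remains a domain and the propagation applies; your closing remark about the dual numbers correctly isolates this.
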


\begin{cor} \label{cor:QSect-closed-embedding}
With the notation above, let $f: Z \hto Y$ be an immersion such that the resulting map $f: Z \hto \BP\E$ is a closed embedding. Then the induced map $F: \GSectdom{Z} \to \GSectdom{Y}$ is a closed embedding, too.
\end{cor}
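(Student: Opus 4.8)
The plan is to transfer the assertion from generic sections to quasi-sections, where it becomes essentially Lemma~\ref{lem:QSect-closed-emb}, and then to descend along the quotient maps $\q$ of Section~\ref{ssec:QSECT}. I will freely use three facts recalled earlier: for a locally closed $Y\subseteq\BP\E$ one has $\QSect{Y}=\GSectdom{Y}\ustimes{\GSectdom{\BP\E}}\QSect{\BP\E}$ by definition; the functor $\GSectdom{-}$ preserves fiber products and open embeddings (the lemma of \cite{Ba} in Section~\ref{sssec:GSECT-indep}); and $\q\colon\QSect{\BP\E}\to\GSectdom{\BP\E}$ is a Zariski-local epimorphism, as follows from the presentation in Section~\ref{ssec:QSECT}.

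The first step is to show that $\QSect{Z}\to\QSect{Y}$ is a closed embedding of schemes. Since the composite $Z\hto\BP\E$ is a closed embedding, Lemma~\ref{lem:QSect-closed-emb} tells us that $\QSect{Z}\to\QSect{\BP\E}$ is one; in particular $\QSect{Z}$ is a scheme. Likewise $\QSect{Y}$ is a scheme: writing $Y=U\cap C$ with $U\subseteq\BP\E$ open and $C\subseteq\BP\E$ closed, and using that $\GSectdom{-}$ preserves finite limits, one identifies $\QSect{Y}$ with $\QSect{U}\ustimes{\QSect{\BP\E}}\QSect{C}$, a locally closed subscheme of $\QSect{\BP\E}$ ($\QSect{U}$ being open by the open-embedding clause and $\QSect{C}$ being closed by Lemma~\ref{lem:QSect-closed-emb}). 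Moreover $\QSect{Y}\to\QSect{\BP\E}$ is a monomorphism, being the base change along $\q$ of the monomorphism $\GSectdom{Y}\to\GSectdom{\BP\E}$. Hence $\QSect{Z}\to\QSect{Y}$ factors the closed embedding $\QSect{Z}\to\QSect{\BP\E}$ through the separated morphism $\QSect{Y}\to\QSect{\BP\E}$, and is therefore itself a closed embedding.

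The second step is a diagram chase: the square with top arrow $\QSect{Z}\to\QSect{Y}$, bottom arrow $F\colon\GSectdom{Z}\to\GSectdom{Y}$, and vertical arrows the restrictions of $\q$, is Cartesian. Indeed
\[
\GSectdom{Z}\ustimes{\GSectdom{Y}}\QSect{Y}\;\simeq\;\GSectdom{Z}\ustimes{\GSectdom{\BP\E}}\QSect{\BP\E}\;=\;\QSect{Z},
\]
using that $\GSectdom{Z}\to\GSectdom{\BP\E}$ factors through $\GSectdom{Y}$. The third and last step is to deduce that $F$ is a closed embedding. Let $\q_Y\colon\QSect{Y}\to\GSectdom{Y}$ be the base change of $\q$; it is again a Zariski-local epimorphism. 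Given an affine test scheme $T\to\GSectdom{Y}$, choose (using that $\q_Y$ is a Zariski-local epimorphism) a Zariski cover $T'\to T$ together with a lift $T'\to\QSect{Y}$. By the Cartesian square, $\GSectdom{Z}\ustimes{\GSectdom{Y}}T'\simeq\QSect{Z}\ustimes{\QSect{Y}}T'$, which is a closed subscheme of $T'$ by the first step. Since the property of being a closed embedding is stable under base change and Zariski-local on the target, $\GSectdom{Z}\ustimes{\GSectdom{Y}}T\to T$ is a closed embedding; as $T$ was arbitrary, $F$ is a closed embedding.

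The only non-formal input is Lemma~\ref{lem:QSect-closed-emb}, taken from \cite{BFG}; the rest is bookkeeping with fiber products and Zariski descent, so I expect no real obstacle. The one point meriting attention is that the various $\QSect{-}$'s genuinely are (locally closed sub-)schemes of $\QSect{\BP\E}$ and that $\q$ is compatible with base change along the immersions $Z\subseteq Y\subseteq\BP\E$ — both guaranteed by the material of Section~\ref{ssec:QSECT}.
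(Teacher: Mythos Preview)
Your proposal is correct and supplies precisely the argument the paper leaves implicit: the corollary is stated without proof, as an immediate consequence of Lemma~\ref{lem:QSect-closed-emb} via the defining Cartesian square $\QSect{Y}=\GSectdom{Y}\ustimes{\GSectdom{\BP\E}}\QSect{\BP\E}$ and Zariski descent along $\q$. Your three steps (closed embedding at the level of quasi-sections, the Cartesian square, and descent) are exactly the intended route, and your care in checking that $\QSect{Z}\to\QSect{Y}$ is closed via the proper-plus-monomorphism cancellation argument is appropriate.
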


\sec{Fourier transform for meromorphic jets} \label{SEC:FT}
\renc{\VA}{V(\sA)}
\nc{\dualVA}{V^\vee_\omega(\sA)}
\renc{\K}{\mathsf{K}}
\renc{\A}{\mathsf{A}}

Let $V$ be a vector group, i.e. a group isomorphic to $\GG_a^n$ for some $n \geq 1$. We wish to develop the theory of Fourier transform for the relative group prestack $\VA \to \Ran$. 

As reviewed below, each $\VA_{X^I}$ is an ind-vector bundle of pro-finite type.\footcite{This stands to \virg{vector bundle} the same way $\IndSch^\pro_{/B}$ stands to $\Sch^{\fty}_{/B}$.}
We need the notion of dual of such an object: if $E = \colim_n \lim_r E_{n,r}$, then $E^\vee := \colim_r \lim_n (E_{n,r})^\vee$ along the dual transition maps. We refer to \cite{Be} for a discussion of the legitimacy of such a definition.
To fix conventions, whenever $\E$ is locally free on a scheme $B$, we write $\mathbb V(\E)$ for $\Spec_B (\Sym \E^\vee)$.

\ssec{The dual of the prestack of jets into a vector group}

Let $\Omega:= \Omega_X$ the canonical line bundle on $X$.
In this section, we recall that the dual of $\VA$ identifies with the prestack of meromorphic jets of $V^\vee$-valued $1$-forms. 

\sssec{}

For $n \geq 0$, define
$$
V(\A)^r_{n,X^I}
:=
\mathbb V  \Bigt{ p_* \big( V \otimes \O_{\D}(n \Gamma_\inc)/ \O_{\D}(-r\Gamma_\inc) \big)},
$$
where $p$ denotes the obvious map $p: \D \to X^I$.
Each $V(\A)^r_{n,X^I}$ is a vector bundle on $X^I$. By Grothendieck duality, its dual is described as follows.

\begin{lem}
The dual of $V(\A)^r_{n,X^I}$ is the prestack
$$
\dualVA_{n,X^I}^r: 
S \mapsto 
\left\{ 
(\x, \sigma) \left| 
\begin{array}{l}
 \x \in X^I(S), \\
\sigma \mbox{ a section of $V^\vee \otimes \big( \O_S \boxtimes \Omega_X(r \Gamma_\inc )/ \O_S \boxtimes \Omega_X (- n \Gamma_\inc) \big)$}
\end{array}
 \right.
\right\}.
$$
\end{lem}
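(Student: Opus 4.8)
The statement is an instance of relative Grothendieck--Serre duality for a finite flat morphism, and the plan is to carry it out in three stages. First I would set up notation: write $\Gamma := \Gamma_{\inc}$ for the incidence divisor on $X^I \times X$ and $Z := (n+r)\Gamma$ for its $(n+r)$-th infinitesimal thickening. This $Z$ is a relative effective Cartier divisor, so the restriction $\pi : Z \to X^I$ of the projection $p$ is finite and flat. Since the $\O$-module $\O(n\Gamma)/\O(-r\Gamma)$ is supported on $Z$, I may compute on $X^I \times X$ (equivalently, on $\D$ or on $Z$ itself), and there $\O(n\Gamma)/\O(-r\Gamma) \simeq \O(n\Gamma)\big|_Z$ is a \emph{line bundle} on $Z$, because $\O(n\Gamma)\otimes\O(-(n+r)\Gamma) = \O(-r\Gamma)$. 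Thus, setting $\E := p_*\bigl(V \otimes \O(n\Gamma)/\O(-r\Gamma)\bigr) = \pi_*\bigl(V \otimes \O(n\Gamma)\big|_Z\bigr)$, one has $V(\A)^r_{n,X^I} = \mathbb V(\E)$, and since the dual of the vector bundle $\mathbb V(\E) = \Spec(\Sym\E^\vee)$ is $\mathbb V(\E^\vee)$, the whole task reduces to identifying the locally free sheaf $\E^\vee$ on $X^I$.

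The second stage is to invoke duality for the finite flat $\pi$: for any line bundle $\F$ on $Z$ there is a canonical isomorphism $(\pi_*\F)^\vee \simeq \pi_*\,\CHom_{\O_Z}(\F, \omega_{Z/X^I})$, where $\omega_{Z/X^I} = \pi^!\O_{X^I}$ is the relative dualizing sheaf, itself a line bundle because $\pi$ is finite flat with lci (hence Gorenstein) fibers. To make this explicit I would compute $\omega_{Z/X^I}$ from the factorization $Z \hookrightarrow X^I \times X \xto{\,p\,} X^I$: here $p$ is smooth of relative dimension one with $\omega_p \simeq \O \boxtimes \Omega_X$, and $Z \hookrightarrow X^I \times X$ is a Cartier divisor with normal bundle $\O\bigl((n+r)\Gamma\bigr)\big|_Z$, so the adjunction formula gives $\omega_{Z/X^I} \simeq \bigl(\O \boxtimes \Omega_X\bigr)\bigl((n+r)\Gamma\bigr)\big|_Z$. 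Feeding in $\F = V \otimes \O(n\Gamma)\big|_Z$ and using $(n+r) - n = r$ yields
$$
\CHom_{\O_Z}\bigl(\F,\, \omega_{Z/X^I}\bigr) \;\simeq\; V^\vee \otimes \bigl(\O \boxtimes \Omega_X\bigr)(r\Gamma)\big|_Z \;\simeq\; V^\vee \otimes \Bigl( \O \boxtimes \Omega_X(r\Gamma) \,\big/\, \O \boxtimes \Omega_X(-n\Gamma) \Bigr),
$$
whence $\E^\vee \simeq p_*\Bigl( V^\vee \otimes \bigl( \O \boxtimes \Omega_X(r\Gamma) \,/\, \O \boxtimes \Omega_X(-n\Gamma)\bigr) \Bigr)$.

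Finally, I would unwind the functor of points. An $S$-point of $\mathbb V(\E^\vee)$ lying over $\x : S \to X^I$ is an element of $\Gamma(S, \x^*\E^\vee)$; since $\pi$ is finite flat, $p_*$ commutes with the base change along $\x$, so $\x^*\E^\vee$ is the pushforward to $S$ of $V^\vee \otimes \bigl( \O_S \boxtimes \Omega_X(r\Gamma_\x) \,/\, \O_S \boxtimes \Omega_X(-n\Gamma_\x)\bigr)$, and its global sections are exactly the sections $\sigma$ appearing in the statement. This gives the asserted identification $\mathbb V(\E^\vee) = \dualVA^r_{n,X^I}$. The step I expect to require the most care is the bookkeeping of twists in the adjunction formula — getting the passage $n\rightsquigarrow r$, $-r\rightsquigarrow -n$ right — together with checking that $\pi_*$ is exact and commutes with base change; both of the latter are standard for finite flat morphisms, and since every sheaf in sight is a line bundle on the finite flat scheme $Z$, no higher direct images intervene and the remainder is routine.
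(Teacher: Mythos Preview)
Your proof is correct and is exactly the approach the paper has in mind: the paper omits the argument entirely, merely stating ``By Grothendieck duality, its dual is described as follows,'' and your three stages (reducing to computing $\E^\vee$, applying duality for the finite flat $\pi$ together with the adjunction formula to identify $\omega_{Z/X^I}$, then unwinding the functor of points via base change) are precisely the details behind that one-line justification. The bookkeeping of twists is right.
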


\sssec{}

By taking the limit of $\dualVA^r_{n, X^I}$ as $n \to \infty$, we obtain
$$
\dualVA^r_{X^I}:
S \mapsto 
\left\{ 
(\x, \sigma) \left| 
\begin{array}{l}
 \x \in X^I(S), \\
\sigma \mbox{ a $V^\vee$-valued section of $\restr{(\O_S \boxtimes \Omega_X(r \Gamma))}{\D_\x}$}
\end{array}
 \right.
\right\}.
$$
This is a pro-vector bundle over $X^I$, and the natural map $\dualVA^r_{X^I} \to \dualVA^{r+1}_{X^I}$ an injection. Finally, taking the colimit as $r \to \infty$ and assembling over the Ran space, we obtain:
$$
(\VA)^\vee \simeq \dualVA:
S \mapsto 
\left\{ 
(\x, \sigma) \left| 
\begin{array}{l}
 \x \in \Ran(S), \\
\sigma \mbox{ a $V^\vee$-valued section of $\O_S \boxtimes \Omega_X$ over $\D_\x^\circ$}
\end{array}
 \right.
\right\}.
$$

\sssec{}

Let
\begin{equation} \label{eqn:residue}
\Res := \Res_{X_S/S} : (\GG_a)_\omega(\sA) \to \GG_a.
\end{equation}
denote the relative residue map. Unwinding the constructions, the evaluation pairing 
\begin{equation} \label{eqn:ev-VA}
\ev: \VA \Times \dualVA \to \GG_a
\end{equation}
is defined combining the residue map with the evaluation of vector groups $\wt \ev: V \otimes V^\vee \to \GG_a:$
$$
\VA \ustimes{\Ran} \dualVA
\xto{\, \mathsf{mult} \,}
(V \times V^\vee)_\omega(\sA)
\xto{ \, \wt\ev \,}
(\GG_a)_\omega(\sA)
\xto{ \, \Res \,}
\GG_a.
$$

\begin{rem} \label{rem:dual-twist}
Let $\L$ be a line bundle on $X$. The above argument shows that $V_\L(\sA)$, the group prestack of meromorphic jets of $V$-valued sections of $\L$, is dual to $(V^\vee)_{\L^{-1} \otimes \Omega}(\sA)$.
\end{rem}

\ssec{The Fourier transform equivalence} 

The abelian group structure on $\VA$ endows $\Dmod^*(\VA)$ with the structure of a commutative algebra object of $\R\mmod$.\footnote{Recall that $\R := (\Dmod(\Ran),\otimes)$.} The product is as usual called convolution and denoted by $\star$. 
We wish to construct a Fourier transform functor $\FT_{\VA}$ such that the following theorem holds.

\begin{thm} \label{thm:FT-Ran}
$\FT_{\VA}$ yields a equivalence
\begin{equation}
\Big(\Dmod^*(\VA), \star \Big) 
\xto{\, \, \simeq \,\,}
\Big(\Dmod^!(\dualVA), \Otimes \Big). 
\end{equation}
of commutative algebra objects in $\R\mmod$.
\end{thm}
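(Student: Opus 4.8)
The plan is to bootstrap the equivalence from the classical, finite-dimensional Fourier(--Deligne) transform, in three stages: reproduce the classical statement for each finite-rank vector bundle occurring in the presentation of $\VA$, propagate it through the ind-pro structure over a fixed $X^I$, and assemble over the Ran space. First I would recall the relative Fourier transform: for a finite-rank vector bundle $E \to B$ over a finite-type scheme $B$, with dual $E^\vee \to B$, integration against the exponential $\Dmod$-module on $E \times_B E^\vee$ (the pullback of the exponential on $\GG_a$ along the tautological pairing, modelled here on $\ev$ of (\ref{eqn:ev-VA})) is a $\Dmod(B)$-linear equivalence $\FT_E \colon \Dmod(E) \xto{\simeq} \Dmod(E^\vee)$, monoidal for the convolution $\star$ on the source and the $!$-tensor on the target, functorial for base change $B' \to B$, and compatible with linear maps of vector bundles: for a linear closed embedding $j \colon E' \hto E$ with dual projection $q \colon E^\vee \to (E')^\vee$ one has $\FT_{E'} \circ j_* \simeq q^! \circ \FT_E$, and symmetrically $\FT_E \circ p_* \simeq (p^\vee)^! \circ \FT_{E'}$ for a projection $p \colon E \to E'$ dual to a closed embedding $p^\vee \colon (E')^\vee \hto E^\vee$ (all up to the customary cohomological shifts). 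Applying this to the bundles $V(\A)^r_{n,X^I}$, whose duals are the $\dualVA^r_{n,X^I}$ of the preceding lemmas, produces a compatible family of equivalences $\FT_{n,r} \colon \Dmod(V(\A)^r_{n,X^I}) \xto{\simeq} \Dmod(\dualVA^r_{n,X^I})$.

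Next I would propagate these along the presentations $\VA_{X^I} = \colim_n \lim_r V(\A)^r_{n,X^I}$ (colimit along closed embeddings in $n$, limit along smooth affine-bundle surjections in $r$) and $\dualVA_{X^I} = \colim_r \lim_n \dualVA^r_{n,X^I}$, in which dualization has interchanged the two indices together with the types of their transition maps. By the definitions of \secref{SEC:background} (following \cite{Be}),
$$
\Dmod^*(\VA_{X^I}) \simeq \colim_n \lim_r \Dmod(V(\A)^r_{n,X^I}),
\qquad
\Dmod^!(\dualVA_{X^I}) \simeq \lim_r \colim_n \Dmod(\dualVA^r_{n,X^I}),
$$
with transition functors $\iota_{n,*},\ \pi_{r,*}$ on the left and $(\pi_r^\vee)^!,\ (\iota_n^\vee)^!$ on the right. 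The compatibilities above show that $\{\FT_{n,r}\}$ intertwines $\iota_{n,*}$ with $(\iota_n^\vee)^!$ and $\pi_{r,*}$ with $(\pi_r^\vee)^!$, hence is a morphism of $\NN \times \NN^\op$-shaped diagrams of DG categories. The real content of this stage is that passing to (co)limits identifies the two iterated expressions, i.e. that here the canonical comparison $\colim_n \lim_r \to \lim_r \colim_n$ is an equivalence; this holds because the square relating the transitions $\iota$ and $\pi$ consists of linear maps of vector bundles, which Fourier transform carries to its Beck--Chevalley mate, so the argument reduces to the base-change and projection-formula statements for $\Dmod^*$ and $\Dmod^!$ on indschemes of pro-finite type established in \loccit. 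One thus obtains an equivalence $\FT_{\VA_{X^I}} \colon \Dmod^*(\VA_{X^I}) \xto{\simeq} \Dmod^!(\dualVA_{X^I})$, $\Dmod(X^I)$-linear and — since at finite level addition on $\VA$ is dual to the diagonal on $\dualVA$ — monoidal for $\star$ against the $!$-tensor.

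Finally I would assemble over the Ran space. For a surjection $I \surjto J$, Fourier transform commutes with the base change $X^J \hto X^I$, so it matches the induced closed embedding $\wt\Delta \colon \VA_{X^J} \hto \VA_{X^I}$ with the analogous closed embedding $\wt\Delta^\vee \colon \dualVA_{X^J} \hto \dualVA_{X^I}$, giving $\FT_{\VA_{X^I}} \circ \wt\Delta_* \simeq \wt\Delta^\vee_* \circ \FT_{\VA_{X^J}}$. Since $\Dmod^*(\VA) = \uscolim{I \in \fset^\op, \wt\Delta_*} \Dmod^*(\VA_{X^I})$ while $\Dmod^!(\dualVA) = \lim_{I \in \fset, \wt\Delta^!} \Dmod^!(\dualVA_{X^I})$, and since $\wt\Delta^\vee_* \dashv (\wt\Delta^\vee)^!$, the colimit on the left is identified with the limit on the right; the family $\{\FT_{\VA_{X^I}}\}$ therefore glues to an equivalence $\FT_{\VA} \colon (\Dmod^*(\VA), \star) \xto{\simeq} (\Dmod^!(\dualVA), \Otimes)$, automatically $\R$-linear and monoidal by what was checked at each level, hence an equivalence of commutative algebra objects of $\R\mmod$.

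The step I expect to be the main obstacle is the second one. The individual $\FT_{n,r}$ are classical, but organizing them compatibly through the ind-pro bookkeeping — commuting the colimit over pole orders with the limit over depths — is exactly the kind of point for which the theory of $\Dmod^*$ and $\Dmod^!$ on schemes and indschemes of (pro-)infinite type was set up in \cite{Be}; carrying it out amounts to feeding the relevant base-change and projection-formula results from \loccit\ into the comparison of the two iterated (co)limits.
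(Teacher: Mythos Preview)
Your proposal is correct and follows the same three-stage structure as the paper: finite-level Fourier transform for the $V(\A)^r_{n,X^I}$, propagation through the ind-pro presentation over each $X^I$, and assembly over $\Ran$. The paper handles your ``main obstacle'' slightly more economically: rather than arguing the categorical $\colim$/$\lim$ exchange via Beck--Chevalley, it observes that the swap already holds at the level of prestacks---the natural map $\colim_r \lim_n \dualVA^r_{n,X^I} \to \lim_n \colim_r \dualVA^r_{n,X^I}$ is an isomorphism (cited from \cite{Be})---and then simply applies $\Dmod^!$.
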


The construction for each fixed $X^I$ was performed in \cite{Be}. Indeed, the usual Fourier transform for vector bundles over a scheme yields an equivalence 
\begin{equation}
\Big(\Dmod^*(\VA^r_{n,X^I}), \star \Big) 
\xto{\, \, \simeq \,\,}
\Big(\Dmod^!(\dualVA_{n,X^I}^r), \Otimes \Big)
\end{equation}
of commutative algebras in $\DGCat_{/X^I}$.
Now, Fourier transform transforms a push-forward along a linear map into the pull-back along the dual map, hence we obtain a monoidal equivalence
$$
\Big(\colim_r \on{lim}_n \, \Dmod^*(\VA^r_{n,X^I}), \star \Big) 
\xto{\, \, \simeq \,\,}
\Big(\colim_r \on{lim}_n \,\Dmod^!(\dualVA_{n,X^I}^r), \Otimes \Big).
$$
The category $\colim_r \on{lim}_n \, \Dmod^*(\VA^r_{n,X^I})$ is naturally equivalent to $\Dmod^*(\VA_{X^I})$. It remains to show that 
$$
\colim_r \on{lim}_n \,\Dmod^!(\dualVA_{n,X^I}^r) \simeq \Dmod^!(\dualVA_{X^I}).
$$
This holds because the natural map
$$
\colim_r \on{lim}_n \, \VA^r_{n,X^I}
\to
\on{lim}_n \, \colim_r  \VA^r_{n,X^I}
\simeq
\dualVA_{X^I}
$$
is an isomorphism (\textit{loc.$\,$cit.}). It is immediate to assemble these equivalences as $I \in \fset$ varies, and we obtain Theorem \ref{thm:FT-Ran}.

\sssec{}
\renc{\ch}{\mathsf{ch}}

Consider now the subfunctor $V(\K) \hto \VA$. The annihilator of $V(\K)$ inside $\dualVA$ will be denoted by $\ch_V$.
Using the theorem about the sum of residues, we deduce the following characterization of $\ch_V$.

\begin{lem}
The prestack $\ch_V \to \Ran$ is equivalent to the relative indscheme (of ind-finite type) parametrizing rational sections of $V^\vee \otimes \Omega_X$, that is, $\ch_V \simeq \GSectRan{\BV(V^\vee \otimes \Omega_X)}$.
\end{lem}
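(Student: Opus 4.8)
The plan is to reduce the assertion to a claim over each $X^I$ with a bounded pole order, where it becomes a form of relative Serre duality along the curve-fibration $\pi\colon X^I\times X\to X^I$; the residue theorem on $X$ will then supply the required compatibility.

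The first step is to fix presentations. By construction $\dualVA\simeq\colim_{I\in\fset^\op}\colim_r\dualVA^r_{X^I}$, where $\dualVA^r_{X^I}$ is the pro-vector bundle of $V^\vee$-valued sections of $\O_S\boxtimes\Omega_X(r\Gamma_\inc)$ over $\D_\x$. On the other side, $\GSectRan{\BV(V^\vee\otimes\Omega_X)}\simeq\colim_{I\in\fset^\op}\colim_r Z^r_{X^I}$, where $Z^r_{X^I}:=\BV\big(\pi_*(V^\vee\otimes\Omega_X(r\Gamma_\inc))\big)$ is the scheme of those $V^\vee\otimes\Omega_X$-sections over $U_\x$ whose pole along $\Gamma_\x$ is at most $r$ — the colimit over $r$ being exhaustive, as a rational section over $U_\x$ has bounded pole. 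Restricting such a section to the punctured disc defines a closed embedding $Z^r_{X^I}\hookrightarrow\dualVA^r_{X^I}$ compatible with the transition maps in $r$ and $I$ on both sides. Hence it suffices to prove, compatibly in $r$ and $I$, that the image of $Z^r_{X^I}$ is precisely $\ch_V\cap\dualVA^r_{X^I}$; since only the colimits matter, one may take $r\gg 0$.

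The inclusion $Z^r_{X^I}\subseteq\ch_V\cap\dualVA^r_{X^I}$ is exactly the residue theorem: if $\tilde\sigma$ is a section of $V^\vee\otimes\Omega_X(r\Gamma_\x)$ over $U_\x$ and $f\in V(\K)$ is a $V$-valued section over $U_\x$, then $\langle f,\tilde\sigma\rangle$ is a rational $1$-form regular away from $\Gamma_\x$, so its total residue along $\Gamma_\x$ — which is precisely $\ev(f,\tilde\sigma)$ — vanishes. For the reverse inclusion I would compute $\ch_V\cap\dualVA^r_{X^I}$ outright. Pushing the short exact sequence $0\to V\otimes\O(-r\Gamma_\inc)\to V\otimes\O(m\Gamma_\inc)\to V\otimes\big(\O(m\Gamma_\inc)/\O(-r\Gamma_\inc)\big)\to 0$ forward along $\pi$, and using $\pi_*(V\otimes\O(-r\Gamma_\inc))=0$ (for $r\geq1$) and $R^1\pi_*(V\otimes\O(m\Gamma_\inc))=0$ (for $m\gg0$), produces a short exact sequence of vector bundles on $X^I$. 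Dualizing it and composing with the residue identification $\pi_*\big(V\otimes(\O(m\Gamma_\inc)/\O(-r\Gamma_\inc))\big)^\vee\simeq\pi_*\big(V^\vee\otimes(\Omega_X(r\Gamma_\inc)/\Omega_X(-m\Gamma_\inc))\big)$ exhibits the kernel of $\dualVA^r_{m,X^I}\to\BV\big(\pi_*(V\otimes\O(m\Gamma_\inc))^\vee\big)$ as $\BV\big(R^1\pi_*(V\otimes\O(-r\Gamma_\inc))^\vee\big)$. Unwinding the evaluation pairing, that kernel is exactly the locus on which $\ev(f,-)$ vanishes for every $f\in V(\K)$ of pole order $\leq m$; passing to the limit over $m$ therefore identifies $\ch_V\cap\dualVA^r_{X^I}$ with $\BV\big(R^1\pi_*(V\otimes\O(-r\Gamma_\inc))^\vee\big)$.

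Finally, relative Serre duality along $\pi$ — whose relative dualizing sheaf is $\pi^*\Omega_X$ — turns $R^1\pi_*(V\otimes\O(-r\Gamma_\inc))^\vee$ into $\pi_*(V^\vee\otimes\Omega_X(r\Gamma_\inc))$, so $\ch_V\cap\dualVA^r_{X^I}\simeq\BV\big(\pi_*(V^\vee\otimes\Omega_X(r\Gamma_\inc))\big)=Z^r_{X^I}$; and since the Serre pairing in play is the residue pairing, this isomorphism is the restriction-of-jets embedding, hence compatible with the transition maps in $r$ and $I$. Taking colimits then yields $\ch_V\simeq\GSectRan{\BV(V^\vee\otimes\Omega_X)}$. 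The step I expect to be the main obstacle is not conceptual but a matter of \emph{bookkeeping}: making the identification of $\ch_V\cap\dualVA^r_{X^I}$ with $\BV\big(R^1\pi_*(V\otimes\O(-r\Gamma_\inc))^\vee\big)$ natural in $m$, $r$, $I$ and in the test scheme, tracking the pro-ind structures, and verifying that the Serre-duality isomorphism genuinely agrees with the restriction map — everything of substance reducing, in the end, to the residue theorem together with the fiberwise Riemann–Roch vanishings that make the relevant pushforwards locally free for $r\gg0$.
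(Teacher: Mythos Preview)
Your argument is correct and is precisely the detailed unpacking of what the paper only gestures at: the paper gives no proof beyond the one-line remark that the lemma follows from ``the theorem about the sum of residues.'' Your use of the residue theorem for one inclusion and relative Serre duality (whose pairing is the residue pairing) for the reverse is the natural way to make that remark rigorous.
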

%

\begin{rem}
In the context of the Fourier transform, we always need to consider $\ch_V$ as fibered over $\Ran$. However, in the definition of the Whittaker categories and in the proof of Theorem \ref{thm:MAIN}, it will be essential to consider also the independent version $\chind_V$.
\end{rem}

\ssec{The extended coefficient functor for vector groups}

\sssec{}

Let $\W \hto \V$ an inclusion of relative ind-pro-vector bundles over a base $B$ (where $B \in \Sch^{\fty}$ or $B= \Ran$) and $\W^\perp$ the annihilator of $\W$ inside $\V^\vee$. Given $\CC \in \Dmod(B) \mmod$ a category acted on by $\V$, Fourier transform induces an equivalence
\begin{equation} \label{eqn:VK-invariants}
\CC^{\W} \simeq \uprestr {\CC}{\W^\perp},
\end{equation}
compatible with the forgetful functors to $\CC$; see \cite{Be}. (Of course, we are interested in the case where $\W \hto \V$ is $V(\sK) \hto V(\sA)$.)
We now manipulate the RHS of (\ref{eqn:VK-invariants}) in a trivial, yet useful, manner.

\sssec{} \label{sssec:Whit-annihilator}

Written out explicitly, the RHS of (\ref{eqn:VK-invariants}) is
$$
\uprestr {\CC}{\W^\perp} 
:=
\Hom_{\Dmod^!(\V^\vee)} \bigt{\Dmod^!(\W^\perp), \CC}.
$$
Thanks to the Bar resolution of $\Dmod^!(\W^\perp)$ as a $\Dmod^!(\V^\vee)$-module, we obtain
$$
\uprestr \CC{\W^\perp} 
\simeq
\lim \left(
\Hom(\Dmod^!(\W^\perp), \CC) \rr \Hom \bigt{\Dmod^!(\V^\vee) \Otimes \Dmod^!(\W^\perp), \CC} \rrr \cdots
\right).
$$
Next, we use duality:
$$
\uprestr \CC{\W^\perp} 
\simeq
\lim \left(
\Dmod^*(\W^\perp)\Otimes \CC  \rr \Dmod^*(\V^\vee) \Otimes \Dmod^*(\W^\perp) \Otimes \CC \rrr \cdots
\right).
$$
Finally, via the Fourier transform for $\V^\vee$, we get:
\begin{equation} \label{eqn:totalization-extWhit}
\uprestr \CC{\W^\perp} 
\simeq
\lim \left(
\Dmod^*(\W^\perp)\Otimes \CC  \rr \Dmod^!(\V) \Otimes \Dmod^*(\W^\perp) \Otimes \CC \rrr \cdots
\right).
\end{equation}
Let $i: \W^\perp \hto \V^\vee$ be the inclusion. Unraveling the equivalences, the arrows of this cosimplicial complex are generated by the following three kinds, according to the usual pattern:

\begin{itemize}
\item
 $\coact$, the coaction of $\Dmod^!(\V)$ on $\CC$;
\smallskip
\item
 $m_\V^!$, the pullback along the multiplication of $\V$;
\smallskip
\item
the coaction of $\Dmod^!(\V)$ on $\Dmod^*(\W^\perp)$ given by the formula
\begin{equation} \label{eqn:FT-weird}
\Dmod^*(\W^\perp)
\xto{\, \Delta_* \,}
\Dmod^*(\W^\perp) \Otimes \Dmod^*(\W^\perp)
\xto {\id \otimes i_*}
\Dmod^*(\W^\perp) \Otimes \Dmod^*(\V^\vee)
\xto{\id \otimes \FT_{\V^\vee}}
\Dmod^*(\W^\perp) \Otimes \Dmod^!(\V).
\end{equation}

\smallskip
\end{itemize}

\begin{defn}
The RHS of (\ref{eqn:totalization-extWhit}) will be denoted by $\bigt{\CC \otimes_{\Dmod(B)} \Dmod^*(\W^\perp)}^{\V,\ev}$.
\end{defn}

The following easy lemma, whose proof is left to the reader, reformulates the coaction (\ref{eqn:FT-weird}) without explicitly mentioning the Fourier transform. Therefore, it allows to extend the above definition to categories acted by nonabelian group indschemes (see Section \ref{SEC:ExtWhit}).

\begin{lem} \label{lem:FT-weird}
Assume for simplicity that $\W^\perp$ is an indscheme of finite type, so that $\Dmod(\W^\perp) \simeq \Dmod^*(\W^\perp)$ canonically. Let $\ev: \V \Times \W^\perp \to \GG_a$ the evaluation and $p:  \V \Times \W^\perp \to \W^\perp$ the projection. The coaction (\ref{eqn:FT-weird}) can be written as
$$
\Dmod(\W^\perp) \to \Dmod(\W^\perp) \Otimes \Dmod^!(\V) \simeq \Dmod^!(\W^\perp \Times \V),
\hspace{.4cm}
M \mapsto p^!(M) \otimes \ev^!(\exp).
$$
\end{lem}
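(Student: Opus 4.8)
The plan is to unwind the partial Fourier transform appearing in (\ref{eqn:FT-weird}) and to reduce the asserted identity to base change along $i$ together with the projection formula. Since $\W^\perp$ is of finite type, $\Dmod^*(\W^\perp)\simeq\Dmod(\W^\perp)$, and under the canonical identifications $\Dmod^*(\W^\perp)\Otimes\Dmod^*(\V^\vee)\simeq\Dmod^*(\W^\perp\Times\V^\vee)$ and $\Dmod^*(\W^\perp)\Otimes\Dmod^!(\V)\simeq\Dmod^!(\W^\perp\Times\V)$ the functor $\id\otimes\FT_{\V^\vee}$ becomes the \emph{partial} Fourier transform along the $\V^\vee$-direction. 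Write $Z:=\W^\perp\Times\V^\vee\Times\V$, let $a\colon Z\to\W^\perp\Times\V^\vee$ and $b\colon Z\to\W^\perp\Times\V$ be the projections forgetting, respectively, the $\V$- and the $\V^\vee$-factor, and let $\mu\colon Z\to\GG_a$ be the canonical pairing between the $\V^\vee$- and the $\V$-coordinates. Then, up to the cohomological shift built into the normalization of $\FT$ in \cite{Be}, this partial Fourier transform is the functor $P\mapsto b_*\bigl(a^!(P)\sotimes\mu^!(\exp)\bigr)$. (All products are taken over the base $B$, and every functor here is $\Dmod(B)$-linear, so the argument is uniform in $B$ and applies in particular for $B=\Ran$.)

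First I would record that the composite of $\Delta_*$ with $\id\otimes i_*$ equals the pushforward $\gamma_*$ along the closed embedding $\gamma:=(\id,i)\colon\W^\perp\hto\W^\perp\Times\V^\vee$; thus the coaction (\ref{eqn:FT-weird}) sends $M\mapsto b_*\bigl(a^!(\gamma_*M)\sotimes\mu^!(\exp)\bigr)$. The geometric input is the Cartesian square with top edge $j:=(\id,i,\id)\colon\W^\perp\Times\V\hto Z$, left edge the projection $q\colon\W^\perp\Times\V\to\W^\perp$, bottom edge $\gamma$, and right edge $a$; one checks at once that $a\circ j=\gamma\circ q$ and that the square is Cartesian. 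Since $\gamma$, hence $j$, is a closed embedding, base change gives $a^!\circ\gamma_*\simeq j_*\circ q^!$. Substituting this and applying the projection formula for the proper map $j$, the coaction becomes $b_*j_*\bigl(q^!(M)\sotimes j^!\mu^!(\exp)\bigr)=(b\circ j)_*\bigl(q^!(M)\sotimes(\mu\circ j)^!(\exp)\bigr)$. Two tautologies then conclude the computation: $b\circ j=\id_{\W^\perp\Times\V}$, and $\mu\circ j=\ev$ --- the latter precisely because the evaluation $\ev\colon\V\Times\W^\perp\to\GG_a$ is by definition the restriction of the canonical pairing $\V\Times\V^\vee\to\GG_a$ along $i$. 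Hence the expression collapses to $q^!(M)\sotimes\ev^!(\exp)=p^!(M)\otimes\ev^!(\exp)$, which is the claimed formula (with $q$ being the projection denoted $p$ in the statement).

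The computation is otherwise routine; the one point that demands care is the bookkeeping of conventions. One must fix the normalization of $\FT_{\V^\vee}$ so that its kernel is literally $\mu^!(\exp)$ --- not its pullback along the inversion $-1\colon\GG_a\to\GG_a$, nor a further shift thereof --- which is exactly the normalization of \cite{Be}; only with that choice does $\mu\circ j=\ev$ hold on the nose rather than up to an inversion on $\GG_a$. The remaining ingredients --- $*$-pushforward, $!$-pullback, base change along the closed embedding $i$, and the projection formula --- are entirely standard.
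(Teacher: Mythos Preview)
The paper does not prove this lemma; it is explicitly ``left to the reader,'' so there is no argument to compare against. Your proof is correct and is precisely the computation one would expect: rewrite the coaction as the partial Fourier transform applied to $\gamma_*M$, then use base change along the Cartesian square with $j=(\id,i,\id)$ and the projection formula for the closed embedding $j$ to collapse everything to $p^!(M)\sotimes\ev^!(\exp)$. Your care with the normalization of the Fourier kernel (so that $\mu\circ j=\ev$ rather than $-\ev$) is well placed and is the only point where anything could go wrong.
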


\sssec{}

Let $\H$ be a group indscheme over $B$ acting on $\V$ and assume that $\W \hto \V$ is $\H$-equivariant. Then, $\H$ acts on $\W^\perp$ via the dual action. 

If $\CC \in \Dmod(B)\mmod$ is acted on by $\H \ltimes \V$, both $\CC^\W$ and $\uprestr\CC{\W^\perp}$ retain an action on $\H$. By rerunning the above construction, we obtain a natural equivalence
\begin{equation} \label{eqn:coeff_V-abstract}
\CC^{\H \ltimes \W}
\xto{ \, \, \simeq \,\,}
\bigt{\CC \otimes_{\Dmod(B)} \Dmod^*(\W^\perp)}^{\H \ltimes \V,\ev}
:=
\Bigt{
\bigt{
\CC \Otimes \Dmod^*(\W^\perp)
}^{\V,\ev}
}^\H,
\end{equation}
where now $\Otimes$ stands for the tensor product in the $\infty$-category $(\Dmod(\H), m^!)\comod(\Dmod(B)\mmod)$.

\begin{example}

For instance, assume $V$ is equipped with an action of an algebraic group $H$. Note that there is an induced action of $\H:= H(\sK)$ on $\V := \VA$. For $\W := V(\sK)$, we will see in Section \ref{sssec:coeff-GL2-conclusion} that the equivalence (\ref{eqn:coeff_V-abstract}) is exactly the extended coefficient functor $H \ltimes V$, thereby proving the following result.

\begin{prop} \label{prop:first-step}
Let $\CRan \in \R\mmod$ be acted on by $H(\sK) \ltimes \VA$. The the \emph{$\Ran$-version of the extended coefficient functor}
\begin{equation} \label{eqn:obvious2}
\coeffRan_{\ch_V}:
(\CRan)^{H(\sK) \ltimes V(\sK)} 
\longto
\big( \CRan \usotimes{\Dmod(\Ran)} \Dmod(\ch_V) \big) ^{H(\sK) \ltimes \VA, \ev}
\end{equation}
is an equivalence.
\end{prop}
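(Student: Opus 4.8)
The plan is to deduce this from the abstract Fourier-transform identity \eqref{eqn:coeff_V-abstract}, specialized to the base $B = \Ran$, the ind-pro-vector bundle $\V := \VA$, the subobject $\W := V(\sK)$, and the group indscheme $\H := H(\sK)$ acting diagonally. The first thing to check is that this specialization is legitimate: one needs $V(\sK) \hto \VA$ to be an $H(\sK)$-equivariant inclusion of relative ind-pro-vector bundles over $\Ran$ in the sense required by Section \ref{sssec:Whit-annihilator}. Equivariance is immediate from functoriality of $\GMapsRan{-}$ and $\Jets^\mer(-/X)$ in the group, and the ind-pro-vector-bundle structure on each $\VA_{X^I}$ was recalled at the start of Section \ref{SEC:FT}; the analogous structure on $V(\sK)_{X^I}$ is standard. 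Granting this, \eqref{eqn:coeff_V-abstract} produces an equivalence $(\CRan)^{H(\sK) \ltimes V(\sK)} \xto{\simeq} \bigt{ \CRan \Otimes \Dmod^*(\W^\perp) }^{H(\sK) \ltimes \VA, \ev}$, so the whole content of the proposition is the identification of $\W^\perp$ with $\ch_V$ and of the target with the right-hand side of \eqref{eqn:obvious2}.

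The second step is precisely that identification. By the Lemma immediately preceding Remark \ref{rem:dual-twist} (the ``theorem about the sum of residues'' computation), the annihilator of $V(\sK)$ inside $\dualVA$ is $\ch_V \simeq \GSectRan{\BV(V^\vee \otimes \Omega_X)}$, which is a relative indscheme of ind-finite type over $\Ran$. In particular $\Dmod^*(\ch_V) \simeq \Dmod(\ch_V)$ canonically, so the tensor factor $\Dmod^*(\W^\perp)$ appearing in \eqref{eqn:coeff_V-abstract} is literally $\Dmod(\ch_V)$, and the notation $\bigt{\CRan \otimes_{\Dmod(\Ran)} \Dmod(\ch_V)}^{H(\sK) \ltimes \VA, \ev}$ of \eqref{eqn:obvious2} matches the target of \eqref{eqn:coeff_V-abstract} on the nose. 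So at this point \eqref{eqn:obvious2} \emph{is} an equivalence; the only remaining assertion is that this equivalence is ``the'' $\Ran$-version of the extended coefficient functor, i.e. that it agrees with the functor $\coeffRan_{\ch_V}$ as it will be constructed in Section \ref{sssec:coeff-GL2-conclusion}. That, however, is a matter of unwinding definitions rather than a theorem: by construction $\coeffRan_{\ch_V}$ is assembled from exactly the three types of arrows listed after \eqref{eqn:totalization-extWhit} ($\coact$, $m_\V^!$, and the twisted coaction \eqref{eqn:FT-weird}, rewritten via Lemma \ref{lem:FT-weird} as $M \mapsto p^!(M) \otimes \ev^!(\exp)$ so as to avoid referencing the Fourier transform), and the compatibility with the forgetful functors to $\CRan$ is built into \eqref{eqn:coeff_V-abstract}. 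Hence one simply records that the two cosimplicial descriptions coincide termwise and faat-wise.

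I expect the only real friction to be bookkeeping: checking that the abstract construction of Section \ref{sssec:Whit-annihilator}, which was phrased for a fixed base $B \in \Sch^{\fty}$ or $B = \Ran$ and for $\Dmod^!, \Dmod^*$ on indschemes of pro-finite type, applies verbatim when $\V = \VA$ is only pro-finite type \emph{relative to} $\Ran$ and $\W^\perp = \ch_V$ is only ind-finite type relative to $\Ran$ — i.e. that all the dualities, Bar resolutions, and the key input $\CC^\W \simeq \uprestr{\CC}{\W^\perp}$ of \eqref{eqn:VK-invariants} survive the passage to the Ran-relative setting. But this is exactly the package set up in Section \ref{SEC:FT} (Theorem \ref{thm:FT-Ran} gives the monoidal Fourier equivalence over $\Ran$, and the assembly ``as $I \in \fset$ varies'' was already carried out there), together with the duality statements for $\Dmod^!$ and $\Dmod^*$ over $\Ran$ recalled in Section \ref{SEC:background}; so there is no genuine obstacle, only the need to cite the right relative statements. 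The remaining naturality claim — that \eqref{eqn:coeff_V-abstract} retains an $\H$-action because $\CC^\W$ and $\uprestr{\CC}{\W^\perp}$ do, which is what lets us take $\H$-invariants of both sides — follows because every arrow in the cosimplicial complex \eqref{eqn:totalization-extWhit} is manifestly $H(\sK)$-equivariant, the action on $\Dmod^*(\ch_V)$ being the dual of the $H(\sK)$-action on $\VA$.
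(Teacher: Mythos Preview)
Your approach is essentially the paper's: specialize the abstract Fourier-transform identity \eqref{eqn:coeff_V-abstract} to $\V = \VA$, $\W = V(\sK)$, $\H = H(\sK)$, and identify $\W^\perp$ with $\ch_V$. The only place where your argument differs from the paper is the final step, where you identify the resulting equivalence with the functor $\coeffRan_{\ch_V}$.

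Here your framing is slightly off. You write that ``by construction $\coeffRan_{\ch_V}$ is assembled from exactly the three types of arrows listed after \eqref{eqn:totalization-extWhit}'', but that is a description of the \emph{target category}, not of the functor: $\coeffRan_{\ch_V}$ is defined in Section \ref{sssec:defn-coeff-U} as the composition $\Av_*^{\VA,\ev} \circ (\text{pullback})$, not directly as a map of cosimplicial objects. The paper's argument (Section \ref{sssec:coeff-GL2-conclusion}) is cleaner: it observes that the Fourier equivalence fits into a commutative triangle with $\mathsf{pullback}$ and $\oblv^{rel}$, and since $\oblv^{rel}$ is fully faithful, one concludes $(\text{FT equiv}) \simeq \Av_*^{\VA,\ev} \circ \mathsf{pullback} = \coeffRan_{\ch_V}$. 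Your remark about ``compatibility with the forgetful functors to $\CRan$'' is exactly the commutativity of this triangle, so you have the right ingredient; you just need to invoke the full faithfulness of $\oblv^{rel}$ to finish, rather than appealing to a termwise comparison of cosimplicial diagrams.
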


We will discover that, when $H \ltimes V$ is the Levi decomposition of $B_{GL_2}$ (resp., $B_{PGL_2}$), this is our main theorem for the group $GL_2$ (resp., $PGL_2$).

\end{example}

\sec{Definition of the extended Whittaker category} \label{SEC:ExtWhit}

\renc{\NA}{N(\sA)}
\renc{\GA}{G(\sA)}
\nc{\TK}{T(\sK)}
\renc{\c}{\mathsf{c}}

To provide a geometric version of Definition \ref{defn:extWhit-functions}, one has to meticolously convert all the objects appearing there into objects of algebraic geometry.
For instance, it is well-known that the quotient $G(\AA)/\GO$ ought to be replaced by the Beilinson-Drinfeld Grassmannian $\GrGRan$ over the Ran space, and $\Fun(G(\AA)/\GO)$ by the category $\Dmod(\GrGRan)$.
However, it is convenient to proceed in greater generality: instead of $\Dmod(\GrGRan)$, we start with $\CRan \in \R\mmod$, equipped with an action of $T(\K) \ltimes N(\A)$. This datum is enough to make sense of the \emph{$\Ran$-version of the extended Whittaker category of $\CRan$}, denoted $\Wh_{G,\ext}[\CRan]$, which is again an object of $\R\mmod$.

However, $\Dmod(\Gr_G)$ has more structure: it is \emph{naive-unital} and it is acted on by $T(\sK) \ltimes N(\sA)$ \emph{naive-unitally}, see Section \ref{sssec:examples-unital-actions}.
Assume $\CRan$ possesses the same structure: in this case, $\Wh_{G,\ext}[\CRan]$ is naive-unital as well, and we define the genuine \emph{extended Whittaker category} of $\CRan$ to be the DG category
$$
\Wh_{G,\ext}[\Cind] := \Bigt{\Wh_{G,\ext}[\CRan]}_\indep.
$$
Note the abuse of notation on the LHS: $\Wh_{G,\ext}[\Cind]$ does depend on $\CRan$, not just on $\Cind$. However, any $\Cind$ arising in practice has a \virg{canonical} $\CRan$ attached to it.

\ssec{General framework for the construction}

We start by explaining the main contruction over a base scheme of finite type $B$.
In the most general situation, the ingredients of the definition are a category acted on by a semidirect product $\T \ltimes \N$ of group indschemes and a space $\c$ of additive characters of $\N$ which is $\T$-invariant with respect to the dual $\T$-action. The details are given below.
(As promised, the construction that follows is an extension of  the one in Section \ref{sssec:Whit-annihilator}, with the aid of Lemma \ref{lem:FT-weird}.)

\sssec{} \label{sssec:conditions-whit-abstract}

Assume given the following data.

\begin{itemize}

\medskip
\item[(i)]
Let $B$ be a $\kk$-scheme of finite type, viewed as a base. Consider the symmetric monoidal $\infty$-category $\DGCat_{/B} := \ShvCat(B_\dR) \simeq \Dmod(B) \mmod$ of DG categories fibered over $B$. 
We denote the monoidal operation of $\DGCat_{/B}$ by $\Otimes$ and $\times_B$ by $\Times$.
\medskip
\item[(ii)]
Let $(\N,m)$ be a group object in the $1$-category $\IndSch^{\pro}_{/B}$, so that $(\Dmod^!(\N),m^!)$ is a coalgebra object of $\DGCat_{/B}$.

\medskip
\item[(iii)]
Let $\CC \in \DGCat_{/B}$ be equipped with an $\N$-action, i.e. a coaction of $(\Dmod^!(\N), m^!)$ on $\CC$ is given.

\medskip
\item[(iv)]
Let $\c$ be another group object of $\IndSch^{\pro}_{/B}$ endowed with a bilinear pairing $\ev: \N \Times \c \to \GG_a$. Note that the natural functor
\begin{equation}
\boxtimes: \Dmod^!(\G) \Otimes \Dmod^!(\c) \longrightarrow \Dmod^! \big( \G \Times \c \big)
\end{equation}
is an equivalence.

\medskip
\item[(v)]
Assume furthermore that a group object $\T$ of $\IndSch_{/B}$ acts on $\N$ and $\c$ compatibly with the evaluation, i.e. $\ev$ is $\T$-equivariant with respect to the diagonal action on $\N \Times \c$ and the trivial action on $\GG_a$. We require that the $\N$-action on $\CC$ extends to a $\T \ltimes \N$-action. 
\end{itemize}

\sssec{}

Under the assumptions (i)-(iv) above, we define the object $\big( \CC \Otimes \Dmod({\mathfrak{c}}) \big)^{\G,\ev}$ of $\DGCat_{/B}$, equipped with a conservative functor
\begin{equation} \label{eqn:oblv_G,ev}
\oblv^{\G,\ev}:
\left(\CC \Otimes\Dmod^!(\c) \right )^{\G,\ev}
\longrightarrow
\CC \Otimes \Dmod^!(\c) .
\end{equation}
Let us regard $ \CC \Otimes \Dmod^!(\c)$, as a category over $B$ equipped with an $\N$-action, where $\N$ acts trivially on $\Dmod^!(\c)$. Recall also the \emph{exponential} $\fD$-module $\exp \in \Dmod(\GG_a)$, which is \virg{additive}:
\begin{equation} \label{eqn:add-exp}
\ms{add}^! (\exp) \simeq \exp \boxtimes \exp.
\end{equation}
Since $\ev$ is bilinear and $\exp$ additive, the $\fD$-module $\ev^!(\exp)$ is a \virg{bilinear} sheaf on $\Dmod^!(\N) \Otimes \Dmod^!(\c)$: this means that compatible identifications
$$
\begin{array}{l}
m_{\N}^!(\ev^!(\exp)) \simeq (p_1)^!(\ev^!(\exp)) \Otimes (p_2)^!(\ev^!(\exp)) \vspace*{.3cm} \\
m_\c^!(\ev^!(\exp)) \simeq (p'_1)^!(\ev^!(\exp)) \Otimes (p'_2)^!(\ev^!(\exp)) \vspace{.3cm}
\end{array}
$$
are given, where $p_i : \N \Times \N \Times \c \to \N \Times \c$ and $p'_i : \N \Times \c \Times \c \to \N \Times \c$ are the projections.

\medskip

The bilinearity allows to define the following cosimplicial object in $\DGCat_{/B}$:
\begin{equation} \label{eqn:totalization-abstract}
\CC \Otimes \Dmod^!(\c) 
\rr
\CC \Otimes \Dmod^!(\c) \Otimes \Dmod^!(\N) 
\rrr 
\CC \Otimes \Dmod^!(\c) \Otimes \Dmod^!(\N)^{\Otimes 2}
\cdots,
\end{equation}
where the arrows are generated, as in \ref{sssec:Whit-annihilator}, by the following three types:

\begin{itemize}
\item
 $\coact$, the coaction of $\N$ on $\CC$;
\smallskip
\item
 $m_\N^!$, the pullback along the multiplication of $\N$;
\smallskip
\item
$p^! \Otimes \ev^!(\exp) : \Dmod^!(\c) \to \Dmod^!(\c) \Otimes \Dmod^!(\N)$, the pullback along the structure projection $p: \N \to B$ \emph{twisted} by $\ev^!(\exp)$.
\smallskip
\end{itemize}
Thus, we define $\bigt{\CC \Otimes \Dmod^!(\c) }^{\N, \ev}$ as the totalization of (\ref{eqn:totalization-abstract}) and 
$$
\oblv^{\N, \ev} : \bigt{\CC \Otimes \Dmod^!(\c) }^{\N, \ev} \to \CC \Otimes \Dmod^!(\c)
$$
as the tautological forgetful functor. Such functor is fully faithful whenever $\N$ is a colimit of pro-unipotent group schemes (cf. Section \ref{sssec:invariants-NA}).

\sssec{} 
\renc{\TK}{T(\K)}

The category $\bigt{\CC \Otimes \Dmod^!(\c)}^{\N, \ev}$ is meant to be the analogue of the space of functions of Definition \ref{defn:extWhit-functions} that satisfy the first condition there ($N(\AA)$-equivariance).
To address the second condition of \textit{loc.cit}, we need item (v) of the list in \ref{sssec:conditions-whit-abstract}: regard $\CC \Otimes \Dmod^!(\c)$ as a category acted on by $\T$ (diagonally) and let
$$
\oblv^\T: \bigt{\CC \Otimes \Dmod^!(\c) }^\T \to \CC \Otimes \Dmod^!(\c)
$$
the usual oblivion functor. 

\begin{defn} 

We shall consider the object $\bigt{ \CC \Otimes \Dmod^!(\c) }^{\T \ltimes \N, \ev} \in \DGCat_{/B}$. According to Section \ref{semi-direct}, this is the fiber product
\begin{equation} \label{defn:Whittaker-B}
\bigt{ \CC \Otimes \Dmod^!(\c) }^{\T \ltimes \N, \ev}
:= 
\bigt{ \CC \Otimes \Dmod^!(\c) }^{\T } 
\ustimes{\CC \Otimes \Dmod^!(\c)}
\bigt{ \CC \Otimes \Dmod^!(\c) }^{\N, \ev}.
\end{equation}
\end{defn}

\sssec{}

We now extend these constructions to categories over $\Ran$, in the obvious way.
To be precise, assume given the data:
\begin{itemize}

\item[(ii')]
$(\N,m)$, a group object in the $1$-category of relative indschemes over $\Ran$, so that $(\bold\Dmod^!(\N),m^!)$ is a coalgebra object of $\DGCatRan$; 

\medskip
\item[(iii')]
$\CRan \in \R \mmod$, equipped with an $\N$-action;

\medskip
\item[(iv')]
$\c$ another relative indscheme over $\Ran$, and $\ev: \N \times_\Ran \c \to \GG_a$ an bilinear map;

\medskip
\item[(v')]
$\T \to \Ran$ a relative group indscheme, satisfying the same conditions as before.
\end{itemize}

\medskip

\noindent 

Then we may define the following sheaf of categories (over $\Ran_\dR$):
\begin{equation}
\bigt{\Cshv \Otimes \bold\Dmod^!(\c)}^{\T \ltimes \N,\ev}
:= \left\{  
\Bigt{
\Cmod{X^I} \usotimes{\Dmod(X^I)} \Dmod^!(\c_{X^I})
}
^{\T_{X^I} \ltimes \N_{X^I}, \ev}
 \right\}_{I \in \fset}.
\end{equation}
Since we are mostly interested in the global sections of this sheaf of categories, we set
$$
\bigt{ \CRan \Otimes \Dmod^!(\c) }^{\T \ltimes \N, \ev}
:= 
\bGamma \Bigt{ \Ran_\dR, \bigt{\Cshv \Otimes \bold\Dmod^!(\c)}^{\T \ltimes \N,\ev} }.
$$

\begin{rem}
This is not an abuse of notation, for $\bGamma(\Ran_\dR, \Cshv \Otimes \bold{\Dmod}^!(\c))$ is equivalent to the RHS of (\ref{defn:Whittaker-B}) upon substituting $B$ with $\Ran$. To prove this, use the dualizability of $\Dmod^!(\Y)$ as a $\Dmod(\Ran)$-module for $\Y= \c, \N, \T$.
\end{rem}

\ssec{Functoriality}

Let us discuss natural functors between categories of the above sort. 

\sssec{} \label{sssec:adjoint-naively}

The following basic fact will be used several times. Let $I$ be an arbitrary index $\infty$-category and $\Delta^1$ the $1$-simplex category. Let $I \times \Delta^1 \to \DGCat$ be a diagram: this consists of two $I$-diagrams $i \squigto \CC_i$, $i \squigto \E_i$ and a family of compatible functors $f_i: \CC_i \to \E_i$. We obtain a functor $F: \lim_i \CC_i \to \lim_i \E_i$. Assume now that each $f_i$ has a left adjoint and that they together give rise to a diagram $I \times (\Delta^1)^\op \to \DGCat$. Then $F$ has a left adjoint, computed termwise.

\begin{lem} \label{lem:f^!-oblv}
Let $f: \c \to \c'$ be a $\T$-equivariant morphism in $\IndSch^\pro_{/B}$ and let $\ev': \N \Times \c' \to \GG_a$ be such that $\ev = \ev' \circ f$.
Then, the pull-back $f^!: \Dmod^!(\c') \to \Dmod^!(\c)$ induces a morphism
$$
\bf^!: 
\bigt{ \Cmod{B} \Otimes \Dmod^!(\c')  }^{\T \ltimes \N, \ev'}
\longrightarrow
\bigt{ \Cmod{B} \Otimes \Dmod^!(\c)  } ^{\T \ltimes \N, \ev}.
$$
\end{lem}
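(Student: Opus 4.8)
The plan is to realize $\bf^!$ as the functor induced on the fiber product (\ref{defn:Whittaker-B}) by a compatible triple of functors: one between the $\T$-invariant categories, one between the $(\N,\ev)$-invariant categories, and the evident functor $g := \id_{\Cmod{B}} \Otimes f^! \colon \Cmod{B} \Otimes \Dmod^!(\c') \to \Cmod{B} \Otimes \Dmod^!(\c)$ between the ambient categories. Since all three will be tautologically compatible with the oblivion functors, the induced map on fiber products is forced, and it will automatically commute with $\oblv^{\T \ltimes \N, \ev}$.

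First I would handle the $\T$-part. Because $f$ is $\T$-equivariant and $\T$ acts diagonally on $\Cmod{B} \Otimes \Dmod^!(\c')$ and $\Cmod{B} \Otimes \Dmod^!(\c)$, the continuous functor $g$ is $\T$-equivariant, so it descends to a functor $g^\T \colon \bigt{\Cmod{B} \Otimes \Dmod^!(\c')}^\T \to \bigt{\Cmod{B} \Otimes \Dmod^!(\c)}^\T$ lying over $g$. Next, the $(\N,\ev)$-part: recall that $\bigt{\Cmod{B} \Otimes \Dmod^!(\c)}^{\N,\ev}$ is by definition the totalization of the cosimplicial object (\ref{eqn:totalization-abstract}), whose coface maps are generated by the coaction $\coact$ of $\N$ on $\Cmod{B}$, the pullback $m_\N^!$, and the twisted pullback $M \mapsto \pi^!(M) \otimes \ev^!(\exp)$ from $\Dmod^!(\c)$ to $\Dmod^!(\c) \Otimes \Dmod^!(\N)$ (Lemma \ref{lem:FT-weird}). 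I would apply $\id_{\Cmod{B}} \Otimes f^! \Otimes \id_{\Dmod^!(\N)^{\Otimes \bullet}}$ degreewise and check it is a map of cosimplicial objects from the $\c'$-version to the $\c$-version. Compatibility with $\coact$ and $m_\N^!$ is automatic, since these do not involve the $\Dmod^!(\c)$-slot; compatibility with the twist reduces — using $\ev = \ev' \circ (f \Times \id_\N)$ and functoriality of $!$-pullback — to the identification $(f \Times \id_\N)^!\bigt{\pi'^!(M) \otimes (\ev')^!(\exp)} \simeq \pi^!(f^! M) \otimes \ev^!(\exp)$, together with the fact that the bilinearity datum carried by $\ev^!(\exp)$ is pulled back from that of $(\ev')^!(\exp)$. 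Totalizing produces $g^{\N,\ev} \colon \bigt{\Cmod{B} \Otimes \Dmod^!(\c')}^{\N,\ev'} \to \bigt{\Cmod{B} \Otimes \Dmod^!(\c)}^{\N,\ev}$ over $g$.

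Then I would combine: $g^\T$, $g^{\N,\ev}$ and $g$ form a morphism of the cospans defining the two fiber products in (\ref{defn:Whittaker-B}), hence induce the desired $\bf^!$. For the $\Ran$-version I would rerun the construction over each $X^I$ — a morphism $f$ of relative indschemes over $\Ran$ yields $\T_{X^I}$-equivariant maps $f_{X^I} \colon \c_{X^I} \to \c'_{X^I}$ with $\ev_{X^I} = \ev'_{X^I} \circ f_{X^I}$ — assemble the resulting functors into a map of sheaves of categories over $\Ran_\dR$, and pass to global sections.

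The only step with genuine content is promoting $\id \Otimes f^! \Otimes \id$ to an actual map of cosimplicial diagrams, with all the higher coherences needed for the totalization. I expect this to be the main (though mild) obstacle, but it is really a matter of naturality of the totalization functor: the space $\c$ enters the entire construction only through the single, fixed, $\T$-equivariant pullback $f^!$, and the only datum on $\Dmod^!(\c)$ that is used — the bilinear sheaf $\ev^!(\exp)$ on $\c \Times \N$ — is manifestly the $(f \Times \id_\N)^!$-pullback of its counterpart on $\c' \Times \N$.
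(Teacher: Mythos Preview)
Your proof is correct and follows essentially the same approach as the paper's own argument: form $g = \id_{\Cmod{B}} \Otimes f^!$, observe it is compatible with the $\T$- and $(\N,\ev)$-coactions, and pass to the fiber product defining the $(\T \ltimes \N,\ev)$-invariants. The paper's proof is simply terser---it declares the compatibility with both coactions ``manifest'' without unpacking the cosimplicial check for the $(\N,\ev)$-part as you do---but the content is identical.
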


\begin{proof}
Obviously, $f^!: \Dmod^!(\c') \to \Dmod^!(\c)$ is $\Dmod(B)$-linear, whence it yields $\bf^! : \Cmod{B} \Otimes \Dmod^!(\c')  \to \Cmod{B} \Otimes \Dmod^!(\c)$. The latter is manifestly compatible with the coactions of $\Dmod^!(\T)$ and $\Dmod^!(\N)$ on both categories, so that it descends to functors $\bf^!: \bigt{ \Cmod{B} \Otimes \Dmod^!(\c') } ^{\T} \to \bigt{ \Cmod{B} \Otimes \Dmod^!(\c)  } ^{\T}$ and $\bigt{ \Cmod{B} \Otimes \Dmod^!(\c')} ^{ \N, \ev'} \to \bigt{ \Cmod{B} \Otimes \Dmod^!(\c)  } ^{ \N, \ev}$.
\end{proof}

The proof of the following result consists of rearranging fiber products and it is left to the reader. (It is parallel to the one of Lemma \ref{lem:indep-on.one.side}.)

\begin{lem} \label{lem:useful-base-change-limits}
In the notation of the above lemma, assume that 
$$
\bf^!:
 \Cmod{B} \Otimes \Dmod^!(\c')  
\longto
\Cmod{B} \Otimes \Dmod^!(\c)  
$$
is \emph{fully faithful}. Then, the natural functor
$$
\bigt{ \Cmod{B} \Otimes \Dmod^!(\c')  } ^{\T \ltimes \N, \ev'}
\longto
\bigt{ \Cmod{B} \Otimes \Dmod^!(\c')  } ^{\T}
\ustimes
{\bigt{ \Cmod{B} \Otimes \Dmod^!(\c)  } ^{\T}}
\bigt{ \Cmod{B} \Otimes \Dmod^!(\c)  } ^{\T \ltimes \N, \ev}
$$ 
induced by $\bf^!$ is an equivalence.
\end{lem}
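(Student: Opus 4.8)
The statement is purely formal: it reduces to the pasting law for fiber products together with the commutation of limits with limits, and uses no input beyond Lemma~\ref{lem:f^!-oblv} and the defining formula (\ref{defn:Whittaker-B}). Write, for brevity, $C := \Cmod{B} \Otimes \Dmod^!(\c)$ and $C' := \Cmod{B} \Otimes \Dmod^!(\c')$, and set $A := C^{\T}$, $A' := (C')^{\T}$, $B := C^{\N,\ev}$, $B' := (C')^{\N,\ev'}$. By (\ref{defn:Whittaker-B}) the source of the functor under consideration is $A' \ustimes{C'} B'$, and its target is $A' \ustimes{A} \bigt{ A \ustimes{C} B }$. The plan is first to massage the target: by the pasting law, $A' \ustimes{A} \bigt{A \ustimes{C} B} \simeq A' \ustimes{C} B$, where $A' \to C$ is the composite $A' \to A \to C$. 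Since $\bf^!$ is, by the proof of Lemma~\ref{lem:f^!-oblv}, a morphism of $\T$-equivariant categories $C' \to C$, this composite agrees with $A' \xto{\oblv^{\T}} C' \xto{\bf^!} C$; applying the pasting law once more, the target becomes $A' \ustimes{C'} \bigt{ C' \ustimes{C} B }$.

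Comparing with the source $A' \ustimes{C'} B'$, it remains to exhibit an equivalence $B' \simeq C' \ustimes{C} B$ over $C'$ --- equivalently, to check that the square with vertices $B'$, $B$ (top) and $C'$, $C$ (bottom), horizontal arrows the two instances of $\bf^!$ and vertical arrows $\oblv^{\N,\ev'}$, $\oblv^{\N,\ev}$, is Cartesian. I would verify this by spelling out $B'$ and $B$ as the totalizations of the cosimplicial objects (\ref{eqn:totalization-abstract}) attached to $\c'$ and to $\c$ respectively. Both objects have $n$-th term of the shape $\Cmod{B} \Otimes \Dmod^!(-) \Otimes \Dmod^!(\N)^{\Otimes n}$, and $\bf^! = \id_{\Cmod{B}} \Otimes f^! \Otimes \id$ defines a map between them which --- as already recorded in the proof of Lemma~\ref{lem:f^!-oblv}, using $\ev = \ev' \circ f$, the bilinearity of the pairings, and the additivity (\ref{eqn:add-exp}) of $\exp$ --- intertwines each of the three families of coface maps (the coaction of $\N$, the multiplication $m_{\N}^!$, and the $\ev^!(\exp)$-twisted coface $p^! \Otimes \ev^!(\exp)$). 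Since moreover the $n$-th term of the $\c'$-complex is literally the base change along $\bf^!$ of the $n$-th term of the $\c$-complex, the $\c'$-cosimplicial object is the termwise pullback of the $\c$-cosimplicial object along $\bf^!$; as totalizations and fiber products are both limits, they commute, yielding $B' = \Tot\bigt{\c'\text{-complex}} \simeq C' \ustimes{C} \Tot\bigt{\c\text{-complex}} = C' \ustimes{C} B$. Feeding this back into the previous paragraph identifies the source and the target, as desired.

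I expect the one genuinely non-routine point to be the claim that the $\c'$-cosimplicial complex is the termwise $\bf^!$-base change of the $\c$-complex: the coaction and $m_{\N}^!$ cofaces do not touch $\c$ and are handled automatically, but matching the twisted cofaces requires the identification $(f \Times \id_{\N})^! \ev'^!(\exp) \simeq \ev^!(\exp)$, which is precisely what $\ev = \ev' \circ f$ buys (again via (\ref{eqn:add-exp})). The remaining ingredients --- the pasting law, the identification of $A' \to A \to C$ with $\bf^! \circ \oblv^{\T}$, and the commutation of $\Tot$ with the pullback $C' \ustimes{C} (-)$ --- are formal. For the $\Ran$-relative version one runs the same argument over each $X^I$ and assembles over $\fset$; the passage to $\bGamma(\Ran_{\dR}, -)$ is harmless by the dualizability of $\Dmod^!(\Y)$ over $\Dmod(\Ran)$ for $\Y \in \{\c,\c',\N,\T\}$, exactly as in the $\Ran$-relative form of (\ref{defn:Whittaker-B}).
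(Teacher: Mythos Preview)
Your proof is correct and takes essentially the same approach as the paper, whose own proof is the single sentence ``rearranging fiber products and commuting limits with limits.'' You have simply spelled out that rearrangement---the pasting-law reduction to $B' \simeq C' \ustimes{C} B$ and the identification of the two cosimplicial totalizations---in more detail than the paper does.
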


From now on, assume $\c$ (as well as $\c'$) is of ind-finite type, so that $\Dmod^!(\c) \simeq \Dmod(\c)$ canonically.

\begin{lem} \label{lem:f_*-oblv}
Let $f: \c \to \c'$ be a $\T$-equivariant morphism in $\IndSch^\fty_{/B}$ such that $\ev' = \ev \circ f$. Then, $f_*: \Dmod(\c) \to \Dmod(\c')$ induces a morphism
$$
\bf_*: 
\bigt{ \Cmod{B} \Otimes \Dmod(\c)  }^{\T \ltimes \N, \ev}
\longrightarrow
\bigt{ \Cmod{B} \Otimes \Dmod(\c')  } ^{\T \ltimes \N, \ev'}.
$$
If $f$ is proper, then $\bf_*$ is left adjoint to $\bf^!$. If $f$ is an open embedding, then $\bf_*$ is right adjoint to $\bf^!$.
\end{lem}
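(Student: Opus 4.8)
The plan is to first upgrade $f_*$ to a functor $\bf_*$ between the two Whittaker categories, and then extract both adjunctions from the general principle on limits of adjoint functors recalled in \secref{sssec:adjoint-naively}.

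First I would check that $f_*$ is compatible with all the structure used to define $\bigt{\Cmod{B}\Otimes\Dmod(\c)}^{\T\ltimes\N,\ev}$. Since $f$ is a morphism over $B$ which is proper or an open embedding, $f_*$ is $\Dmod(B)$-linear (by the projection formula, in the open case the $*$-projection formula for open immersions; see \cite{Be}), so it induces a functor $\Cmod{B}\Otimes\Dmod(\c)\to\Cmod{B}\Otimes\Dmod(\c')$; as $f$ is $\T$-equivariant and proper (resp. an open embedding), this functor is strictly $\T$-equivariant and hence passes to $\T$-invariants. The substantive point is compatibility with the twisted $\N$-coaction: by \lemref{lem:FT-weird} the relevant coaction sends $M\squigto p_\c^!(M)\sotimes\ev_\c^!(\exp)$, with $p_\c\colon\c\Times\N\to\c$ the projection, and using $\ev_\c=\ev_{\c'}\circ(\id_\N\Times f)$ together with base change $(\id_\N\Times f)_*\circ p_\c^!\simeq p_{\c'}^!\circ f_*$ and the projection formula against the $\sotimes$-invertible object $\ev_{\c'}^!(\exp)$, one identifies $(\id_\N\Times f)_*\bigt{p_\c^!(M)\sotimes\ev_\c^!(\exp)}$ with $p_{\c'}^!(f_*M)\sotimes\ev_{\c'}^!(\exp)$. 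The two remaining types of structure map in the cosimplicial object (\ref{eqn:totalization-abstract}), $\coact$ and $m_\N^!$, touch only the factors $\CC$ and $\Dmod^!(\N)$, so they commute with $f_*$ tautologically. Hence $f_*$ descends to the totalizations and to the fiber product (\ref{defn:Whittaker-B}), defining $\bf_*$.

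For the adjunctions I would exploit that $\bigt{\Cmod{B}\Otimes\Dmod(\c)}^{\T\ltimes\N,\ev}$ is a limit of a diagram in which $\Dmod(\c)$ enters only through the single tensor factor $\Cmod{B}\Otimes\Dmod(\c)$, and that $\bf^!$ and $\bf_*$ are induced by the levelwise families $f^!\Otimes\id$ and $f_*\Otimes\id$. If $f$ is proper, then $f_!\simeq f_*$ is left adjoint to $f^!$, and the compatibilities checked above are precisely what is needed for these levelwise left adjoints to assemble, as in \secref{sssec:adjoint-naively}, into a compatible family; that principle then yields a left adjoint of $\bf^!$, computed termwise, which is $\bf_*$. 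If $f$ is an open embedding, then $f^!=f^*$ is left adjoint to $f_*$; applying the same principle to the diagram whose limit is $\bf_*$ gives a left adjoint of $\bf_*$, computed termwise, namely $\bf^!$, i.e. $\bf_*$ is right adjoint to $\bf^!$.

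The step I expect to be the main obstacle is the first one — verifying that $f_*$ genuinely commutes with the twisted pullback $p^!(-)\sotimes\ev^!(\exp)$ — since this rests on base change and the projection formula in the six-functor formalism for $\fD$-modules on (ind)schemes of infinite type of \cite{Be}: proper base change in the proper case, and the $*$-projection formula against the invertible sheaf $\ev^!(\exp)$ in the open-embedding case. Once these compatibilities are recorded, the remainder is formal bookkeeping with limits, and the statement over $\Ran$ follows by running the argument for each $X^I$ and assembling over $I\in\fset$.
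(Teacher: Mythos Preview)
Your proof is correct and follows essentially the same approach as the paper's: establish $\Dmod(B)$-linearity of $f_*$, check compatibility with the $\T$- and (twisted) $\N$-coactions via base change, and read off the adjunctions from the termwise adjunctions between $f_*$ and $f^!$. The paper's proof is considerably more terse---it simply says ``by base change'' for both the linearity and the equivariance, and ``automatic'' for the adjunctions---but your expanded version, including the explicit use of \lemref{lem:FT-weird} and the projection formula against $\ev_{\c'}^!(\exp)$, is precisely what that terseness is hiding. One small remark: the paper constructs $\bf_*$ for \emph{any} morphism $f$ in $\IndSch^\fty_{/B}$ (the $\Dmod(B)$-linearity of $f_*$ holds by base change without any properness or openness hypothesis), and only invokes proper/open for the adjunction statements; your restriction to those two cases from the outset is harmless but slightly more than needed.
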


\begin{proof}
By base change, $f_*: \Dmod(\c) \to \Dmod(\c')$ is $\Dmod(B)$-linear, whence $\bf_* : \Cmod{B} \Otimes \Dmod(\c)  \to \Cmod{B} \Otimes \Dmod(\c')$ is defined. It remains to check that this functor is $\T$- and $\N$-equivariant: both claims follow again from base change.
The stated adjunctions are automatic.
\end{proof}

\begin{cor}
In the situation of the above lemma, if $f$ is an open or closed embedding, then $\bf_*$ is fully faithful.
\end{cor}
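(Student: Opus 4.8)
The plan is to reduce the assertion to the elementary fact that the pushforward along an open or closed embedding of indschemes of finite type is fully faithful, and then to propagate this through the cosimplicial and cobar constructions that define the invariant categories.

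First I would record the base cases at the level of plain $\fD$-modules: for $f$ an open or closed embedding of indschemes of finite type one has $f^!f_*\simeq\id_{\Dmod(\c)}$, so $f_*$ is fully faithful --- this is Kashiwara's theorem in the closed case, and the localization property in the open case, where moreover $f^!=f^*$. In particular, the unit of the adjunction $f_*\dashv f^!$ is an isomorphism when $f$ is proper, and the counit of $f^!\dashv f_*$ is an isomorphism when $f$ is an open embedding. Tensoring over $\Dmod(B)$ with $\Cmod{B}$, the analogous statements hold for the functor $\id_{\Cmod{B}}\Otimes f_*$ against $\id_{\Cmod{B}}\Otimes f^!$.

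Next I would transport this to the invariant categories. By (the proofs of) Lemmas \ref{lem:f^!-oblv} and \ref{lem:f_*-oblv}, the functors $\bf^!$ and $\bf_*$ are built termwise out of $\id_{\Cmod{B}}\Otimes f^!$ and $\id_{\Cmod{B}}\Otimes f_*$ on the cosimplicial object (\ref{eqn:totalization-abstract}) computing the $(\N,\ev)$-invariants and on the cobar complex computing the $\T$-invariants, all of whose structure maps are $\Dmod(B)$-linear and commute with $f_*$ and $f^!$ by base change. Hence, by the formalism recalled in \ref{sssec:adjoint-naively}, the adjunction $\bf_*\dashv\bf^!$ (available when $f$ is proper, in particular a closed embedding), respectively $\bf^!\dashv\bf_*$ (when $f$ is an open embedding), is the termwise limit of the corresponding adjunction between $\id_{\Cmod{B}}\Otimes f_*$ and $\id_{\Cmod{B}}\Otimes f^!$; in particular its unit (resp. counit) is the limit of the termwise units (resp. counits). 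Each of the latter is an isomorphism by the previous paragraph, and a limit of isomorphisms is an isomorphism, so the unit (resp. counit) of the adjunction involving $\bf_*$ is an isomorphism, i.e. $\bf_*$ is fully faithful. (Equivalently, one may apply the conservative forgetful functor $\oblv^{\T\ltimes\N,\ev}$ to that unit/counit and identify the result with the unit/counit of $\id_{\Cmod{B}}\Otimes f_*\dashv\id_{\Cmod{B}}\Otimes f^!$; here $\oblv^{\T\ltimes\N,\ev}$ is conservative since it factors as the projection from the fibre product (\ref{defn:Whittaker-B}) to $\bigt{\Cmod{B}\Otimes\Dmod(\c)}^{\T}$ followed by the conservative $\oblv^\T$, and that projection is conservative because $\oblv^{\N,\ev}$ is.) The same argument applies over $\Ran$, carried out termwise in $I\in\fset$.

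The only point requiring genuine care is the interchange of the adjunction $\id_{\Cmod{B}}\Otimes f_*\dashv\id_{\Cmod{B}}\Otimes f^!$ with the limits defining the $\T\ltimes\N$-invariants --- namely, that not just the functors but also the unit and counit are computed termwise. This is exactly what \ref{sssec:adjoint-naively} supplies; granting it, the remainder of the argument is purely formal.
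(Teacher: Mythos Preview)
Your proof is correct. The paper itself provides no proof for this corollary, treating it as immediate from Lemma \ref{lem:f_*-oblv}; your argument supplies the details that the paper leaves implicit, and the parenthetical route via conservativity of $\oblv^{\T\ltimes\N,\ev}$ is the cleanest way to phrase it.
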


\ssec{The $\Omega$-twist} \label{ssec:twists}

Before getting to the proper definitions of the (extended, degenerate) Whittaker categories of $\CRan \in \R\mmod$, we need to introduce twisted versions of the prestacks of jets into $G$ and \emph{similia}.

\nc{\Pom}[1]{\P_{#1}^{\,{\Omega}}}
\nc{\Omrho}{\Omega^\rhoch}
\nc{\twistedform}[1]{\dot{#1}}

\nc{\Ht}{\twistedform{H}}
\nc{\Gt}{\twistedform{G}}
\nc{\Pt}{\twistedform{P}}
\nc{\Qt}{\twistedform{Q}}
\nc{\Bt}{\twistedform{B}}
\nc{\Tt}{\twistedform{T}}
\nc{\Nt}{\twistedform{N}}
\nc{\Ut}{\twistedform{U}}
\nc{\Vt}{\twistedform{V}}
\nc{\Mt}{\twistedform{M}}
\nc{\Zt}{\twistedform{Z}}

\sssec{}

Let $2\rhoch$ be the coweight of $G$ equal to the sum of all positive coroots and denote by $\Omega^{2\rhoch} := \Omega_X \times^{\GG_m} T$ the induced $T$-bundle on $X$. We choose once and for all a square root of $\Omega^{2\rhoch}$, which we denote by $\Omega^\rhoch$.

\medskip

For any subgroup $H \subseteq G$ containing $T$ (e.g.: $P$, $M$, $Z_M$ for any $P \in \Par$), we form the $H$-bundle $\Pom{H}:= \Omrho \times^T H$. Let $\Ht$ be the group scheme over $X$ defined by 
$$
\Ht := \Aut_H(\Pom{H}).
\footnote{Perhaps $H^\Omega$ would have been a better notation for $\Ht$, but we find the latter less invasive.}
$$
This is a pure inner form of the trivial group scheme $H \times X$. In particular, $\Bun_H \simeq \Bun_{\Ht}$, canonically.

\medskip

We need to twist also the unipotent radicals of $P$'s accordingly: for any $P \in \Par$ with Levi decomposition $P = M \ltimes U_P$, set
$$
\Ut_P := \ker (P^\Omega \to M^\Omega).
$$ 
This way, we have $\Pt \simeq \Mt \ltimes \Ut_P$ canonically.

\sssec{}

Next, for any $H \subseteq G$ for which $\Ht$ has been defined, we consider generic sections and (meromorphic) jets of sections into $\Ht$; e.g.:
$$
\Ht(\sA) := \Jets^{\mer} \bigt{ \Ht / X }.
$$

The only reason we adopt this twist is to have canonical sections of the spaces of characters $\ch_{G,\ext}$: see Proposition \ref{prop:Whit(G,P)-at-chi} and Section \ref{sssec:twists-xi}.

\ssec{The extended Whittaker category}
\renc{\ch}{\mathsf{ch}}

\nc{\Ntk}{\Nt(\sK)}
\nc{\Ttk}{\Tt(\sK)}
\nc{\NtA}{\Nt(\sA)}

Let $\CRan \in \R\mmod$ be acted upon by $\Tt(\sK) \ltimes \Nt(\A)$. To define the extended Whittaker category of $\CRan$, we wish to apply verbatim the constructions above. Hence, it remains to specify what $\c$ will be. 

\sssec{}

As in the function theoretic situation, we set $\c:=\ch_{G,\ext}$ to be the indscheme (of ind-finite type) of characters of $\NtA$ that are trivial on $\Ntk \hto \NtA$. This inclusion is obviously $\Ttk$-equivariant.

\medskip 
 
To place this definition in the framework of Section \ref{SEC:FT}, note that the natural maps
$$
\NtA/[\NtA, \NtA] \to \NtA/[\Nt,\Nt](\sA) \to \Nt/[\Nt,\Nt](\sA)
$$
are all isomorphisms. As a consequence, $\ch_{G,\ext}$ is the annihilator of $\Nt/[\Nt,\Nt](\sK)$ inside the dual of $\NtA/[\NtA, \NtA]$.
Explicitly, using our chosen Chevalley generators,
\begin{equation} \label{eqn:def-ch}
\ch_{G,\ext}:
S \mapsto
 \left\{ 
\big( \x, \eta= \{\eta_i\}_{i \in \I} \big)
\left| 
\begin{array}{l}
 \x \in \Ran(S), \\
\mbox{$\eta_i$ a section of $\restr{\O_S \boxtimes \O_X}{U_\x}$ }
\end{array} \right.
\right\},\footnote{
Had we not followed the convention of \ref{ssec:twists}, the $\O_X$ in the above formula would have been replaced by $\Omega_X$.}
\end{equation}
see Remark \ref{rem:dual-twist}. When there is no risk of confusion, we may occasionally write $\ch:= \ch_{G,\ext}$.

\begin{rem}
There is slight clash of notation with the parallel definition in the function theory setting. There, $\mathfrak{ch}$ was the space of $\mathbb{C}^\times$-valued characters, whereas in the geometric picture $\ch$ represents additive characters. However, this is harmless as a canonical \virg{exponential} character, the $\fD$-module $\exp$, is available in the current setting.
\end{rem}

\begin{rem}
Let us remind the reader that we are under the assumption that $G$ has connected center. Otherwise, $\ch$ is to be defined differently: the general case is explained in \cite[Sections 8.1, 8.2.]{Outline}.
\end{rem}

\sssec{}

For the sake of clarity, let us unravel the evaluation pairing $\ev$ between $\NtA$:
$$
\ev:
\Nt(\A) \underset {\Ran} \times \ch
\twoheadrightarrow
\Nt/[\Nt,\Nt](\A) \underset {\Ran} \times \ch
\hto
(\GG_a^{\oplus \I})_\omega(\A) \underset {\Ran} \times (\GG_a^{\oplus \I})(\sA)
\to
\GG_a,
$$
where the last map is (\ref{eqn:ev-VA}) associated to the standard self-duality of $\GG_a^{\oplus \I}$.
Again, for the sake of clarity, the \virg{dual} $\Tt(\K)$-action on $\ch$ is given by
$$
\Big(\x, t, \{\eta_i\}_{i \in \I} \Big) \mapsto \Big(\x, \{\alpha_i(t) \cdot \eta_i\}_{i \in \I} \Big).
$$

\begin{defn} \label{defn:whit-ext-C-Ran}
For $\CRan \in \R\mmod$ acted on by $\Tt(\K) \ltimes \Nt(\A)$, define $\Wh_{G,\ext}[\CRan]$, the \emph{$\Ran$ version of the extended Whittaker category of $\CRan$}, as 
$$
\Wh_{G,\ext}[\CRan] 
:=
\big( \CRan \Otimes \Dmod(\ch) \big)^{\Ttk \ltimes \NtA, \ev}.
$$
Note that $\Wh_{G,\ext}[\CRan]$ is the category of global sections of 
$$
\Wh_{G,\ext}[\Cshv] :=
\big( \Cshv \Otimes \bold\Dmod(\ch) \big)^{\Ttk \ltimes \NtA, \ev}
\in \DGCatRan.
$$
\end{defn}

\sssec{}

This is as far as the definition can go for a general $\CRan \in \R\mmod$; to extract a plain DG category from $\Wh_{G,\ext}[\CRan]$, we need a naive-unital structure on $\CRan$.

\begin{lem} \label{lem:Whitt-unital}
Let $\CRan \in \R\mmod^\un$ be acted unitally on by $\Tt(\K) \ltimes \Nt(\A)$. Then $\Wh_{G,\ext}[\CRan]$ admits a canonical naive-unital structure.
\end{lem}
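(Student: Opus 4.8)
The plan is to recognize $\Wh_{G,\ext}[\CRan]$ as assembled, via the fiber product \eqref{defn:Whittaker-B} and the cosimplicial mechanism of Section~\ref{sssec:conditions-whit-abstract}, out of objects of $\R\mmod^\un$ by operations that preserve naive-unitality --- the symmetric monoidal product $\Otimes$ and limits --- and then to invoke \lemref{lem:unital-formal}. First one records that the building blocks lie in $\R\mmod^\un$: $\CRan$ by hypothesis; $\Dmod^!(\NtA)$ and $\Dmod^!(\Ttk)$ as (coalgebra) objects of $\R\mmod^\un$, as constructed in Section~\ref{SEC:unital} (for $\NtA$ via the three-legged correspondence $\Ran \times \NtA \leftarrow \Nt(\sA,\sO) \to \NtA$, and for $\Ttk$ since it is a space of generic sections over $\Ran$, hence unital); and $\Dmod(\ch)$, because by the characterization of $\ch_V$ in Section~\ref{SEC:FT} the indscheme $\ch = \ch_{G,\ext}$ parametrizes (twisted) rational sections over $X$ and is therefore a unital space over $\Ran$. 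Since $\Otimes$ is symmetric monoidal on $\R\mmod^\un$ with $\theta$ monoidal, every term $\CRan \Otimes \Dmod(\ch) \Otimes \Dmod^!(\NtA)^{\Otimes n}$ of \eqref{eqn:totalization-abstract} and every term of the $\Ttk$-cobar complex is naive-unital.

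Next one checks that the three types of structure maps occurring in those cosimplicial diagrams are morphisms in $\R\mmod^\un$. The coaction maps $\coact$ are, because the $\NtA$- and $\Ttk$-actions on $\CRan$ are unital by assumption; the diagonal $\Ttk$-coaction on $\CRan \Otimes \Dmod(\ch)$ is, because in addition the dual $\Ttk$-action on $\ch$ is induced by a morphism of unital spaces over $\Ran$ (pointwise rescaling of sections, which commutes with restriction to smaller domains). The multiplication pullbacks $m^!_{\NtA}$ are morphisms in $\R\mmod^\un$ because $\NtA$ is a group object of $\RanCorr_{/\Ran}$, so $m^!$ is part of its coalgebra structure there. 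The remaining map --- the ``twisted'' pullback $p^! \Otimes \ev^!(\exp)\colon \Dmod(\ch) \to \Dmod(\ch) \Otimes \Dmod^!(\NtA)$ of \lemref{lem:FT-weird}, equivalently the composite \eqref{eqn:FT-weird} built from $\Delta_*$, $i_*$ and the Fourier transform --- is the one requiring genuine attention: one must verify that it is compatible with the correspondence data defining the naive-unital structures on $\Dmod(\ch)$ and $\Dmod^!(\NtA)$. I expect this to be the main obstacle. The point is that the residue pairing underlying $\ev$ is insensitive to enlarging the divisor $\Gamma_\x$ (a form regular at a point has vanishing residue there) and that $\exp$ is additive by \eqref{eqn:add-exp}; combined with the compatibility of the Fourier transform of Section~\ref{SEC:FT} with the correspondence structures, this shows that $\ev^!(\exp)$, hence the whole twisted coaction, respects naive-unitality.

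Finally, with all terms and all face and degeneracy maps promoted to $\R\mmod^\un$, the cosimplicial object \eqref{eqn:totalization-abstract} and the $\Ttk$-cobar complex are diagrams in $\R\mmod^\un$; by \lemref{lem:unital-formal} their totalizations, being limits, are again objects of $\R\mmod^\un$ computed compatibly with $\theta$, and so is the fiber product \eqref{defn:Whittaker-B}. This equips $\Wh_{G,\ext}[\CRan]$ with a canonical naive-unital structure whose image under $\theta$ is the $\R$-module $\Wh_{G,\ext}[\CRan]$ of Definition~\ref{defn:whit-ext-C-Ran}. Running the same argument termwise over $X^I$, $I \in \fset$, produces the naive-unital upgrade of the sheaf of categories $\Wh_{G,\ext}[\Cshv] \in \DGCatRan$, compatibly with global sections over $\Ran_\dR$.
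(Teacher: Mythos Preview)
Your approach is correct and essentially the same as the paper's: both reduce to verifying that the twisted coaction $p^! \otimes \ev^!(\exp): \Dmod(\ch) \to \Dmod(\ch) \Otimes \Dmod^!(\NtA)$ is a morphism in $\R\mmod^\un$, with the key geometric input being the vanishing of $\ev$ on $\Nt(\sO) \subseteq \Nt(\sA)$ (what you phrase as the residue being insensitive to enlarging the divisor). The paper makes this step precise by drawing the correspondence diagram for the unitality of $\NtA \times_\Ran \ch$ and then invoking contractibility of the fibers of the forward leg $\beta$, which is what your appeal to \virg{compatibility with correspondence structures} is implicitly standing in for; your references to the additivity of $\exp$ and to Fourier transform compatibility are inessential for this particular lemma.
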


\begin{proof}
It suffices to show that $(\CRan \Otimes \Dmod(\ch))^{\NtA,\ev}$ is canonically naive-unital. By writing it as a totalization, we just need to see that the arrow $p^! \otimes \ev^!(\exp): \Dmod(\ch) \to \Dmod^!(\NtA) \Otimes \Dmod(\ch)$ is naturally a morphism in $\R\mmod^\un$. Let us form the correspondence expressing the unitality of $\NtA \times_\Ran \ch$, together with the evaluations maps to $\GG_a$:
\begin{gather}  
\xy
(50,0)*+{ \GG_a.}="10";
(0,15)*+{ \Ran \times (\NtA \ustimes\Ran \ch)   }="01";
(50,15)*+{ \Nt(\sO, \A) \ustimes{\Ran \times \Ran} (\Ran \times \ch) }="11";
(100,15)*+{  \NtA \ustimes\Ran \ch }="21";
{\ar@{->}_{  \alpha   } "11";"01"};
{\ar@{->}^{ \;\;\;\;\beta  } "11";"21"};
{\ar@{->}_{\ev} "01";"10"};
{\ar@{->}^{\ev} "21";"10"};
\endxy
\end{gather}	
Notice that the diagram is commutative, as $\ev$ is null on $N(\sO) \subseteq N(\sA)$. Hence, our claim follows from the contractibility of the fibers of $\beta$.
\end{proof}

We are now ready to give the main definition of this paper.

\begin{defn} \label{defn:whit-ext-C-indep}
For $\CRan \in \R\mmod^\un$, acted on unitally by $\Tt(\K) \ltimes \Nt(\A)$, define $\Wh_{G,\ext}[\Cind]$, the \emph{extended Whittaker category of $\CRan$}, as 
$$
\Wh_{G,\ext}[\Cind] :=
\big( \CRan \Otimes \bold\Dmod(\ch) \big)^{\Ttk \ltimes \NtA, \ev}_\indep
:=
\indep \Bigt{ \big( \CRan \Otimes \bold\Dmod(\ch) \big)^{\Ttk \ltimes \NtA, \ev}  }.
$$
(We emphasize again that this notation is abusive, as the LHS depends also on $\CRan$, not just on $\Cind$.)
\end{defn}

As a particular case of the above construction, the following plays a central role in the geometric Langlands program. The \emph{extended Whittaker category of the affine Grassmannian} is the DG category
$$
\Wh(G,\ext) := \Wh_{G,\ext}[\Dmod(\Gr^{\indep}_{\Gt})],
$$
where $\Gr^{\indep}_{\Gt}$, the \emph{$\Omega$-twisted Grassmannian}, parametrizes $G$-bundles equipped with a rational isomorphism to $\Pom{G}$. 

\begin{rem}
By Lemma \ref{lem:indep-on.one.side}, we can also write
$$
\Wh_{G,\ext}[\Cind] 
:=
\bigt{ \CRan \Otimes \Dmod(\ch) }^{\Tt(\sK)}_{\indep} 
\ustimes{\CRan \Otimes \Dmod(\ch)}
\bigt{ \CRan \Otimes \Dmod(\ch) }^{\NtA, \ev}.
$$ 
This longer formula is particularly useful when treating the Whittaker category of the affine Grassmannian. The reason is that the independent category of $\Dmod(\Gr_{\Gt} \Times \ch) ^{\Tt(\sK)}$ is the category of $\fD$-modules on an explicit prestack.
\end{rem}

\ssec{The degenerate, or partial, Whittaker categories}

Important variants of Definitions \ref{defn:whit-ext-C-Ran} and \ref{defn:whit-ext-C-indep} arise by replacing $\ch_{G,\ext}$ with some particular $\Ttk$-invariant subspaces related to parabolic subgroups. 

\begin{example}

Let $P$ be a standard parabolic subgroup, with Levi $M$. The closed embedding $i: \ch_{M,\ext} \hto \ch_{G,\ext}$, determined in (\ref{eqn:def-ch}) by the vanishing of the $\eta_i$'s corresponding to $\I - \I_M$, yields the colocalization
$$
\bi_*:
\bigt{\CRan \Otimes \Dmod(\ch_{M,\ext})}^{\Ttk \ltimes \NtA, \ev}
\rightleftarrows
\Wh_{G,\ext}[\CRan]:
\bi^!.
$$
Passing to the independent subacategories, we derive the adjunction
$$
\bi_*:
\bigt{\CRan \Otimes \Dmod(\ch_{M,\ext})}^{\Ttk \ltimes \NtA, \ev}_\indep
\rightleftarrows
\Wh_{G,\ext}[\Cind]:
\bi^!,
$$
which we denote by the same symbols.
\end{example}

\sssec{}

Let $\chi_{G,P} \hto \ch_{G,\ext}$ be the image of the section of $\ch_{G,\ext} \to \Ran$ determined by the requirement that $\eta_i = 1$ for $i \in \I_M$ and $\eta_i = 0$ else.
Being able to canonically define these $\chi_{G,P}$'s is the \emph{only} reason for the usage of the $\Omega$-twisted versions of our groups (Section \ref{ssec:twists}).

\begin{defn}
For $\CRan \in \R\mmod$, define 
\begin{equation}
\Wh_{G,P}[\CRan] := (\CRan)^{\Zt_M(\sK) \ltimes \NtA,\chi_{G,P}}.
\end{equation}
If $\CRan$ is naive-unital, define also 
\begin{equation}
\Wh_{G,P}[\Cind] := 
\Bigt{\Wh_{G,P}[\CRan]}_\indep
\simeq
(\Cind)^{\Zt_M(\KK)} \ustimes{\CRan} {(\CRan)^{\NtA,\chi_{G,P}}}.
\end{equation}
\end{defn}

\sssec{}

The prestack $\chdom_{G,\ext} := \chind_{G,\ext}$ can be stratified in a $\TtKK$-invariant way according to the vanishing of the sections $\eta_i$ (equivalently, according to the poset of standard parabolics).\footnote{Recall we are assuming that $Z_G$ is connected.} In fact, define
\begin{equation}
\chdom_{G,P}:
S 
\mapsto
 \left\{ 
\big( U, \eta= \{\eta_i\}_{i \in \I_M} \big)
\left| 
\begin{array}{l}
 \mbox{$U \subseteq X_S$ a domain,} \\
\mbox{$\eta_i$ a nowhere zero section of $\restr{\O_S \boxtimes \O_X}{U} \to U$ }
\end{array} \right.
\right\} / \sim.
\end{equation}
It is easy to see that each $\chdom_{G,P} \hto \chdom_{G,\ext}$ is a locally closed embedding (cf. Corollary \ref{cor:QSect-closed-embedding}) and that the partition $\bigsqcup_{P \subseteq G} \chdom_{G,P}$ is indeed the $\TtKK$-orbit stratification of $\chdom_{G,\ext}$.

\begin{prop} \label{prop:Whit(G,P)-at-chi}
Let $\bar\chi_{G,P} \in \chdom_{G,P}$ be the image of $\chi_{G,P}$ along $\ch_{G,\ext} \to \chdom_{G,ext}$. Pullback along the inclusion $\{\bar\chi_{G,P}\} \hto \chdom_{G,P}$ yields an equivalence 
\begin{equation}
\displaystyle
\Wh_{G,P}[\Cind] 
\xto{\;\; \simeq \;\;}
\bigt{\Cind  \otimes \Dmod(\chind_{G,P})}^{\TtKK}
\ustimes{
\bigt{\Cind  \otimes \Dmod(\chind_{G,\ext})}^{\TtKK}
}
\Wh_{G,\ext}[\Cind].
\end{equation}
\end{prop}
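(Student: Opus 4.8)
The plan is to unwind both sides into iterated invariant categories of $\CRan$ tensored with $\fD$-modules on spaces of characters, to observe that the $\NtA$-equivariant (and $\ev$-twisted) factors agree tautologically once one restricts to $\bar\chi_{G,P}$, and to recognise the residual comparison as an instance of \propref{prop:stabilizer}. The single geometric input that powers the argument is that $\chdom_{G,P}$ is the $\TtKK$-orbit of $\bar\chi_{G,P}$ with stabilizer $\Zt_M(\KK)$; establishing this — and it is exactly here that the hypothesis that $Z_G$ be connected intervenes — is the conceptual heart of the proof.

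First I would carry out the reduction. Let $j_P\colon\ch^{\Ran}_{G,P}\hto\ch_{G,\ext}$ be the $\Ran$-level locally closed embedding underlying $\chdom_{G,P}\hto\chdom_{G,\ext}$; it is $\Ttk$-equivariant, $\NtA$ acts trivially on source and target, and $\ev$ on $\NtA\ustimes{\Ran}\ch^{\Ran}_{G,P}$ is the restriction of $\ev$ on $\NtA\ustimes{\Ran}\ch_{G,\ext}$. Set $\E:=\CRan\Otimes\Dmod(\ch_{G,\ext})$, $\E_P:=\CRan\Otimes\Dmod(\ch^{\Ran}_{G,P})$ and $\iota:=\id_{\CRan}\Otimes(j_P)_*\colon\E_P\to\E$. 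Since $(j_P)_*$ is fully faithful with a continuous right adjoint, $\iota$ is a fully faithful, $\Ttk$- and $\NtA$-equivariant embedding, and forming invariants along such an embedding commutes with the relevant fibre products:
\[
\E_P^{\NtA,\ev}\;\simeq\;\E_P\ustimes{\E}\E^{\NtA,\ev},\qquad
\E_P^{\Ttk}\;\simeq\;\E_P\ustimes{\E}\E^{\Ttk}.
\]
Combining this with \secref{semi-direct} (invariants under $\Ttk\ltimes\NtA$ as a fibre product), \lemref{lem:indep-on.one.side} (compatibility of $\indep$ with fibre products), and \propref{prop:GSect/Ran} together with \secref{sssec:domains} (which identify $\E^{\Ttk}_{\indep}$ and $\E_{P,\indep}^{\Ttk}$ with $\bigt{\Cind\otimes\Dmod(\chdom_{G,\ext})}^{\TtKK}$ and $\bigt{\Cind\otimes\Dmod(\chdom_{G,P})}^{\TtKK}$), I would rewrite the right-hand side of the proposition as the independent subcategory of $\E_P^{\Ttk\ltimes\NtA,\ev}$, whereas the left-hand side is $\Wh_{G,P}[\Cind]=(\CRan)^{\Zt_M(\sK)\ltimes\NtA,\chi_{G,P}}_{\indep}$. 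Since $\bar\chi_{G,P}$ is the image of a section $\Ran\to\ch^{\Ran}_{G,P}$ along which $\ev$ restricts to the character $\chi_{G,P}\colon\NtA\to\GG_a$, pulling back along $\{\bar\chi_{G,P}\}\hto\chdom_{G,P}$ matches the $\NtA$-equivariant-and-$\ev$-twisted factors verbatim, and one more application of \secref{semi-direct} reduces the proposition to the assertion that the pullback functor
\[
\bigt{\Cind\otimes\Dmod(\chdom_{G,P})}^{\TtKK}\;\longrightarrow\;(\Cind)^{\Zt_M(\KK)}
\]
is an equivalence; here $\Cind$ carries a $\TtKK$-action because $\indep$ is monoidal (\remref{rem:indep-monoidal}).

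Next I would establish the orbit–stabilizer identity. Since $Z_G$ is connected, so is $Z_M$, hence the root lattice of $M$ is saturated in $X^*(T)$ and the torus extension $1\to Z_M\to T\to\GG_m^{\I_M}\to 1$ — with the last map $t\mapsto\{\alpha_i(t)\}_{i\in\I_M}$ — splits. Therefore $\TtKK\to\chdom_{G,P}$, $t\mapsto\{\alpha_i(t)\}_{i\in\I_M}$, is a (split) $\Zt_M(\KK)$-torsor whose image contains $\bar\chi_{G,P}$ (the tuple with all $\eta_i=1$), and the stabilizer of $\bar\chi_{G,P}$ is $\bigcap_{i\in\I_M}\ker\alpha_i=Z_M$ on generic sections, i.e.\ $\Zt_M(\KK)$; in particular $(\pt/\Zt_M(\KK))_{\dR}\to(\chdom_{G,P}/\TtKK)_{\dR}$ is an isomorphism. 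I expect this step to be the crux: it is the only place the connectedness hypothesis is used, and for a disconnected center the kernel of $T\to\GG_m^{\I_M}$ acquires a nontrivial finite part and the isomorphism fails, in accordance with the remark preceding the proposition.

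Finally, the displayed comparison is an instance of \propref{prop:stabilizer} with $\G=\TtKK$, $\H=\Zt_M(\KK)$, $\Y=\chdom_{G,P}$, $\Z=\{\bar\chi_{G,P}\}$ and $\CC=\Cind$: the hypothesis on $(\Z/\H)_{\dR}\to(\Y/\G)_{\dR}$ is provided by the previous step, while the requirement that $\Y$ and $\G$ be colimits of $1$-affine schemes along proper maps holds because — via the open immersions $\GG_m\hto\BP^1$ (one per $i\in\I_M$) and the quasi-section machinery of \secref{ssec:QSECT} — both $\chdom_{G,P}$ and $\TtKK$ are, up to Zariski sheafification, such colimits of disjoint unions of projective schemes. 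One must still check that \propref{prop:stabilizer}, or its evident $\Ran$-relative variant, applies in this mildly infinite-type situation, which is the ``technical condition satisfied in all examples'' alluded to just before that proposition. This gives that the pullback functor is an equivalence, hence the equivalence asserted in the proposition, whose displayed arrow — as in \propref{prop:stabilizer} — is the inverse of the pullback just discussed.
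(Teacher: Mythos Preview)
Your proof is correct and follows the same approach as the paper: the paper's proof is a single sentence stating that the result is ``an immediate application of \propref{prop:stabilizer}, after observing that the stabilizer of $\bar\chi_{G,P}$ within $\TtKK$ is exactly $Z_M(\KK)$.'' You have simply spelled out in detail what the paper leaves implicit --- the unwinding of the fibre product on the right-hand side via \lemref{lem:useful-base-change-limits}, \lemref{lem:indep-on.one.side} and \secref{semi-direct}, the role of the connected-center hypothesis in the stabilizer computation, and the verification that \propref{prop:stabilizer} applies --- but the conceptual skeleton is identical.
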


\begin{proof}
This is an immediate application of Proposition \ref{prop:stabilizer}, after observing that the stabilizer of $\bar\chi_{G,P}$ within $\TtKK$ is exactly $Z_M(\KK)$.
\end{proof}

\begin{defn}
Pullback along the locally closed embedding $\chdom_{G,P} \hto \chdom_{G,\ext}$ yields the \emph{restriction functor} 
\begin{equation} \label{eqn:defn-rho}
\vrho_{G,\ext	\to P}:
\Wh_{G,\ext}[\Cind]
\longto
\Wh_{G,P}[\Cind].
\end{equation}
\end{defn}

The totality of these functors, as $P$ runs through the poset of standard parabolics, is \emph{jointly conservative} (this follows from the fact that $\bigsqcup_{P \subseteq G} \chdom_{G,P}$ is a stratification of $\chdom_{G,\ext}$). The reader may wonder whether (and if so, how) $\Wh_{G,\ext}[\Cind]$ can be reconstructed out of the $\Wh_{G,P}[\Cind]$. This is explained in \cite{Outline}.

\sec{The functor of Whittaker coefficient} \label{SEC:coeff}

The main goal of this paper is to show that a specific (continuous) functor \begin{equation} \label{eqn:coeff_bunG}
\coeff_{G,\ext}: \Dmod(\Bun_G) \to \Wh(G,\ext)
\end{equation}
is fully faithful for $G= GL_n$ and $G=PGL_n$.
As with the definition of the extended Whittaker category, we shall discuss the construction of this functor in the more general context of a naive-unital category over $\Ran$ acted on unitally by $M(\sK) \ltimes U(\sA)$. Here, $M \ltimes U$ is the Levi decomposition of an affine algebraic group.
In this section, the $\Omega$-twists are irrelevant (for instance, there is a canonical identification $\Bun_G \simeq \Bun_{\Gt}$) so we ignore them.


\medskip

The non-trivial point in the discussion of $\coeff_{G,\ext}$ is its continuity. This is ensured by the \emph{strong approximation} for meromorphic jets into a unipotent group $U$, Proposition \ref{prop:Av_^*-continuous-Wh-ext-noT}, parallel to the fact that in the function theoretic case $U(\AA)/U(K)$ is \emph{compact}.

\ssec{Definition of the coefficient functor and strong approximation}
\renc{\c}{\mathsf{c}}

\nc{\UA}{U(\sA)}
\nc{\rel}{\mathit{rel}}

\sssec{} \label{sssec:defn-coeff-U}

Let $M \ltimes U$ be the Levi decomposition of an algebraic group, and $\ch_U \to \Ran$ the relative indscheme of characters of $\UA$ trivial on $U(\sK)$.
For $\CRan \in \R\mmod$ acted on by $M(\sK) \ltimes \UA$, consider the inclusion 
$$
\oblv^{U(\sK)} : \big( (\CRan)^{U(\sK)} \Otimes \Dmod(\c) \big)^{M(\sK)} \to \big( \CRan \Otimes \Dmod(\c) \big)^{M(\sK)},
$$
where $\c \to \Ran$ is a unital relative indscheme equipped with a unital $M(\sK)$-equivariant map $\c \to \ch_U$. The induced evaluation $\UA \times_\Ran \c \to \GG_a$ will be also denoted by $\ev$.
The functor
$\oblv^{\UA, \ev}: \bigt{(\CRan) \Otimes \Dmod(\c)}^{M(\sK) \ltimes \UA, \ev}
\to
\big( \CRan \Otimes \Dmod(\c) \big)^{M(\sK)}$ factors through $\oblv^{U(\sK)}$, thus yielding the fully faithful functor
\begin{equation} \label{eqn:oblv-enhanced}
\oblv^{rel}:
\bigt{(\CRan) \Otimes \Dmod(\c)}^{M(\sK) \ltimes \UA, \ev}
\longto
\big( (\CRan)^{U(\sK)} \Otimes \Dmod(\c) \big)^{M(\sK)}.
\end{equation}
%
%
%
We may occasionally write $\bigt{\CRan \Otimes \Dmod(\c)}^{(M \ltimes U)(\sK)}$ for $\bigt{(\CRan)^{U(\sK)} \Otimes \Dmod(\c)}^{M(\sK)}$, being understood that the $U(\sK)$-action on $\Dmod(\c)$ is trivial.

\begin{rem}
Arguing as in Lemma \ref{lem:Whitt-unital}, we deduce that, if $\CRan$ is naive-unital acted on naive-unitally by $M(\sK) \ltimes U(\sA)$ , then $\oblv^\rel$ is tautologically a morphism of naive-unital categories.
\end{rem}

\begin{prop} \label{prop:Av_^*-continuous-Wh-ext}
The functor (\ref{eqn:oblv-enhanced}) admits a \emph{continuous} right adjoint, which we denote by $\Av_*^{\UA, \ev}$.
\end{prop}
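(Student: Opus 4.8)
The plan is to strip off the group $M(\sK)$ and the auxiliary space $\c$, reducing the statement to one about meromorphic jets into the unipotent group $U$ alone --- this is the geometric avatar of the compactness of $\UA/U(\sK)$ --- and then to transport the resulting adjunction through the limit that computes $M(\sK)$-invariants.

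\emph{Reduction to $M=1$.} Both the source and the target of \eqref{eqn:oblv-enhanced} carry an $M(\sK)$-action, and \eqref{eqn:oblv-enhanced} is the functor of $M(\sK)$-invariants applied to the $M(\sK)$-equivariant functor
$$
\oblv_0 := \oblv^{\UA,\ev}:
\bigt{ \CRan \Otimes \Dmod(\c) }^{\UA,\ev}
\longto
(\CRan)^{U(\sK)} \Otimes \Dmod(\c)
$$
that forgets $\UA$-invariance down to its $U(\sK)$-part. (That applying $(-)^{M(\sK)}$ to $\oblv_0$ recovers \eqref{eqn:oblv-enhanced} follows from the fiber-product description of iterated invariants, Section~\ref{semi-direct}; the $M(\sK)$-equivariance of $\oblv_0$ is immediate from that of the pairing $\ev$.) Since $(-)^{M(\sK)}$ is computed as a limit, it carries an $M(\sK)$-equivariant adjunction to an adjunction and a continuous $M(\sK)$-equivariant functor to a continuous one --- filtered colimits in the invariant categories being computed through the cobar presentation. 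Hence it suffices to exhibit a continuous, $M(\sK)$-equivariant right adjoint to $\oblv_0$.

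\emph{The unipotent case.} Because the $\UA$-action on $\Dmod(\c)$ is the one twisted by $\ev^!(\exp)$ pulled back along the map $\c\to\ch_U$, base change along that map reduces us to $\c=\ch_U$; so we must produce a continuous right adjoint to $\oblv_0:\bigt{\CRan\Otimes\Dmod(\ch_U)}^{\UA,\ev}\to(\CRan\Otimes\Dmod(\ch_U))^{U(\sK)}$, the statement that we record separately as \propref{prop:Av_^*-continuous-Wh-ext-noT}. Recall from Section~\ref{SEC:background} that $\UA$ is exhausted by the pro-unipotent group subschemes $\UA^{\mu}$ obtained by conjugating $U(\sO)$ by the sections $\sigma_{\mu}$, for $\mu$ dominant regular. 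The key input --- \emph{strong approximation} --- is that for $\mu$ large enough one has $\UA=U(\sK)\cdot\UA^{\mu}$ over each $X^I$; after filtering $U$ by its lower central series, this reduces to the vanishing of $H^1(X,\O_X(D))$ for $D$ sufficiently positive. Granting it, being $\UA$-invariant against $\ev$ is equivalent, inside $(\CRan\Otimes\Dmod(\ch_U))^{U(\sK)}$, to being $\UA^{\mu}$-invariant against $\ev$; consequently the right adjoint to $\oblv_0$ is the character-twisted averaging $\Av^{\UA^{\mu},\ev}_*$ with respect to the pro-unipotent group \emph{scheme} $\UA^{\mu}$, which is continuous because for a pro-unipotent group scheme the forgetful functor out of invariants admits a continuous right adjoint (Section~\ref{SEC:background}), a property unaffected by twisting by a character. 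As this construction is performed universally in the datum, it is in particular $M(\sK)$-equivariant, which is what Step 1 asked for.

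\emph{On the difficulty.} The crux is the unipotent case --- concretely, the identity $\UA=U(\sK)\cdot\UA^{\mu}$ and the ensuing continuity of $\Av^{\UA^{\mu},\ev}_*$; everything else is formal bookkeeping with adjunctions and limits of DG categories. Observe that a right adjoint to $\oblv^{\rel}$ exists automatically, the source being presentable and $\oblv^{\rel}$ colimit-preserving, so the entire content lies in its \emph{continuity} --- precisely the feature that would fail were $\UA$ replaced by a general group indscheme.
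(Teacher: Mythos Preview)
Your proof is correct and follows essentially the same approach as the paper: reduce away the $M(\sK)$-invariants and the character twist, then invoke strong approximation for $\UA$ over each $X^I$ (the surjectivity $\UA_{n,X^I}\cdot\bigl(\UA_{m,X^I}\cap U(\sK)_{X^I}\bigr)\twoheadrightarrow\UA_{m,X^I}$, which boils down to the vanishing of $H^1(X,\O_X(nD))$ for $n$ large) to identify $\Av_*^{\UA}$ on $U(\sK)$-invariants with averaging along a single pro-unipotent group scheme. One small mismatch: what you label as \propref{prop:Av_^*-continuous-Wh-ext-noT} still carries $\ch_U$ and $\ev$, whereas the paper's version of that proposition has already absorbed $\Dmod(\c)$ into $\CRan$ and dropped the twist entirely; also be explicit that the required $\mu$ depends on $I$.
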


The proof of Proposition \ref{prop:Av_^*-continuous-Wh-ext} is carried out just below. Assuming this result for the moment, we define $\coeffRan$ as the composition
$$
\coeffRan_{\c}:
(\CRan)^{(M \ltimes U)(\sK)} 
\xto{\mathsf{pullback}}
\big( (\CRan)^{U(\sK)} \Otimes \Dmod(\c) \big)^{M(\sK)}
\xto{ \,\, \Av_*^{\UA, \ev} \, \,}
\bigt{(\CRan) \Otimes \Dmod(\c)}^{M(\sK) \ltimes \UA, \ev}.
$$

\sssec{}

It is clear that $\Av_*^{U(\sA),\ev}$ is continuous if and only if so is $\Av_*^{U(\sA)}$. Thus, by replacing $\CRan \Otimes \Dmod(\ch)$ with $\CRan$, it suffices to prove the following:

\begin{prop} \label{prop:Av_^*-continuous-Wh-ext-noT}
Let $U$ be a unipotent group and $\CRan \in \R\mmod$ be acted on by $\UA$. The functor $\Av_*^{\UA}: \CRan \to (\CRan)^{\UA}$ is continuous on the subcategory $(\CRan)^{U(\sK)} \hto \CRan$.
\end{prop}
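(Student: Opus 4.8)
The plan is to identify the operator in question, in the abelian (vector-group) case, with the averaging functor for the \emph{quotient} relative group $\UA/U(\sK)$; to show, via strong approximation, that this quotient is a relative group scheme of pro-finite type over $\Ran$; and then to invoke the general fact recalled in Section~\ref{SEC:background}, that $\oblv^H$ admits a continuous right adjoint whenever $H$ is a relative group scheme of pro-finite type. The general unipotent case is reduced to the abelian one by dévissage along a central series.

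First I would reduce to $U$ a vector group. Pick a central vector subgroup $Z\subseteq U$ with $\bar U:=U/Z$ of shorter central length, so that $Z(\sA)$ is central in $\UA$, $\bar U(\sA)=\UA/Z(\sA)$, and likewise over $\sK$. Writing $(\CRan)^{\UA}=\bigt{(\CRan)^{Z(\sA)}}^{\bar U(\sA)}$ and $(\CRan)^{U(\sK)}=\bigt{(\CRan)^{Z(\sK)}}^{\bar U(\sK)}$, and using $\oblv^{\UA}=\oblv^{Z(\sA)}\circ\oblv^{\bar U(\sA)}$ together with the analogous factorization of $\oblv^{U(\sK)}$, the composite $\Av_*^{\UA}\circ\oblv^{U(\sK)}$ is identified with $\Av_*^{\bar U(\sA)}\circ\bigt{\Av_*^{Z(\sA)}\circ\oblv^{Z(\sK)}}$. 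The inner functor is the vector-group case applied to the action $Z\curvearrowright\CRan$, and it is $\bar U(\sK)$-equivariant because $Z$ is central; the outer functor is continuous by the inductive hypothesis, applied to $\bar U\curvearrowright(\CRan)^{Z(\sA)}$. So it suffices to treat $U$ a vector group.

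For $U$ a vector group, $U(\sK)_\Ran$ is normal in $\UA$, whence $(\CRan)^{\UA}=\bigt{(\CRan)^{U(\sK)}}^{\UA/U(\sK)}$; since $\oblv^{U(\sK)}$ is fully faithful (the contractibility of $U(\sK)$ and the corresponding statement in Section~\ref{sssec:invariants-NA}), the composite $\Av_*^{\UA}\circ\oblv^{U(\sK)}$ is exactly the averaging functor $\Av_*^{\UA/U(\sK)}\colon(\CRan)^{U(\sK)}\to(\CRan)^{\UA}$. It therefore remains to prove that $\UA/U(\sK)$ is a relative group scheme of pro-finite type over $\Ran$, which is the content of strong approximation. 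Concretely, over a fixed $X^I$ I would filter by pole order, $\UA_{X^I}=\colim_r\UA^{\le r}_{X^I}$ with each $\UA^{\le r}_{X^I}$ a group scheme of pro-finite type over $X^I$, and show that for $r$ large (depending only on $|I|$ and the genus $g$ of $X$) the multiplication map $\UA^{\le r}_{X^I}\times_{X^I}U(\sK)_{X^I}\to\UA_{X^I}$ is surjective with fibres a torsor under the finite-type subgroup scheme $\UA^{\le r}_{X^I}\cap U(\sK)_{X^I}$ of globally regular jets; this is precisely the vanishing $H^1\bigt{X,\O(r\Gamma_\x)}=0$, which holds uniformly over $X^I$ once $r\cdot\deg\Gamma_\x\ge 2g-1$, by Riemann--Roch and Serre duality. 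Hence $\UA_{X^I}/U(\sK)_{X^I}\simeq\UA^{\le r}_{X^I}/\bigt{\UA^{\le r}_{X^I}\cap U(\sK)_{X^I}}$ is a unipotent (hence affine) group scheme of pro-finite type over $X^I$; assembling over $\fset$ gives $\UA/U(\sK)\in\Sch^{\pro}_{/\Ran}$, and the quoted result yields the continuous right adjoint.

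The main obstacle is the strong-approximation step itself: making this geometry precise and \emph{uniform over the whole of} $X^I$, in particular along the diagonals, where the points of $\x$ collide and $\Gamma_\x$ acquires multiplicities, and verifying that the resulting quotient genuinely lies in $\Sch^{\pro}_{/X^I}$ with the correct (pullback-of-finite-type) notion of closed embedding, rather than being merely an ind-pro object. The dévissage and the passage from each $X^I$ to $\Ran$ are, by contrast, formal, modulo the routine base-change compatibilities for pushforward along pro-finite-type group schemes.
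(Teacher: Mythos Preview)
Your proposal is correct, and the core input---strong approximation via the vanishing of $H^1(X,\O(n\cdot D))$ for $n$ large---is the same as the paper's. The packaging, however, is genuinely different. You reduce to vector groups by categorical d\'evissage along a central series, then identify $\Av_*^{\UA}\circ\oblv^{U(\sK)}$ with the averaging functor for the \emph{quotient} group $\UA/U(\sK)$, and finally argue that this quotient is a relative group scheme of pro-finite type. The paper proceeds more directly: it works over each $X^I$, uses the ind-presentation $\UA_{X^I}=\colim_m \UA_{m,X^I}$, and proves a general lemma (Lemma~\ref{lem:strong-approx}) to the effect that if $\U_1\cdot\U_2=\U$ (surjective on geometric points) then $\Av_*^{\U}\simeq\Av_*^{\U_1}\circ\Av_*^{\U_2}$. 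Applying this with $\U_1=\UA_{k,X^I}$ and $\U_2=\UA_{m,X^I}\cap U(\sK)_{X^I}$ shows the inverse system $\{\Av_*^{\UA_{m,X^I}}\}_m$ stabilizes on $U(\sK)$-invariants, whence the limit is continuous.

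What each approach buys: the paper's route is more elementary, as it avoids having to establish that the quotient $\UA/U(\sK)$ lies in $\Sch^\pro$ (precisely the point you flag as the main obstacle), and it needs the reduction to $\GG_a$ only at the level of checking surjectivity on geometric points, not as a categorical d\'evissage. Your route is more conceptual and makes the analogy with the classical compactness of $N(\AA)/N(K)$ transparent; it also gives a slightly stronger geometric statement. The factorization lemma in the paper is the efficient shortcut that lets one bypass the quotient analysis entirely.
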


In turn, it suffices to prove the statement separately for each power of the curve $X^I$, the reason being that colimits in a limit category are computed naively.
The key is to estabilish the following general result.

\begin{lem}\label{lem:strong-approx}
Let $B$ be a base scheme of finite type, $\U$ a pro-unipotent group scheme over $B$ and $\U_1, \U_2 \subseteq \U$ two subgroups. Let $\Cmod{B} \in \DGCat_{/B} \simeq \Dmod(B) \mmod$ a category acted on by $\U$. If the multiplication map
$$
m: \U_1 \Times \U_2 \to \U
$$
is surjective, then there exists an equivalence $\Av_*^\U \simeq \Av_*^{\U_1} \circ \Av_*^{\U_2}$. (We are omitting to write the forgetful functors $\oblv^{\U}$ as they are fully faithful.)
\end{lem}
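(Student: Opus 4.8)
The plan is to exploit the following general principle about averaging functors: if $\U_2 \subseteq \U$ is a (pro-unipotent, hence cohomologically contractible) subgroup, then $\Av_*^{\U_2}: \Cmod{B} \to (\Cmod{B})^{\U_2}$ is defined and is the right adjoint to $\oblv^{\U_2}$, which is fully faithful; moreover the composite $\oblv^{\U_2} \circ \Av_*^{\U_2}$ is the colocalization onto the image of $(\Cmod{B})^{\U_2}$. Now $(\Cmod{B})^{\U_2}$ still carries a residual action of $\U_1$, because $\U_1$ normalizes nothing a priori --- but it does act on the quotient-type object $\U/\U_2$ --- so instead of working with a residual action I would work geometrically with the action groupoid. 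The cleanest route: form the Čech/bar resolution of $\Av_*^\U$ along the surjection $m: \U_1 \Times \U_2 \to \U$, i.e. observe that $B\U \simeq |\,B\U_1 \Times B\U_2 \text{-type simplicial object}\,|$ is \emph{not} literally true, so one must instead argue at the level of the defining cobar complexes.

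Concretely, here are the steps in order. \emph{Step 1.} Reduce to a statement about $\fD$-modules on the relevant stacks: by $1$-affineness of $B_\dR$ and of $(\pt/\U_i)_\dR$, $(\pt/\U)_\dR$ (all of which hold since the $\U_i$ are pro-unipotent group schemes of pro-finite type over $B$, using the colocalization results recalled in Section \ref{sssec:invariants-NA}), it suffices to produce a natural equivalence of the relevant comonads / of the relevant corestriction--coinduction adjunctions on $\ShvCatCrys$. \emph{Step 2.} Translate $\Av_*^\U$ as $+$-pushforward along $\pi: \pt/\U \to B$ and $\Av_*^{\U_i}$ as $+$-pushforward along $\pi_i$, via the standard dictionary $(\Cmod{B})^{\U} \simeq \bGamma^\nabla(\pt/\U, \text{loc of }\Cmod{B})$. \emph{Step 3.} Factor $\pi$ as $\pt/\U_2 \to \pt/\U$ is \emph{wrong} --- the map goes the other way; what we have from the surjectivity of $m$ is a map $q: \pt/\U_2 \to \pt/\U$ (induced by $\U_2 \hto \U$) together with the observation that the fiber of $q$ is $\U/\U_2$, and since $m$ is surjective with pro-unipotent (hence cohomologically contractible) fibers, $\U_1 \Times^{\U_1 \cap \U_2} \pt \simeq \U/\U_2$ acts transitively. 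The key geometric input is therefore: the composite $\pt/(\U_1 \cap \U_2) \to \pt/\U_2 \xto{q} \pt/\U$ factors through $\pt/\U_1 \to \pt/\U$, and the resulting square
\begin{equation} \label{eqn:strong-approx-square}
\xy
(0,16)*+{\pt/(\U_1 \cap \U_2)}="00";
(34,16)*+{\pt/\U_2}="10";
(0,0)*+{\pt/\U_1}="01";
(34,0)*+{\pt/\U}="11";
{\ar@{->} "00";"10"};
{\ar@{->} "00";"01"};
{\ar@{->} "10";"11"};
{\ar@{->} "01";"11"};
\endxy
\end{equation}
is \emph{not} Cartesian in general, but it \emph{is} a pushout after $\dR$, or more usefully: the fibers of the two vertical maps agree (both equal $\U_1/(\U_1\cap\U_2) \simeq \U/\U_2$ under $m$), so base change along $\pt \to \pt/\U$ turns \eqref{eqn:strong-approx-square} into the bar presentation of $\U$ by $\U_1 \Times \U_2$. \emph{Step 4.} Apply base change ($+$-pushforward satisfies base change against $!$-pullback) around \eqref{eqn:strong-approx-square} to get $\pi_* \simeq (\pi_1)_* \circ (\text{pushforward along }\pt/(\U_1\cap\U_2) \to \pt/\U_1)$, and then absorb the contractible fibers $\U_1 \cap \U_2$ versus the genuinely needed composition; iterating the bar resolution (using that $m$ is a surjection of group schemes, so its Čech nerve computes $\pt/\U$) collapses the correction terms because each is a $+$-pushforward along a cohomologically contractible pro-unipotent gerbe and hence an equivalence. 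This yields $\Av_*^\U \simeq \Av_*^{\U_1} \circ \Av_*^{\U_2}$.

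The main obstacle, and the step deserving the most care, is \emph{Step 3/4}: the square \eqref{eqn:strong-approx-square} is genuinely not Cartesian (that would force $m$ to be an isomorphism, not merely a surjection), so one cannot invoke base change naively. The correct replacement is to use the full Čech nerve of the surjection $m: \U_1 \Times \U_2 \surjto \U$ --- whose geometric realization recovers $\pt/\U$ by descent, since $m$ is an fppf (indeed smooth, being a surjection of pro-unipotent group schemes) cover --- and to check that termwise $+$-pushforward along the resulting simplicial diagram of classifying stacks, after passing to invariants, telescopes: each extra simplicial level contributes a $+$-pushforward along a map whose fiber is a product of copies of $\U_1 \cap \U_2$ or $\U_1, \U_2$-type pro-unipotent groups, and cohomological contractibility (the standard fact that $\Av_*^{\U'}$ is a colocalization for $\U'$ pro-unipotent, recalled after the definition of the invariant category) makes all the face maps equivalences in the relevant range. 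I would organize this as: (a) establish $\Av_*^{\U_2}$ followed by residual $\U_1$-averaging is well-defined on $(\Cmod{B})^{\U(\sK)}$-type objects using Proposition \ref{prop:Av_^*-continuous-Wh-ext-noT}'s setup only insofar as needed, (b) identify the composite with $\Av_*^{\U}$ by the Čech descent argument above, checking the cosimplicial identities by base change, and (c) note fully-faithfulness of all the $\oblv$'s lets us suppress them throughout, as the statement permits.
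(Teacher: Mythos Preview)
Your proposal is far more elaborate than what is needed, and in its present form does not constitute a proof. The paper's argument is a three-line convolution computation that you have entirely missed: for a pro-unipotent group scheme $\U'$ over $B$, the functor $\Av_*^{\U'}$ is explicitly given by
\[
c \;\longmapsto\; \act_{\U}\bigl(m_*(c \Otimes k_{\U'})\bigr),
\]
i.e.\ convolution with the constant $\fD$-module $k_{\U'} \in \Dmod^*(\U')$ (this is the standard description of $\Av_*$ for a cohomologically contractible group scheme). Hence $\Av_*^{\U_1}\circ\Av_*^{\U_2}$ is convolution with $m_*(k_{\U_1 \Times \U_2})$. The surjectivity hypothesis says precisely that $m$ factors as $\U_1 \Times \U_2 \xto{q} \U_1 \overset{\U_1\cap\U_2}{\Times}\U_2 \xto{\sim} \U$, where $q$ has fibers isomorphic to the pro-unipotent (hence contractible) group $\U_1\cap\U_2$; therefore $m_*(k_{\U_1\Times\U_2})\simeq k_\U$, which is exactly the kernel computing $\Av_*^\U$. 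That is the whole proof.

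By contrast, your route through classifying stacks runs into a difficulty you yourself flag and do not resolve: the square \eqref{eqn:strong-approx-square} is not Cartesian, so base change does not apply, and your proposed replacement --- the \v{C}ech nerve of $m$ --- is never actually connected to the composite $\Av_*^{\U_1}\circ\Av_*^{\U_2}$. The \v{C}ech nerve of $m:\U_1\Times\U_2\to\U$ has geometric realization $\U$, not $\pt/\U$, and it is not clear how the simplicial diagram you envision would produce the factorization of averaging functors rather than some descent statement for $\Dmod^*(\U)$. The phrases ``telescopes'' and ``collapses the correction terms'' are where the argument would have to happen, and they are left as assertions. What is actually doing the work in any correct proof is the single fact $m_*(k_{\U_1\Times\U_2})\simeq k_\U$; once you see that, the classifying-stack machinery, $1$-affineness, and \v{C}ech descent are all superfluous.
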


\begin{proof} 
Set $\U_1 \cap \U_2$ to be the subgroup $\U_1 \times_\U \U_2$ and denote by $\Times$ the fiber product $\times_B$. The assumption is equivalent to having an isomorphism
$$
\U_1 \overset{\U_1 \cap \U_2}{\Times} \U_2 \to \U.
$$
The functor $\Av_*^{\U_1} \circ \Av_*^{\U_2}$ is isomorphic to the composition
$$
\Cmod{B}
\xto{ - \Otimes k_{\U_1 \Times \U_2} }
\Cmod{B} \Otimes \Dmod^*(\U_1 \Times \U_2)
\xto{ m_*}
\Cmod{B} \Otimes \Dmod^*(\U)
\xto{\act_\U}
\Cmod{B}.
$$
Clearly, $m$ factors as $\hat m \circ q$, where $q: \U_1 \Times \U_2 \to \U_1 \overset{\U_1 \cap \U_2}{\Times} \U_2$ is a quotient map with contractible fibers. Hence, under the assumption, we have that $q_* (k_{\U_1 \Times \U_2}) \simeq k_\U$ and we are done.
\end{proof}

Let $m \geq n \geq 1$. In our situation, consider $\U:= \UA_{m,X^I}$ with subgroups $\U_1 := \UA_{n, X^I}$ and $\U_2 = \UA_{m, X^I} \cap U(\sK)_{X^I}$.

\begin{lem}
Fix $I \in \fset$. There exists $n \geq 1$ such that 
$$
\UA_{n, X^I} \ustimes{X^I}
\big( \UA_{m, X^I} \cap U(\sK)_{X^I} \big)
\to
 \UA_{m,X^I}
$$
is surjective on geometric points for any $m\geq n$.
\end{lem}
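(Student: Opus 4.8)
The plan is to verify the surjectivity fibrewise over geometric points of $X^I$ and to reduce the nonabelian statement to the case of a vector group, where it becomes a Riemann--Roch assertion. As in the lemma computing $N(\sA)^\mu_{X^I}$, we may assume $U=N$ is the unipotent radical of a Borel $B=T\ltimes N$ in a reductive group, and that the chosen presentation is $U(\sA)_{n,X^I}=\mathrm{Ad}_{\sigma_{n\mu}}\big(U(\sO)_{X^I}\big)$ for a fixed dominant regular cocharacter $\mu$ of $T$; in root coordinates $U\simeq\prod_{\alpha>0}U_\alpha$, a geometric point of $U(\sA)_{n,X^I}$ is a jet whose $U_\alpha$-component has pole $\le n\langle\mu,\alpha\rangle$ at each marked point. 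Now fix a geometric point $\underline x$ of $X^I$ and let $y_1,\dots,y_k$ ($k\le|I|$) be the distinct points of $X$ it involves. Over $\underline x$, the group $U(\sA)_{m,X^I}$ becomes $\prod_j U(\sA)_m(y_j)$, while $U(\sA)_{m,X^I}\cap U(\sK)_{X^I}$ becomes the group of $U$-valued rational functions on $X$ whose expansion at each $y_j$ lies in $U(\sA)_m(y_j)$ (with no condition away from the $y_j$). Surjectivity of $m$ over $\underline x$ is therefore the \emph{simultaneous approximation} statement: every $u=(u_j)_j\in\prod_j U(\sA)_m(y_j)$ admits a rational $v\in U(\sK)\cap U(\sA)_m$ with $u_j\cdot v|_{y_j}^{-1}\in U(\sA)_n(y_j)$ for all $j$.

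\textbf{The vector-group case.} This is the engine. If $V$ is a vector group — e.g. a $T$-stable root subquotient of $U$ — then for \emph{any} prescription of polar parts at $y_1,\dots,y_k$, of arbitrary orders, there is a rational section of $V$ over $X$ realizing them and regular away from the $y_j$ except at one auxiliary point $y_0\notin\{y_1,\dots,y_k\}$, which exists since $X$ has infinitely many closed points and which we may use with as large a pole as we please. Reducing to $V=\GG_a$ and to a line-bundle twist, one takes the short exact sequence
$$
0\longrightarrow \mathcal O_X(M[y_0])\longrightarrow \mathcal O_X\Big(M[y_0]+\textstyle\sum_j N_j[y_j]\Big)\longrightarrow \mathcal Q\longrightarrow 0,
$$
where $\mathcal Q$ is the torsion sheaf at the $y_j$ carrying the prescribed polar parts; for $M\gg 0$ one has $H^1\big(X,\mathcal O_X(M[y_0])\big)=0$, so global sections of the middle term surject onto $H^0(X,\mathcal Q)$. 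In particular $V(\sK)\cap V(\sA)_m$ surjects onto $V(\sA)_n(y_\bullet)\backslash V(\sA)_m(y_\bullet)$ for every $n\ge 0$, every $m\ge n$, and every $\underline x$; this is the geometric avatar of strong approximation, and it uses nothing about $I$ or $m$ beyond the availability of $y_0$.

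\textbf{Induction on the nilpotency class of $U$.} The base case is the vector-group case above. Let $Z$ be the last nonzero term of the lower central series of $U$: a $T$-stable central vector group, with the presentation compatible in the sense that $Z(\sA)_n=Z\cap U(\sA)_n$ is the corresponding object for $Z$ and $(U/Z)(\sA)_n$ is its image. Given $u\in U(\sA)_m(\underline x)$ with $m$ large, project to $\bar u\in(U/Z)(\sA)_m(\underline x)$, apply the inductive hypothesis to obtain $\bar v\in(U/Z)(\sK)\cap(U/Z)(\sA)_m$ with $\bar v|_{y_j}\in(U/Z)(\sA)_{\bar n}(y_j)\cdot\bar u_j$, and lift $\bar v$ to a rational $\tilde v\in U(\sK)$ through a fixed polynomial section of $U\to U/Z$; the $Z$-coordinate of $\tilde v$ then has pole bounded by $c\cdot m$ at each $y_j$, for a constant $c$ depending only on $U$ and $\mu$. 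Two successive central corrections, each supplied by the vector-group case applied to $Z$, finish the argument: the first adjusts $\tilde v$ by an element of $Z(\sK)$ so that its $Z$-coordinate acquires pole $\le m\langle\mu,\alpha_Z\rangle$ at the $y_j$, placing the result $v$ in $U(\sA)_m\cap U(\sK)$; the second adjusts $v$ by an element of $Z(\sK)\cap Z(\sA)_m$ so as to cancel, simultaneously at all $y_j$, the $Z$-pole of $u_j\cdot v|_{y_j}^{-1}$ above a fixed order. Since $Z$ is central in $U$, neither correction disturbs the image in $U/Z$, and the resulting $u_1:=u\,v^{-1}$ lies in $\prod_j U(\sA)_n(y_j)$ with $n$ depending only on $\bar n$, on $c$, and on the section — in particular on neither $m$ nor $\underline x$. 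This closes the induction and yields the lemma with an $n=n(U)$ independent of $I$, a fortiori one depending only on $I$.

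\textbf{Main obstacle.} The delicate point is the pole bookkeeping in the inductive step: a careless rational lift of $\bar v$ has a $Z$-coordinate whose pole grows linearly in $m$, which a priori violates both the requirement $v\in U(\sA)_m$ and the requirement that $u\,v^{-1}$ land in $U(\sA)_n$ with $n$ bounded. The two central corrections are precisely what absorbs this growth, and the only thing that needs care is that they can be performed \emph{compatibly with all the bounds, uniformly across the points $y_j$} — which is exactly what the vector-group case guarantees, since it permits arbitrary polar parts with unconstrained poles at the auxiliary point $y_0$. Everything else — the passage to geometric points, the identification of the fibres of the relevant group (ind)schemes, and the Riemann--Roch computation — is routine.
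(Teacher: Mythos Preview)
Your proof has a genuine gap in the identification of the fibre of $U(\sK)_{X^I}$ over a geometric point $\underline x$. In this lemma $U(\sK)$ is the Ran-space version $\GMapsRan{U}$, so its fibre over $\underline x$ (with underlying divisor $D=\sum_j [y_j]$) is $U(X\setminus D)$: rational maps into $U$ that are \emph{regular away from the $y_j$}. For $U=\GG_a$ the paper accordingly identifies $\big(U(\sA)_{m,X^I}\cap U(\sK)_{X^I}\big)_{\underline x}$ with $H^0(X,\O(mD))$. Your claim that there is ``no condition away from the $y_j$'' is incorrect, and it invalidates the auxiliary-point manoeuvre: the rational section you produce has a pole at $y_0\notin\{y_j\}$ and hence does \emph{not} lie in $U(\sK)_{X^I}$ over $\underline x$. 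Both central corrections in your inductive step rest on this device, so the argument as written does not establish the lemma.

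The fix is immediate and simpler than what you wrote. For $U=\GG_a$ one uses instead
\[
0\longrightarrow \O_X(nD)\longrightarrow \O_X(mD)\longrightarrow \O_X(mD)/\O_X(nD)\longrightarrow 0,
\]
and surjectivity on $H^0$ is obstructed by $H^1(X,\O(nD))$, which vanishes by Serre duality once $n\deg D>2g-2$, hence for all $\underline x$ as soon as $n>2g-2$. This is exactly the paper's argument. Your induction on the nilpotency class is a legitimate way to unpack the paper's terse ``one easily reduces to $U=\GG_a$'', but the pole bookkeeping you flag as the main obstacle is self-inflicted: if you choose the section $s:U/Z\to U$ to be $T$-equivariant (e.g.\ the root-coordinate section with the central coordinates set to zero), then $s\bigl((U/Z)(\sA)_m\bigr)\subset U(\sA)_m$ on the nose, so $\tilde v:=s(\bar v)$ already lies in $U(\sA)_m\cap U(\sK)$ with no first correction needed, and a single application of the (corrected) $\GG_a$ case to $Z$ finishes the step.
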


\begin{proof}
One easily reduces to $U = \GG_a$. Let $\x \in X^I$ be a geometric point and $D \subset X$ the divisor corresponding to it. Note that $\bigt{\UA_{m, X^I} \cap U(\sK)}(\bk) \simeq H^0(X_{\bk}, \O(m \cdot D))$ and that the statement is equivalent to the surjectivity of the map
$$
H^0(X_\bk, \O(m \cdot D)) \to H^0(X_\bk, \O(m \cdot D)/ \O(n \cdot D)).
$$
The obstruction lies in $H^1(X_\bk, \O(n \cdot D))$, which vanishes by Serre duality as soon as $n$ is large enough.
\end{proof}

\sssec{}

We can now complete the proofs of Proposition \ref{prop:Av_^*-continuous-Wh-ext-noT} and consequently Proposition \ref{prop:Av_^*-continuous-Wh-ext}.

\begin{proof}[Proof of Proposition \ref{prop:Av_^*-continuous-Wh-ext-noT}] 
We show that, for any $X^I$, the inverse system of functors $\Av_*^{\UA_{\ell, X^I}}$ eventually stabilizes on $\Cmod{X^I}^{U(\sK)}$. Indeed, combining the two lemmas above, we obtain that
$$
\Av_*^{\UA_{\ell, X^I}}
\simeq
\Av_*^{\UA_{k, X^I}}  \circ \Av_*^{\UA_{\ell, X^I} \cap\, U(\sK)_{X^I}} ,
$$
as soon as $\ell \geq k$. However, the latter is isomorphic to $\Av_*^{\UA_{k, X^I}}$ on $(\Cmod{X^I})^{U(\sK)}$.
\end{proof}

\ssec{The unital structures}

\begin{prop}
If $\CRan$ is naive-unital with a naive-unital action of $M(\sK) \ltimes U(\sA)$, then 
$$
\Av_*^{\UA, \ev}:
\big( (\CRan)^{U(\sK)} \Otimes \Dmod(\c) \big)^{M(\sK)}
\longto
\bigt{(\CRan) \Otimes \Dmod(\c)}^{M(\sK) \ltimes \UA, \ev}.
$$ is a morphism in $\R\mmod^\un$.
\end{prop}

\begin{proof}
First off, the twist by $\ev$ is irrelevant, whence it is enough to prove that $\Av_*^{U(\sA)}: (\CRan)^{U(\sK)} \to (\CRan)^{U(\sA)}$ is naive-unital whenever $\CRan \in \R\mmod^\un$ is acted on unitally by $U(\sA)$.
Writing 
$$
\CRan^{U(\sK)} 
\simeq 
\bigt{
\lim_{[n] \in \bold{\Delta}}
{}^{U(\sK)} \Dmod^!(U(\sA))
\Otimes
 \Dmod^!(U(\sA))^{\Otimes n}
 \Otimes
 \CRan
 },
$$
we see that it suffices to prove the assertion in the universal case, that is, for the regular representation $\CRan =  \Dmod^!(U(\sA))$. In other words, we need to prove that the right adjoint to 
$$
\Dmod(\Ran)
\xto{p^!}
\Dmod^!(U(\sA)/U(\sK)),
$$
which is continuous by strong approximation, is naive-unital. Denote by $p_?$ such functor.

By construction, $U(\sA)/U(\sK)$ belongs to $\RanCorr_{/\Ran}$, via the diagram 
\begin{gather}  
\xy
(50,0)*+{ \Ran \times \Ran}="10";
(90,0)*+{ \Ran;}="20";
(0,15)*+{ \Ran \times (U(\sA)/U(\sK)) }="01";
(50,15)*+{ U(\sO,\A)/(\Ran \times U(\sK))}="11";
(90,15)*+{ U(\sA)/U(\sK)}="21";
{\ar@{->}_{  \alpha   } "11";"01"};
{\ar@{->}^{ \add  } "10";"20"};
{\ar@{->}^{ \;\;\;\;\beta  } "11";"21"};
{\ar@{->}^{v} "11";"10"};
{\ar@{->}_{\id \times p} "01";"10"};
{\ar@{->}^{p} "21";"20"};
\endxy
\end{gather}	
this is just the quotient along the subgroup $U(\sK) \to U(\sA)$ in $\RanCorr_{/\Ran}$. We know that $p^!$ is a morphism in $\R\mmod^\un$: this means that there is a natural isomorphism $\beta_+ \circ \alpha^! \circ (id \times p)^! \simeq p^! \circ \add_*$, together with coherent compatibilities. We wish to prove that $p_?$ is also a morphism in $\R\mmod^\un$. To do this, we just need to show that the natural transformation 
$$
\add_* \circ  (\id \times p)_? 
\longto 
p_? \circ \beta_+ \circ \alpha^!
$$
is an isomorphism. This follows from a simple diagram chase together with the contractibility of the fibers of $v$ and $p$.
\end{proof}

Thus, if $\CRan$ is naive-unital, we define the functor
$$
\coeff_{\c}: 
(\Cind)^{(M \ltimes U)(\KK)} 
\longto
\big( (\CRan)^{U(\sK)} \Otimes \Dmod(\c) \big)^{M(\sK)}_\indep
\longto
\bigt{(\CRan) \Otimes \Dmod(\c)}^{M(\sK) \ltimes \UA, \ev}_\indep,
$$
to be the independent version of $\coeffRan_\c$.

\begin{defn}

Let $\CRan$ be naive-unital, acted on unitally by $T(\sK) \ltimes N(\sA)$. The above constructions specialize to the following: 
$$
\coeff_{G,\ext} := \coeff_{\ch_{G,\ext}}
:
(\Cind)^{B(\KK)} \longto \Wh_{G,\ext}[\Cind];
$$
$$
\coeff_{G,P} := \coeff_{\ch_{G,P}}
:
(\Cind)^{B(\KK)} \longto \Wh_{G,P}[\Cind].
$$ 
\end{defn}

\begin{rem}
The above coefficient functors have $(\Cind)^{B(\KK)}$ for source. The functor mentioned in (\ref{eqn:coeff_bunG}), which we denoted also $\coeff_{G,\ext}$ abusing notation, is the one obtained in the case $\CRan = \Dmod(\GrGRan)$ precomposing with the inclusion (as proven in \cite{Ba})
$$
\Dmod(\Bun_G) 
\hto
\Dmod(\Bun_G^{B-\gen})
\simeq 
\Dmod(\Bun_G^{1-\gen})^{B(\KK)}
\simeq
\Bigt{\Dmod(\GrGRan)^{B(\sK)}}_\indep.
$$
\end{rem}

The corollary below expresses the commutativity of diagram (\ref{triangle:geom}).

\begin{cor}
Let $\CRan$ be acted on by $T(\sK) \ltimes \NA$. For any $P \in \Par$, there is a natural isomorphism $\coeffRan_{G,P} \simeq \vrho_{G,\ext \to P} \circ \coeffRan_{G,\ext}$. A completely analogous statement holds in the presence of naive-unital structures.
\end{cor}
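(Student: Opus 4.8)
The plan is to exhibit both $\coeffRan_{G,P}$ and $\vrho_{G,\ext\to P}\circ\coeffRan_{G,\ext}$ as the two routes around a square governed by the inclusion of character spaces $f\colon \ch_{G,P}\hto\ch_{G,\ext}$, and then invoke Lemma~\ref{lem:Av_*-f^!}. Write $T\ltimes N$ for the Levi decomposition of $B$. Unwinding the definition of $\coeffRan_{\c}$ from Section~\ref{sssec:defn-coeff-U}, we have $\coeffRan_{G,\ext}=\Av_*^{\NA,\ev}\circ\bp^!_{\ext}$ and $\coeffRan_{G,P}=\Av_*^{\NA,\ev}\circ\bp^!_{P}$, where $\bp^!_{\ext}$ (resp.\ $\bp^!_{P}$) denotes the pullback from $(\CRan)^{B(\sK)}$ to $\bigt{(\CRan)^{N(\sK)}\Otimes\Dmod(\ch_{G,\ext})}^{T(\sK)}$ (resp.\ to $\bigt{(\CRan)^{N(\sK)}\Otimes\Dmod(\ch_{G,P})}^{T(\sK)}$) along the structure map of the character space. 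First I would record that $f$ is a $T(\sK)$-equivariant locally closed embedding with $\ev=\ev\circ(\id\times f)$, so Lemma~\ref{lem:f^!-oblv} yields the induced functor $\bf^!$ on the corresponding equivariant categories; and since $\ch_{G,P}\to\Ran$ factors through $f$, one has $\bp^!_{P}\simeq\bf^!\circ\bp^!_{\ext}$, the only point to check being compatibility with the $N(\sK)$- and $T(\sK)$-structures, which is routine.

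Next I would invoke Lemma~\ref{lem:Av_*-f^!} for this $f$: it asserts precisely that $\bf^!$ intertwines the two averaging functors occurring above. Combined with $\bp^!_{P}\simeq\bf^!\circ\bp^!_{\ext}$, this gives
$$
\coeffRan_{G,P}=\Av_*^{\NA,\ev}\circ\bp^!_{P}\;\simeq\;\Av_*^{\NA,\ev}\circ\bf^!\circ\bp^!_{\ext}\;\simeq\;\bf^!\circ\Av_*^{\NA,\ev}\circ\bp^!_{\ext}=\bf^!\circ\coeffRan_{G,\ext}.
$$
It then remains to identify $\bf^!$ with (the Ran version of) the restriction functor $\vrho_{G,\ext\to P}$. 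By Lemma~\ref{lem:useful-base-change-limits}, the target $\bigt{\CRan\Otimes\Dmod(\ch_{G,P})}^{T(\sK)\ltimes\NA,\ev}$ of $\coeffRan_{G,P}$ is the fiber product of $\bigt{\CRan\Otimes\Dmod(\ch_{G,P})}^{T(\sK)}$ and $\Wh_{G,\ext}[\CRan]$ over $\bigt{\CRan\Otimes\Dmod(\ch_{G,\ext})}^{T(\sK)}$; by Proposition~\ref{prop:Whit(G,P)-at-chi}, which rests on the stabilizer computation of Proposition~\ref{prop:stabilizer} (the stabilizer of $\chi_{G,P}$ in $T(\sK)$ being $Z_M(\sK)$), this fiber product is canonically $\Wh_{G,P}[\CRan]$, and under this identification $\bf^!$ becomes precisely the restriction along $\ch_{G,P}\hto\ch_{G,\ext}$.

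I expect this last identification to be the only genuinely non-formal step: one must reconcile the two descriptions of $\Wh_{G,P}$ --- one as a fiber product over the entire space of characters, the other as a $Z_M$-invariant category at the privileged character $\chi_{G,P}$ --- which is exactly the content of Proposition~\ref{prop:Whit(G,P)-at-chi}; everything else is manipulation of limits and adjoints already packaged in the cited lemmas. Finally, for the naive-unital variant, I would observe that when $\CRan$ lies in $\R\mmod^\un$ all the functors above ($\bp^!_{\ext}$, $\bp^!_{P}$, the two averaging functors, and $\bf^!$) are morphisms in $\R\mmod^\un$, as recorded in Lemma~\ref{lem:Av_*-f^!} and in the discussion surrounding the definition of $\coeff_{\c}$; applying the functor $\indep$ to the natural isomorphism $\coeffRan_{G,P}\simeq\bf^!\circ\coeffRan_{G,\ext}$ then gives $\coeff_{G,P}\simeq\vrho_{G,\ext\to P}\circ\coeff_{G,\ext}$, completing the analogous statement.
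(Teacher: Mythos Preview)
Your proof is correct and follows essentially the same route as the paper: apply Lemma~\ref{lem:Av_*-f^!} to the inclusion $f\colon \ch_{G,P}\hto\ch_{G,\ext}$ to obtain $\coeffRan_{G,P}\simeq\bf^!\circ\coeffRan_{G,\ext}$, and then use Lemma~\ref{lem:useful-base-change-limits} to rewrite the target as a fiber product over the extended side. The paper compresses all of this into two sentences (``consider diagram~(\ref{diag_Av_*-f^!}) with $\ch_{G,P}\hto\ch_{G,\ext}$\ldots base-change along the independent subcategory\ldots the assertion becomes manifest''), whereas you spell out the factorization $\bp^!_P\simeq\bf^!\circ\bp^!_{\ext}$ and the identification of $\bf^!$ with $\vrho_{G,\ext\to P}$ explicitly.

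One small point of care: you invoke Proposition~\ref{prop:Whit(G,P)-at-chi} to identify the fiber product with $\Wh_{G,P}[\CRan]$ at the Ran level, but that proposition is stated only for the independent subcategories (it involves $\Cind$ and $\chdom$). This is harmless, since the mechanism behind it---Proposition~\ref{prop:stabilizer}---applies over $\Ran$ just as well (the stabilizer of $\chi_{G,P}$ in $T(\sK)$ is $Z_M(\sK)$ at each $X^I$), and in any case the paper itself passes to the independent subcategory at this step. But strictly speaking, you should either cite Proposition~\ref{prop:stabilizer} directly for the Ran-level identification, or defer the identification with $\Wh_{G,P}$ to the independent level as the paper does.
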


\begin{proof} 
Let $\ch_{G,P}$ be the $\Ran$ version of $\chdom_{G,P}$:
$$
\ch_{G,P}:
S 
\mapsto
 \left\{ 
\big( \x, \eta= \{\eta_i\}_{i \in \I_M} \big)
\left| 
\begin{array}{l}
 \mbox{$\x \in \Ran(S)$,} \\
\mbox{$\eta_i$ a fiberwise non-zero section of $\restr{\O_S \boxtimes \O_X}{U_\x}$ }
\end{array} \right.
\right\},
$$
and apply the commutativity of (\ref{diag_Av_*-f^!}) with $\ch_{G,P} \hto \ch_{G,\ext}$ playing the role of $\c \hto \c'$. 
\end{proof}

\begin{lem} \label{lem:Av_*-f^!}
In the situation of Section \ref{sssec:defn-coeff-U}, let $\c'$ be another relative indscheme mapping to $\ch_U$ and $f: \c \to \c'$ be a $M(\sK)$-equivariant morphism such that $\ev = \ev' \circ f$.
Then, lax-commutative diagram
\begin{gather}  \label{diag_Av_*-f^!}
\xy
(0,0)*+{ \bigt{ \CRan \Otimes \Dmod(\c) }^{M(\sK) \ltimes U(\sA), \ev}  }="00";
(55,0)*+{ \bigt{ \CRan \Otimes \Dmod(\c') }^{M(\sK) \ltimes U(\sA), \ev}, }="10";
(0,24)*+{ \bigt{ (\CRan)^{U(\sK)} \Otimes \Dmod(\c) }^{M(\sK)} }="01";
(55,24)*+{ \bigt{ (\CRan)^{U(\sK)} \Otimes \Dmod(\c') }^{M(\sK)}   }="11";
{\ar@{->}^{  \bf^! } "00";"10"};
{\ar@{->}^{   \bf^!  } "01";"11"};
{\ar@{->}^{ \Av_*^{U(\sA),\ev'}} "11";"10"};
{\ar@{->}^{  \Av_*^{U(\sA),\ev}   } "01";"00"};
\endxy
\end{gather}	
obtained from Lemma \ref{lem:f^!-oblv} by adjunction, is actually commutative. When everything in sight is naive-unital, we obtain a commutative diagram of independent subcategories.
\end{lem}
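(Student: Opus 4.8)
The plan is to recognize the square~(\ref{diag_Av_*-f^!}) as a Beck--Chevalley (mate) square and then to prove that the relevant $2$-cell is invertible. By Lemma~\ref{lem:f^!-oblv}, $\bf^!$ is compatible with the forgetful functors $\oblv^\rel$ of~(\ref{eqn:oblv-enhanced}): the square of \emph{left adjoints} $\oblv^\rel\circ\bf^!\simeq\bf^!\circ\oblv^\rel$ commutes strictly, both composites running from $\bigt{\CRan\Otimes\Dmod(\c)}^{M(\sK)\ltimes U(\sA),\ev}$ to $\bigt{(\CRan)^{U(\sK)}\Otimes\Dmod(\c')}^{M(\sK)}$. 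Since by Proposition~\ref{prop:Av_^*-continuous-Wh-ext} each $\oblv^\rel$ admits the continuous right adjoint $\Av_*^{U(\sA),\ev}$ (resp.\ $\Av_*^{U(\sA),\ev'}$), passing to right adjoints in this commuting square produces exactly the lax-commutative square~(\ref{diag_Av_*-f^!}), whose structure $2$-cell is the canonical transformation $\theta\colon \bf^!\circ\Av_*^{U(\sA),\ev}\Rightarrow\Av_*^{U(\sA),\ev'}\circ\bf^!$. So the content of the lemma is precisely that $\theta$ is an equivalence.

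To prove this, I would first reduce to a levelwise statement: the total forgetful functor from $\bigt{\CRan\Otimes\Dmod(\c')}^{M(\sK)\ltimes U(\sA),\ev'}$ down to $\CRan\Otimes\Dmod(\c')$ is conservative, and every functor in sight is a limit over $I\in\fset$ of its $X^I$-analogue, so it suffices to fix $I$ and check that the corresponding transformation becomes invertible after forgetting all equivariance on $X^I$. Here the key input is strong approximation: by Proposition~\ref{prop:Av_^*-continuous-Wh-ext-noT} and Lemma~\ref{lem:strong-approx}, there is a level $k\gg0$, which may be chosen the same for $\c$ and $\c'$, such that on the $U(\sK)$-invariant subcategory the averaging $\Av_*^{U(\sA),\ev}$ followed by the forgetful functor is canonically the finite-level twisted averaging for the group scheme $U(\sA)_{k,X^I}$ over $X^I$ --- a $*$-pushforward along a smooth affine schematic map affecting only the $U(\sA)_{k,X^I}$- and $\CRan$-directions, twisted by $\ev^!(\exp)$ restricted to $U(\sA)_{k,X^I}\usotimes{X^I}\c_{X^I}$; likewise for $\c'$ with $(\ev')^!(\exp)$. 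Since $\bf^!=f^!$ modifies only the $\c$-direction, the two operations sit on a Cartesian square whose two maps are induced by $f$ and $\id\times f$, and the hypothesis $\ev=\ev'\circ f$ gives $\ev^!(\exp)=(\id\times f)^!(\ev')^!(\exp)$, so the twists match up. Base change for $\fD$-modules ($!$-pullback against $*$-pushforward along the finite-level action map) together with the projection formula then show that $f^!$ intertwines the two finite-level twisted averagings, and unwinding the adjunctions identifies this base-change isomorphism with $\theta$ over $X^I$; compatibility over $\fset$ gives that $\theta$ is an equivalence.

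For the naive-unital enhancement I would observe that the square~(\ref{diag_Av_*-f^!}) in fact already lives in $\R\mmod^\un$: $\bf^!$ is a morphism there since it is the localization of a $!$-pullback, and $\Av_*^{U(\sA),\ev}$ is a morphism there by the proposition above identifying it as a morphism of naive-unital categories; hence applying the monoidal functor $\indep$ to the commutative square just established yields the commutative square of independent subcategories. The main obstacle I anticipate is the middle step: making the finite-level identification of $\Av_*^{U(\sA),\ev}$ precise and natural in the cosimplicial data defining the Whittaker categories, choosing the stabilization level $k$ uniformly in the relevant maps, and carefully matching the two $\ev$-twists --- once this set-up is in place, the base-change computation itself is routine.
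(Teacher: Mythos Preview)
Your proposal is correct and follows essentially the same route as the paper: reduce to each $X^I$, invoke strong approximation to replace $\Av_*^{U(\sA),\ev}$ by a finite-level averaging $\Av_*^{U(\sA)_{\ell,X^I}}$, and then check directly that $\bf^!$ intertwines these (the paper phrases the last step as ``convolution with the constant sheaf'', which is your base-change/projection-formula computation). The only cosmetic difference is that the paper first observes that the $\ev$-twist is irrelevant---since $\ev=\ev'\circ f$ the twists match automatically---and thereby reduces to the untwisted case, where the claim is immediate because $\Av_*^{U(\sA)}$ acts only on the $\CRan$-factor while $\bf^!$ acts only on the $\Dmod(\c)$-factor; you instead carry the twist through explicitly, which is equally valid.
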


\begin{proof}
The twist by $\ev$ is irrelevant, so we assume that $\c$ is $\Ran$. It suffices to check the claim separately on each $X^I$.
Further, by the proof of Proposition \ref{prop:Av_^*-continuous-Wh-ext-noT}, we know that the inverse system of functors $\Av_*^{U(\sA)_{\ell, X^I}}$ stabilizes. Hence, it suffices to show that, for any $\ell \geq 1$, the functors $\bf^!$ intertwine the averaging functors $\Av_*^{U(\sA)_{\ell, X^I}}$. This is a straightforward computation using the fact that $\Av_*^{U(\sA)_{\ell, X^I}}$ is the functor of convolution with the constant sheaf on $U(\sA)_{\ell, X^I}$. 
\end{proof}

\sssec{} \label{sssec:coeff-GL2-conclusion}

We conclude this section by finishing the proof that $\coeffRan_{GL_2,\ext}$ is an equivalence. Since the coefficient functor is that case is essentially Fourier transform, the equivalence already holds at the level of categories over $\Ran$, with no \virg{independent} manipulations required.

\medskip

We need to verify that the equivalence (\ref{eqn:obvious2}) is given by the coefficient functor. For that, it suffices to observe that the equivalence of (\ref{eqn:coeff_V-abstract}) fits in the commutative triangle
\begin{gather}
\xy
(30,0)*+{ \bigt{\CC \otimes_{\Dmod(B)} \Dmod^*(\W^\perp)}^{\H \ltimes \W},}="basso";
(60,25)*+{ \bigt{\CC \otimes_{\Dmod(B)} \Dmod^*(\W^\perp)}^{\H \ltimes \V, \ev} }="dx";
(0,25)*+{ \CC^{\H \ltimes \W} }="sx";
{\ar@{->}^{ \simeq  } "sx";"dx"};
{\ar@{<-}^{ \mathsf{pullback}} "basso";"sx"};
{\ar@{<-}_{ \oblv } "basso";"dx"};
\endxy
\end{gather}	
where the functor $\oblv$ is fully faithful.

\sec{Fully faithfulness of the coefficient functor} \label{SEC:MAIN}

In this section, we prove the main result of this paper, Theorem \ref{thm:MAIN}.

\ssec{Statement and overview}

Let us fix some notation first. Unless stated otherwise, let $G := GL_n$ and $G' := GL_{n-1}$ for the entirety of this section. Standard parabolic subgroups of $GL_n$ correspond to partitions of $n$. We shall denote by $P_{m_1, \ldots, m_p} \subseteq GL_n$ the subgroup corresponding to the partition $n= m_1 + \cdots + m_p$. 

In particular, we set $Q:= P_{n-1, 1} \subset GL_n$ and $Q' := P_{n-2,1} \subseteq G'$. 
The Levi decomposition of $Q$ is $Q \simeq M \ltimes V$, where $M := G' \times \GG_m$ and  $V := \GG_a^{n-1}$.

\sssec{} \label{sssec:twists-xi}

Since it matters in the course of the proof, let us be precise about the $\Omega$-twist. Note that $\Vt(\sK)$ (resp., $\Vt(\sA)$) parametrizes rational sections (resp., meromorphic jets of sections) of the rank $(n-1)$ vector bundle 
$$
\Omega_X^{n-1} \oplus \Omega_X^{n-2} \cdots \oplus \Omega_X.  
$$
Hence, $\ch_V \simeq \GSectRan{\BV\E}$, the annihilator of $\Vt(\sK)$, is the relative indscheme of rational sections of 
$$
\E :=\Omega_X^{2-n} \oplus \Omega_X^{1-n} \cdots \oplus \O_X.
$$
The obvious inclusion $\O_X \hto \E$ yields a section $\xi$ of $\ch_V \to \Ran$.

\begin{thm} \label{thm:MAIN}
Let $G=GL_n$, or $G=PGL_n$, and $Q$ be the standard parabolic subgroup corresponding to the partition $n= (n-1)+1$. For any $\CRan \in \R\mmod^\un$ acted on unitally by $\Qt(\sA)$, the extended coefficient functor
$$
\coeff_{G,\ext}: (\Cind)^{\Qt(\KK)} 
\longto 
\Wh_{G,\ext}[\Cind]
$$
is \emph{fully faithful}.
\end{thm}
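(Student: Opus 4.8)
The plan is to run the inductive argument sketched in Section \ref{sssec:sketch}, but in the geometric setting, using the Fourier transform developed in Section \ref{SEC:FT} as the base case and the blow-up alluded to in the acknowledgements to make the induction go through. I would argue by induction on $n$, the case $n=1$ being trivial and $n=2$ being exactly \propref{prop:first-step} (the Fourier transform equivalence, via \sssec{sssec:coeff-GL2-conclusion}). Throughout, since $\PGL_n$ differs from $GL_n$ only by a central $\GG_m$, which contributes nothing to the unipotent integrations defining the coefficient functor, it suffices to treat $G=GL_n$ and deduce the $\PGL_n$ case formally.

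For the inductive step, recall $Q\simeq M\ltimes V$ with $M=GL_{n-1}\times\GG_m$ and $V=\GG_a^{n-1}$, and $Q'\subseteq GL_{n-1}$ the next mirabolic. The first move is to apply the Fourier transform equivalence of \propref{prop:first-step} to the $V(\sK)$-invariants, rewriting
$$
(\Cind)^{\Qt(\KK)}\simeq \bigt{\Cind\otimes\Dmod(\chind_V)}^{M(\KK)\ltimes V(\sA),\,\ev}.
$$
Next I would stratify $\chind_V$ by $M(\KK)$-orbits. As in the function-theoretic sketch, the relevant piece of geometry is that the $(GL_{n-1}\times\GG_m)(\sK)$-action on the space of characters of $V(\sA)$ trivial on $V(\sK)$ has essentially two orbit types, governed by the vanishing locus of the last coordinate section $\xi$: the closed stratum where that section vanishes (stabilizer involving all of $M(\sK)$) and the open stratum where it is nowhere zero (stabilizer $Q'(\sK)\times(\GG_m(\sK)\ltimes V(\sA))$, up to the relevant twist). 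The honest geometric statement one needs is a recollement (gluing) along this open/closed decomposition of $\chind_V$, together with \propref{prop:stabilizer} to identify the invariants on each stratum with a Whittaker-type category for a smaller group. On the closed stratum one obtains a $\Wh$-type category for $M$ which, since $M=GL_{n-1}\times\GG_m$ and the $\GG_m$ is harmless, reduces to the $GL_{n-1}$ situation; on the open stratum one gets (after a further Fourier transform in the direction of the remaining $\GG_a$'s, exactly as in the displayed chain of \sssec{sssec:sketch}) a $\Qt'(\KK)$-invariant category, again a smaller mirabolic. Feeding both into the inductive hypothesis, each is identified fully faithfully with the corresponding extended Whittaker category, and reassembling the recollement identifies $(\Cind)^{\Qt(\KK)}$ with the lax limit / glued category that one checks — by unwinding \propref{prop:Whit(G,P)-at-chi} and the definition of $\Wh_{G,\ext}$ via the stratification $\bigsqcup_P\chdom_{G,P}$ — is precisely $\Wh(GL_n,\ext)[\Cind]$, with the comparison map being $\coeff_{G,\ext}$. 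Compatibility of all these Fourier transforms with the coefficient functors is \lemref{lem:Av_*-f^!} and the Corollary following it.

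The reason the naive stratification argument is not quite enough — and the reason a blow-up enters — is that the open and closed strata of $\chind_V$ do not glue cleanly at the level of the forgetful functors: the left adjoint to restriction-to-the-closed-stratum does not interact well with the $V(\sA)$-action (the character twist $\ev$ degenerates along the boundary), so the recollement is not exact in the form one needs to conclude fully faithfulness by a descent/gluing argument. The fix, which I expect to be the technical heart of the proof, is to replace $\chind_V$ by an appropriate blow-up $\widetilde{\chind}_V$ along the closed stratum (equivalently, to deform the space of characters so that the closure of the open orbit becomes a clean divisor with the expected normal bundle), run the Fourier transform and the invariants on the blow-up where the two strata are separated by an honest normal-crossings situation, and then check that passing to the blow-up does not change the relevant invariant category — this last point using contractibility of the fibers of the blow-down (the same mechanism as in \lemref{lem:strong-approx} and \propref{prop:Av_^*-continuous-Wh-ext-noT}).

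The main obstacle, then, is precisely this gluing/blow-up step: verifying that (a) on the blow-up the extended-coefficient functor factors as a composite of a Fourier transform and an honest recollement of two smaller-$n$ Whittaker categories for which the inductive hypothesis applies, (b) the blow-down induces an equivalence on the relevant $(-)_\indep$-categories, and (c) the resulting identification matches the abstract description of $\Wh(GL_n,\ext)$ coming from the stratification of $\chdom_{G,\ext}$ and \propref{prop:Whit(G,P)-at-chi}. Everything else — the $n=2$ base case, the reduction from $\PGL_n$ to $GL_n$, the bookkeeping of $\Omega$-twists (Section \ref{ssec:twists}, \sssec{sssec:twists-xi}), and the compatibilities of $\Av_*^{U(\sA),\ev}$ with pullbacks (\lemref{lem:Av_*-f^!}) — is either already established in the excerpt or is routine given the framework of Sections \ref{SEC:unital}--\ref{SEC:coeff}.
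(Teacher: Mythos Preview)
Your proposal has the right high-level ingredients --- induction on $n$, Fourier transform for $\Vt(\sA)$, and a blow-up --- but it misidentifies the role of the blow-up and leans on machinery the paper explicitly postpones.

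The key divergence: you want to stratify $\chind_V$ (or its blow-up) into open and closed pieces, apply induction separately on each, and then reassemble via a recollement matching the description $\Wh(G,\ext)\simeq\laxlim_P\,\Wh(G,P)$. But the paper states (just after \thmref{thm:main_Gr}) that this gluing equivalence is relegated to \cite{Be2} and that the proof of \thmref{thm:MAIN} \virg{has nothing to do with the gluing equivalence}. Your step (c) is therefore unavailable here, and without it your argument does not close.

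The paper's use of the blow-up is different and simpler. After the Fourier step (\ref{eqn:PROOF1}), one pulls back along $\pi:\BB\chind_V\to\chind_V$; this is only \emph{fully faithful}, not an equivalence (\propref{prop:crucial}) --- so your expectation that \virg{passing to the blow-up does not change the relevant invariant category} is too optimistic, and even fully faithfulness is not just contractibility of fibers: it needs the weak-properness machinery of Section~\ref{ssec:crucial} to produce the left adjoint $\pi_!$, together with the categorical recollement criterion of \lemref{lem:exact-seq-cat}. The payoff of the blow-up is that $\BB\chind_V$ fibers via $\lambda$ over $\BP\chind_V$, on which the $\Mt(\KK)$-action is \emph{transitive} (\lemref{lem:Pch_V = quotient}). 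A \emph{single} application of \propref{prop:stabilizer} then identifies the entire blow-up Whittaker category with the Whittaker-type category over the fiber $\ch_{n-1}$ for the stabilizer $(\Qt'\times\GG_m)(\KK)$. No stratification or gluing of two pieces is needed: one lands directly in one category of the same shape as the input but for $GL_{n-1}$, and the inductive hypothesis (applied to the naive-unital category $(\CRan\Otimes\Dmod(\ch_{n-1}))^{\GG_m(\sK)\ltimes\Vt(\sA),\ev}$) finishes. Thus $\coeff_{G,\ext}$ factors as four fully faithful functors --- Fourier equivalence, $\bold\pi^!$, transitivity equivalence, induction --- rather than through a glued category whose identification with $\Wh_{G,\ext}[\Cind]$ you would still owe.
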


We will only run the proof in the $GL_n$ case, the case of $PGL_n$ is almost identical and left to the reader.

\sssec{}
\nc{\VtA}{\Vt(\sA)}

The statement of Theorem \ref{thm:MAIN} is trivial for $n=1$: in that case, the coefficient functor is the identity of $(\Cind)^{\GG_m(\sK)}$. The proof ultimately goes by induction on $n$. More precisely, we shall express $\coeff_{G,\ext}$ as a composition of four fully faithful functors: the first functor is an equivalence by Fourier transform, the second is fully faithful thanks to a geometric argument explained in Section \ref{ssec:blowup}, the third functor is an equivalence thanks to a standard result on transitive actions (Proposition \ref{prop:stabilizer}), the fourth functor is fully faithful by induction.

\medskip

We start from $\CRan$, viewed as a category acted on by $\Mt(\K) \ltimes \VtA$, and use Proposition \ref{prop:first-step} to obtain the equivalence
\begin{equation} \nonumber
\coeffRan_{\ch_V} =\Av_*^{\VtA, \ev} \circ (\omega_{\ch_V} \Otimes -): 
(\CRan)^{\Qt(\sK)} \xto{ \; \; \simeq \;\;}  \bigt{ \CRan \Otimes \Dmod (\ch_V) }^{\Mt(\sK) \ltimes \Vt(\sA), \ev}.
\end{equation}
Our first functor is the induced equivalence 
\begin{equation} \label{eqn:PROOF1}
\coeff_{\ch_V}: 
(\Cind)^{\Qt(\sK)} \xto{ \; \; \simeq \;\;}  \bigt{ \CRan \Otimes \Dmod (\ch_V) }^{\Mt(\sK) \ltimes \Vt(\sA), \ev}_\indep.
\end{equation}
on the independent subcategories.

\ssec{Blowing up} \label{ssec:blowup}

We now wish to compare $\ch_V$ with $\BB\ch_V$, the space of \emph{generic sections into the blow-up of $\BV\E$ along the zero section}. 

\sssec{} \label{sssec:blow-up-indschemes}

Denoting by $\BB\E$ such blow-up and by $\BP\E$ the projective bundle associated to $\E$, 
define
$$
\BP\ch_V :=
\GSectRan{\BP\E}
\hspace{.6cm}
\BB\ch_V :=
\GSectRan{\BB\E}.
$$
By construction, $\BP\ch_V$ and $\BB\ch_V$ are the functors
\begin{equation} \label{eqn:Pch_V}
\BP\ch_V:
S \mapsto 
\left\{ 
(\x, \L, \iota) \left| 
\begin{array}{l}
 \x \in \Ran(S), \\
\iota: \L \hto \restr{ (\O_S \boxtimes \E)}{U_\x} \mbox{ a line sub-bundle}
\end{array}
 \right.
\right\} \big/ \approx
\end{equation}
and
\begin{equation} \label{eqn:blow-up}
\BB\ch_V:
S \mapsto 
\left\{ 
(\x, \L, \iota, \sigma) \left| 
\begin{array}{l}
 \x \in \Ran(S), \\
\iota: \L \hto \restr{ (\O_S \boxtimes \E)}{U_\x} \mbox{ a line sub-bundle,} \\
\sigma: \O_{U_\x} \to \L \mbox{ a section}
\end{array}
 \right.
\right\}\big/ \approx,
\end{equation}
where in both cases $\approx$ identifies isomorphic pairs of line bundles compatible with the other pieces of data.
The tautological incidence correspondence
\begin{gather}  \label{eqn:blow-up-incidence}
\xy
(0,0)*+{ (\x, \L, \iota) }="00";
(30,0)*+{ (\x, \L, \iota, \sigma )}="10";
(60,0)*+{ (\x, \iota \circ \sigma) }="20";
(0,7)*+{ \BP \ch_V }="01";
(30,7)*+{ \BB\ch_V}="11";
(60,7)*+{ \ch_V}="21";
{\ar@{|->}^{ } "10";"00"};
{\ar@{->}_{ \lambda} "11";"01"};
{\ar@{|->}^{ } "10";"20"};
{\ar@{->}^{ \pi} "11";"21"};
\endxy
\end{gather}	
is a diagram in the category of spaces over $\Ran$ with $M(\sK)$-action. Hence, setting $\wt\ev = \ev \circ \pi$, we conclude (Lemma \ref{lem:f^!-oblv}) that $\pi^!$ induces a functor
$$
\bold\pi^!: \bigt{ \CRan \Otimes \Dmod (\ch_V) }^{\Mt(\sK) \ltimes \Vt(\sA), \ev}
\longto
\bigt{ \CRan \Otimes \Dmod (\BB\ch_V) }^{\Mt(\sK) \ltimes \Vt(\sA), \wt\ev}.
$$

\sssec{}

As usual, denote by $\chind$, $\BP\chind$, $\BB\chind$ the independent versions of the above prestacks.
The following proposition, whose proof is postponed to Section \ref{ssec:crucial}, is crucial.

\begin{prop} \label{prop:crucial}
The pullback $\pi^!: \Dmod(\chind_V) \to \Dmod(\BB\chind_V)$ is \emph{fully faithful} and admits a left adjoint $\pi_!$.
\end{prop}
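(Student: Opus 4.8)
The plan is to reduce the statement to a geometric fact about the blow-up map at the level of the prestacks of generic sections, and then to exploit the projection formula for the proper morphism $\pi$. First I would observe that $\pi: \BB\chind_V \to \chind_V$ is, Zariski-locally on the target, modeled on the blow-up $\BB\E \to \BV\E$ of a vector bundle along its zero section, passed through the functor $\GSectdom{-}$. Since $\GSectdom{-}$ preserves finite limits and open embeddings (Lemma of \cite{Ba} quoted above), and since $\pi$ is the generic-section version of the proper birational map $\mathsf{bl}: \BB\E \to \BV\E$, the morphism $\pi$ is itself ind-proper and is an isomorphism over the complement of $\chind_{G,\varnothing}$ (the locus where all $\eta_i$ vanish is exactly where the blow-up is nontrivial). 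The key input is that $\mathsf{bl}_* \O_{\BB\E} \simeq \O_{\BV\E}$ (and more generally that $\mathsf{bl}$ has rationally connected, cohomologically trivial fibers), which persists after applying $\GSectdom{-}$; concretely, the fibers of $\pi$ are either points or projective spaces $\BP^{n-2}$ (the exceptional projective bundle), and in either case they are cohomologically contractible in the relevant sense.

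The main steps, in order, would be: (1) set up $\pi$ as a map of relative indschemes over $\Ran$ and reduce to a fixed $X^I$, then to the independent model via Proposition \ref{prop:GSect/Ran}; (2) stratify $\chind_V$ by the $\TtKK$-orbits $\chdom_{G,P}$ and analyze $\pi$ over each stratum — over the open stratum $\pi$ is an isomorphism, while over the deeper strata the fiber is the exceptional divisor, a projective bundle; (3) use that $\pi$ is ind-proper to get a continuous right adjoint $\pi_*$ (equivalently, since everything is nice, $\pi_! \simeq \pi_*$ on the relevant subcategory), hence a left adjoint $\pi_!$ to $\pi^!$ as well — here one uses that $\pi$ is proper so that $\pi_! = \pi_*$ and, dually, $\pi^!$ has left adjoint $\pi_!$; (4) prove the unit $\id \to \pi_* \pi^!$ is an isomorphism by base change along the stratification: over the open stratum this is clear since $\pi$ is an iso there, and along the closed strata one invokes the projection formula together with the vanishing $R\Gamma(\BP^{n-2}, \O) = \kk$ to see that $\pi_* \pi^! \omega \simeq \omega$ and that the cone of the unit is supported nowhere. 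Equivalently, and perhaps more cleanly, one shows $\pi_! \pi^! \simeq \id$ directly from $\mathsf{bl}_! \mathsf{bl}^! \simeq \id$ on $\Dmod(\BV\E)$, which is the statement that the blow-up of a smooth variety along a smooth center induces a fully faithful $!$-pullback on $\fD$-modules, and then pushes this through $\GSectdom{-}$ using that the latter preserves the relevant fiber-product structure.

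The hard part will be Step (2)–(4): passing from the classical blow-up statement to its avatar for the infinite-dimensional prestacks $\BB\chind_V \to \chind_V$. The subtlety is that $\GSectdom{-}$ does not commute with arbitrary pushforwards, so one cannot blindly transport $\mathsf{bl}_* \O = \O$; instead one must argue stratum by stratum using the explicit description of the fibers of $\pi$ (points over the big stratum, projective spaces over the small ones) and the fact, established via \loccit, that these spaces of generic sections are built as colimits along closed embeddings of finite-type pieces on which ordinary $\fD$-module formalism (proper base change, projection formula, Serre-type vanishing $H^1 = 0$ for the relevant line bundles) applies. Granting the compatibility of these finite-type computations with the colimit — which is where one leans on the presentation-compatibility convention for pseudo-indschemes of \cite{R} — the full faithfulness of $\pi^!$ and existence of $\pi_!$ follow. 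I would expect the cleanest exposition to first prove the finite-dimensional prototype as a lemma and then devote the bulk of Section \ref{ssec:crucial} to the passage to the limit.
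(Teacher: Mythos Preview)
Your proposal has the right architecture (stratify into open locus where $\pi$ is an isomorphism and closed locus where the exceptional fiber sits, then check conditions over each), and this is indeed what the paper does via the short-exact-sequence-of-categories Lemma \ref{lem:exact-seq-cat}. But two of your key geometric claims are wrong, and fixing them is exactly the substance of Section \ref{ssec:crucial}.

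First, the exceptional fiber of $\pi:\GSectdom{\BBE}\to\GSectdom{\BVE}$ over the origin $O$ is \emph{not} $\BP^{n-2}$; it is $\GSectdom{\BPE}$, an infinite-dimensional prestack. So the input you need is not $R\Gamma(\BP^{n-2},\O)\simeq\kk$ but rather that $p^!:\Vect\to\Dmod(\GSectdom{\BPE})$ is fully faithful. The paper proves this by noting that $\GSectdom{\BPE}$ is the quotient of two homologically contractible prestacks (Lemma \ref{lem:Pch_V = quotient} and Theorem \ref{thm:contr:Barlev}). Your stratification by the $\Tt(\KK)$-orbits $\chdom_{G,P}$ is also off: $\ch_V$ here is the character space of the abelian $V$, not of $N$, and the only relevant decomposition is $O$ versus its complement.

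Second, $\pi$ is not ind-proper in any standard sense---$\GSectdom{\BBE}$ and $\GSectdom{\BVE}$ are not indschemes but Zariski-sheafified colimits of simplicial diagrams of schemes (via the quasi-section resolution of Section \ref{ssec:QSECT}). The existence of $\pi_!$ therefore cannot be read off from general ind-proper nonsense. The paper introduces the notion of \emph{weakly proper} (meaning: $!$-pullback has a left adjoint), shows it is stable under colimits of diagrams with weakly proper transition maps (Lemma \ref{lem:weak-proper-colim}), and then proves directly (Proposition \ref{prop:general-weakly-proper}) that $\GSectdom{W}\to\GSectdom{Y}$ is weakly proper whenever $W\to Y$ is projective and $Y\to X$ quasi-projective, by factoring through $\GSectdom{\BP^N_Y}\simeq\GSectdom{\BP^N_X}\times\GSectdom{Y}$ and using the quasi-section resolution $\QSect{\BP^N_X}\twoheadrightarrow\GSectdom{\BP^N_X}$ by projective schemes. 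This is the genuine work you are gesturing at in your last paragraph, and it requires the quasi-section machinery rather than a ``pass to the limit'' from the finite-dimensional blow-up.
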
 

\begin{cor}
The functor
\begin{equation} \label{eqn:PROOF2}
\bigt{ \CRan \Otimes \Dmod (\ch_V) }_\indep^{\Mt(\sK) \ltimes \Vt(\sA), \ev}
\longto
\bigt{ \CRan \Otimes \Dmod (\BB\ch_V) }_\indep^{\Mt(\sK) \ltimes \Vt(\sA), \wt\ev}
\end{equation}
induced by $\bold\pi^!$ is fully faithful.
\end{cor}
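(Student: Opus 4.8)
The goal is to deduce the Corollary from Proposition \ref{prop:crucial}. The plan is to observe that the functor $\bold\pi^!$ is, up to the formal manipulations defining the twisted-equivariant categories, obtained by tensoring the fully faithful functor $\pi^!: \Dmod(\chind_V) \to \Dmod(\BB\chind_V)$ with $\CRan$ over $\Dmod(\Ran)$ and then passing to $(\Mt(\sK) \ltimes \Vt(\sA), \ev)$-invariants. So the essential content is: fully faithfulness is preserved under $\CRan \Otimes_{\Dmod(\Ran)} -$, under passing to equivariant (co)bar totalizations, and under taking independent subcategories.

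First I would record that, by Proposition \ref{prop:crucial}, $\pi^!$ admits a left adjoint $\pi_!$ with $\pi_! \circ \pi^! \simeq \id$, and that $\pi^!$ is a morphism of $\Dmod(\Ran)$-modules (indeed of objects of $\R\mmod^\un$, since $\lambda$ and $\pi$ are maps of unital spaces over $\Ran$). Tensoring the adjunction $(\pi_!, \pi^!)$ with $\CRan$ over $\Dmod(\Ran)$ yields an adjunction between $\CRan \Otimes \Dmod(\chind_V)$ and $\CRan \Otimes \Dmod(\BB\chind_V)$ whose right adjoint is still fully faithful, because $\id \otimes (\pi_! \circ \pi^!) \simeq \id$ and tensoring preserves isomorphisms of functors. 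The same argument, carried out over each $X^I$ before assembling, shows $\bold\pi^!$ on the ambient categories $\CRan \Otimes \bold\Dmod(\ch_V) \to \CRan \Otimes \bold\Dmod(\BB\ch_V)$ is fully faithful with left adjoint $\id \Otimes \pi_!$; note here that $\ev = \wt\ev \circ \pi$, so $\pi_!$ (or rather its tensor-up) is compatible with the $\ev$-twists, and the twists are in any case irrelevant to (co)continuity and fully faithfulness exactly as exploited in the proofs of Lemmas \ref{lem:f^!-oblv} and \ref{lem:Av_*-f^!}.

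Next I would push this through the equivariance. The categories $\bigt{ \CRan \Otimes \Dmod(\ch_V) }^{\Mt(\sK) \ltimes \Vt(\sA), \ev}$ and $\bigt{ \CRan \Otimes \Dmod(\BB\ch_V) }^{\Mt(\sK) \ltimes \Vt(\sA), \wt\ev}$ are totalizations (limits) of cosimplicial diagrams built from the ambient categories by tensoring with powers of $\Dmod^!(\Vt(\sA))$ and $\Dmod^!(\Mt(\sK))$, and $\bold\pi^!$ is induced termwise by $\id \Otimes \pi^!$ in each cosimplicial degree. By the principle recalled in \ref{sssec:adjoint-naively}, since each termwise functor $\id \Otimes \pi^!$ admits a left adjoint $\id \Otimes \pi_!$ and these left adjoints are compatible across the cosimplicial structure, the induced functor on totalizations admits a left adjoint computed termwise; and since $\id \otimes (\pi_! \pi^!) \simeq \id$ termwise, the unit of the totalized adjunction is an isomorphism, i.e. $\bold\pi^!$ on the $(\Mt(\sK) \ltimes \Vt(\sA))$-equivariant categories is fully faithful. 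Finally, $\indep$ preserves limits (Remark \ref{rem:indep-monoidal}) and hence fully faithful functors admitting left adjoints pass to the independent subcategories; this is exactly the passage used repeatedly in Section \ref{SEC:coeff}. Applying it to $\bold\pi^!$ gives that (\ref{eqn:PROOF2}) is fully faithful, which is the assertion of the Corollary.

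The only genuine input is Proposition \ref{prop:crucial}; everything else is a formal transport of fully faithfulness along base change, equivariant totalization and the independent-category functor, all of which are operations already set up and used earlier in the paper. The one point to be careful about is checking that $\pi_!$ and its various tensor-ups are genuinely compatible with the $\ev$-twist and with the unital structures so that the adjunctions survive the totalizations; but as in Lemmas \ref{lem:f^!-oblv}, \ref{lem:f_*-oblv} and \ref{lem:Av_*-f^!}, the $\ev$-twist is harmless here because it does not interact with the map $\pi$ beyond the identity $\wt\ev = \ev\circ\pi$.
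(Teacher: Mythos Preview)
Your overall strategy—transport the colocalization $(\pi_!,\pi^!)$ through tensoring, equivariant totalization, and $\indep$—is reasonable in spirit, but there is a genuine gap in the level at which you apply Proposition~\ref{prop:crucial}. That proposition gives you $\pi_!$ and the full faithfulness of $\pi^!$ \emph{only for the independent spaces} $\chind_V$ and $\BB\chind_V$: its proof (via Lemma~\ref{lem:exact-seq-cat}, Corollary~\ref{cor:weakly-proper-q}, and Proposition~\ref{prop:general-weakly-proper}) is carried out entirely for $\GSectdom{-}$. You, however, tensor $(\pi_!,\pi^!)$ with $\CRan$ over $\Dmod(\Ran)$ and then claim to argue ``over each $X^I$ before assembling''; this would require the adjunction and full faithfulness at the $\Ran$ (or each $X^I$) level, which is not what Proposition~\ref{prop:crucial} provides. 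In particular, $\Dmod(\chind_V)$ is not obviously a $\Dmod(\Ran)$-module, so the expression $\CRan \Otimes \Dmod(\chind_V)$ does not parse as written.

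The paper's proof handles this by first invoking Lemma~\ref{lem:useful-base-change-limits} (together with Lemma~\ref{lem:indep-on.one.side} and the monoidality of $\indep$, Remark~\ref{rem:indep-monoidal}) to exhibit the functor~(\ref{eqn:PROOF2}) as a base-change, in $\DGCat$, of
\[
\id \otimes \pi^! : \Cind \otimes \Dmod(\chind_V) \longrightarrow \Cind \otimes \Dmod(\BB\chind_V),
\]
which lives squarely at the independent level. Since $\pi^!$ is fully faithful with left adjoint there, so is $\id \otimes \pi^!$, and full faithfulness survives base-change. Your termwise-adjoint argument via \ref{sssec:adjoint-naively} would work if you first made this reduction to the independent level; as it stands, the passage from $\Ran$ to $\indep$ is done in the wrong order.
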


This is our second functor.

\begin{proof}[Proof of the corollary]
As an immediate consequence of Proposition \ref{prop:crucial}, the functor
\begin{equation} \label{eqn:pullback-tensored}
\id \otimes \pi^!:  \Cind \otimes \Dmod (\chind_V) 
\longto
 \Cind \otimes \Dmod (\BB\chind_V),
\end{equation}
is fully faithful.
Then, Lemma \ref{lem:useful-base-change-limits} guarantees that the functor in question is a base-change of 
$$
\bold\pi^!: \bigt{ \Cind \otimes \Dmod (\chind_V) }^{\Mt(\KK)}
\longto
\bigt{ \Cind \otimes \Dmod (\BB\chind_V) }^{\Mt(\KK)}.
$$
In turn, thanks to the contractibility of $\Mt(\KK)$, the latter is a base-change of (\ref{eqn:pullback-tensored}), and we are done.
\end{proof}

\ssec{Transitivity}

The reason for bringing $\BB\chind_V$ and $\BP\chind_V$ into play is that the their quotients by $\Mt(\KK)$ are really simple.

\begin{lem} \label{lem:Pch_V = quotient}
Let $[\xi]$ be the image of $\xi$ along the natural morphism $(\chind_V - O) \to \BP\chind_V$. The inclusion $i_{[\xi]} : \pt \hto \BP\chind_V$ yields an equivalence
$$
i_{[\xi]}: \pt/(\Qt' \times \GG_m)(\KK) \xto{\; \; \simeq \;\;}  \BP\chind_V / \Mt(\KK)
$$
(the quotients are taken in the {\'e}tale topology). In other words, the $\Mt(\KK)$-action on $\BP\chind_V$ is transitive and the stabilizer of $[\xi]$ is $(\Qt' \times \GG_m)(\KK)$.
\end{lem}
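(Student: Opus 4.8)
The claim is an instance of the general principle that a space of generic sections into a variety with a transitive group action is itself \virg{homogeneous}, combined with Theorem \ref{thm:contr:Barlev} (contractibility of spaces of generic sections). First I would unwind the definitions. By \eqref{eqn:Pch_V}, an $S$-point of $\BP\chind_V$ is a domain $U \subseteq X_S$ together with a line subbundle $\iota : \L \hookrightarrow \restr{(\O_S \boxtimes \E)}{U}$, up to the equivalence $\approx$ and up to shrinking $U$; equivalently (via the definition of $\GSectdom{-}$ as a functor on domains), it is a generic section of the projective bundle $\BP\E \to X$. The section $\xi$ of Section \ref{sssec:twists-xi} comes from the inclusion $\O_X \hookrightarrow \E$, hence $[\xi]$ is the corresponding rational section of $\BP\E$. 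Since $\E = \Omega_X^{2-n}\oplus\cdots\oplus\O_X$ and $\Vt = \ker(\Qt \to \Mt)$ with $\Mt = \Gt' \times \GG_m$, the twisted group $\Mt$ acts on $\E$ (generically, i.e. over the generic point of $X$) with the open $\GL$-orbit structure: the stabilizer of the line $\O_X \hookrightarrow \E$ in $\GL_{n-1}\times \GG_m$ is exactly $Q' \times \GG_m$ where $Q' = P_{n-2,1} \subseteq GL_{n-1}$ — this is a pointwise, purely group-theoretic statement about the action of $GL_{n-1}$ on $\BP(\kk^{\,2-n}\oplus\cdots\oplus\kk)\cong \BP^{n-2}$, which is transitive with parabolic stabilizer.

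Next I would assemble this into a statement about generic sections. The map $i_{[\xi]} : \pt \to \BP\chind_V$ is $\Mt(\KK)$-equivariant for the trivial action on $\pt$ twisted by the subgroup $(\Qt'\times\GG_m)(\KK) \hookrightarrow \Mt(\KK)$, so it descends to a map
$$
\bar\imath : \pt/(\Qt'\times\GG_m)(\KK) \longrightarrow \BP\chind_V/\Mt(\KK).
$$
To show this is an equivalence of prestacks (after \'etale sheafification) it suffices to check that (a) $\Mt(\KK)$ acts transitively on $\BP\chind_V$ in the \'etale topology, and (b) the stabilizer of $[\xi]$ is $(\Qt'\times\GG_m)(\KK)$. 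For (b): given a domain $U$ and an \'etale-local automorphism $g \in \Mt(\KK)$ fixing the generic section $\O_U \hookrightarrow \restr{(\O_S\boxtimes\E)}{U}$ up to the equivalence, the pointwise stabilizer computation above shows $g$ must (generically, hence on a possibly smaller domain) land in $(\Qt'\times\GG_m)(\KK)$; conversely that subgroup manifestly fixes $[\xi]$. For (a): given any generic section $\iota : \L \hookrightarrow \restr{(\O_S\boxtimes\E)}{U}$, I want to find, \'etale-locally on $S$ and after shrinking $U$, an element of $\Mt(\KK)$ carrying $[\xi]$ to it. This is precisely where contractibility enters: the space of such $g$'s is a torsor under $(\Qt'\times\GG_m)(\KK)$ over $\BP\chind_V$, and its total space is the space of generic trivializations of the $\GL_{n-1}\times\GG_m$-bundle of frames of $\restr{(\O_S\boxtimes\E)}{U}$ compatibly moving the line — which, by the argument of \cite{Ba} (surjectivity of $\GrGind \to \Bun_G^{H-\gen}$ in the \'etale topology, cf. Section \ref{gen-red-G-bundles}) together with Theorem \ref{thm:contr:Barlev}, is an \'etale-locally-surjective contractible cover. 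Concretely: $\BP\chind_V \simeq \Bun_{\GL_{n-1}\times\GG_m}^{(Q'\times\GG_m)-\gen}$ after a twist, and the transitive-action statement is the special case $H = Q'\times\GG_m \subseteq \GL_{n-1}\times\GG_m = M$ of the identification $\GrMind^{H}_{\,\indep} \simeq \Bun_M^{H-\gen}$, restricted along $i_{[\xi]}$.

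The main obstacle, I expect, is (a): verifying that the $\Mt(\KK)$-action is transitive \emph{in the \'etale topology}. The subtlety is that over a general base $S$ the generic line subbundle $\L \hookrightarrow \restr{(\O_S\boxtimes\E)}{U}$ need not be \'etale-locally on $S$ of the standard form, because $\L$ can be a nontrivial line bundle on $X_S$ and the domain $U$ varies; one must use that passing to a domain makes $\restr{(\O_S\boxtimes\E)}{U}$ trivializable \'etale-locally and that line subbundles of a trivial bundle over a (relative) curve, generically, are classified up to the $\GL$-action exactly as in the fiberwise picture — this is the content of \cite{Ba} and \cite{DS}. Once transitivity and the stabilizer computation are in hand, the conclusion that $\bar\imath$ is an equivalence after \'etale sheafification is formal: a map of prestacks inducing an isomorphism on \v{C}ech nerves (equivalently, a $(\Qt'\times\GG_m)(\KK)$-torsor which is \'etale-locally trivial) descends to an equivalence of \'etale stacks. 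The analogue of this lemma for $\BB\chind_V$ (needed right afterwards in Section \ref{SEC:MAIN}) will follow by the same argument applied to the blow-up $\BB\E$, whose generic sections over the open stratum $\chind_V - O$ agree with those of $\BP\E$ by \eqref{eqn:blow-up-incidence}.
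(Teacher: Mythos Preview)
Your approach is correct but considerably more elaborate than the paper's. The paper takes a very direct route: it simply checks that the orbit map $\Mt(\KK) \to \BP\chind_V$, $g \mapsto g\cdot[\xi]$, is surjective on \emph{geometric points}. The argument is elementary and fits in three lines: given a $\kk$-point represented by $(U, \L_U \hookrightarrow \restr{\E}{U})$, shrink $U$ until both $\L_U$ and $\restr{\E}{U}$ are trivial, so that the datum becomes an $(n-1)$-tuple of functions on $U$; shrink $U$ further so that at least one of these functions is nowhere vanishing; then write down by hand a matrix in $GL_{n-1}(\O_U)$ whose last column is that tuple. The stabilizer computation is left implicit --- it is exactly the pointwise parabolic-stabilizer observation you already made.

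You instead attempt to verify transitivity in the \'etale topology over an arbitrary base $S$, invoking contractibility (Theorem \ref{thm:contr:Barlev}), the identification with $\Bun_M^{H\text{-}\gen}$, and the machinery of \cite{Ba} and \cite{DS}. This is not wrong, and your worry about \'etale surjectivity versus surjectivity on geometric points is legitimate in general; but here the domain-shrinking built into the very definition of $\GSectdom{-}$ absorbs that subtlety, and the paper's argument for $\Spec(\kk)$ is really the whole content. What your approach buys is a cleaner conceptual picture (the statement as a special case of $\Gr_M^{\indep}/H(\KK) \simeq \Bun_M^{H\text{-}\gen}$); what the paper's approach buys is brevity and avoidance of any external input beyond the definitions.

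One small inaccuracy: your closing remark that ``the analogue of this lemma for $\BB\chind_V$ \ldots\ will follow by the same argument'' is not how the paper proceeds. There is no separate transitivity lemma for $\BB\chind_V$; instead, the present lemma is combined with the fibration $\lambda: \BB\chind_V \to \BP\chind_V$ (whose fiber over $[\xi]$ is $\ch_{n-1}$) and Proposition \ref{prop:stabilizer} to obtain the needed equivalence for $\BB\chind_V$ directly.
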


\begin{proof}
We show that the \virg{action on $[\xi]$} map $\Mt(\KK) \to \BP \chind_V$ is surjective on geometric points. Let $(U, \L_U \hto \restr \E U)$ be a presentation of some $f \in \BP\chind_V(\Spec(k))$. Shrinking $U$, we may assume that both $\L_U$ and $\restr{\E}U$ are trivial, whence $f$ consists of $n-1$ functions on $U$. Shrinking $U$ further, we may suppose that at least one of those functions is \emph{never} vanishing. This guarantees the existence of an element of $GL_{n-1}(\O_U)$ whose last column in $f$.
\end{proof}

The fiber of $\lambda: \BB\ch_V \to \BP\ch_V$ over $[\xi]$ identifies with $\ch_{\GG_a}$, thought of as the space of characters of the last entry of 
$$
\Vt(\sA) \simeq \prod_{i=n-2}^{0}(\GG_a)_{\Omega^{i}}(\sA),
$$
trivial on $\GG_a(K)$. For this reason, we denote this fiber by $\ch_{n-1}$. 
We are now ready to present our third functor.

\begin{cor}
Pullback along $i_{[\xi]}$ induces an equivalence
\begin{equation} \label{eqn:PROOF3}
(\bi_{[\xi]})^!:
\bigt{ \CRan \Otimes \Dmod (\BB\ch_V)}_\indep^{\Mt(\sK) \ltimes \Vt(\sA), \wt\ev}
\xto{ \; \; \simeq \;\;}
\bigt{ \CRan \Otimes \Dmod (\ch_{n-1}) }_\indep^{(\Qt' \times \GG_m)(\sK) \ltimes \Vt(\sA), \ev}.
\end{equation}
\end{cor}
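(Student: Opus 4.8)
The plan is to deduce this corollary from \propref{prop:stabilizer}, which is precisely the tool for handling transitive actions with a known stabilizer. First I would recall the set-up: the category $\CRan \Otimes \Dmod(\BB\ch_V)$ is acted on by $\Mt(\sK)$ (through its action on $\BB\ch_V$, diagonally with the action on $\CRan$) and by $\VtA$ (through the $\ev$-twisted action on $\CRan$, trivially on the $\Dmod$-factor up to the twist by $\wt\ev$). The idea is to first isolate the $\Mt(\sK)$-invariants, then separately take the $\VtA$-equivariant part with respect to the $\wt\ev$-twist, using that the semidirect-product invariants decompose as a fiber product (Section \ref{semi-direct}). Thus $\bigt{ \CRan \Otimes \Dmod (\BB\ch_V)}^{\Mt(\sK) \ltimes \Vt(\sA), \wt\ev}$ is the fiber product of $\bigt{ \CRan \Otimes \Dmod (\BB\ch_V)}^{\Mt(\sK)}$ and $\bigt{ \CRan \Otimes \Dmod (\BB\ch_V)}^{\Vt(\sA), \wt\ev}$ over $\CRan \Otimes \Dmod(\BB\ch_V)$, and similarly for the right-hand side with $\ch_{n-1}$ in place of $\BB\ch_V$ and $(\Qt'\times\GG_m)(\sK)$ in place of $\Mt(\sK)$.

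Next I would handle the $\Mt(\sK)$-invariant part by applying \propref{prop:stabilizer}. By \lemref{lem:Pch_V = quotient}, the $\Mt(\KK)$-action on $\BP\chind_V$ is transitive with stabilizer $(\Qt'\times\GG_m)(\KK)$, i.e. $i_{[\xi]}: \pt/(\Qt'\times\GG_m)(\KK) \xto{\simeq} \BP\chind_V/\Mt(\KK)$. The blow-up $\BB\ch_V$ is a relative vector bundle (indeed an $\GG_a$-bundle) over $\BP\ch_V$, so $\BB\ch_V \to \BP\ch_V$ is $\Mt(\sK)$-equivariant, and the fiber over $[\xi]$ is precisely $\ch_{n-1}$, which is $(\Qt'\times\GG_m)(\sK)$-invariant. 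Hence $(\ch_{n-1})/(\Qt'\times\GG_m)(\KK) \to \BB\chind_V/\Mt(\KK)$ is an isomorphism. I can now invoke \propref{prop:stabilizer} — after first reducing to the category-over-$\Ran$ level using \lemref{lem:indep-on.one.side} and then to each $X^I$ — to conclude that pullback along $i_{[\xi]}$ gives an equivalence $\bigt{ \CRan \Otimes \Dmod (\BB\ch_V)}^{\Mt(\sK)}_\indep \xto{\simeq} \bigt{ \CRan \Otimes \Dmod (\ch_{n-1})}^{(\Qt'\times\GG_m)(\sK)}_\indep$. One technical point: \propref{prop:stabilizer} is stated for finite-type prestacks that are colimits of $1$-schemes along proper maps; to apply it here one uses \corref{cor:QSect-closed-embedding} and the structure of $\BB\ch_V, \BP\ch_V$ as pseudo-indschemes (see \secref{ssec:QSECT}), working $X^I$ by $X^I$ and passing to the limit as in \lemref{lem:LocRan-fullyfaith}.

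For the $\VtA$-equivariant part, pullback along the base change $i_{[\xi]}$ identifies $\bigt{\CRan \Otimes \Dmod(\BB\ch_V)}^{\Vt(\sA),\wt\ev}$ restricted over the image of $\ch_{n-1}$ with $\bigt{\CRan \Otimes \Dmod(\ch_{n-1})}^{\Vt(\sA),\ev}$, since $\wt\ev$ restricts to $\ev$ along $\ch_{n-1} \hto \BB\ch_V$ (the map $\pi$ sends the fiber $\ch_{n-1}$ isomorphically to the last-coordinate character space $\ch_{\GG_a}\subseteq\ch_V$). Then I would assemble the two pieces: by compatibility of \propref{prop:stabilizer} with the relevant forgetful functors (which is built into the commutative diagram in its proof) the fiber-product descriptions of the two sides match up under $(\bi_{[\xi]})^!$, and therefore $(\bi_{[\xi]})^!$ is an equivalence.

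The main obstacle I anticipate is the bookkeeping needed to legitimately apply \propref{prop:stabilizer}: that proposition is phrased for ordinary finite-type prestacks and quotients by group prestacks satisfying the stated properness/$1$-affineness hypotheses, whereas here we are dealing with pseudo-indschemes over the Ran space and their independent versions. The argument requires (a) reducing the independent-category statement to a statement at the level of categories over $\Ran$ via \lemref{lem:indep-on.one.side}, (b) reducing that to each $X^I$ where $\BB\ch_V$ and $\BP\ch_V$ are genuine (ind-)finite-type prestacks expressible as colimits along closed, hence proper, embeddings thanks to \corref{cor:QSect-closed-embedding}, and (c) checking that the transitivity statement of \lemref{lem:Pch_V = quotient} (which is about geometric points) indeed yields the isomorphism of quotient stacks $i:(\Z/\H)_\dR \to (\Y/\G)_\dR$ required as input — here one uses that the map is an epimorphism in the étale topology, as in the remark following \secref{gen-red-G-bundles}. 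Once this reduction is in place, everything else is formal manipulation of fiber products and limits.
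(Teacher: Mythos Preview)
Your proposal is correct and follows essentially the same approach as the paper: use \lemref{lem:Pch_V = quotient} to obtain the quotient isomorphism $\chind_{n-1}/(\Qt'\times\GG_m)(\KK)\simeq \BB\chind_V/\Mt(\KK)$, apply \propref{prop:stabilizer} to get the equivalence on the $\Mt(\sK)$-invariant factor, and then base-change against the $\VtA$-equivariant factor. The paper invokes \lemref{lem:useful-base-change-limits} for this last step rather than explicitly unwinding the fiber-product description from Section~\ref{semi-direct}, but this is the same manipulation you describe; your extra care about the hypotheses of \propref{prop:stabilizer} is reasonable, though the paper simply takes for granted that the technical condition is met (as announced before the proposition).
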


\begin{proof}
Lemma \ref{lem:Pch_V = quotient} implies that the inclusion $\ch_{n-1} \hto \BB\ch_V$ yields an isomoprhism $\chind_{n-1} / (\Qt' \times \GG_m)(\KK) \simeq \BB\chind_V/\Mt(\KK)$. By Proposition \ref{prop:stabilizer}, we deduce that
$$
\bi_{[\xi]}^!: 
\bigt{ \Cind \otimes \Dmod (\BB\chind_V) }^{\Mt(\KK)}
\longto
\bigt{ \Cind \otimes \Dmod (\chind_{n-1}) }^{(\Qt \times \GG_m)(\KK)}
$$
is an equivalence. Next, base-change with respect to 
$$
- \ustimes
{ \CRan \Otimes \Dmod (\ch_{n-1}) }
\bigt{ \CRan \Otimes \Dmod (\ch_{n-1}) }^{\VtA,\ev}
$$
and use Lemma \ref{lem:useful-base-change-limits}.
\end{proof}

\ssec{Induction}

After rearranging using Lemma \ref{lem:Av_*-f^!}, we see that the composition of the arrows (\ref{eqn:PROOF1}), (\ref{eqn:PROOF2}), (\ref{eqn:PROOF3}) is the fully faithful functor
$$
\coeff_{\ch_{n-1}} 
\simeq
\Av_*^{\VtA, \ev} \circ (\omega_{\ch_{n-1}} \Otimes -):
(\Cind)^{\Qt(\KK)} 
\longto
\left(
\bigt{ 
\CRan \Otimes \Dmod (\ch_{n-1}) 
}
^{\GG_m(\sK) \ltimes \Vt(\sA), \ev}_\indep
\right)
^{\Qt'(\KK)}.
$$
Next, we apply the induction hypothesis to the naive-unital category $\bigt{ 
\CRan \Otimes \Dmod (\ch_{n-1}) 
}
^{\GG_m(\sK) \ltimes \Vt(\sA), \ev}$,
deducing that 
$$
\coeff_{G', \ext}:  
\left(
\bigt{ 
\CRan \Otimes \Dmod (\ch_{n-1}) 
}
^{\GG_m(\sK) \ltimes \Vt(\sA), \ev}_\indep
\right)
^{\Qt'(\KK)}
\longto
\Wh_{GL_{n-1},\ext} \Big[ \bigt{ 
\CRan \Otimes \Dmod (\ch_{n-1}) 
}_\indep
^{\GG_m(\sK) \ltimes \Vt(\sA), \ev} \Big]
$$
is fully faithful. This is our fourth functor.
Thanks to the obvious isomorphism
$$
\Tt(\K) \ltimes \NtA 
\simeq
\big( (\Tt'(\K) \ltimes \Nt'(\sA) \big) \ltimes \big( \GG_m(\K)  \ltimes \Vt(\sA) \big),
$$
one easily checks that the category on the RHS is equivalent to $\Wh_{GL_n, \ext}[\Cind]$. Moreover, using Lemma \ref{lem:Av_*-f^!} once more, the resulting composition $(\Cind)^{\Qt(\KK)} \hto \Wh_{GL_n,\ext}[\Cind]$ is the extended coefficient functor.

\medskip

Modulo Proposition \ref{prop:crucial}, this concludes the proof of Theorem \ref{thm:MAIN}.

\ssec{The proof of Proposition \ref{prop:crucial}} \label{ssec:crucial}

The argument rests on two ingredients: first off, $\pi$ enjoys a kind of properness property thanks to which $\pi_!$ is defined; second off, the exceptional fiber of $\pi: \BB\ch_V \to \ch_V$ is contractible.

\medskip

The categorical input needed, whose proof is purely formal, is the lemma below. Before stating it, recall the notion of \virg{short exact sequence} of categories: a diagram
\begin{gather}
\xy
(0,0)*+{ \Ccat_1 }="01";
(20,0)*+{ \Ccat}="11";
(40,0)*+{ \Ccat_2}="21";
{\ar@<-.5ex>@{->}_{ j_* } "01";"11"};
{\ar@<.4ex>@{<-}^{ j^! } "01";"11"};
{\ar@<.4ex>@{<-}^{ i_* } "11";"21"};
{\ar@<-.5ex>@{->}_{ i^! } "11";"21"};
\endxy
\end{gather}	
where the two pairs of arrows are adjoint, the morphisms $i_*: \Ccat_1 \to \Ccat$ and $j_*: \Ccat_2 \to \Ccat$ are fully faithful and the kernel of $i^!$ is equivalent to the essential image of $j_*$.\footnote{By adjunction, the kernel of $j^!$ is the essential image of $i_*$.}

\begin{lem} \label{lem:exact-seq-cat}
Assume given a diagram
\begin{gather}  \label{diag:exact-seq-cats}
\xy
(0,0)*+{ \E_1 }="00";
(30,0)*+{ \E}="10";
(60,0)*+{ \E_2}="20";
(0,15)*+{ \Ccat_1 }="01";
(30,15)*+{ \Ccat}="11";
(60,15)*+{ \Ccat_2}="21";
{\ar@<-.5ex>@{->}_{ j_* } "00";"10"};
{\ar@<.4ex>@{<-}^{ j^! } "00";"10"};
{\ar@<-.5ex>@{->}_{ j_* } "01";"11"};
{\ar@<.4ex>@{<-}^{ j^! } "01";"11"};
{\ar@<.4ex>@{<-}^{ i_* } "10";"20"};
{\ar@<.4ex>@{<-}^{ i_* } "11";"21"};
{\ar@<-.5ex>@{->}_{ i^! } "10";"20"};
{\ar@<-.5ex>@{->}_{ i^! } "11";"21"};
{\ar@{->}^{ \psi} "11";"10"};
{\ar@{->}^{ \psi_1 } "01";"00"};
{\ar@{->}^{ \psi_2 } "21";"20"};
\endxy
\end{gather}	
where the two rows are \emph{short exact sequences of categories} and the four squares are commutative.
Suppose that 
\begin{itemize}
\item[(a)]
$\psi_1$ and $\psi_2$ are fully faithful;
\item[(b)]
$\psi_1$ admits a left adjoint, denoted $(\psi_1)^L$;
\item[(c)]
$\psi$ admits a left adjoint, denoted $(\psi)^L$;
\item[(d)]
the natural transformation $\psi^L \circ j_* \to j_* \circ (\psi_1)^L$ is an equivalence.
\end{itemize}
Then $\psi$ is fully faithful.
\end{lem}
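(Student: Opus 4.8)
The plan is to deduce the full faithfulness of $\psi$ from the assertion that the counit $\epsilon\colon\psi^L\psi\to\id_{\Ccat}$ of the adjunction $\psi^L\dashv\psi$ of (c) is an equivalence. The first step is to extract from the short-exact-sequence structure of the two rows the usual functorial recollement triangle: for each $c\in\Ccat$ the counit $i_*i^!c\to c$ of $i_*\dashv i^!$ and the unit $c\to j_*j^!c$ of $j^!\dashv j_*$ fit into a fibre sequence $i_*i^!c\to c\to j_*j^!c$, and likewise over $\E$. Indeed, the fibre $K$ of $c\to j_*j^!c$ has $j^!K\simeq0$, since applying $j^!$ to that unit produces a section of the counit equivalence $j^!j_*j^!c\simeq j^!c$; hence $K\in\ker(j^!)=\on{im}(i_*)$ (the latter equality being the one recorded in the footnote to the definition), so $K\simeq i_*i^!K$, and the relations $i^!j_*=0$ together with the triangle identities identify $i^!K\simeq i^!c$ and the map $K\to c$ with the counit. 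Applying the exact functor $\psi^L\psi$ to this triangle and mapping to the triangle itself via $\epsilon$ yields a morphism of fibre sequences; by two-out-of-three it thus suffices to prove that $\epsilon$ is an equivalence on objects of the form $j_*a$ and on objects of the form $i_*d$.

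For $c=j_*a$ this is exactly what (a)--(d) provide: from the commuting square $\psi j_*\simeq j_*\psi_1$, then (d) in the form $\psi^L j_*\simeq j_*(\psi_1)^L$, then $(\psi_1)^L\psi_1\simeq\id$ (full faithfulness of $\psi_1$), one obtains $\psi^L\psi j_*a\simeq\psi^L j_*\psi_1a\simeq j_*(\psi_1)^L\psi_1a\simeq j_*a$, and this composite is $\epsilon_{j_*a}$.

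The case $c=i_*d$ is the crux, and here the left adjoint to $\psi_2$ needed to run the same argument is free: no analogue of (d) is required, because the mate identity $j^!\psi^L\simeq(\psi_1)^L j^!$ holds automatically ($j^!$ being a left adjoint of $j_*$). First, for every $f\in\E_2$ one has $\psi^L(i_*f)\in\on{im}(i_*)$, since $j^!\psi^L(i_*f)\simeq(\psi_1)^L j^!(i_*f)=0$ as $j^!i_*=0$; hence $\psi^L i_*\simeq i_*\circ\psi_2^L$ with $\psi_2^L:=i^!\psi^L i_*$. A short adjunction computation --- using $i_*\dashv i^!$, $\psi^L\dashv\psi$, the square $\psi i_*\simeq i_*\psi_2$, and the full faithfulness of $i_*\colon\E_2\hookrightarrow\E$, which supplies $\Hom_{\E}(i_*f,i_*g)\simeq\Hom_{\E_2}(f,g)$ and so dispenses with any further adjoint --- identifies $\psi_2^L$ as left adjoint to $\psi_2$. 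As $\psi_2$ is fully faithful, $\psi_2^L\psi_2\simeq\id$, so $\psi^L\psi i_*d\simeq\psi^L i_*\psi_2d\simeq i_*\psi_2^L\psi_2d\simeq i_*d$, again compatibly with $\epsilon$. Combined with the first paragraph this shows $\epsilon$ is an equivalence, i.e.\ $\psi$ is fully faithful.

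The step I expect to cost the most effort is not conceptual but a matter of $\infty$-categorical bookkeeping: verifying that each chain of canonical equivalences above genuinely realises the counit in question (the mate/Beck--Chevalley compatibilities for $\psi^L\dashv\psi$, for $(\psi_1)^L\dashv\psi_1$, and for the newly constructed $\psi_2^L\dashv\psi_2$), and that the bare ``short exact sequence'' axioms really do yield the functorial fibre sequence $i_*i^!c\to c\to j_*j^!c$ with its two maps as claimed.
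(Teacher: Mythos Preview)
Your proof is correct and follows essentially the same strategy as the paper: reduce to checking that the counit $\psi^L\psi\to\id$ is an equivalence on the essential images of $j_*$ and $i_*$ separately (via the recollement triangle), handle the $j_*$ case using (d), and handle the $i_*$ case using full faithfulness of $\psi_2$. The one minor difference is in the $i_*$ step: the paper bypasses your construction of $\psi_2^L$ and instead verifies the Hom-isomorphism $\Hom_{\Ccat}(i_*c_2,d)\to\Hom_{\E}(\psi i_*c_2,\psi d)$ directly by splitting the \emph{target} $d$ into $j_*$- and $i_*$-pieces --- for $d=i_*c''$ this is full faithfulness of $i_*$ and $\psi_2$, while for $d=j_*c'$ both sides vanish since $j^!i_*=0$. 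Your route via an explicit $\psi_2^L$ is slightly more structural but amounts to the same thing.
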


\begin{proof}
It suffices to show that the counit natural transformation $(\psi)^L \circ \psi \to \id$ is an equivalence when evaluated separately on the essential images of $j_*$ and $i_*$. Thanks to (d), this is obvious for the image of $j_*$. As for $i_*$, we need to show that the natural map 
$$
 \Hom_\Ccat(i_*(c_2), d) 
 \to 
\Hom_{\E} (\psi(i_* (c_1)), \psi(d))
$$
is an isomorphism for any $d \in \Ccat$. Again, it is enough to check this claim for $d = j_*(c')$ and $d= i_*(c'')$ separately. This is obvious for the latter case (fully faithfulness of $\psi_2$). As for the former, one easily sees by adjunction that both terms are zero.
\end{proof}

\sssec{}

We wish to use the above paradigm to prove that $\pi^!: \Dmod(\chind_V) \to \Dmod(\BB\chind_V)$ is fully faithful. More generally, we shall prove an analogous statement for any vector bundle $E$ over $X$.

\medskip

Let $E \to X$ be a vector bundle. Denote by $\zeta: X \hto E$ the zero section and by  $E^\circ$ its open complement.
Applying the functor $\GSectdom{-}$ to $\zeta$, we obtain the closed embedding $i: O \simeq \pt \hto \GSectdom{E}$, see Corollary \ref{cor:QSect-closed-embedding}.
Its open complement is $\GSectdom{E}^\circ \simeq \GSectdom{E^\circ}$.

Similarly, we have the closed embedding $\wt i:\GSectdom{\BPE} \hto \GSectdom{\BBE}$, with open complement $\GSectdom{\BBE^\circ}$, where $\BBE^\circ := \BBE \times_{E} E^\circ$.
Hence, with self-explanatory notation, the two rows in the following diagram are exact sequences of categories:
\begin{gather}  \label{diag:blowup-stratified}
\xy
(0,0)*+{ \Dmod (\GSectdom{\BBE^\circ})  }="00";
(45,0)*+{ \Dmod (\GSectdom{\BBE}) }="10";
(90,0)*+{ \Dmod (\GSectdom{\BPE}).}="20";
(0,20)*+{ \Dmod (\GSectdom{E^\circ})  }="01";
(45,20)*+{ \Dmod(\GSectdom{E})}="11";
(90,20)*+{ \Dmod(O)}="21";
{\ar@<-.5ex>@{->}_{ \wt j_* } "00";"10"};
{\ar@<.4ex>@{<-}^{ \wt j^! } "00";"10"};
{\ar@<-.5ex>@{->}_{  j_* } "01";"11"};
{\ar@<.4ex>@{<-}^{  j^! } "01";"11"};
{\ar@<.4ex>@{<-}^{ \wt i_* } "10";"20"};
{\ar@<.4ex>@{<-}^{ i_* } "11";"21"};
{\ar@<-.5ex>@{->}_{ \wt i^! } "10";"20"};
{\ar@<-.5ex>@{->}_{ i^! } "11";"21"};
{\ar@{->}^{ \pi^!} "11";"10"};
{\ar@{->}^{ (\pi^\circ)^! } "01";"00"};
{\ar@{->}^{ p^! } "21";"20"};
\endxy
\end{gather}	
\sssec{}

We will verify that the hypotheses of Lemma \ref{lem:exact-seq-cat} are met in this case.
Obviously, the four squares of (\ref{diag:blowup-stratified}) commute, either by the functoriality of the $!$-pullback or by base-change. Also, $(\pi^\circ)^!$ is an equivalence (just because $\pi^\circ$ is an isomorphism), whence $(\pi^\circ)_!$ is well-defined.

\begin{lem}
The map $p^!: \Vect = \Dmod(O) \to \Dmod(\GSectdom{\BPE})$ is fully faithful.
\end{lem}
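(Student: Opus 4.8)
The statement is that $p^!: \Vect = \Dmod(O) \to \Dmod(\GMapsdom{\BPE})$ is fully faithful, where $O \simeq \pt$ and $p: \GMapsdom{\BPE} \to O$ is the structure map. (Here I read $\GSectdom{\BPE}$ and $\GMapsdom{\BPE}$ as the same space, the map to $\pt$ being the canonical one.) Since $O \simeq \pt$, fully faithfulness of $p^!$ is by definition precisely the \emph{contractibility} of $\GMapsdom{\BPE}$ in the sense introduced in the subsection "Contractibility, generic maps, (meromorphic) jets": a prestack $\Y \in \PreStk^{\fty}$ is contractible if $\fD$-module pullback along $\Y \to \pt$ is fully faithful. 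So the whole task is to verify that the space of generic sections of $\BPE \to X$ is contractible.

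The plan is to invoke Theorem \ref{thm:contr:Barlev}, which asserts that $\GSectdom{Y}$ is contractible whenever $Y \to X$ is a Zariski-locally trivial fibration with fibers satisfying the hypothesis of Theorem \ref{thm:contr:Ga}, namely affine schemes covered by open subsets isomorphic to open subsets of $\mathbb A^n$. The fibration $\BPE \to X$ is of course Zariski-locally trivial with fibers $\mathbb P^{n-2}$ (projectivization of the rank-$(n-1)$ bundle $\E$). However, $\mathbb P^{n-2}$ is \emph{not affine}, so Theorem \ref{thm:contr:Barlev} does not apply directly to $\BPE$ itself. The fix is the usual one: stratify $\mathbb P^{n-2}$ by affine spaces, or rather realize the space of generic sections of $\BPE$ via the space of generic sections of a suitable affine bundle. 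Concretely, $\BPE$ is the quotient of $E^\circ := E - \zeta(X)$ (the total space of $\E$ minus the zero section) by the fiberwise $\GG_m$-action, and $E^\circ \to X$ is an open subscheme of the vector bundle $E \to X$, hence a Zariski-locally trivial fibration whose fibers $\mathbb A^{n-1} - \{0\}$ \emph{are} affine and covered by opens isomorphic to opens of $\mathbb A^{n-1}$ (two standard coordinate charts $\{x_i \neq 0\}$ cover, and each is $\GG_m \times \mathbb A^{n-2}$, which is an open of $\mathbb A^{n-1}$). Thus $\GSectdom{E^\circ}$ is contractible by Theorem \ref{thm:contr:Barlev}.

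To pass from the contractibility of $\GSectdom{E^\circ}$ to that of $\GSectdom{\BPE} = \GMapsdom{\BPE}$, I would argue that the natural map $\GSectdom{E^\circ} \to \GSectdom{\BPE}$ (a generic section of $E^\circ$ composes with $E^\circ \surjto \BPE$ to give a generic section of $\BPE$, shrinking the domain if necessary) is itself a map with contractible fibers in an appropriate sense, since the fiber over a generic section of $\BPE$ is the space of generic lifts to $E^\circ$, which — over a domain where the relevant line subbundle of $\E$ is trivialized — is the space of generic sections of a $\GG_m$-torsor, hence contractible by the $\GG_m$-case (covered again by Theorem \ref{thm:contr:Ga}, as $\GG_m \subset \mathbb A^1$ is open). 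Then contractibility of the total space follows from contractibility of base and fibers, by the standard two-out-of-three argument for homological contractibility used throughout \cite{GL:contr}. Alternatively — and perhaps more cleanly — one notes that $\GSectdom{-}$ preserves the Zariski-local structure, stratifies $\BPE$ by the $n-1$ standard affine charts, each isomorphic fiberwise to $\mathbb A^{n-2}$-bundles, applies Theorem \ref{thm:contr:Barlev} to each stratum, and assembles the pieces; this is exactly the kind of stratified contractibility argument already alluded to in \secref{ssec:blowup}.

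The main obstacle is the non-affineness of the fiber $\mathbb P^{n-2}$, which blocks a one-line appeal to Theorem \ref{thm:contr:Barlev}; everything else is bookkeeping with the $\GSectdom{-}$ functor (which by the lemma attributed to \cite{Ba} preserves finite limits and open embeddings) and the elementary vanishing/contractibility inputs already recorded in the paper. I expect the cleanest write-up uses the presentation $\BPE = E^\circ / \GG_m$ together with the fact that both $\GSectdom{E^\circ}$ and the $\GG_m$-torsor fibers of $\GSectdom{E^\circ} \to \GSectdom{\BPE}$ are contractible, concluding by the standard descent-of-contractibility lemma.
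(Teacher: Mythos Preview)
Your approach has a genuine gap. You claim that the fiber $\mathbb A^{n-1}\setminus\{0\}$ of $E^\circ \to X$ is affine; this is false as soon as $n-1 \geq 2$ (the complement of a closed point of codimension $\geq 2$ in affine space is quasi-affine but not affine). Hence Theorem~\ref{thm:contr:Barlev}, whose hypothesis explicitly requires affine fibers, does not apply to $E^\circ$, and the contractibility of $\GSectdom{E^\circ}$ is not established. Your two-out-of-three argument then has no base to stand on. The alternative stratification sketch is also incomplete: the open charts $\GSectdom{U_i}$ do \emph{not} cover $\GSectdom{\BPE}$ (a generic section may land entirely in the hyperplane $\{x_i=0\}$), and even if they did, contractibility of an open cover does not formally yield contractibility of the union without further argument.

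The paper takes a different and shorter route. By Lemma~\ref{lem:Pch_V = quotient}, the action of $\Mt(\KK)$ on $\GSectdom{\BPE}$ is transitive with stabilizer $(\Qt'\times\GG_m)(\KK)$, so $\GSectdom{\BPE}$ is the quotient of $\Mt(\KK)$ by $(\Qt'\times\GG_m)(\KK)$. Both of these are of the form $H(\KK)$ for an affine algebraic group $H$, hence contractible directly by Theorem~\ref{thm:contr:Barlev}. Since homology is compatible with colimits, the quotient is contractible as well. This bypasses the non-affineness of $\mathbb P^{n-2}$ entirely by trading the geometry of $\BPE$ for the group theory of $M$ acting on it.
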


\begin{proof}
It is enough to prove the assertion locally on $X$, hence we may assume $E$ is trivial with fiber $W$.
As $\GMapsdom{\BP{W}}$ is the quotient of two contractible prestacks (Lemma \ref{lem:Pch_V = quotient} and Theorem \ref{thm:contr:Barlev}), the claim amounts to the compatibility of homology with colimits.
\end{proof}

So far, we have assured that hypotheses (a) and (b) of Lemma \ref{lem:exact-seq-cat} hold. To address (c) and (d), we must make a digression (Section \ref{sssec:digressione-inizio}) about maps of prestacks having well-defined $!$-pushforward on $\fD$-modules.

\sssec{} \label{sssec:digressione-inizio}

Let us say that a map $F: \X \to \Y$ in $\PreStk^{\fty}$ is \emph{weakly proper} if $F^!: \Dmod(\Y) \to \Dmod(\X)$ admits a left adjoint.
Obviously, a proper map (resp., a composition of weakly proper maps) is weakly proper. Weak properness is preserved by products: if $F: \X \to \Y$ is weak proper, then so is $F \times \id_\Z : \X \times \Z \to \Y \times \Z$. Indeed, under the equivalence $\Dmod(- \times \Z) \simeq \Dmod(-) \otimes \Dmod(\Z)$, the left adjoint of $(F \times \id_\Z)^!$ is just $F_! \otimes \id_{\Dmod(\Z)}$.
However, weak properness is \emph{not} preserved by fiber products in general.

\begin{lem} \label{lem:weak-proper-colim}
Let $\I$ be an arbitrary index category and 
$$
\Phi: \I \times \Delta^1 \to \PreStk^{\fty}
$$
be a diagram of schemes with all arrows weakly proper. Denote by $X_i$ (resp. $Y_i$) the image of $(i,0)$ (resp. $(i,1)$) under $\Phi$. 
The resulting map $F: \colim_i X_i \to \colim_i Y_i$ is weakly proper.
\end{lem}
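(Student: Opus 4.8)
The plan is to reduce the assertion to the behaviour of $\Dmod^!$ on colimits of prestacks, followed by an application of the adjoint functor theorem. First I would apply $\Dmod^!$ to the entire diagram $\Phi$ and to the cone exhibiting $F\colon \colim_i X_i \to \colim_i Y_i$. Since $\Dmod^!\colon (\PreStk^{\fty})^{\op}\to\DGCat$ is obtained by right Kan extension from affine schemes, it carries colimits of prestacks to limits of DG categories; hence $\Dmod(\colim_i X_i)\simeq\lim_i\Dmod(X_i)$ and $\Dmod(\colim_i Y_i)\simeq\lim_i\Dmod(Y_i)$, and under these identifications $F^!$ becomes the functor $\lim_i f_i^!$ induced between the two limits by the compatible family $\{f_i^!\colon\Dmod(Y_i)\to\Dmod(X_i)\}_{i\in\I}$ coming from $\Phi$.

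Next I would exploit weak properness of the vertical maps. For each $i$ the morphism $f_i$ is weakly proper, so $f_i^!$ admits a left adjoint and is in particular a right adjoint, hence preserves all limits. Because limits in $\lim_i\Dmod(X_i)$ and $\lim_i\Dmod(Y_i)$ are computed componentwise, it follows formally that $F^!=\lim_i f_i^!$ preserves limits as well.

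To conclude, observe that $F^!$ is by construction a morphism in $\DGCat$, i.e.\ a continuous functor between presentable stable $\kk$-linear $\infty$-categories; such a functor is automatically accessible, so the adjoint functor theorem applies and the fact that $F^!$ preserves limits yields a left adjoint $F_!$ to $F^!$. This is precisely the statement that $F$ is weakly proper. As an alternative to these two steps one may instead invoke the formal criterion of Section \ref{sssec:adjoint-naively}, but that route requires checking that the left adjoints $(f_i)_!$ organize into a diagram $\I\times(\Delta^1)^{\op}\to\DGCat$, i.e.\ a Beck--Chevalley compatibility along the arrows of $\I$; the adjoint functor theorem bypasses this point, and in particular only the weak properness of the maps $f_i$ gets used, not that of the transition maps in the $\I$-direction.

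The genuinely non-formal input — indeed the only one — is the commutation of $\Dmod^!$ with the colimits of finite-type prestacks in play, which is part of the foundational package for the $!$-theory of $\fD$-modules and is precisely why the statement is formulated inside $\PreStk^{\fty}$ rather than for indschemes; everything after that is a diagram chase together with the adjoint functor theorem, so I do not expect any serious obstacle.
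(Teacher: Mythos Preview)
Your argument via the adjoint functor theorem is valid and does prove the lemma, but the final paragraph contains a genuine error: the assertion that ``only the weak properness of the maps $f_i$ gets used, not that of the transition maps in the $\I$-direction'' is false. The step ``limits in $\lim_i\Dmod(X_i)$ and $\lim_i\Dmod(Y_i)$ are computed componentwise'' is exactly where weak properness of the transition maps enters. In a limit of presentable categories along continuous functors, the projection functors $\pi_i$ preserve \emph{colimits} automatically, but they preserve \emph{limits} only when the transition functors do; and the transition functors here are the $p_\alpha^!$ and $q_\alpha^!$, which preserve limits precisely because they admit left adjoints, i.e.\ because the $p_\alpha$ and $q_\alpha$ are weakly proper. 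Without that hypothesis your componentwise computation of limits is unjustified, and with it the adjoint functor theorem argument goes through cleanly. So the proof is correct, but not for the reason you state, and it does not yield a strengthening of the lemma.

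For comparison, the paper proceeds differently: it uses weak properness of the transition maps to rewrite each limit $\lim_{p^!}\Dmod(X_i)$ as the colimit $\colim_{p_!}\Dmod(X_i)$, and then exhibits the left adjoint explicitly as $\colim_i(F_i)_!$. The point you raise about Beck--Chevalley is handled there by the observation that when \emph{all} arrows in a commutative square are right adjoints, the square of left adjoints commutes as well (by Yoneda); since every map in $\Phi$ is weakly proper, every $!$-pullback in the resulting diagram is a right adjoint and the $!$-pushforwards organize into an honest diagram. Your route via the adjoint functor theorem is a legitimate alternative that avoids this last check, at the cost of not producing an explicit formula for $F_!$; the paper's route gives the formula (used in the subsequent Remark), at the cost of that extra verification.
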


\begin{proof}
For any $i,j \in \I$, denote by $F_i: X_i \to Y_i$, $p_{i \to j}: X_i \to X_j$ and $q_{i \to j}: Y_i \to Y_j$ the structure arrows. Then, $F^!$ arises as
$$
F^! \simeq \lim_{i \in \I} (F_i)^!: 
\lim_{i, q^!} \Dmod(Y_i) \longto \lim_{i, p^!} \Dmod(X_i).
$$
The weak properness of each $p_{i\to j}$ and $q_{i \to j}$ allows to express the two limit categories as colimit categories along the $!$-pushforward functors. Under these equivalences, it is immediate to verify that 
$$
\uscolim{i \in \I} (F_i)_!:
\uscolim{i, p_!}\, \Dmod(X_i) \longto \uscolim{i, p_!}\, \Dmod(Y_i),
$$ 
well-defined by the assumption, is the sought-after left adjoint.
\end{proof}

\begin{rem}
In the situation of the above lemma, assume that each arrow is proper (not just weakly proper). Then $F_!$ is computed as $\colim_i (F_i)_*$. 
\end{rem}

\begin{cor} \label{cor:weakly-proper-q}
The maps $\q: \QSect{\BPE} \to \GSectdom{\BPE}$ and $\GSectdom{\BPE} \to \pt$ are both weakly proper. 
\end{cor}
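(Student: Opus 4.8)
The plan is to derive both claims from \lemref{lem:weak-proper-colim}, fed with the Cech-simplicial presentation of $\GSectdom{\BPE}$ recorded in \secref{ssec:QSECT}. Write $Q_\bullet \colon \bold\Delta^\op \to \Sch$ for the Cech nerve of $\q$, so that $Q_0 = \QSect{\BPE}$, every face map $Q_n \to Q_m$ is proper, $Q_1 \to Q_0 \times Q_0$ is a closed embedding, and $L^\Zar(\colim_{\bold\Delta^\op} Q_\bullet) \simeq \GSectdom{\BPE}$. Since $\Dmod^!$ is invariant under Zariski sheafification, weak properness of a map is unaffected by replacing its source or target by a Zariski sheafification, so it is harmless to work with $\colim_{\bold\Delta^\op} Q_\bullet$ itself. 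Moreover, each elementary degeneracy $Q_k \to Q_{k+1}$ is a section of a proper (hence separated) face map, thus a closed embedding; so every structure map of $Q_\bullet$ --- faces (proper) and degeneracies (closed embeddings) --- is weakly proper, and so is every iterated degeneracy $s_n \colon Q_0 \to Q_n$.

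For $\q$, I would apply \lemref{lem:weak-proper-colim} to the diagram $\bold\Delta^\op \times \Delta^1 \to \PreStk^\fty$ that restricts to $Q_\bullet$ over $\bold\Delta^\op \times \{1\}$, to the constant diagram $Q_0$ over $\bold\Delta^\op \times \{0\}$, and whose edge over $[n]$ is $s_n \colon Q_0 \to Q_n$ (induced by the unique morphism $[n] \to [0]$ of $\bold\Delta$, so naturality is automatic by uniqueness). All its arrows are weakly proper; the colimit of the constant diagram is $Q_0$ because $\bold\Delta^\op$ is contractible; and the induced map on colimits is the canonical map $Q_0 \to \colim_{\bold\Delta^\op} Q_\bullet$, which after Zariski sheafification is $\q$. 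Hence $\q$ is weakly proper. For $\GSectdom{\BPE} \to \pt$, I would instead use the diagram that equals $Q_\bullet$ over $\bold\Delta^\op \times \{0\}$ and the constant diagram $\pt$ over $\bold\Delta^\op \times \{1\}$; its edge $Q_n \to \pt$ factors as a face map $Q_n \to Q_0$ (proper, taking $m=0$) followed by $Q_0 \to \pt$, and the latter is weakly proper because $Q_0 = \QSect{\BPE} = \bigsqcup_d \QSect{\BPE}^d$ is a disjoint union of projective schemes, so $p^! \colon \Vect \to \prod_d \Dmod(\QSect{\BPE}^d)$ has left adjoint $(\F_d)_d \mapsto \bigoplus_d (p_d)_! \F_d$. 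Again \lemref{lem:weak-proper-colim} applies, and the induced map on colimits is $\GSectdom{\BPE} \to \pt$.

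Everything above reduces to \lemref{lem:weak-proper-colim} together with bookkeeping: that the stated data genuinely assemble into functors $\bold\Delta^\op \times \Delta^1 \to \PreStk^\fty$, and that the maps they produce on $\colim_{\bold\Delta^\op}$ are the claimed ones. The only mildly delicate point --- which I regard as the main (if modest) obstacle --- is the identification of the canonical map $Q_0 \to \colim_{\bold\Delta^\op} Q_\bullet$ with $\q$ up to Zariski sheafification; this is the standard relation between a morphism and (the geometric realization of) its Cech nerve, and once granted, the argument is purely formal.
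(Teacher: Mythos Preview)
Your proof is correct and follows the same strategy as the paper's: feed the Cech nerve $Q_\bullet$ of $\q$ into \lemref{lem:weak-proper-colim}. The only cosmetic difference is in the auxiliary simplicial object used for the first claim: you take the constant diagram at $Q_0$ together with the iterated-degeneracy maps $Q_0 \to Q_n$, whereas the paper uses the d\'ecalage $Q_{\bullet+1}$ (the split simplicial object resolving $Q_0$) with the face maps $Q_{n+1} \to Q_n$; both choices produce $\q$ on colimits and satisfy the hypotheses of the lemma. You are also more explicit than the paper about why degeneracies are closed embeddings and why $Q_0 \to \pt$ is weakly proper despite $Q_0$ being only a \emph{disjoint union} of projective schemes.
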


\begin{proof}
Let $Q_\bullet: \bold\Delta^\op \to \Sch$ be the Cech simplicial scheme generated by $\q$, whose colimit (in the Zariski topology) is $\GSectdom{\BPE}$, see Section \ref{ssec:QSECT}. Apply the above lemma to the canonical diagram $Q_{\bullet + 1 } \to Q_\bullet$, where $Q_{\bullet +1}$ is the split simplicial scheme that resolves $Q_0 = \QSect{\BPE}$. This yields the first assertion.

To prove the second one, use the resolution $Q_\bullet$ again to argue that $\GSectdom{\BPE}$ can be expressed as a colimit of proper (in fact, projective) schemes.
\end{proof}

\sssec{}

Resuming the proof of Proposition \ref{prop:crucial}, we now show that the map $F: \GSectdom{\BBE} \to \GSectdom{\BVE}$, induced by $\BBE \to \BVE$, is weakly proper. This would verify condition (c) of Lemma \ref{lem:exact-seq-cat}. 

More generally, we prove:

\begin{prop} \label{prop:general-weakly-proper}
Let $Y \to X$ be a quasi-projective morphism and $f: W \to Y$ a projective morphism. The map $F: \GSectdom{W}\to \GSectdom{Y}$ induced by $f$ is weakly proper.
\end{prop}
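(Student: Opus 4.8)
The plan is to reduce to the case already handled, namely Corollary \ref{cor:weakly-proper-q}, by factoring $f$ through a projective bundle. Since $f: W \to Y$ is projective, we may choose a locally free sheaf $\F$ on $Y$ together with a closed embedding $W \hookrightarrow \BP_Y(\F)$ over $Y$. Because $Y \to X$ is quasi-projective, after possibly enlarging $\F$ we may even arrange that $\BP_Y(\F)$ embeds (locally closed) into $\BP\E$ for a suitable locally free sheaf $\E$ on $X$; but in fact we will not need this, as it suffices to work relative to $Y$ and then appeal to the structure already established for $\BP\E$. The key point is that the construction $\GSectdom{-}$ preserves closed embeddings (Corollary \ref{cor:QSect-closed-embedding}) and that $\GSectdom{\BP\E} \to \pt$ (equivalently, to $\GSectdom{Y}$ after base change, once we are careful about what $\BP$ means over $Y$) is weakly proper.

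First I would treat the case $W = \BP_Y(\F) \to Y$, i.e. the case of a projective bundle. Here one writes $\GSectdom{\BP_Y(\F)}$ as a fiber product over $\GSectdom{Y}$ with the total space of generic sections of the projectivization, and then resolves it by quasi-sections exactly as in Section \ref{ssec:QSECT}: let $Q_\bullet$ be the \v{C}ech simplicial scheme generated by the map from (relative) quasi-sections $\q: \QSect{\BP_Y(\F)} \to \GSectdom{\BP_Y(\F)}$, so that $\GSectdom{\BP_Y(\F)} = L^{\Zar}(\colim Q_\bullet)$ and each face map is proper. Since $\QSect{\BP_Y(\F)} \to \GSectdom{Y}$ is a relative projective scheme (a disjoint union of projective $\GSectdom{Y}$-schemes, by the relative version of Lemma \ref{lem:QSect-closed-emb}), it is proper, hence weakly proper. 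Now apply Lemma \ref{lem:weak-proper-colim} to the canonical map of simplicial schemes from the split resolution $Q_{\bullet+1}$ of $Q_0 = \QSect{\BP_Y(\F)}$ to $Q_\bullet$, composed with the proper projection $\QSect{\BP_Y(\F)} \to \GSectdom{Y}$: this exhibits $\GSectdom{\BP_Y(\F)} \to \GSectdom{Y}$ as a weakly proper map, exactly as in the proof of Corollary \ref{cor:weakly-proper-q}.

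Next I would handle the general $f: W \to Y$. Using the closed embedding $W \hookrightarrow \BP_Y(\F)$ over $Y$ and the fact that $\GSectdom{-}$ preserves closed embeddings (Corollary \ref{cor:QSect-closed-embedding}), we get a closed embedding $\GSectdom{W} \hookrightarrow \GSectdom{\BP_Y(\F)}$, which is in particular proper, hence weakly proper. Composing with the weakly proper map $\GSectdom{\BP_Y(\F)} \to \GSectdom{Y}$ from the previous paragraph, and using that a composition of weakly proper maps is weakly proper (as noted in Section \ref{sssec:digressione-inizio}), we conclude that $F: \GSectdom{W} \to \GSectdom{Y}$ is weakly proper, as desired.

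The main obstacle is purely bookkeeping about what $\GSectdom{-}$ means for a morphism that is not directly a subscheme of some $\BP\E$ over the curve $X$ itself, but only relative to $Y$: one has to make sure the resolution-by-quasi-sections argument of Section \ref{ssec:QSECT} goes through in the relative setting over $\GSectdom{Y}$, i.e. that the relative quasi-sections scheme is still proper over $\GSectdom{Y}$ and that the \v{C}ech resolution still has proper face maps. This is a routine relativization of Lemma \ref{lem:QSect-closed-emb} and the discussion preceding Corollary \ref{cor:weakly-proper-q}, but it is the only place where something genuinely needs to be checked rather than quoted; everything else is a formal consequence of the stability properties of weak properness recorded in Section \ref{sssec:digressione-inizio} together with Lemma \ref{lem:weak-proper-colim}.
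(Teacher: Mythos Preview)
Your argument is in the right spirit and would ultimately work, but you are doing more than is necessary and missing the shortcut that the paper takes. The paper exploits the hypothesis that $Y \to X$ is quasi-projective to embed $W$ into a \emph{trivial} projective bundle $\BP^N_Y = \BP^N_X \times_X Y$, rather than into $\BP_Y(\F)$ for a general locally free $\F$. Once one does this, the lemma (recorded in \cite{Ba}) that $\GSectdom{-}: \Sch^{\fty}_{/X} \to \PreStk$ preserves finite limits gives
\[
\GSectdom{\BP^N_Y} \;\simeq\; \GSectdom{\BP^N_X} \times \GSectdom{Y},
\]
and the projection onto the second factor is weakly proper simply because weak properness is stable under products (Section \ref{sssec:digressione-inizio}) and $\GSectdom{\BP^N_X} \to \pt$ is weakly proper by Corollary \ref{cor:weakly-proper-q}. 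No relativization of the quasi-sections machinery is needed.

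What you identify as the ``main obstacle'' --- running the \v{C}ech-by-quasi-sections argument relative to $\GSectdom{Y}$ --- is thus entirely sidestepped. Your route would work, but it requires setting up relative quasi-sections $\QSect{\BP_Y(\F)} \to \GSectdom{Y}$ and checking their properness and the properness of the face maps in that relative setting; none of this is in the paper, and while it is plausible, it is genuine extra work. The paper's trick costs nothing beyond the observation that a projective morphism over a quasi-projective base factors through a trivial $\BP^N$.

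One small point: your appeal to Corollary \ref{cor:QSect-closed-embedding} for the closed embedding $\GSectdom{W} \hto \GSectdom{\BP_Y(\F)}$ is a bit loose, since that corollary presupposes an ambient $\BP\E$ over $X$. The paper instead checks this closed-embedding claim by pulling back along the Zariski-surjection $\q: \QSect{\BP^N_Y} \twoheadrightarrow \GSectdom{\BP^N_Y}$ and invoking Lemma \ref{lem:QSect-closed-emb} for $\QSect{W} \hto \QSect{\BP^N_Y}$; here again the quasi-projectivity of $Y \to X$ is what makes $\QSect{\BP^N_Y}$ well-defined in the first place.
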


\begin{proof}
Write $W \to Y$ as a composition $W \hto \BP^N_Y \to Y$, with $W \hto \BP^N_Y$ closed.
Accordingly, $F$ factors as
$$
\GSectdom{W} \to \GSectdom{\BP^N_Y} \to \GSectdom{Y}.
$$
The first arrow is a closed embedding: in fact, its base change along the Zariski-surjection $\q:\QSect{\BP^N_Y} \twoheadrightarrow \GSectdom{\BP^N_Y}$ is the closed embedding $\QSect{W} \hto \QSect{ \BP^N_Y}$. 
Next, we show that the second arrow is weakly proper. Since $\GSectdom{-}: \Sch^{\fty}_{/X} \to \PreStk$ preserves finite limits, the arrow in question is the projection
$$
\GSectdom{\BP^N_Y} \simeq \GSectdom{\BP^N_X} \times \GSectdom{Y} \to \GSectdom{Y},
$$
whence it suffices to invoke the weak properness of $\GSectdom{\BP^N_X} \to \pt$ (Corollary \ref{cor:weakly-proper-q}).
\end{proof}

\sssec{}

It remains to check condition (d) of Lemma \ref{lem:exact-seq-cat}. In the present case, such condition is an instance of the following more general situation.
Let
\begin{gather} \nonumber
\xy
(00,0)*+{ U }="00";
(20,0)*+{ Y }="10";
(0,12)*+{ V }="01";
(20,12)*+{ Z }="11";
{\ar@{->}^{ g } "00";"10"};
{\ar@{->}^{ g' } "01";"11"};
{\ar@{->}^{ f  } "11";"10"};
{\ar@{->}^{ f'   } "01";"00"};
\endxy
\end{gather}	
be a Cartesian diagram of schemes over $X$, with $Y \to X$ quasi-projective, $f$ and $f'$ projective, $g$ and $g'$ open embeddings. Applying $\GSectdom{-}$, we obtain a Cartesian diagram 
\begin{gather} \label{diag:last}
\xy
(00,0)*+{ \GSectdom{U} }="00";
(35,0)*+{ \GSectdom{Y}, }="10";
(0,12)*+{ \GSectdom{V} }="01";
(35,12)*+{ \GSectdom{Z} }="11";
{\ar@{->}^{ j } "00";"10"};
{\ar@{->}^{ j' } "01";"11"};
{\ar@{->}^{ F  } "11";"10"};
{\ar@{->}^{ F'   } "01";"00"};
\endxy
\end{gather}	
where the horizontal maps are open embeddings and the vertical ones are weakly proper, by the proposition above.

\begin{prop}
The natural transformation $F'_! \circ j_* \to j_* \circ F_!$ is an equivalence.
\end{prop}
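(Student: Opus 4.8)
This is an instance of base change. In the notation of diagram~(\ref{diag:last}), the natural transformation in question is the base-change $2$-cell $\beta\colon F_!\,j'_* \to j_*\,F'_!$ attached to the Cartesian square; under the adjunction $(j^!,j_*)$ it is adjoint to the composite
$$
j^!\,F_!\,j'_* \xto{\;\sim\;} F'_!\,(j')^!\,j'_* \xto{\;\sim\;} F'_! ,
$$
in which the second arrow is the counit of $((j')^!,j'_*)$, an isomorphism since $j'$ is an open embedding, and the first is the base-change isomorphism $j^!F_! \simeq F'_!(j')^!$ for the Cartesian square. Thus $j^!\beta$ is automatically an isomorphism, and the genuine content is twofold: (i)~to produce that base-change isomorphism in our setting, where $F$ is only known to be \emph{weakly proper} so the six-functor formalism is not available a priori, and (ii)~to upgrade ``$j^!\beta$ is an isomorphism'' to ``$\beta$ is an isomorphism''.

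The plan is to reduce both points to honest schemes. Choose a locally closed embedding $Y \hto \BP^N_X$, possible because $Y \to X$ is quasi-projective, and recall from \secref{ssec:QSECT} and \corref{cor:QSect-closed-embedding} that $\GSectdom{Y}$ is the Zariski sheafification of $\colim_{\bold\Delta^\op} Q^Y_\bullet$, where $Q^Y_\bullet$ is the Cech simplicial scheme generated by $\q\colon \QSect{Y} \to \GSectdom{Y}$; each $Q^Y_m$ is a scheme and every face map is proper. Since $\GSectdom{-}$ preserves fiber products and open embeddings (\cite{Ba}), base-changing $Q^Y_\bullet$ along $g$, $f$, $f'$ produces compatible Cech presentations of $\GSectdom{U}$, $\GSectdom{Z}$, $\GSectdom{V}$ in which the horizontal maps are levelwise open embeddings, the vertical maps levelwise projective, and each level a Cartesian square of schemes. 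As $\fD$-modules are insensitive to Zariski sheafification, the four corners of~(\ref{diag:last}) are the limits $\lim_{\bold\Delta}\fD(Q_\bullet)$ along $!$-restrictions. By \lemref{lem:weak-proper-colim} and the remark following it (the face maps being proper), $F_!$ and $F'_!$ are the colimits of the levelwise proper $*$-pushforwards; the functors $j_*$ and $j'_*$ are the limits of the levelwise $*$-pushforwards along the open immersions, these being compatible with the $!$-restrictions along the Cech face maps by base change on schemes; and $\beta$ is assembled level by level from the base-change transformations for Cartesian squares of schemes with proper vertical and open-immersion horizontal maps, which are isomorphisms in the standard $\fD$-module formalism. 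Assembling over $\bold\Delta$ then shows $\beta$ is an equivalence.

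The step demanding care is the identification of the functors in this presentation: one must verify that the Cech presentation is functorial enough that all four edges of~(\ref{diag:last}) and the base-change $2$-cell are read off levelwise — in particular that $j_*$ is computed levelwise, i.e.\ that the levelwise $*$-pushforwards commute with the $!$-pullbacks along the (proper) face maps. This last fact is itself an instance of scheme-level base change, so the argument is not circular, but the coherences must be organized with some care; granting the scheme case, the rest is formal. (Alternatively, one may argue directly on prestacks: taking $i$ to be the closed embedding complementary to $j$, the pair $\{j^!,i^!\}$ is jointly conservative, $j^!\beta$ is an isomorphism as above, and $i^!\beta$ is an isomorphism since both $i^!F_!\,j'_*$ and $i^!j_*\,F'_!$ vanish — the first by base change along $F$ applied to the open--closed recollement of $\GSectdom{Z}$, the second by that recollement on $\GSectdom{Y}$ — but this relies on the same base-change inputs and on the recollement being available in the required generality.)
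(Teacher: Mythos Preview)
Your simplicial-resolution approach has a genuine gap. You claim that base-changing the \v{C}ech presentation $Q^Y_\bullet$ of $\GSectdom{Y}$ along $F\colon \GSectdom{Z}\to\GSectdom{Y}$ produces a presentation of $\GSectdom{Z}$ by \emph{schemes}, with the levelwise vertical maps \emph{projective}. Neither is justified: $F$ is only weakly proper, not representable by proper schemes. Concretely, factoring $Z\hookrightarrow\BP^M_Y\to Y$ shows that $Q^Y_n\times_{\GSectdom{Y}}\GSectdom{Z}$ sits as a closed subspace of $Q^Y_n\times\GSectdom{\BP^M_X}$, and $\GSectdom{\BP^M_X}$ is itself only a quotient of a scheme by a proper equivalence relation, not a scheme. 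So you cannot invoke scheme-level base change levelwise. Your alternative recollement sketch has the same defect: the isomorphisms $j^!F_!\simeq F'_!(j')^!$ and $i^!F_!\simeq(\ldots)\circ(\tilde\imath)^!$ that you need are precisely base-change statements for the weakly proper $F_!$, which is the content in question.

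The paper's argument sidesteps this entirely by exploiting the factorization $Z\hookrightarrow\BP^M_Y\to Y$ at the level of the proposition itself, rather than inside a simplicial presentation. For the closed-embedding piece, $F$ and $F'$ become genuine closed embeddings of prestacks (\corref{cor:QSect-closed-embedding}), so $F_!=F_*$ and the square is an instance of functoriality of $*$-pushforwards. For the projection $\BP^M_Y\to Y$, the key observation is that $\GSectdom{\BP^M_Y}\simeq\GSectdom{\BP^M_X}\times\GSectdom{Y}$ \emph{splits as a product}, so the diagram of $\fD$-module categories has the form $\CA\otimes\Ccat\to\CA\otimes\D$ versus $\B\otimes\Ccat\to\B\otimes\D$, where the base-change transformation is visibly an isomorphism because the two directions act on independent tensor factors. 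This product structure is what makes $F_!$ tractable without any six-functor input; your approach does not isolate it.
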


\begin{proof}
As in the proof of Proposition \ref{prop:general-weakly-proper}, it suffices to verify the assertion separately for $f: \BP^n_Y \to Y$ being the structure map and for $f: Z \hto Y$ being a closed embedding. 
In the latter case, we have already observed that $F$ and $F'$ are closed embeddings, whence the claim reduces to the functoriality of $*$-pushforwards.
In the former situation, the diagram in question splits into a product: it is equivalent to one of the form
\begin{gather} \nonumber
\xy
(00,0)*+{ \CA \otimes \D }="00";
(30,0)*+{ \B \otimes \D, }="10";
(0,17)*+{ \CA \otimes \Ccat }="01";
(30,17)*+{ \B \otimes \Ccat }="11";
{\ar@{->}^{  \gamma \otimes \id_{\D} } "00";"10"};
{\ar@{->}^{ \gamma \otimes \id_{\Ccat} } "01";"11"};
{\ar@{->}^{ \id_\B \otimes \delta   } "11";"10"};
{\ar@{->}^{ \id_{\CA} \otimes \delta   } "01";"00"};
\endxy
\end{gather}	
which is evidently commutative.
\end{proof}


\end{document}